\newtheorem{theorem}[equation]{Theorem}
\newtheorem{proposition}[equation]{Proposition}
\newtheorem{lemma}[equation]{Lemma}
\theoremstyle{definition}
\newtheorem{definition}[equation]{Definition}
\newtheorem{example}[equation]{Example}
\theoremstyle{remark}
\newtheorem{remark}[equation]{Remark}
\numberwithin{equation}{section} 
\def\B{\mathcal{B}}
\def\C{\mathbb{C}}
\def\M{\mathbb{M}}
\def\N{\mathbb{N}}
\def\F{\mathcal{F}}
\def\H{\mathcal{H}}
\def\L{\mathcal{L}}
\def\Ln{\mathcal{L}^n} 
\def\R{\mathbb{R}}
\def\re{\mathbb{R}}
\def\Rn{\mathbb{R}^n}
\def\X{\mathbb{X}}
\def\Y{\mathbb{Y}}
\def\w{\omega}
\def\p{\mathfrak{p}}
\def\diam{\operatorname{diam}}
\def\div{\operatorname{div}}
\def\dist{\operatorname{dist}}
\def\loc{\operatorname{loc}}
\def\Lip{\operatorname{Lip}}
\def\supp{\operatorname{supp}}
\def\BMO{\operatorname{BMO}}
\def\VMO{\operatorname{VMO}}
\def\Re{\operatorname{Re}}
\def\inf{\operatornamewithlimits{inf\vphantom{p}}}
\newcommand{\norm}[1]{\left\lVert#1\right\rVert}
\newcommand\restr[2]{\ensuremath{\left.#1\right|_{#2}}}
\DeclareMathOperator*{\esssup}{ess\,sup}
\DeclareMathOperator*{\essinf}{ess\,inf}
\begin{document}

\title{Extrapolation on function and modular spaces, and applications}

\author{Mingming Cao}
\address{Mingming Cao\\
Instituto de Ciencias Matemáticas CSIC-UAM-UC3M-UCM\\
Con\-se\-jo Superior de Investigaciones Científicas\\
C/ Nicolás Cabrera, 13-15\\
E-28049 Ma\-drid, Spain} \email{mingming.cao@icmat.es}

\author{Juan José Marín}
\address{Juan José Marín\\
Instituto de Ciencias Matemáticas CSIC-UAM-UC3M-UCM\\
Con\-se\-jo Superior de Investigaciones Científicas\\
C/ Nicolás Cabrera, 13-15\\
E-28049 Ma\-drid, Spain} \email{juanjose.marin@icmat.es}

\author{José María Martell}
\address{José María Martell\\
Instituto de Ciencias Matemáticas CSIC-UAM-UC3M-UCM\\
Con\-se\-jo Superior de Investigaciones Científicas\\
C/ Nicolás Cabrera, 13-15\\
E-28049 Ma\-drid, Spain} \email{chema.martell@icmat.es}

\thanks{All authors acknowledge financial support from the Spanish Ministry of Science and Innovation, through the ``Severo Ochoa Programme for Centres of Excellence in R\&D'' (CEX2019-000904-S) and  from the Spanish National Research Council, through the ``Ayuda extraordinaria a Centros de Excelencia Severo Ochoa'' (20205CEX001). The first author also acknowledges financial support from Spanish Ministry of Science and Innovation, through the Juan de la Cierva-Formación 2018 (FJC2018-038526-I). The second and third authors also acknowledge that the research leading to these results has received funding from the European Research Council under the European Union's Seventh Framework Programme (FP7/2007-2013)/ ERC agreement no. 615112 HAPDEGMT.  The third author was partially supported by the Spanish Ministry of Science and Innovation,  MTM PID2019-107914GB-I00.}

\date{\today}

\subjclass[2010]{42B25, 42B35, 46E30, 42B20, 42B37, 35J25, 35J47, 35J10}


\keywords{Muckenhoupt weights, Rubio de Francia extrapolation,  Banach function spaces, 
Modular spaces, Vector-valued inequalities, Dirichlet problem, Well-posedness, Layer potentials, Square functions, Singular integral operators, Schrödinger operators with potentials, Kato conjecture}

\begin{abstract}
We generalize the extrapolation theory of Rubio de Francia to the context of Banach function spaces and modular spaces. Our results are formulated in terms of some natural weighted estimates for the Hardy-Littlewood maximal function and are stated in measure spaces and for general Muckenhoupt bases. Finally, we give several applications in analysis and partial differential equations. 
\end{abstract}

\maketitle 
\tableofcontents

\section{Introduction}\label{sec:intro}

The celebrated extrapolation result by Rubio de Francia shows that if an operator $T$ satisfies 
\begin{equation}\label{eq:intro-RdF-1}
\begin{array}{c}
\|Tf\|_{L^{p_0}(\Rn,w_0)}\leq C\|f\|_{L^{p_0}(\Rn,w_0)}
\\[4pt]
\text{ for some }p_0\in[1,\infty)\text{ and every }w_0\in A_{p_0}, 
\end{array}
\end{equation}
then
\begin{equation}\label{eq:intro-RdF-2}
\begin{array}{c}
\|Tf\|_{L^{p}(\Rn,w)}\leq C\|f\|_{L^{p}(\Rn,w)}
\\[4pt]
\text{ for every }p\in(1,\infty)\text{ and every }w\in A_{p}.
\end{array}
\end{equation}
Here, $A_{p}$ denotes the class of Muckenhoupt weights in $\Rn$ with the underlying Lebesgue measure and $L^{p}(\Rn,w)$ denotes the associated weighted Lebesgue space. The theory of Muckenhoupt weights has been extensively studied and one of its most basic features is the fact that $w\in A_{p}$ if and only if the Hardy-Littlewood maximal function $M$ (cf. \eqref{eq:M-def}) is bounded on $L^p(\Rn,w)$ whenever $p\in(1,\infty)$; and for $p=1$, $w\in A_{1}$ if and only if $M$ (cf. \eqref{eq:M-def}) maps continuously $L^1(\Rn,w)$ into $L^{1,\infty}(\Rn,w)$. 
As a consequence, \eqref{eq:intro-RdF-2} may be recast as 
\begin{equation}\label{eq:intro-RdF-3}
\begin{array}{c}
\|Tf\|_{L^{p}(\Rn,w)}\leq C\|f\|_{L^{p}(\Rn,w)}
\text{ for every }p\in(1,\infty) 
\\[4pt]
\text{and every weight $w$ such that $M$ is bounded on }L^p(\Rn,w).
\end{array}
\end{equation}

In the last forty years, Rubio de Francia's result has been extended and complemented in several different ways, see \cite{CMP} and the references therein. In particular, one can see that \eqref{eq:intro-RdF-1} encapsulates information about the boundedness of $T$ on many different function spaces, see for instance \cite[Theorem~1.3]{CFMP} and \cite[Theorems~4.6 and 4.10]{CMP}. All these results show, in hindsight, that what is needed to extrapolate is a good behavior of the Hardy-Littlewood maximal function. 
Elaborating on this there are some two other equivalent formulations of \eqref{eq:intro-RdF-3} which will be relevant momentarily. For any weight $v$ (i.e., $v$ is a measurable function such that $0<v<\infty$ a.e.) let us introduce the ``dual'' operator $M_v' h(x)= M(h\,v)(x)/v(x)$. Fix then $p\in (1,\infty)$ and recall the well-known fact that  $w\in A_p$ if and only if $w^{1-p'}\in A_p'$.  Hence,  we can rewrite \eqref{eq:intro-RdF-3} as
\begin{equation}\label{eq:intro-RdF-4}
\begin{array}{c}
\|Tf\|_{L^{p}(\Rn,v)}\leq C\|f\|_{L^{p}(\Rn,v)}
\text{ for every weight $v$ such that}
\\[4pt]
\|Mh\|_{L^{p}(\Rn,v)} \leq C \|h\|_{L^{p}(\Rn,v)}
\text{ and } 
\|M'_{v}h \|_{L^{p'}(\Rn,v)} \leq C \|h\|_{L^{p'}(\Rn,v) }.
\end{array}
\end{equation}
or as
\begin{equation}\label{eq:intro-RdF-5}
\begin{array}{c}
\|(Tf)\,u\|_{L^p(\Rn,\Ln)}\leq C\|f\, u\|_{L^p(\Rn,\Ln)}
\text{ for every weight $u$}
\\[4pt]\text{such that }
\|(Mh)\,u\|_{L^p(\Rn,\Ln)} \leq C \|h u\|_{L^p(\Rn,\Ln)}
\\[4pt]\text{ and } 
\|(Mh)\,u^{-1} \|_{L^{p'}(\Rn,\Ln)} \leq C \|h u^{-1}\|_{L^{p'}(\Rn,\Ln) }.
\end{array}
\end{equation}
These say, as already announced, that extrapolation is intimately related with the boundedness of the Hardy-Littlewood maximal on the given space and its dual. To continue with our discussion we bring two extrapolation results which resemble the previous formulations. The first one, implicit in \cite[Theorem~4.10, Lemma~4.12]{CMP} in the case of $\mathbb{X}_v=\mathbb{X}(v)$ with $\mathbb{X}$ being rearrangement invariant Banach and recently extended in \cite[Theorem~8.2]{MMMMM20} to the general case where $\mathbb{X}_v$ is a weighted Banach function space with respect to the underlying measure $v\,d\mathcal{L}^n$ with $v\in L^1_{\rm loc}(\Rn,\mathcal{L}^n)$ and $v>0$ $\mathcal{L}^n$-a.e., establishes that \eqref{eq:intro-RdF-1} yields
\begin{equation}\label{eq:intro-RdF-6}
\begin{array}{c}
\norm{Tf}_{\X_v}\leq C\|f\|_{\X_v}
\text{ whenever}
\\[4pt]
\|Mh\|_{\X_v} \leq C \|h\|_{\X_v}
\text{ and } 
\|M'_{v}h\|_{\X'_v} \leq C \|h\|_{\X'_v},
\end{array}
\end{equation}
where $\X'_v$ is the associated space of $\X_v$. Hence, \eqref{eq:intro-RdF-6} is a generalization of \eqref{eq:intro-RdF-4} to the context of weighted Banach function spaces. Examples of spaces in this class to which \eqref{eq:intro-RdF-6} applies are Lebesgue spaces $L^p(v)$ for $1<p<\infty$ and $v\in A_p$; Lorentz spaces $L^{p,q}(v)$ with $1<p<\infty$, $1\le q\le\infty$, and  $v\in A_p$; Orlicz spaces such as $L^p(\log L)^\alpha(v)$ with $1<p<\infty$, $\alpha\in\R$, and  $v\in A_p$; or Lebesgue spaces with variable exponents $L^{p(\cdot)}$ where $v\equiv 1$ and $p(\cdot):\R^n\to (1,\infty)$ is a measurable function such that $M$ is bounded in  $L^{p(\cdot)}$ and $L^{p'(\cdot)}$, where $p'(\cdot)$ is the conjugate exponent of $p(\cdot)$, that is, $1/p(x)+1/p'(x)=1$ for every $x \in \Rn$. 

Our second example, which generalizes \eqref{eq:intro-RdF-4}, is \cite[Theorem 2.6]{CW}, where it is shown that \eqref{eq:intro-RdF-1} has implications not only for weighted Lebesgue spaces but also for weighted Lebesgue spaces with variable exponents.  Specifically, they prove that \eqref{eq:intro-RdF-1} implies 
\begin{equation}\label{eq:intro-Lpvar}
\begin{array}{c}
\|(Tf)\,u\|_{L^{p(\cdot)}(\Rn, \Ln)}\leq C\|f\,u\|_{L^{p(\cdot)}(\Rn, \Ln)}
\text{ for every weight $u$} 
\\[6pt]	
\text{such that } \|(Mh)\,u\|_{L^{p(\cdot)}(\Rn, \Ln)}\leq C\|h\,u\|_{L^{p(\cdot)}(\Rn, \Ln)}
\\[6pt]	
\text{and } \|(Mh)\,u^{-1}\|_{L^{p'(\cdot)}(\Rn, \Ln)}\leq C\|h\,u^{-1}\|_{L^{p'(\cdot)}(\Rn, \Ln)}.
\end{array}
\end{equation}
Here, $p(\cdot):\R^n\to (1,\infty)$ is a measurable function satisfying $1<\essinf_{\Rn}p(\cdot ) \le \esssup_{\Rn}p(\cdot)<\infty$, and $p'(\cdot)$ is the conjugate exponent of $p(\cdot)$, that is, $1/p(x)+1/p'(x)=1$ for every $x \in \Rn$.

The previous discussion reveals that extrapolating from \eqref{eq:intro-RdF-1} to a given weighted function space, requires assumptions on the weight guaranteeing that the Hardy-Littlewood maximal function $M$ is bounded on that weighted function space. However, it is important to emphasize that the equivalence of \eqref{eq:intro-RdF-4} and \eqref{eq:intro-RdF-5} stems from the fact that for every weight $w$ and every measurable function $f$ one has  
$ \|f w\|_{L^{p}(\Rn,\mathcal{L}^n)}= \|f\|_{L^{p}(\Rn,w^p)}$. However, that equality ceases to be valid if one replaces $L^p$ by more general spaces (say, $L^{p,q}$ with $p\neq q$, $L^p(\log L)^\alpha$ with $\alpha\neq 0$, or $L^{p(\cdot)}$ with $p$ being non-constant). That is, having the weight as part of the underlying measure (as $v$ in \eqref{eq:intro-RdF-4} or \eqref{eq:intro-RdF-6}) is in general different from having it as a pointwise multiplier (as $u$ in \eqref{eq:intro-RdF-5} or \eqref{eq:intro-Lpvar}) ---think for instance in $ \|f\|_{L^{p,\infty}(\Rn,w^p)}$ and $ \|f w\|_{L^{p,\infty}(\Rn,\mathcal{L}^n)}$.

Continuing this line of research, one of the goals of this paper is to reconsider the extrapolation on some new weighted Banach function spaces so that the two different approaches formulated above can be framed under the same result. More precisely, let $u$ and $v$ be weights (that is, measurable functions which are strictly positive and finite almost everywhere with respect to Lebesgue measure $\L^n$), and let $\X_v$ be a Banach function space with underlying measure $v\,d\L^n$ (cf. Definition~\ref{def:BFS}). Introduce the weighted norm related to $\X_v$ and $u$ as $f \longmapsto \|f\,u\|_{\X_v}$.
Our aim is to see that the boundedness of $M$ with respect to this new weighted space (and an appropriate associated space) allows us to extrapolate from \eqref{eq:intro-RdF-1} to obtain estimates in the just introduced weighted norm. More specifically, a particular case of Theorem~\ref{thm:BFSAp} assures that \eqref{eq:intro-RdF-1} implies 
\begin{equation}\label{eq:intro-BFS}
\begin{array}{c}
\|(Tf)\,u\|_{\X_v} \leq C \|f\,u\|_{\X_v} 
\ \text{for every Banach function space $\X_v$ and every weight}   
\\[6pt]	
\text{$u$ such that } \|(Mh)\,u\|_{\X_v} \leq C \|h\,u\|_{\X_v}
\text{ and } 
\|(M'_{v}h)\,u^{-1}\|_{\X'_v} \leq C \|h\,u^{-1}\|_{\X'_v}.
\end{array}
\end{equation}
In the particular case $u\equiv 1$ and $\X_v=L^{p}(\Rn,v\,d\L^n)$ with $p\in(1,\infty)$, one immediately sees that \eqref{eq:intro-BFS} becomes  \eqref{eq:intro-RdF-4}. More generally,  if $u\equiv 1$, \eqref{eq:intro-BFS} agrees with \eqref{eq:intro-RdF-6}. On the other hand, if $v\equiv 1$ and $\X_v=L^{p}(\Rn,\L^n)$ with $p\in(1,\infty)$, one can easily see that \eqref{eq:intro-BFS} turns to be \eqref{eq:intro-RdF-5}. Also, if $v\equiv 1$ and $\X_v=L^{p(\cdot)}(\Rn,\L^n)$ then \eqref{eq:intro-BFS} becomes \eqref{eq:intro-Lpvar}. We note that Theorem~\ref{thm:BFSAp} is written in much more general terms 
 using pairs of functions (in place of operators), the setting is that of general measure spaces, and the Hardy-Littlewood maximal function is associated with a general basis. We also prove other extrapolation results for Banach function spaces such as an $A_{\infty}$ extrapolation result (cf. Theorem~\ref{thm:BFSAi}) and a limited range extrapolation result (cf. Theorems~\ref{thm:lim-RIBFS} and \ref{thm:LRE}). These generalize previously known results (\cite[Theorem~2.1]{CMP04}, \cite[Theorem~4.9]{AM1}, and \cite[Theorem~3.31]{CMP}, respectively). 

Besides the new extrapolation results in the context of Banach function spaces we study the extrapolation on modular spaces associated with Young functions. More specifically, let $\Phi$ be a Young function so that $\Phi\in\Delta_2$ and let $\overline{\Phi}$ be its complementary Young function. A particular case of Theorem~\ref{thm:PhiAp} establishes that  \eqref{eq:intro-RdF-1} implies 
\begin{equation}\label{eq:intro-BFS:mod}
	\begin{array}{c}
		\displaystyle\int_{\re^n}\Phi(|Tf|\,u)\,v\,dx \leq C \int_{\re^n}\Phi(|f|\,u)\,v\,dx 
		\ \text{for any weights $u$ and $v$}  
		\\[12pt]	
		\text{such that } \displaystyle\int_{\re^n}\Phi((Mf)\,u)\,v\,dx \leq C \int_{\re^n}\Phi(|f|\,u)\,v\,dx 
		\\[12pt]	
		\text{and }\displaystyle \int_{\re^n}\overline{\Phi}((M'_v f)\,u^{-1})\,v\,dx \leq C \int_{\re^n}\overline{\Phi}(|f|\,u^{-1})\,v\,dx. 
		\end{array}
\end{equation}
In the particular case $u\equiv 1$ (resp. $v\equiv 1$) and $\Phi(t)=t^p$, $t\ge 0$ with $p\in(1,\infty)$, it is trivial to see that \eqref{eq:intro-BFS:mod} becomes  \eqref{eq:intro-RdF-4} (resp. \eqref{eq:intro-RdF-5}). Let us mention that Theorem~\ref{thm:PhiAp} is written in much more general terms 
using pairs of functions (in place of operators), the setting is that of general measure spaces, and the Hardy-Littlewood maximal function is associated with a general basis. Additionally, we establish an $A_{\infty}$ extrapolation result in modular spaces (cf. Theorem \ref{thm:PhiAi}).

Finally, we give several applications of the extrapolation theory developed here. We first establish the well-posedness results for some Dirichlet problem in the upper half-space whenever the boundary data belongs to a general weighted Banach function space or a weighted modular space. And, secondly, we obtain weighted inequalities on Banach function and modular spaces for the layer potential operators and the related commutators on Ahlfors regular domains satisfying a two-sided local John condition.  
Beyond that, an application of $A_p$ extrapolation is presented for the square function on non-homogeneous spaces. 
We also prove estimates for singular integral operators including the pseudo-differential operators, $L^{\Psi}$-Hörmander operators and singular integrals of Calderón-type.  We end up with a limited range extrapolation result for the Schrödinger operators with potentials and for the operators associated with the Kato conjecture.

\section{Preliminaries}\label{sec:pre}

\subsection{Muckenhoupt weights}
Throughout this paper, we make the assumption that $(\Sigma, \mu)$ is a non-atomic $\sigma$-finite measure space with $\mu(\Sigma)>0$. We recall that $\mu$ is said to be non-atomic if for every $\mu$-measurable set $F$ with $\mu(F)>0$, there exists $F'\subset F$ such that $\mu(F)>\mu(F')>0$.
The characteristic function of a $\mu$-measurable set $E$ will be denoted by $\mathbf{1}_E$.
Let $\B$ be a {\tt basis}, that is, a collection of $\mu$-measurable sets in $\Sigma$ such that $0<\mu(B)<\infty$ for every $B \in \B$. Let us introduce the Muckenhoupt weights associated with $\B$ and some of its most relevant properties in this section. Given a basis $\B$, the Hardy-Littlewood maximal operator $M_{\B}$  on $(\Sigma,\mu)$ associated with $\B$ is defined for each $\mu$-measurable function $f$ on $\Sigma$  by 
\begin{equation}\label{eq:M-def}
M_{\B}f(x) := \sup_{x\in B \in \B} \fint_B |f(y)|\,d\mu(y), \quad\text{if } x \in \Sigma_\B:=\bigcup_{B \in \B} B, 
\end{equation} 
and $M_{\B}f(x)=0$ otherwise.

A $\mu$-measurable function $w$ on $\Sigma$ is called a {\tt $\B$-weight} on $(\Sigma,\mu)$ if $0<w(x)<\infty$ for $\mu$-a.e.~$x \in \Sigma_\B$. 
Given $p \in (1, \infty)$ and a basis $\B$ on $(\Sigma, \mu)$, we define the {\tt Muckenhoupt class} $A_{p, \B}$ as the collection of all $\B$-weights $w$ on $(\Sigma,\mu)$ satisfying 
\begin{equation*}
[w]_{A_{p,\B}}:=\sup_{B \in \B} \left(\fint_{B} w\, d\mu \right) \left(\fint_{B}w^{1-p'}\, d\mu \right)^{p-1}<\infty,
\end{equation*} 
where $p'$ is the H\"older conjugate exponent of $p$, i.e., $\frac1p+\frac{1}{p'}=1$. As for the case $p=1$, we say that $w\in A_{1,\B}$ if  
\begin{equation*}
[w]_{A_{1,\B}} := \|(M_{\B}w)\,w^{-1}\,\mathbf{1}_{\Sigma_\B}\|_{L^{\infty}(\Sigma, \mu)} <\infty.
\end{equation*}
Finally, we define 
\begin{equation*}
A_{\infty,\B} :=\bigcup_{p\geq 1}A_{p,\B},
\end{equation*} 
and $[w]_{A_{\infty,\B}}=\inf_{p:w \in A_{p, \B}} [w]_{A_{p,\B}}$.

For every $p\in(1,\infty)$ and every $\B$-weight $w$ on $(\Sigma,\mu)$, we define the associated weighted Lebesgue space 
$L^p(\Sigma,w):=L^p(\Sigma,\,w d\mu)$ as the collection of $\mu$-measurable functions $f$ with $\int_\Sigma |f|^p\,w\,d\mu<\infty$.

Given $1<p\le\infty$, $1 \leq q < \infty$, and a basis $\B$ on $(\Sigma, \mu)$, we define the $A_{p,q, \B}$ class as the collection of all $\B$-weights $w$ on $(\Sigma,\mu)$ satisfying
\begin{equation}\label{Apq}
[w]_{A_{p, q, \B}} := \sup_{B \in \B} \left(\fint_{B}w^q\, d\mu \right) \left(\fint_{B}w^{-p'}\, d\mu\right)^{\frac{q}{p'}}<\infty.
\end{equation} 
The {\tt reverse Hölder classes} are defined in the following way: we say that $w\in RH_{s, \B}$ for $s\in(1,\infty)$ and a basis $\B$ on $(\Sigma, \mu)$, if 
\begin{equation*}
[w]_{RH_{s, \B}} :=\sup_{B \in \B} \left(\fint_B w^s\,d\mu\right)^{\frac1s} \left(\fint_B w\,d\mu\right)^{-1} < \infty. 
\end{equation*} 
Regarding the endpoint $s=\infty$, $w \in RH_{\infty, \B}$ means that
\begin{equation*}
[w]_{RH_{\infty, \B}} := \sup_{B \in \B} \|w\mathbf{1}_{B}\|_{L^{\infty}(\Sigma, \mu)} \left(\fint_{B}w\,d\mu\right)^{-1}<\infty.  
\end{equation*}

\begin{remark}
Note that, by definition, the operator $M_{\B}$ does not take into account the values of the functions in the set $\Sigma\setminus\Sigma_\B$. Also the $A_{p,\B}$ or $RH_{s,\B}$ classes do not depend on the values of the weights in the set $\Sigma\setminus\Sigma_\B$. This may create some technical issues in the arguments below and to avoid them, we will assume from now on that $\mu(\Sigma\setminus\Sigma_\B)=0$.  With this assumption in place, $w$ is a $\B$-weight on $(\Sigma,\mu)$ if $0<w(x)<\infty$ for $\mu$-a.e.~$x \in \Sigma$. In the general situation where $\mu(\Sigma\setminus\Sigma_\B)>0$ one can alternatively work with $\Sigma_\B$ in place of $\Sigma$ (or, what is the same, restrict all functions and weights to the set $\Sigma_\B$).
\end{remark}

The properties listed in the following result follow much as in the Euclidean case (see, for instance, \cite{GR}).

\begin{lemma}\label{lemma:AP-trivial}
Let  $(\Sigma, \mu)$ be a non-atomic $\sigma$-finite measure space and let  $\B$ be a basis. Then the following properties hold.
\begin{list}{\textup{(\theenumi)}}{\usecounter{enumi}\leftmargin=1cm \labelwidth=1cm \itemsep=0.2cm 
			\topsep=.2cm \renewcommand{\theenumi}{\alph{enumi}}}
\item $A_{1, \B} \subset A_{p, \B} \subset A_{q, \B}\subset A_{\infty, \B} $ and $RH_{\infty, \B} \subset RH_{q, \B} \subset RH_{p, \B}$
for every $1 <p \leq q <\infty$. 

\item For every $p\in(1,\infty)$, $w\in A_{p,\B}$ if and only if $w^{1-p'}\in A_{p', \B}$, and 
\begin{equation}\label{eq:Ap-dual}
[w^{1-p\,'}]_{A_{p', \B}}=[w]^{\frac{1}{p-1}}_{A_{p,\B}}. 
\end{equation}

\item For every $p \in (1,\infty)$ and $w_1,w_2\in A_{1, \B}$, 
\begin{equation}\label{eq:Ap-product}
w_1\,w_2^{1-p}\in A_{p,\B}  \quad\text{with}\quad  
[w_1\,w_2^{1-p}\,]_{A_{p,\B}} \leq [w_1]_{A_{1, \B}}\,[w_2]^{p-1}_{A_{1, \B}}. 
\end{equation}

\item If $w_1, w_2\in A_{p,\B}$ and $0\le \theta\le 1$, then $w_1^{\theta}\,w_2^{1-\theta}\in A_{p,\B}$ with
\[
[w_1^\theta\,w_2^{1-\theta}\,]_{A_{p,\B}} \leq [w_1]_{A_{p, \B}}^\theta\,[w_2]^{1-\theta}_{A_{p, \B}}. 
\]

\item For all $p\in[1,\infty)$ and $s\in(1,\infty)$,
\begin{align}\label{eq:JN}
w\in A_{p,\B} \cap RH_{s, \B} \quad 
\Longleftrightarrow\quad w^s\in A_{\tau, \B}, \quad \tau=s(p-1)+1.
\end{align}

\item $A_{p_1,q_1, \B} \subset A_{p_2,q_2, \B} $ for every $1<p_1\le p_2\le\infty$ and $1\leq q_2\le q_1 < \infty$.

\item For all $1<p\le\infty$ and $1\leq q < \infty$, 
\begin{align}\label{eq:Apq}
w \in A_{p,q, \B} &\,  \Longleftrightarrow\, w^{-1} \in A_{q',p', \B} 
\,  \Longleftrightarrow\,  w^q \in A_{1+\frac{q}{p'}, \B}\,  \Longleftrightarrow\, w^{-p'} \in A_{1+\frac{p'}{q}, \B}.
\end{align}

\item For all $1<p<\infty$ 
\begin{equation}\label{eq:App}
w^p \in A_{p,\B} \, \,  \Longleftrightarrow\, \, w^{-p'} \in A_{p',\B} \, \, \Longleftrightarrow\, \, w\in A_{p,p, \B} \, \, \Longleftrightarrow \, \, w^{-1} \in A_{p',p'}.
\end{equation}
\end{list}
\end{lemma}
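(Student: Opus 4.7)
The plan is to verify each of (a)--(h) by unfolding the definitions and applying Hölder's (or Jensen's) inequality on a single set $B \in \B$ of finite positive measure. None of the items requires a covering lemma, Lebesgue differentiation, or any structural feature of $\B$ beyond $0 < \mu(B) < \infty$; they are purely algebraic manipulations of averages, so the classical Euclidean proofs in \cite{GR} transfer verbatim, and I would proceed item by item, keeping track of how the constants propagate. For (a), Hölder on $\fint_B w^{1-q'}\,d\mu$ with exponent $(q-1)/(p-1)$ yields $A_{p,\B} \subset A_{q,\B}$; the inclusion into $A_{\infty,\B}$ is by definition, $A_{1,\B} \subset A_{p,\B}$ is immediate from the pointwise $A_1$ bound, and the analogous application of Hölder on $\fint_B w^p\,d\mu$ gives the $RH_s$ chain. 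Part (b) is the substitution $w \mapsto w^{1-p'}$ in the definition of $[\,\cdot\,]_{A_{p',\B}}$, together with $(p-1)(p'-1) = 1$. Part (c) splits the two integrals $\fint_B w_1\,w_2^{1-p}\,d\mu$ and $\fint_B (w_1\,w_2^{1-p})^{1-p'}\,d\mu = \fint_B w_1^{1-p'}\,w_2\,d\mu$, and majorizes each factor by its $A_1$ constant. Part (d) is the multiplicative Hölder inequality with conjugate exponents $1/\theta$ and $1/(1-\theta)$, applied once to each of the two averages defining $[w_1^\theta w_2^{1-\theta}]_{A_{p,\B}}$.

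The main computation is (e). Setting $\tau = s(p-1) + 1$, one checks $s(1-\tau') = 1-p'$ and $\tau - 1 = s(p-1)$, so the $A_{\tau,\B}$ expression for $w^s$ on $B$ becomes
\begin{equation*}
	\left(\fint_B w^s\,d\mu\right)\left(\fint_B w^{1-p'}\,d\mu\right)^{s(p-1)}.
\end{equation*}
Inserting the factor $\bigl(\fint_B w\,d\mu\bigr)^s \bigl(\fint_B w\,d\mu\bigr)^{-s}$ and regrouping, this equals $[w]_{RH_{s,\B}}^s \,[w]_{A_{p,\B}}^s$ at the supremum level, giving $(\Longrightarrow)$ with the explicit constants. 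For $(\Longleftarrow)$, the $A_p$ constant of $w$ is extracted from the finiteness of $[w^s]_{A_{\tau,\B}}$ via $\bigl(\fint_B w\,d\mu\bigr)^s \leq \fint_B w^s\,d\mu$ (Jensen), and the $RH_s$ constant via $\bigl(\fint_B w\,d\mu\bigr)\bigl(\fint_B w^{1-p'}\,d\mu\bigr)^{p-1} \geq 1$ (Hölder applied to $w^{1/p} \cdot w^{-1/p}$).

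For (f), Hölder on $\fint_B w^{q_2}\,d\mu$ with exponent $q_1/q_2$ combined with Hölder on $\fint_B w^{-p_2'}\,d\mu$ with exponent $p_1'/p_2'$ yields $[w]_{A_{p_2,q_2,\B}} \leq [w]_{A_{p_1,q_1,\B}}^{q_2/q_1}$, with the usual interpretation when $p_2 = \infty$. The three equivalences in (g) are direct readings of the definition: substituting $w \mapsto w^{-1}$ swaps the two averages and interchanges $(p,q)$ with $(q',p')$, while raising $w$ to the $q$-th power identifies $A_{p,q,\B}$ with the $A_{1+q/p',\B}$ condition for $w^q$; combining this last identification with (b) yields the remaining equivalence. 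Finally, (h) is the specialization $q = p$ of (g), using the arithmetic $1 + p/p' = p$. The main obstacle throughout is bookkeeping, most acutely in (e) and (g), where a wrong sign or a misplaced $p$ versus $p'$ would propagate through the duality identifications; beyond that, the standing assumption $\mu(\Sigma \setminus \Sigma_\B) = 0$ from the preceding remark ensures that weights and $\B$-averages are well-defined $\mu$-a.e.\ on $\Sigma$.
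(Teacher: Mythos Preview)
Your proposal is correct and follows precisely the standard Euclidean arguments that the paper invokes by reference to \cite{GR}; the paper does not write out any details beyond that citation, so your item-by-item verification via H\"older/Jensen on a single $B\in\B$ is exactly the approach being deferred to.
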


\begin{definition}
A basis $\B$ is called a {\tt Muckenhoupt basis} if for every $p \in (1, \infty)$ and for every $w \in A_{p,\B}$, 
\begin{align}\label{eq:MBLp}
\|M_{\B}f\|_{L^p(\Sigma, w)} \le C \|f\|_{L^p(\Sigma, w)}. 
\end{align}
\end{definition}

Given a basis $\B$ on $(\Sigma, \mu)$ and a $\B$-weight $w$, we set $M'_{\B, w}f(x):=M_{\B}(f\,w)/w(x)$ if $x \in \Sigma_\B$, and $M'_{\B, w}f(x)=0$ otherwise. Note that if $\B$ is a Muckenhoupt basis on $(\Sigma, \mu)$, by definition and property (b) above, it follows that $M_{\B}$ is bounded on $L^p(w)$ and $M'_{\B, w}$ is bounded on $L^{p'}(w)$ for every $w \in A_{p, \B}$, $1<p<\infty$.

\begin{definition}
A basis $\B$ is \texttt{open for the Muckenhoupt classes}, if for every $p \in (1, \infty)$, 
\begin{equation*}
w \in A_{p, \B} \, \Longrightarrow \, w \in A_{p-\varepsilon, \B} \text{ for some } \varepsilon \in (0, p-1). 
\end{equation*}
\end{definition}

We present some examples of Muckenhoupt bases. In $(\R^n, \L^n)$, the set of all cubes $\mathcal{Q}$,  the set of all dyadic cubes $\mathcal{D}$, and the set of all rectangles $\mathcal{R}$,  whose sides are parallel to the coordinate axes, are all Muckenhoupt bases, see \cite{GR}. Another interesting example is the basis $\mathcal{Z}$ of Zygmund rectangles in $(\re^3, \L^3)$ whose sides are parallel to the coordinate axes and have lengths $s$, $t$ and $st$ with $s,t>0$, see \cite{FP}. 
Moving on, let $(\Sigma,d,\mu)$ be a doubling measure metric space, that is, $(\Sigma,d,\mu)$ is a metric space endowed with a doubling measure. The latter condition means that 
\begin{equation}
\mu(B(x,2r)) \leq C_{\mu} \ \mu(B(x,r)), \quad x \in \Sigma \text{ and } r>0.
\end{equation}
If we set $\B$ to be the collection of all metric balls in $(\Sigma, d)$, then $\B$ is a Muckenhoupt basis since \cite[Proposition~7.13]{HK} gives that $M_{\B}$ is bounded in $L^p(\Sigma, w)$, $1<p<\infty$, if and only if $w \in A_{p, \B}$, in which case 
\begin{align}\label{eq:Msharp}
\|M_{\B}f\|_{L^p(\Sigma, w)} \le C [w]_{A_{p, \B}}^{\frac{1}{p-1}} \|f\|_{L^p(\Sigma, w)}.  
\end{align}

The five examples of Muckenhoupt bases given above all have the openness property for the Muckenhoupt classes; in each case it is a consequence of a reverse Hölder  inequality. Indeed, there holds $A_{\infty, \B} \subset \bigcup_{s>1} RH_{s, \B}$. In \cite[p.~29]{CMP} one can find an example of a Muckenhoupt basis $\B$ (consisting on a single element) and a weight $w\in A_{2,\B}$ with $w\not\in A_{p, B}$ for any $p<2$, that is, the basis $\B$ is not open for the Muckenhoupt classes.

\begin{lemma}\label{lem:ApRH}
Let  $(\Sigma, \mu)$ be a non-atomic $\sigma$-finite measure space and let  $\B$ be a basis.  Assume that $A_{\infty, \B} \subset \bigcup_{s>1} RH_{s, \B}$ and fix $p,q\in (1, \infty)$. Then the following hold.
\begin{list}{\textup{(\theenumi)}}{\usecounter{enumi}\leftmargin=1cm \labelwidth=1cm \itemsep=0.2cm 
			\topsep=.2cm \renewcommand{\theenumi}{\alph{enumi}}}
\item\label{list:open-1} If $w \in A_{p, \B}$, then $w^r \in A_{p, \B}$ for some $r>1$ and $w \in A_{p/s, \B}$ for some $1<s<p$. In particular, $\B$ is open for the Muckenhoupt classes. 

\item\label{list:open-2} If $u^p\, v \in A_{p, \B}$ and $v \in A_{\infty, \B}$ then $u^{p/r}\, v \in A_{p/r, \B}$ for every $r\in [1,r_0]$ and some $1<r_0<p$

\item\label{list:open-3} If $u^q\, v \in A_{q, \B}$ and $v \in A_{\infty, \B}$ then $u^{q\,s}\, v \in A_{q\,s, \B}$  for every $s\in [1,s_0]$ and some $s_0>1$.

\item\label{list:open-4} If  $u^q\, v \in A_{p, \B}$,  $v \in A_{p, \B}$, and $1<p\le q<\infty$, then $u^p\, v \in A_{p, \B}$ and $u^q\, v \in A_{q, \B}$.

\end{list}
\end{lemma}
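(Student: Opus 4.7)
The plan is to handle (d) first as a direct consequence of Lemma~\ref{lemma:AP-trivial}, then prove (a) by direct manipulation using the hypothesis $A_{\infty,\B}\subset\bigcup_{s>1}RH_{s,\B}$, and finally establish (b) and (c) via a shared Hölder/reverse Hölder mechanism whose key feature is a cancellation of the exponent of $\fint_B v$.

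For (a), the hypothesis applied to $w\in A_{p,\B}\subset A_{\infty,\B}$ gives $w\in RH_{s,\B}$ and, using Lemma~\ref{lemma:AP-trivial}(b), $w^{1-p'}\in A_{p',\B}\subset RH_{s',\B}$ for some $s,s'>1$. For the first claim, Jensen combined with these RH inequalities yields, for $r\in[1,\min(s,s')]$, $\fint_B w^r\le[w]_{RH_{s,\B}}^r(\fint_B w)^r$ and $\fint_B w^{r(1-p')}\le[w^{1-p'}]_{RH_{s',\B}}^r(\fint_B w^{1-p'})^r$, whence $[w^r]_{A_{p,\B}}\le C[w]_{A_{p,\B}}^r$. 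For the second claim, set $q:=1+1/[s'(p'-1)]\in(1,p)$, for which $(q'-1)/(p'-1)=s'$ and $(q-1)s'=p-1$. Then $w^{1-q'}=(w^{1-p'})^{s'}$, and the RH inequality for $w^{1-p'}$ combined with these identities gives $(\fint_B w^{1-q'})^{q-1}\le C(\fint_B w^{1-p'})^{p-1}$; multiplication by $\fint_B w$ and use of the $A_{p,\B}$ condition shows $w\in A_{q,\B}$, i.e.\ $w\in A_{p/s,\B}$ with $s=p/q\in(1,p)$. Openness is the conjunction of the two claims.

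For (b), write $q=p/r\in[p/r_0,p]$; the two key decompositions are $u^q v=(u^p v)^{q/p}v^{1-q/p}$ and $u^{-q'}v^{1-q'}=(u^{-p'}v^{1-p'})^{q'/p'}v^{(p'-q')/p'}$. Applying Hölder to the first gives $\fint_B u^q v\le(\fint_B u^p v)^{q/p}(\fint_B v)^{1-q/p}$. The second is subtler because $(p'-q')/p'<0$: using the RH exponent $\sigma>1$ of $u^{-p'}v^{1-p'}\in A_{p',\B}\subset A_{\infty,\B}$, Hölder with $\tau=\sigma p'/q'$ (which exceeds $1$ for $r$ close to $1$), and the Jensen-type estimate $\fint_B v^{-\delta}\le C(\fint_B v)^{-\delta}$ (valid for $\delta\le 1/(s_0-1)$ whenever $v\in A_{s_0,\B}$, which holds for some $s_0$ since $v\in A_{\infty,\B}$), one obtains $\fint_B u^{-q'}v^{1-q'}\le C(\fint_B u^{-p'}v^{1-p'})^{q'/p'}(\fint_B v)^{(p'-q')/p'}$. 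Raising this to $q-1$, multiplying the two bounds, and invoking the $A_{p,\B}$ condition for $u^p v$ together with the identity $q/p'=(p-1)q/p$, the $u$-dependent averages collapse to $[u^p v]_{A_{p,\B}}^{q/p}$, while the exponent of $\fint_B v$ simplifies to $1-q/p+(q-1)(p'-q')/p'=q(1-1/p-1/p')=0$. Choosing $r_0>1$ small enough so that $\sigma>q'/p'$ and $\delta\le 1/(s_0-1)$ hold uniformly for $r\in[1,r_0]$ completes the proof.

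Part (c) is the mirror image of (b): with $\tilde q=qs$, the analogous decompositions $u^{\tilde q}v=(u^q v)^{\tilde q/q}v^{1-\tilde q/q}$ and $u^{-\tilde q'}v^{1-\tilde q'}=(u^{-q'}v^{1-q'})^{\tilde q'/q'}v^{(q'-\tilde q')/q'}$ now shift the RH-plus-$v^{-\delta}$ step into the first factor (since $1-\tilde q/q<0$), while the second reduces to plain Hölder with $\tau=q'/\tilde q'$ (for which $B^{\tau'}=v^1$); the same exponent cancellation $\tilde q(1-1/q-1/q')=0$ produces $u^{\tilde q}v\in A_{\tilde q,\B}$. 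Part (d) is immediate: since $p\le q$, Lemma~\ref{lemma:AP-trivial}(a) gives $u^q v\in A_{p,\B}\subset A_{q,\B}$, and writing $u^p v=(u^q v)^{p/q}v^{(q-p)/q}$ as an $A_{p,\B}$-interpolation with parameter $\theta=p/q\in[0,1]$, Lemma~\ref{lemma:AP-trivial}(d) yields $u^p v\in A_{p,\B}$. The main obstacle is the bookkeeping in (b)-(c): one must arrange the Hölder exponents so that the RH inequalities for $u^{-p'}v^{1-p'}$ (respectively $u^q v$) and the $v^{-\delta}$ integrability regime are simultaneously applicable, and then observe the cancellation of the exponent of $\fint_B v$, which is forced by the conjugacy relation $1/p+1/p'=1$.
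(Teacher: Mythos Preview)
Your proof is correct and follows essentially the same strategy as the paper's: parts (a) and (d) are identical in substance, and for (b)--(c) both you and the paper use H\"older on the factorization of $u^{p/r}v$ and $(u^{p/r}v)^{1-(p/r)'}$, invoke the reverse H\"older property of $(u^p v)^{1-p'}$ (resp.\ of $u^q v$), and control the residual negative power of $v$ via $v\in A_{\infty,\B}$. The only cosmetic difference is that the paper tracks the $v$-contribution through an explicit $[v]_{A_{q_0,\B}}$ factor in the final bound, whereas you encode it in the Jensen-type estimate $\fint_B v^{-\delta}\le C(\fint_B v)^{-\delta}$ and then observe the clean cancellation $1-q/p+(q-1)(p'-q')/p'=q(1-1/p-1/p')=0$; this is a slightly tidier bookkeeping of the same computation.
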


\begin{proof}
We begin by showing \eqref{list:open-1}. Let $w \in A_{p, \B}$. Then $w^{1-p'} \in A_{p', \B}$. By the assumption $A_{\infty, \B} \subset \bigcup_{s>1} RH_{s, \B}$, there exist $s_1, s_2>1$ such that $w \in RH_{s_1, \B}$ and $w^{1-p'} \in RH_{s_2, \B}$. Pick $r:=\min\{s_1, s_2\}>1$. Then for every $B \in \B$, Jensen's inequality gives 
\begin{align*}
&\left(\fint_{B} w^r d\mu\right) \left(\fint_{B} w^{r(1-p')} d\mu\right)^{p-1} 
\le \left(\fint_{B} w^{s_1} d\mu\right)^{\frac{r}{s_1}} \left(\fint_{B} w^{s_2(1-p')} d\mu\right)^{\frac{r(p-1)}{s_2}} 
\nonumber \\
&\qquad\qquad\le [w]_{RH_{s_1, \B}}^r  [w^{1-p'}]_{RH_{s_2, \B}}^{r(p-1)} \left(\fint_{B} w\, d\mu\right)^{r} \left(\fint_{B} w^{1-p'}d\mu\right)^{r(p-1)} 
\nonumber \\
&\qquad\qquad\le [w]_{RH_{s_1, \B}}^r  [w^{1-p'}]_{RH_{s_2, \B}}^{r(p-1)} [w]_{A_{p, \B}}^r, 
\end{align*}
which yields that $w^r \in A_{p, \B}$. Moving on we let $p_0:=(1+(p'-1)r)'\in (1, p)$ so that $r(1-p')=1-p'_0$.  Note that for every $B \in \B$, by Jensen's inequality
\begin{multline*}
\left(\fint_{B} w\, d\mu\right) \left(\fint_{B} w^{1-p'_0} d\mu\right)^{p_0-1} 
=\left(\fint_{B} w\, d\mu\right) \left(\fint_{B} w^{r(1-p')} d\mu\right)^{\frac{p-1}{r}} 
\\
\le \left(\fint_{B} w^r d\mu\right)^{\frac1r} \left(\fint_{B} w^{r(1-p')} d\mu\right)^{\frac{p-1}{r}} 
\le [w^r]_{A_{p, \B}}^{\frac1r}. 
\end{multline*}
Hence $w \in A_{p_0, \B}$ with $p_0\in(1,p)$ and this eventually shows that $\B$ is open for the Muckenhoupt classes. 

Next, we prove \eqref{list:open-2}. Fix $u^p\, v \in A_{p, \B}$ and $v \in A_{\infty, \B}$. Pick $q_0\in(1,\infty)$ so that $v\in A_{q_0, \B}$. By Lemma~\ref{lemma:AP-trivial}, $(u\,v^{1/p})^{-p'}=(u^p\,v)^{1-p'}\in A_{p', \B}$. By the assumption $A_{\infty, \B} \subset \bigcup_{s>1} RH_{s, \B}$ it follows that
$(u\,v^{1/p})^{-p'}\in RH_{t_0, \B}$ for some $t_0>1$. Fix $1<t<t_0$ and note that
\[
\lim_{r\to 1^+} 1+\frac{t'}{r'}\,\frac{(p/r)'}{(p/r)}=1
\quad\text{and}\quad 
\lim_{r\to 1^+} t\,(p/r)'= t\,p'.
\]
Therefore, we can find $r_0\in (1,p)$ such that for every $ 1<r\le r_0$ one has 
$q':=1+(t'/r')(p/r)'/(p/r)<q_0'$ and $t\,(p/r)'<t_0\, p'$. After all these considerations we observe that Hölder's inequality with $r>1$ yields
\[
\left(\fint_{B} u^{p/r}\,v\,d\mu\right)^{\frac{r}{p}}
=
\left(\fint_{B} u^{p/r}\,v^{1/r}\,v^{1/r'}\, d\mu\right)^{\frac{r}{p}}
\le
\left(\fint_{B} u^{p}\,v\, d\mu\right)^{\frac{1}{p}}
\left(\fint_{B} v\,d\mu\right)^{\frac{r}{p\,r'}}.
\]
On the other hand, Hölder's inequality with $t>1$, the facts that $q'<q_0'$ and $t\,(p/r)'<t_0\, p'$, Jensen's inequality, and that $(u\,v^{1/p})^{-p'}\in RH_{t_0, \B}$ lead to 
\begin{align*}
&\left(\fint_{B} (u^{p/r}\,v)^{1-(p/r)'}\,d\mu\right)^{\frac{1}{(p/r)'}}
=
\left(\fint_{B} (u\,v^{1/p})^{-(p/r)'}\,v^{(1-q')/t'}\,d\mu\right)^{\frac{1}{(p/r)'}}
\\
&\qquad\le
\left(\fint_{B} (u\,v^{1/p})^{-t\,(p/r)'}\,d\mu\right)^{\frac{1}{t\,(p/r)'}}
\left(\fint_{B} v^{1-q'}\,d\mu\right)^{\frac{1}{t'(p/r)'}}
\\
&\qquad\le
\left(\fint_{B} (u\,v^{1/p})^{-t_0\,p'}\,d\mu\right)^{\frac{1}{t_0\,p'}}
\left(\fint_{B} v^{1-q_0'}\,d\mu\right)^{\frac{r\,(q_0-1)}{p\,r'}}
\\
&\qquad\le
[(u\,v^{1/p})^{-p'}]_{RH_{t_0, \B}}^{\frac{1}{p'}}
\left(\fint_{B} (u^p\,v)^{1-p'}\,d\mu\right)^{\frac{1}{p'}}
\left(\fint_{B} v^{1-q_0'}\,d\mu\right)^{\frac{r\,(q_0-1)}{p\,r'}}.
\end{align*}
Collecting the obtained estimates we conclude that
\begin{align*}
&\left(\fint_{B} u^{p/r}\,v\,d\mu\right)^{\frac{r}{p}} \left(\fint_{B} (u^{p/r}\,v)^{1-(p/r)'}\,d\mu\right)^{\frac{1}{(p/r)'}}
\\
&
\qquad
\le
[(u\,v^{1/p})^{-p'}]_{RH_{t_0, \B}}^{\frac{1}{p'}}
\left(\fint_{B} u^{p}\,v\, d\mu\right)^{\frac{1}{p}}
\left(\fint_{B} (u^p\,v)^{1-p'}\,d\mu\right)^{\frac{1}{p'}}
\\
&
\hskip5cm\times
\left(\fint_{B} v\,d\mu\right)^{\frac{r}{p\,r'}}
\left(\fint_{B} v^{1-q_0'}\,d\mu\right)^{\frac{r\,(q_0-1)}{p\,r'}}
\\
&
\qquad
\le
[(u\,v^{1/p})^{-p'}]_{RH_{t_0, \B}}^{\frac{1}{p'}} [u^p\,v]_{A_{p, \B}}^{\frac{1}{p}}
[v]_{A_{q_0, \B}}^{\frac{r}{p\,r'}}.
\end{align*}
Consequently, $ u^{p/r}\,v\in A_{p/r,\B}$ as desired with 
\[
[u^{p/r}\,v]_{A_{p/r,\B}}\le [(u\,v^{1/p})^{-p'}]_{RH_{t_0, \B}}^{\frac{p}{p'\,r}} [u^p\,v]_{A_{p, \B}}^{\frac{1}{r}}
[v]_{A_{q_0, \B}}^{\frac{1}{r'}}.
\]

To proceed we next consider \eqref{list:open-3}.  Fix $u^q\, v \in A_{q, \B}$ and $v \in A_{\infty, \B}$. Pick $q_0 \in (1, \infty)$ so that $v\in A_{q_0, \B}$. By the assumption $A_{\infty, \B} \subset \bigcup_{s>1} RH_{s, \B}$ it follows that $u^q\,v\in RH_{t_0, \B}$ for some $t_0>1$. Fix $1<t<t_0$, pick $s_0$ so that  $1<s_0<\min\{t_0/t,1+(q_0'-1)/t'\}$, and let $1<s<s_0$. Hölder's inequality with exponent $t>1$, together with our choices guarantee that $ts<r_0$ and $t'(s-1)<q_0'-1$, we arrive at
\begin{align*}
\left(\fint_{B} u^{q\,s}\,v\,d\mu\right)^{\frac{1}{q\,s}}
&=
\left(\fint_{B} u^{q\,s}\,v^s\,v^{1-s}\,d\mu\right)^{\frac{1}{q\,s}}
\\
&\le
\left(\fint_{B} (u^q\,v)^{t\,s}\,d\mu\right)^{\frac{1}{t\,q\,s}}
\left(\fint_{B} v^{t'\,(1-s)}\,d\mu\right)^{\frac{1}{t'\,q\,s}}
\\
&\le
\left(\fint_{B} (u^q\,v)^{t_0}\,d\mu\right)^{\frac{1}{t_0\,q}}
\left(\fint_{B} v^{1-q_0'}\,d\mu\right)^{\frac{(q_0-1)\,(s-1)}{q\,s}}
\\
&\le 
[u^q\,v]_{RH_{t_0,\B}}^{\frac1q}\left(\fint_{B} u^q\,v\,d\mu\right)^{\frac{1}{q}}
\left(\fint_{B} v^{1-q_0'}\,d\mu\right)^{\frac{q_0-1}{q\,s'}}.
\end{align*}
On the other hand, Hölder's inequality with $q'/(q\,s)'>1$
\begin{align*}
\left(\fint_{B} (u^{q\,s}\,v)^{1-(q\,s)'}\,d\mu\right)^{\frac{1}{(q\,s)'}}
&=
\left(\fint_{B} (u^{q}\,v)^{-\frac{(q\,s)'}{q}}\,v^{\frac{(q\,s)'}{q\,s'}}\,d\mu\right)^{\frac{1}{(q\,s)'}}
\\
&\le
\left(\fint_{B} (u^{q}\,v)^{-\frac{q'}{q}}\,d\mu\right)^{\frac{1}{q'}}
\left(\fint_{B}\,v^{\frac{(q\,s)'}{q\,s'}\,(\frac{q'}{(q\,s)'})'}\,d\mu\right)^{\frac{1}{(q\,s)'(q'/(q\,s)')'}}
\\
&
=
\left(\fint_{B} (u^{q}\,v)^{1-q'}\,d\mu\right)^{\frac{1}{q'}}
\left(\fint_{B}\,v\,d\mu\right)^{\frac{1}{q\,s'}}
.
\end{align*}
Collecting the obtained estimates
\begin{align*}
&
\left(\fint_{B} u^{q\,s}\,v\,d\mu\right)^{\frac{1}{q\,s}}
\left(\fint_{B} (u^{q\,s}\,v)^{1-(q\,s)'}\,d\mu\right)^{\frac{1}{(q\,s)'}}
\\
&\qquad\le
[u^q\,v]_{RH_{t_0,\B}}^{\frac1q}
\left(\fint_{B} u^q\,v\,d\mu\right)^{\frac{1}{q}}
\left(\fint_{B} (u^{q}\,v)^{1-q'}\,d\mu\right)^{\frac{1}{q'}}
\left(\fint_{B}\,v\,d\mu\right)^{\frac{1}{q\,s'}}
\left(\fint_{B} v^{1-q_0'}\,d\mu\right)^{\frac{q_0-1}{q\,s'}} 
\\
&\qquad\le
[u^q\,v]_{RH_{t_0,\B}}^{\frac1q}
[u^q\,v]_{A_{q,\B}}^{\frac1q}
[v]_{RH_{t_0,\B}}^{\frac1{q\,s'}}.
\end{align*}
Thus, $ u^{q\,s}\,v\in A_{q\,s,\B}$ as desired with 
\[
[u^{q\,s}\,v]_{A_{q\,s,\B}}
\le 
[u^q\,v]_{RH_{t_0,\B}}^{s}
[u^q\,v]_{A_{q,\B}}^{s}
[v]_{RH_{t_0,\B}}^{\frac{s}{s'}}.
\]

It remains to prove \eqref{list:open-4}. Let $u^q\, v \in A_{p, \B}$ with $v \in A_{p, \B}$. Since $p\le q$ we clearly have by Lemma~\ref{lemma:AP-trivial} that $u^q\, v \in A_{p, \B}\subset A_{q, \B}$. On the other hand, $u^p\,v=(u^q\,v)^\theta\,v^{1-\theta}$ with $\theta=p/q\in [0,1]$, hence Lemma~\ref{lemma:AP-trivial} readily gives that  $u^p\, v \in A_{p, \B}$. 
\end{proof}

\begin{lemma}\label{lem:RHApq}
Let  $(\Sigma, \mu)$ be a non-atomic $\sigma$-finite measure space and let  $\B$ be a basis. Let $0<\p_{-}\le p \le q \le \p_{+}\le \infty$. Then $u^p \in A_{p/\p_{-}, \B} \cap RH_{(\p_{+}/p)', \B}$ and $u^q \in A_{q/\p_{-}, \B} \cap RH_{(\p_{+}/q)', \B}$  
if and only if $u^{\p_{-}} \in A_{p/\p_{-}, (q/\p_-)(\p_+/q)', \B}$.  
\end{lemma}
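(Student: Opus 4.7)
The plan is to translate both sides of the claimed equivalence into $A_r$-type conditions on powers of $u$ by invoking parts (e) and (g) of Lemma~\ref{lemma:AP-trivial}, and then to apply (e) in both directions to bridge the $p$- and $q$-level conditions. Introduce the parameters $b_1:=p\p_{+}/(\p_{+}-p)$, $b_2:=q\p_{+}/(\p_{+}-q)$, and $s:=b_2/b_1=q(\p_{+}-p)/(p(\p_{+}-q))\ge 1$ (with equality iff $p=q$). By (g), with $p_1=p/\p_{-}$ and $q_1=(q/\p_{-})(\p_{+}/q)'$, the right-hand side $u^{\p_{-}}\in A_{p_1,q_1,\B}$ is equivalent to $u^{\p_-q_1}=u^{b_2}\in A_{\sigma,\B}$, where $\sigma:=1+q_1/p_1'$; and by (e), the left-hand side is equivalent to the pair of conditions $u^{b_1}\in A_{\tau_p,\B}$ and $u^{b_2}\in A_{\tau_q,\B}$, with
\[
\tau_p-1=\frac{\p_{+}(p-\p_{-})}{\p_{-}(\p_{+}-p)},\qquad \tau_q-1=\frac{\p_{+}(q-\p_{-})}{\p_{-}(\p_{+}-q)},\qquad \sigma-1=\frac{q\p_{+}(p-\p_{-})}{p\p_{-}(\p_{+}-q)}.
\]
A direct algebraic check establishes the two crucial identities $s(\tau_p-1)+1=\sigma$ and $\sigma\le \tau_q$.

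For the direction "$\Leftarrow$" (right-hand side implies left-hand side), inclusion (a) of Lemma~\ref{lemma:AP-trivial} gives $u^{b_2}\in A_{\sigma}\subset A_{\tau_q}$, which recovers the $q$-condition on the left. Writing $u^{b_2}=(u^{b_1})^{s}$ and using the identity $\sigma=s(\tau_p-1)+1$, part (e) yields the equivalence $u^{b_2}\in A_\sigma\iff u^{b_1}\in A_{\tau_p}\cap RH_{s,\B}$; extracting the $A_{\tau_p}$ component recovers the $p$-condition on the left.

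For the direction "$\Rightarrow$" (left-hand side implies right-hand side), the decisive step is to extract reverse-Hölder information from the $q$-condition. Viewing $u^{b_2}=(u^{b_1})^{s}\in A_{\tau_q}$ and applying (e), we obtain $u^{b_1}\in A_{(\tau_q-1)/s+1}\cap RH_{s,\B}$, whence in particular $u^{b_1}\in RH_{s,\B}$. Combining with the $p$-LHS $u^{b_1}\in A_{\tau_p}$ gives $u^{b_1}\in A_{\tau_p}\cap RH_{s,\B}$, and a final forward application of (e) produces $u^{b_2}=(u^{b_1})^{s}\in A_{s(\tau_p-1)+1}=A_{\sigma}$, which is the right-hand side. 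The main obstacle lies in the algebraic bookkeeping needed to verify the key identities and the inequality among $s,\sigma,\tau_p,\tau_q$, together with careful handling of the degenerate endpoint cases ($p=\p_{-}$, $q=\p_{+}$, or $\p_{+}=\infty$) where some of the relevant classes collapse to $A_1$ or $RH_{\infty}$ and the $A_{p,q}$ class must be interpreted at its endpoints.
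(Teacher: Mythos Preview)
Your proof is correct. Both your argument and the paper's hinge on the same two ingredients from Lemma~\ref{lemma:AP-trivial}: part (e) (the equivalence $w\in A_p\cap RH_s\iff w^s\in A_{s(p-1)+1}$) and part (g) (the characterization $w\in A_{p,q}\iff w^q\in A_{1+q/p'}$). The difference is organizational. The paper first rescales to $\p_-=1$ and then, for the backward implication, writes out explicit Jensen estimates comparing the $A_{\tau_p}$ and $A_{\tau_q}$ conditions to the raw $A_{p,q(\p_+/q)'}$ integrals; for the forward implication it invokes an (unstated but true) equivalence ``$u^p\in A_p$ and $u^{q_0}\in A_{q_0}$ $\iff$ $u\in A_{p,q_0}$'' whose nontrivial direction itself needs a power-mean/Jensen step. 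Your route bypasses all of that by working directly with general $\p_-$, computing the single exponent $\sigma$ so that the right-hand side is exactly $u^{b_2}\in A_\sigma$, and then applying (e) twice in each direction via the clean identity $\sigma=s(\tau_p-1)+1$ together with the inclusion $A_\sigma\subset A_{\tau_q}$. This is a bit more algebraic and arguably more transparent, at the cost of the bookkeeping you mention; note in particular that when $s=1$ (i.e.\ $p=q$) part (e) as stated does not apply, but then $\sigma=\tau_p=\tau_q$ and the statement is a tautology.
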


\begin{proof}
Let us observe that it suffices to consider the case $\p_{-}=1$. Indeed, set $\widetilde{\p}_{-}=1$, $\widetilde{p}=p/\p_{-}$, $\widetilde{q}=q/\p_{-}$, and 
$\widetilde{\p}_{+}=\p_{+}/\p_{-}$ so that  $1\le \widetilde{p}\le\widetilde{q}\le\widetilde{\p}_{+}\le\infty$. Then from the case $\widetilde{\p}_{-}=1$ we obtain
\begin{multline*}
u^p \in A_{p/\p_{-}, \B} \cap RH_{(\p_{+}/p)', \B}\, \text{ and }\,  u^q \in A_{q/\p_{-}, \B} \cap RH_{(\p_{+}/q)', \B} 
\\
\Longleftrightarrow\ 
(u^{\p_-}\big)^{\widetilde{p}} \in A_{\widetilde{p}, \B} \cap RH_{(\widetilde{\p}_{+}/\widetilde{p})', \B}
\, \text{ and }\,  
(u^{\p_-}\big)^{\widetilde{q}} \in A_{\widetilde{q}, \B} \cap RH_{(\widetilde{\p}_{+}/\widetilde{q})', \B}
\\
\Longleftrightarrow\ 
u^{\p_{-}} \in A_{\widetilde{p}, \widetilde{q}(\widetilde{\p}_+/\widetilde{q})', \B}\ \Longleftrightarrow\ u^{\p_{-}} \in A_{p/\p_{-}, (q/\p_-)(\p_+/q)', \B}.
\end{multline*}
	
Assume then that $\p_{-}=1$.
We first prove the forward implication. Let $u^p \in A_{p, \B} \cap RH_{(\p_{+}/p)', \B}$ and $u^q \in A_{q, \B} \cap RH_{(\p_{+}/q)', \B}$. By \eqref{eq:JN}, the latter gives that $u^{q(\p_+/q)'} \in A_{\tau_q, \B} \subset A_{q(\p_+/q)', \B}$, where $\tau_q=(\p_+/q)'(q-1)+1$. Now we have $u^p \in A_{p, \B}$ and $u^{q(\p_+/q)'} \in A_{q(\p_+/q)', \B}$, which is equivalent to $u \in A_{p, q(\p_+/q)', \B}$. To show the backward implication, we assume that $u \in A_{p, q(\p_+/q)', \B}$. Observe that $p(\p_+/p)' \le q(\p_+/q)'$ and $q(\p_+/q)'(\tau'_q -1)=q' \le p'=p(\p_+/p)'(\tau'_p -1)$. Hence, Jensen's inequality implies 
\begin{multline*}
\bigg(\fint_{B}u^{p(\p_+/p)'}d\mu \bigg) \bigg(\fint_{B}u^{p(\p_+/p)'(1-\tau'_p)}d\mu \bigg)^{\tau_p-1}
\\
\le \bigg[\bigg(\fint_{B}u^{q(\p_+/q)'}d\mu\bigg)^{\frac{1}{q(\p_+/q)'}} 
\bigg(\fint_{B}u^{-p'}d\mu \bigg)^{\frac{1}{p'}}\bigg]^{p(\p_+/p)'} 
\le [u]_{A_{p, q(\p_+/q)', \B}}^{p(\p_+/p)'}, 
\end{multline*}
and
\begin{multline*}
\bigg(\fint_{B}u^{q(\p_+/q)'}d\mu \bigg) \bigg(\fint_{B}u^{q(\p_+/q)'(1-\tau'_q)}d\mu \bigg)^{\tau_q-1}
\\
\le \bigg[\bigg(\fint_{B}u^{q(\p_+/q)'} d\mu\bigg)^{\frac{1}{q(\p_+/q)'}} 
\bigg(\fint_{B}u^{-p'}d\mu\bigg)^{\frac{1}{p'}}\bigg]^{q(\p_+/q)'} 
\le [u]_{A_{p, q(\p_+/q)', \B}}^{q(\p_+/q)'}. 
\end{multline*}
This shows $u^{p(\p_+/p)'} \in A_{\tau_p, \B}$ and $u^{q(\p_+/q)'} \in A_{\tau_q, \B}$, which by \eqref{eq:JN} are equivalent to $u^p \in A_{p, \B} \cap RH_{(\p_{+}/p)', \B}$ and $u^q \in A_{q, \B} \cap RH_{(\p_{+}/q)', \B}$ respectively.
\end{proof}

\subsection{Banach function spaces}\label{sec:BFS}

Continue to assume that $(\Sigma, \mu)$ is a given non-atomic $\sigma$-finite measure space with $\mu(\Sigma)>0$.  Let $\nu$ be another non-atomic $\sigma$-finite measure with $\nu(\Sigma)>0$. A typical example that we will consider throughout this paper is that on which $\nu$ is a ``weighted measure'' (with respect to $\mu$).  To elaborate on this, let $v$ be a non-negative $\mu$-measurable function such that $v<\infty$ $\mu$-a.e.~and $v$ is strictly positive on a set of $\mu$-positive measure. Consider $d\nu=v\,d\mu$, in which case we agree to identify the weight function $v$ with the weighted measure $dv := v d\mu$. We claim that $(\Sigma, v)$ is a non-atomic $\sigma$-finite measure space with $v(\Sigma)>0$ (we implicitly assume that the measurable sets for $v$ are those for $\mu$). That $v(\Sigma)>0$ follows from the assumption that $v$ is strictly positive on a set of $\mu$-positive measure. To see that $v$ is $\sigma$-finite, we first use that $\mu$ is  $\sigma$-finite to find a family $\{\Sigma_j\}_{j\in\N}$ of $\mu$-measurable sets with $\Sigma=\bigcup_{j\in\N} \Sigma_j$ and $\mu(\Sigma_j)<\infty$ for every $j$. Consider next $F_k:=\{x\in\Sigma: v(x)\le 2^k\}$, $k\in\N$, and $F_\infty=\Sigma\setminus \bigcup_{k\in\N} F_k$ so that $\Sigma=F_\infty\bigcup(\bigcup_{j,k\in\N} \Sigma_j\cap F_k)$. Note that for every $j,k\in\N$
\[
v(\Sigma_j\cap F_k)
=\int_{\Sigma_j\cap F_k}v\,d\mu
\le 2^k\mu(\Sigma_j)<\infty.
\]
and also $v(F_\infty)=0$ since $v<\infty$ $\mu$-a.e. To see that $v$ is non-atomic, take an arbitrary $\mu$-measurable (hence $v$-measurable) with $v(F)>0$. By definition we clearly have that $\mu(F)>0$. Set $F_0=F\cap\{v>0\}$ so that $v(F)=v(F_0)$. Since $\mu$ is non-atomic we can find $F_0'\subset F_0$ so that $\mu(F_0)>\mu(F_0')>0$. Note that $v(F_0')>0$, otherwise $v=0$ $\mu$-a.e.~in $F_0'$, which can only happens if $\mu(F_0')=0$, thus we get a contradiction. Also, we should have  $v(F_0)>v(F_0')$. Otherwise, $v(F_0\setminus F_0')=0$ which implies that $v=0$ $\mu$-a.e.~in $F_0\setminus F_0'$, and this can only happens if $\mu(F_0\setminus F_0')=0$ which leads us again to a contradiction.

Assume in what follows that $(\Sigma, \mu)$ is a given non-atomic $\sigma$-finite measure space with $\mu(\Sigma)>0$ and that $\nu$ is another non-atomic $\sigma$-finite measure with $\nu(\Sigma)>0$. Define, 
\begin{equation*}
\M_{\nu}:=\{f:\Sigma\to\mathbb{C}:\,f\text{ is }\nu\text{-measurable}\}.
\end{equation*} 

\begin{definition}\label{def:BFS}
We say that a mapping $\|\cdot\|:\M_{\nu}\to[0,\infty]$ is a {\tt function norm} provided following properties are satisfied for all $f,g\in \M_{\nu}$: 
\begin{list}{\textup{(\theenumi)}}{\usecounter{enumi}\leftmargin=1cm \labelwidth=1cm \itemsep=0.2cm 
		\topsep=.2cm \renewcommand{\theenumi}{\roman{enumi}}}
\item $\|f\|=\big\| |f|\big\|$ and $\|f\|=0$ if and only if $f=0$ $\nu$-a.e.

\item $\norm{f+g} \leq\|f\| + \|g\|$.

\item $\norm{\lambda f}=|\lambda|\|f\|$ for every $\lambda \in\R$.

\item If $|f|\leq|g|$ $\nu$-a.e., then $\|f\| \leq\|g\|$.

\item If $\{f_j\}_{j\in\mathbb{N}} \subset\mathbb{M}_{\nu}$ is a sequence such that $|f_j|$ increases 
to $|f|$ $\nu$-a.e.~as $j\to\infty$, then $\norm{f_j}$ increases to $\|f\|$ as $j\to\infty$.

\item If $E\subset\Sigma$ is a $\nu$-measurable set with $\nu(E)<\infty$, then one has $\norm{\mathbf{1}_E}<\infty$ and there is a constant $C_E\in(0,\infty)$ such that $\int_E |f| d\nu\leq C_E \|f\|$.  
\end{list}
\end{definition}

Given a function norm $\|\cdot\|$, the set 
\begin{equation*}
\X=\{f\in\mathbb{M}_{\nu}: \|f\|<\infty\} 
\end{equation*} 
is called a {\tt Banach function space} over $(\Sigma,\nu)$.  In such a scenario, we shall write $\|\cdot\|_{\X}$ in place of $\|\cdot\|$ in order to emphasize the connection between the function norm $\|\cdot\|$ and its associated function space $\X$. Then $(\X,\|\cdot\|_{\X})$ is a Banach space.   

For a Banach function space $\X$ over $(\Sigma,\nu)$,  it is not difficult to see that one can define its associate space $\X'$ by means of the function form 
\begin{equation*}
\|f\|_{\X'}=\sup\left\{\int_{\Sigma}|f(x)g(x)|\,d\nu(x): \,g\in\X,\,\,\|g\|_{\X}\leq 1\right\},
\end{equation*}
and with this definition $\X'$ is also a Banach function space. Then it follows from the definition of $\X'$ that the following generalized Hölder's inequality holds: 
\begin{equation}\label{eq:Holder}
\int_{\Sigma}|f(x)g(x)|\,d\nu(x)\leq\|f\|_{\X}\|g\|_{\mathbb{X}'},
\quad f\in\X \, \text{ and }\, g\in\X'.
\end{equation}
It turns out that $\X=(\X')'=:\X''$ (cf. \cite[Theorem 2.7, p.\,10]{BS}). Therefore, one has 
\begin{equation}\label{eq:fX-norm}
\|f\|_{\X}=\sup\left\{\int_{\Sigma}|f(x)g(x)|\,d\nu(x):
\,g\in\mathbb{X}',\,\,\|g\|_{\mathbb{X}'}\leq 1\right\}.
\end{equation}

\begin{remark}\label{rem:supoverg}
It is useful to note that the supremum in \eqref{eq:fX-norm} does not change if it is taken only over functions $g\in\X'$ with $\|g\|_{\X'}\leq 1$ which are non-negative and positive on a set of positive $\mu$-measure (that is, non-negative $g\in\X'$ with $0<\|g\|_{\X'}\leq 1$). Indeed, the fact that we can consider only non-negative functions can be directly seen from \eqref{eq:fX-norm}. If $\|f\|_{\X}>0$ then there is $g\in\X'$ with $\|g\|_{\X'}\leq 1$ such that $0<\|f\|_{\X} \leq 2 \int_{\Sigma} |f\,g|\,d\nu$ and this forces $g$ to be non-zero on a set of positive $\nu$-measure. Finally, the case $\|f\|_{\X}=0$ is trivial.
\end{remark}

Given a Banach function space $\X$ over $(\Sigma,\nu)$, we define the scale of spaces $\X^r$ with $r \in(0,\infty)$ by
\begin{equation}\label{Xr}
\X^r:=\{f\in\mathbb{M}_\nu: |f|^r \in\X\} \quad\text{and}\quad \|f\|_{\X^r} := \norm{|f|^r}_{\X}^{1/r}. 
\end{equation}
If $r \ge 1$, then $\X^r$ is again an actual norm and $\X^r$ is a Banach function space. However, if $r<1$, then $\X^r$ needs not be a function space. 

To define a rearrangement invariant function space, let $\nu_{f}$ denote the distribution function of $f\in\mathbb{M}_{\nu}$: 
\begin{equation*}
\nu_f(\lambda) = \nu(\{x \in \Sigma: |f(x)|>\lambda\}), \qquad \lambda\in[0,\infty). 
\end{equation*}
A Banach function space $\X$ over $(\Sigma ,\nu)$ is {\tt rearrangement invariant} if $\|f\|_{\X}=\|g\|_{\X}$ for every pair of functions $f,g\in \X$ such that $\nu_f=\nu_g$. This means that the function norm of $f$ in $\X$  depends only on its distribution function. Note that if $(\Sigma, \nu)$ is nonatomic, then $\X$ is rearrangement invariant if and only if $\X'$ is (cf. \cite[Corollary~4.4,~p.60]{BS}). 

For each $f\in\mathbb{M}_{\nu}$, the decreasing rearrangement of $f$ with respect to $\nu$ is defined as 
\begin{equation*}
f^{*}_{\nu}(t)=\inf\{\lambda\geq 0:\nu_f(\lambda) \leq t\},\quad t \in[0, \nu(\Sigma)].
\end{equation*} 
Note that the functions $f$ and $f^{*}_{\nu}$ have the same distribution function. One remarkable consequence of this is
Luxemburg representation theorem: if $\X$ is rearrangement invariant Banach function space over $(\Sigma ,\nu)$, then there exists a rearrangement invariant Banach function space $\overline{\X}$ over $[0,\nu(\Sigma))$ such that $f\in\X$ if and only if  $f^{*}_{\nu}\in\overline{\X}$ and $\|f^{*}_{\nu}\|_{\overline{\X}}=\|f\|_{\X}$ (cf.  \cite[Theorem 4.10, p.\,62]{BS}). This allows us to define the {\tt Boyd indices} by 
\begin{equation}\label{eq:Boydindices}
p_{\X}:=\lim_{t\to\infty} 
\frac{\log t}{\log\norm{D_t}_{\overline{\X}\to\overline{\X}}}, \qquad q_{\X} 
:=\lim_{t\to 0^{+}}\frac{\log t}{\log\norm{D_t}_{\overline{\X}\to\overline{\X}}}, 
\end{equation}
where $D_t:\overline{\X}\to\overline{\X}$ is the dilation operator defined by
\begin{equation}\label{eq:Dilationoperators}
D_t f(s):=\left\lbrace
\begin{array}{lcl}
f(s/t), &\text{ if}& s\in[0,\,\nu(\Sigma)\min\{1,t\}],
\\[4pt]
0, &\text{ if}& s\in(\nu(\Sigma)\min\{1,t\},\,\nu(\Sigma)].
\end{array}
\right. 
\end{equation}
It is not hard to see that
\begin{equation}\label{eq:Boyd}
1\leq p_{\X}\leq q_{\X}\leq\infty, \quad 
(p_{\X})'=q_{\X'} \quad\text{and}\quad  (q_{\X})'=p_{\X'},
\end{equation} 
and also that $p_{\X^r}=r\,p_{\X}$ and $q_{\X^r}=r\,q_{\X}$.

Given $\X$, a rearrangement invariant Banach function space defined over the measure space $(\Sigma, \mu)$, we want to define a weighted version $\X(v)$. Let $\B$ be a basis and let $v$ be a $\B$-weight  (i.e., $0<v(x)<\infty$ for $\mu$-a.e.~$x \in \Sigma$). Then $(\Sigma, v)$, where $dv=v\,d\mu$, is a non-atomic $\sigma$-finite measure space with $\mu(\Sigma)>0$.
Define the weighted space 
\begin{align}\label{def:Xv:ri}
\X(v) := \{f \in \M_{\nu}: \|f^*_{v}\|_{\overline{\X}}<\infty\}.  
\end{align}
This is a Banach function space over $(\Sigma, v)$ with norm $\|f\|_{\X(v)} := \|f^*_{v}\|_{\overline{\X}}$. Then one has $\X'(v)=\X(v)'$ and 
\begin{align}
\|f\|_{\X(v)} = \sup \left\{\int_{\Sigma} |f(x) g(x)| \, dv(x): \, g \in \X'(v), \|g\|_{\X'(v)} \le 1 \right\}.
\end{align}
Similarly, $\X(v)^r = \X^r(v)$ for $0<r<\infty$. 

Continue to assume that $(\Sigma, \mu)$ is a given non-atomic $\sigma$-finite measure space with $\mu(\Sigma)>0$ and let $\B$ be a basis. Let $v$ be a non-negative $\mu$-measurable function such that $v<\infty$ $\mu$-a.e.~and $v$ is strictly positive on a set of $\mu$-positive measure. We can then consider the associated weighted measure $v\,d\mu$, in which case we agree to identify the weight function $v$ with the weighted measure $dv := v d\mu$. Let $\X_v$ be a Banach function space over $(\Sigma, v)$. 

Recall the definition of the Hardy-Littlewood maximal operator $M_{\B}$ in \eqref{eq:M-def} (where the underlying measure is $\mu$) and introduce $M'_{\B, v}h(x) := M_{\B}(h\,v)(x)/v(x)$ if $v(x) \neq 0$, $M'_{\B, v}h(x)=0$ otherwise.  Observe that $M'_{\B, v}=M_{\B}$ if $v \equiv 1$. We are interesting in deriving some properties from the boundedness of $M_{\B}$ and its ``dual'' operator $M'_{\B, v}$: 
\begin{align}
\label{eq:Xv-N1} \|(M_{\B}h)\,u\|_{\X_v} &\leq \mathcal{N}_1 \|h\,u\|_{\X_v}, \quad \forall\,h \in\mathbb{M}_{\mu}, 
\\[0.2cm]
\label{eq:Xv-N2} \|(M'_{\B, v}h)\,u^{-1}\|_{\X'_v} &\leq \mathcal{N}_2 \|h\,u^{-1}\|_{\X'_v}, \quad \forall\,h \in\mathbb{M}_{\mu}.
\end{align}
\begin{lemma}\label{lem:AXprev}
Fix $(\Sigma, \mu)$, a non-atomic $\sigma$-finite measure space with $\mu(\Sigma)>0$, and a basis $\B$. Consider $v$, a non-negative $\mu$-measurable function so that $v(B)<\infty$ for every $B\in\B$  and, with the notation above, $v(\Sigma)>0$; and $\X_v$ a Banach function space over $(\Sigma, v)$. Let $u:\Sigma \to [0, \infty]$ be so that \eqref{eq:Xv-N1} holds. Then, for every $B \in \B$, the following hold:
\begin{list}{\textup{(\theenumi)}}{\usecounter{enumi}\leftmargin=1cm \labelwidth=1cm \itemsep=0.2cm 
		\topsep=.2cm \renewcommand{\theenumi}{\alph{enumi}}}

\item If $\|\mathbf{1}_B u\|_{\X_v}=\infty$ then $u \equiv\infty$ $\mu$-a.e.~in $B$. Hence, either $\|\mathbf{1}_B u\|_{\X_v}<\infty$ or $u \equiv\infty$ $\mu$-a.e.~in $B$; and either $u \in L^1(B, v)$ or $u \equiv\infty$ $\mu$-a.e.~in $B$.

\item If $\|\mathbf{1}_B u\|_{\X_v}=0$ then $u \equiv0$ $v$-a.e.~in $B$. Hence, either $\|\mathbf{1}_B u\|_{\X_v}>0$ or $u \equiv 0$ $v$-a.e.~in $B$; and either $u>0$ $v$-a.e.~in $B$ or $u \equiv0$ $v$-a.e.~in $B$.
\end{list}

In particular, if in addition $0<u, v<\infty$ $\mu$-a.e.~in $\Sigma_\B$, then
\begin{equation}\label{eq:BuXv}
0<\|\mathbf{1}_B\,u\|_{\X_v}<\infty \, \text{ for every } B \in \B. 
\end{equation}
\end{lemma}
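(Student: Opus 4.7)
The plan is to reduce everything to the hypothesis \eqref{eq:Xv-N1} applied with carefully chosen characteristic functions, together with the elementary pointwise lower bound $M_{\B}(\mathbf{1}_E)(x)\ge \mu(E)/\mu(B)$, valid for every $x\in B$ and every $\mu$-measurable set $E\subset B$ (since $B$ itself is one of the elements of $\B$ witnessing the supremum in the definition of $M_{\B}$). Combined with the monotonicity and homogeneity of the function norm, this bound transfers information about $\|\mathbf{1}_{E}u\|_{\X_v}$ into information about $\|\mathbf{1}_Bu\|_{\X_v}$; the hypothesis $v(B)<\infty$ will also be crucial, via property (vi) of Definition~\ref{def:BFS}, to ensure $\|\mathbf{1}_B\|_{\X_v}<\infty$.

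For part (a) I would argue the contrapositive. Assuming $u\not\equiv\infty$ $\mu$-a.e.~on $B$, write $B\cap\{u<\infty\}=\bigcup_{k\in\N} B\cap\{u\le k\}$ and pick $k$ so that $E:=B\cap\{u\le k\}$ has $\mu(E)>0$. Since $\mathbf{1}_E u\le k\,\mathbf{1}_B$ pointwise, properties (iii)--(iv) of the function norm give $\|\mathbf{1}_E u\|_{\X_v}\le k\,\|\mathbf{1}_B\|_{\X_v}<\infty$. Then \eqref{eq:Xv-N1} with $h=\mathbf{1}_E$, together with $(\mu(E)/\mu(B))\,\mathbf{1}_B\le M_{\B}\mathbf{1}_E$ pointwise on $\Sigma$, yields
\[
\frac{\mu(E)}{\mu(B)}\,\|\mathbf{1}_B u\|_{\X_v}\le \|(M_{\B}\mathbf{1}_E)\,u\|_{\X_v}\le \mathcal{N}_1\,\|\mathbf{1}_E u\|_{\X_v}<\infty,
\]
hence $\|\mathbf{1}_B u\|_{\X_v}<\infty$. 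The subsidiary claim $u\in L^1(B,v)$ then follows from property (vi) applied to $f=\mathbf{1}_B u$.

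For part (b), the first assertion ``$\|\mathbf{1}_B u\|_{\X_v}=0 \Rightarrow u\equiv 0$ $v$-a.e.~on $B$'' is immediate from property (i), and the first ``hence'' clause is just its contrapositive. The substantive content is the dichotomy that either $u>0$ $v$-a.e.~on $B$ or $u\equiv 0$ $v$-a.e.~on $B$, which I would obtain by contrapositive as well. Set $F:=B\cap\{u=0\}$ and suppose $v(F)>0$; since $v(F)=\int_F v\,d\mu>0$, the set $F_0:=F\cap\{v>0\}$ has $\mu(F_0)>0$. Apply \eqref{eq:Xv-N1} with $h=\mathbf{1}_{F_0}$: the product $hu$ vanishes identically (as $u=0$ on $F_0\subset F$), so the right-hand side is zero, while the inequality $(\mu(F_0)/\mu(B))\,\mathbf{1}_B u\le (M_{\B}\mathbf{1}_{F_0})\,u$ pointwise forces $\|\mathbf{1}_B u\|_{\X_v}=0$. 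The first part of (b) then gives $u\equiv 0$ $v$-a.e.~on $B$, as desired.

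Finally, under $0<u,v<\infty$ $\mu$-a.e.~on $\Sigma_{\B}$ (which equals $\Sigma$ up to a $\mu$-null set by the standing convention recorded in the remark), the set $\{u=\infty\}\cap B$ is $\mu$-null, and $\{u=0\}\cap B$, being $\mu$-null, is also $v$-null (the $v$-integral of any integrand over a $\mu$-null set is zero). Hence both exceptional alternatives in the ``hence'' clauses of (a) and (b) are excluded, yielding $0<\|\mathbf{1}_B u\|_{\X_v}<\infty$. The most delicate step is the dichotomy in (b): one must convert a $v$-positive set into a $\mu$-positive one in order to take a meaningful $\mu$-average with $M_{\B}$, and the choice $F_0=F\cap\{v>0\}$ is precisely what bridges the gap between the two measures.
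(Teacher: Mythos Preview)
Your proof is correct and follows essentially the same approach as the paper's: both derive the key inequality $\frac{\mu(S)}{\mu(B)}\|\mathbf{1}_B u\|_{\X_v}\le \mathcal{N}_1\|\mathbf{1}_S u\|_{\X_v}$ by applying \eqref{eq:Xv-N1} with $h=\mathbf{1}_S$, then use level sets $\{u\le k\}$ for part~(a) and the zero set $\{u=0\}$ for part~(b). The only cosmetic differences are that you phrase (a) as a contrapositive rather than a direct contradiction, and in (b) you pass to $F_0=F\cap\{v>0\}$, which is harmless but unnecessary: since $dv=v\,d\mu$, the implication $v(F)>0\Rightarrow\mu(F)>0$ is immediate without restricting to $\{v>0\}$, and the paper simply writes ``$v(S)>0$ (hence $\mu(S)>0$)''.
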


\begin{proof}
Fix $B \in \B$. For every $0\le h \in \M_{\mu}$ with $0<\fint_B h\, d\mu < \infty$ we have that \eqref{eq:Xv-N1} gives
\begin{equation*}
\bigg(\fint_B h d\mu\bigg)\,
\|\mathbf{1}_B u\|_{\X_v}
\leq
\|\mathbf{1}_B\,M_{\B}(h\mathbf{1}_B)\,u\|_{\X_v}
\leq
\|M_{\B}(h\mathbf{1}_B)\,u\|_{\X_v}
\leq 
\mathcal{N}_1 \|h\,\mathbf{1}_B u\|_{\X_v}.
\end{equation*}
In particular, for any $\mu$-measurable set $S\subset B$ with $\mu(S)>0$, taking $h=\mathbf{1}_S$ we obtain that
\begin{equation}\label{eq:SBSB}
\frac{\mu(S)}{\mu(B)} \|\mathbf{1}_B\,u\|_{\X_v}\leq \mathcal{N}_1 \norm{\mathbf{1}_S\,u}_{\X_v}.
\end{equation}

Assume now that $\|\mathbf{1}_B u\|_{\X_v}=\infty$. Then, \eqref{eq:SBSB} implies that 
\begin{equation}\label{eq:SuXv}
\norm{\mathbf{1}_S\,u}_{\X_v}=\infty \, \text{ for every measurable set $S \subset B$ with $\mu(S)>0$}.
\end{equation}
For every $r>0$, define 
\begin{equation*}
S_r := \{x \in B: 0 \le u(x) \le r\}.
\end{equation*}
If $\mu(S_r)>0$, then $\infty=\|\mathbf{1}_{S_r} u\|_{\X_v} \leq r \|\mathbf{1}_B\|_{\X_v}$ by \eqref{eq:SuXv}. This contradicts item (vi) in Definition~\ref{def:BFS} because $v(B)<\infty$. Hence $\mu(S_r)=0$ for every $r>0$, which in turn indicates that $u \equiv \infty$ $\mu$-a.e.~in $B$. On the other hand, if $u \not\in L^1(B, v)$, that is, $\int_{B} u dv=\infty$, then, using that $v(B)<\infty$  and item (vi) in Definition~\ref{def:BFS}, we obtain $\|\mathbf{1}_B u\|_{\X_v}=\infty$. We have already shown that the latter implies that $u \equiv \infty$ $\mu$-a.e.~in $B$. This finishes the proof of (a).

Next, assume that $\norm{\mathbf{1}_{B} u}_{\X_v}=0$. Then item (i) in Definition~\ref{def:BFS} readily implies that $u \equiv 0$ $v$-a.e.~in $B$. Furthermore, if $u \equiv 0$ in a measurable set $S \subset B$ with $v(S)>0$ (hence $\mu(S)>0$), then \eqref{eq:SBSB} implies that $\norm{\mathbf{1}_{B} u}_{\X_v}=0$. Thus, $u \equiv 0$ $v$-a.e.~in $B$. This proves (b).

Finally, \eqref{eq:BuXv} is a consequence of (a) and (b) since $u \equiv 0$ $v$-a.e.~in $B$ is equivalent to $u \equiv 0$ $\mu$-a.e.~in $B$ whenever $0<v<\infty$ $\mu$-a.e.
\end{proof}

\begin{lemma}\label{lem:X-geq1}
Fix $(\Sigma, \mu)$, a non-atomic $\sigma$-finite measure space with $\mu(\Sigma)>0$, and a basis $\B$. Let $u$ and $v$ be two $\B$-weights so that  
$v(B)<\infty$ for every $B\in\B$.
\begin{list}{\textup{(\theenumi)}}{\usecounter{enumi}\leftmargin=1cm \labelwidth=1cm \itemsep=0.2cm 
		\topsep=.2cm \renewcommand{\theenumi}{\alph{enumi}}}
\item If \eqref{eq:Xv-N1} holds, then $\mathcal{N}_1 \ge 1$. 

\item If \eqref{eq:Xv-N2} holds, then $\mathcal{N}_2 \ge 1$. 
\end{list}
\end{lemma}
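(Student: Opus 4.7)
The strategy for both parts is to test the hypothesized boundedness of $M_\B$ (respectively $M'_{\B,v}$) against a carefully chosen indicator-type function supported on a basis element whose maximal function admits a clean pointwise lower bound. The matching factor will appear on both sides of the inequality, and cancellation will deliver $\mathcal{N}_i\geq 1$.

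For part (a), I would fix any $B\in\B$ (which exists because $\mu(\Sigma_\B)=\mu(\Sigma)>0$) and test \eqref{eq:Xv-N1} with $h=\mathbf{1}_B$. The trivial bound $M_\B\mathbf{1}_B(x)\geq \fint_B\mathbf{1}_B\,d\mu=1$ for $x\in B$ yields $M_\B\mathbf{1}_B\geq \mathbf{1}_B$ pointwise on $\Sigma$. Multiplying by $u$ and taking $\X_v$-norms (item (iv) of Definition~\ref{def:BFS}) together with \eqref{eq:Xv-N1} gives
\[
\|\mathbf{1}_B\,u\|_{\X_v} \le \|(M_\B\mathbf{1}_B)\,u\|_{\X_v} \le \mathcal{N}_1\,\|\mathbf{1}_B\,u\|_{\X_v}.
\]
Since $u$ and $v$ are $\B$-weights (so $0<u,v<\infty$ $\mu$-a.e.) and $v(B)<\infty$, the ``in particular'' clause of Lemma~\ref{lem:AXprev} guarantees $0<\|\mathbf{1}_B u\|_{\X_v}<\infty$; dividing concludes $\mathcal{N}_1\geq 1$.

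For part (b) the natural analogue is to test \eqref{eq:Xv-N2} against $h=\mathbf{1}_B/v$, so that $hv=\mathbf{1}_B$ and $M'_{\B,v}h=M_\B(\mathbf{1}_B)/v\geq \mathbf{1}_B/v=h$ pointwise, producing $\|hu^{-1}\|_{\X'_v}\leq \mathcal{N}_2\|hu^{-1}\|_{\X'_v}$ with $hu^{-1}=\mathbf{1}_B/(uv)$. The obstruction is that a direct analogue of Lemma~\ref{lem:AXprev} for $\X'_v$ against the ``weight'' $1/(uv)$ is not available, and the norm $\|\mathbf{1}_B/(uv)\|_{\X'_v}$ may well be infinite when $uv$ gets arbitrarily small. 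I would circumvent this by a truncation: set $F_m:=B\cap\{uv\geq 1/m\}$, so that $1/(uv)\leq m$ on $F_m$ and hence
\[
0 < \|\mathbf{1}_{F_m}/(uv)\|_{\X'_v} \leq m\,\|\mathbf{1}_B\|_{\X'_v} < \infty
\]
for all large $m$: the upper bound uses $v(B)<\infty$ together with item (vi) of Definition~\ref{def:BFS}, while the lower bound follows from $uv>0$ $\mu$-a.e.~and $v(F_m)>0$ once $m$ is large (via item (i) of Definition~\ref{def:BFS}).

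Testing \eqref{eq:Xv-N2} with $h_m:=\mathbf{1}_{F_m}/v$, the elementary averaging bound gives $M'_{\B,v}h_m(x)\geq \mu(F_m)/(\mu(B)\,v(x))$ for $x\in B$. Restricting to $F_m\subset B$, multiplying by $u^{-1}$, taking $\X'_v$-norms, and invoking \eqref{eq:Xv-N2} deliver
\[
\tfrac{\mu(F_m)}{\mu(B)}\,\|\mathbf{1}_{F_m}/(uv)\|_{\X'_v} \leq \mathcal{N}_2\,\|\mathbf{1}_{F_m}/(uv)\|_{\X'_v}.
\]
Canceling the common strictly positive, finite factor and sending $m\to\infty$ (so that $\mu(F_m)/\mu(B)\to 1$ by monotone convergence, since $uv>0$ $\mu$-a.e.) yields $\mathcal{N}_2\geq 1$. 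The main technical hurdle is precisely this truncation in part (b), where one must simultaneously secure that the quantities being canceled are both strictly positive and finite.
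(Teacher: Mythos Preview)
Your argument for part (a) is essentially identical to the paper's.

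For part (b) your truncation argument is correct, but the paper takes a shorter route that you in fact dismissed. The substitution $h\mapsto hv$ shows that \eqref{eq:Xv-N2} is equivalent to
\[
\|(M_\B h)\,u^{-1}v^{-1}\|_{\X'_v}\leq \mathcal{N}_2\,\|h\,u^{-1}v^{-1}\|_{\X'_v},\qquad\forall\,h\in\M_\mu,
\]
which is precisely the hypothesis \eqref{eq:Xv-N1} with the Banach function space $\X'_v$ in place of $\X_v$ and the $\B$-weight $u^{-1}v^{-1}$ in place of $u$. Contrary to your assertion that ``a direct analogue of Lemma~\ref{lem:AXprev} for $\X'_v$ against the `weight' $1/(uv)$ is not available'', Lemma~\ref{lem:AXprev} is stated for an arbitrary Banach function space over $(\Sigma,v)$ and an arbitrary $u:\Sigma\to[0,\infty]$ satisfying \eqref{eq:Xv-N1}; since $\X'_v$ is such a space and $u^{-1}v^{-1}$ is a $\B$-weight (as $0<u,v<\infty$ $\mu$-a.e.), the lemma applies verbatim and yields $0<\|\mathbf{1}_B\,u^{-1}v^{-1}\|_{\X'_v}<\infty$. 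Part (a) then gives $\mathcal{N}_2\geq 1$ in one line. Your route trades this structural observation for a hands-on truncation and limiting argument; it works, but the paper's reduction is both shorter and highlights the symmetry between \eqref{eq:Xv-N1} and \eqref{eq:Xv-N2}.
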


\begin{proof} 
Let $B \in \B$. Observe that $M_{\B}(\mathbf{1}_B)(x) \equiv 1$ for every $x \in B$.  This and \eqref{eq:Xv-N1}, immediately yield
\begin{align}\label{eq:1BX-1BX}
\| \mathbf{1}_B\, u\|_{\X_v} =  
\| \mathbf{1}_B M_{\B}(\mathbf{1}_B)\,u\|_{\X_v}
\leq \|(M_{\B}\mathbf{1}_B)\,u\|_{\X_v} \leq \mathcal{N}_1 \|\mathbf{1}_B\, u\|_{\X_v}. 
\end{align}
By \eqref{eq:BuXv}, and the fact that $u$ and $v$ are $\B$-weights,  we have that $0<\|\mathbf{1}_B\, u\|_{\X_v}<\infty$. Hence re readily obtain that $\mathcal{N}_1 \geq 1$. 

To show (b), note that \eqref{eq:Xv-N2} is equivalent to 
\begin{align}\label{eq:MM}
\|(M_{\B} h)\,u^{-1}\, v^{-1} \|_{\X'_v} &\leq \mathcal{N}_2\|h\,u^{-1}\,v^{-1}\|_{\X'_v}, \quad \forall\,h \in \M_{\mu}.
\end{align}
Additionally, the fact that $u$ and $v$ are $\B$-weights gives that $u^{-1}\, v^{-1}$ are also $\B$-weights. Therefore, the conclusion (a) applied to \eqref{eq:MM} eventually yields $ \mathcal{N}_2 \geq 1$.  
\end{proof}

We now present examples of Banach function spaces.

\begin{example}\label{ex:Lp0}
Suppose that $(\Sigma, \mu)$ is a non-atomic $\sigma$-finite measure space with $\mu(\Sigma)>0$. Let $\B$ be a Muckenhoupt basis and let $u$, $v$ be two $\B$-weights. Set $\X=L^p(\Sigma,d\mu)$, $1<p<\infty$, which is a rearrangement invariant Banach function space over $(\Sigma, \mu)$. Clearly $\X(v)=L^p(\Sigma,v)$, in other words, the space $\X_v=L^p(\Sigma,v)$ is a Banach function space over $(\Sigma,v)$. Note that in this case $\X'_v=L^{p'}(\Sigma, v)$. Hence \eqref{eq:Xv-N1} and \eqref{eq:Xv-N2} can be rewritten respectively as
	\begin{align}
	\label{eq:exLp-1} \|M_{\B}h\|_{L^p(\Sigma, u^p\, v)} 
	&\leq \mathcal{N}_1 \|h\|_{L^p(\Sigma, \, u^p\, v)}, \quad \forall\,h \in \M_{\mu},
	\\
	\label{eq:exLp-2} \|M_{\B}h\|_{L^{p'}(\Sigma, \, u^{-p'}\,v^{1-p'})} 
	&\leq \mathcal{N}_2 \|h\|_{L^{p'}(\Sigma, \, u^{-p'}\,v^{1-p'})}, \quad \forall\,h \in \M_{\mu}. 
	\end{align}
Since $\B$ is a Muckenhoupt basis, we have that $u^p\,v \in A_{p, \B}$ yields \eqref{eq:exLp-1}, and $u^{-p'}v^{1-p'} \in A_{p', \B}$ implies \eqref{eq:exLp-2}. On the other hand, by definition $u^p\,v \in A_{p, \B}$ if and only if  $u^{-p'}v^{1-p'} \in A_{p', \B}$. All these show that in this scenario, \eqref{eq:Xv-N1} and \eqref{eq:Xv-N2} (or, \eqref{eq:exLp-1} and \eqref{eq:exLp-2}) hold  provided $u^p\,v \in A_{p, \B}$. 

As mentioned above, in $(\Rn,\Ln)$, the set of all cubes $\mathcal{Q}$,  the set of all dyadic cubes $\mathcal{D}$, the set of all rectangles $\mathcal{R}$  whose sides are parallel to the coordinate axes, and the collection $\mathcal{Z}$ of Zygmund rectangles in $(\re^3, \L^3)$ whose sides are parallel to the coordinate axes and have lengths $s$, $t$ and $st$ with $s,t>0$, are all Muckenhoupt bases. The same occurs if we consider the collection of all metric balls in  a non-atomic doubling measure metric space $(\Sigma,d,\mu)$. In each of these cases, \eqref{eq:exLp-1} and \eqref{eq:exLp-2} hold  provided $u^p\,v \in A_{p, \B}$. 

\end{example}

\begin{example}\label{ex:Lpvar}
Given a measurable function $p(\cdot):\Rn\to (1,\infty)$, then the norm
\begin{equation}\label{eq:Lpvar-norm}
\|f\|_{L^{p(\cdot)}(\Rn,\Ln)}:=\inf\Big\{\lambda>0:
\int_{\Rn} \Big(\frac{|f(x)|}{\lambda}\Big)^{p(x)}dx \leq 1\Big\}
\end{equation}
defines a Banach function space $L^{p(\cdot)}(\Rn,\Ln)$ called a {\tt variable Lebesgue space} whose associate space is $L^{p'(\cdot)}(\Rn,\Ln)$, where $1/p(x)+1/p'(x)=1$ for every $x\in \Rn$. The space $L^{p(\cdot)}(\Rn,\Ln)$ is not generally rearrangement invariant (typically, the norm of a ball depends on the location). In this context, we set 
\begin{equation}\label{eq:Lpvar-pp}
p_{-}:=\essinf_{x\in\Rn}p(x) \quad\text{and}\quad p_{+}:=\esssup_{x\in\Rn}p(x).
\end{equation}
Furthermore, we say that $p(\cdot)\in{\rm LH}$ if there are constants $C_0,C_{\infty}\in(0,\infty)$ and $p_{\infty}\in[0,\infty)$ such that
\begin{equation*}
|p(x)-p(y)|\leq\frac{C_0}{-\log(|x-y|)}, \quad |x-y|<1/2, 
\end{equation*}
and
\begin{equation*}
|p(x)-p_{\infty}|\leq\frac{C_{\infty}}{\log(e+|x|)}, \quad\quad x \in \Rn.
\end{equation*}
For further details on these spaces, the reader is referred to \cite{CF}.

Let $\B$ be the collection of all balls in $\Rn$. The class of $A_{p(\cdot)}=A_{p(\cdot), \B}$ weights consists of all weights that satisfy the condition 
\begin{equation}\label{eq:Lpvar-Apvar}
[w]_{A_{p(\cdot)}}:=\sup_{B } {\Ln(B)}^{-1}\,
\norm{w\mathbf{1}_B}_{L^{p(\cdot)}(\Rn,\Ln)}
\|w^{-1}\mathbf{1}_B\|_{L^{p'(\cdot)}(\Rn,\Ln)}<\infty. 
\end{equation}
Under the background hypothesis that $1<p_{-}\leq p_{+}<\infty$ and $p(\cdot)\in{\rm LH}$, it turns out that in \cite{CDH, CFN} it is proved that 
\begin{align}\label{eq:Ap=AX}
w \in A_{p(\cdot)} &\Longleftrightarrow \|(Mh)\, w\|_{L^{p(\cdot)}(\Rn,\Ln)} \le C_1 \|h\,w\|_{L^{p(\cdot)}(\Rn,\Ln)}, 
\nonumber\\
 &\Longleftrightarrow \|(Mh)\, w^{-1}\|_{L^{p'(\cdot)}(\Rn,\Ln)} \le C_2 \|h\,w^{-1}\|_{L^{p'(\cdot)}(\Rn,\Ln)}.
\end{align}
In particular, by \cite[Theorem~2.4]{CW} one can easily see that 
\begin{align}
w \equiv 1 \in A_{p(\cdot)},\quad \forall\,p(\cdot) \in{\rm LH} \text{ with } 1<p_{-}\leq p_{+}<\infty. 
\end{align}

We would like to observe that in the general case, with $p(\cdot)$ not satisfying the ${\rm LH}$-condition, it is not known whether any of the equivalences in \eqref{eq:Ap=AX} holds for general weights. Nonetheless, \cite[Theorem~1.2]{Ko} shows that if $p(\cdot)$ is constant outside some large ball and $1<p_{-} \leq p_{+}<\infty$, then 
\begin{align}\label{eq:M-Lp-Ap}
M \text{ is bounded on } L^{p(\cdot)}(\Rn,\Ln) \quad\text{if and only if}\quad 1 \in A_{p(\cdot)}. 
\end{align}

\end{example}

\begin{example}\label{ex:RIBFS}
Suppose that $(\Sigma, \mu)$ is a non-atomic $\sigma$-finite measure space with $\mu(\Sigma)>0$. Let $\B$ be a Muckenhoupt basis so that $A_{\infty, \B}\subset\bigcup_{s>1} RH_{s, \B}$,  and let $\X$ be a rearrangement invariant Banach function space over $(\Sigma, \mu)$. 

Since $\B$ is a Muckenhoupt basis and $w\equiv 1\in A_{p,\B}$ for all $p>1$, it follows that $M_\B$ is bounded on $L^p(\Sigma,\mu)$ for all $1<p<\infty$. This and Boyd's interpolation theorem (cf. \cite[Theorem 5.16, p.\,153]{BS} or \cite[Theorem 8.44]{MMMMM20}) imply that 
\begin{align}\label{eq:w=1}
1<p_{\X} \leq q_{\X}<\infty\quad\Longrightarrow\quad M_\B \text{ is bounded on $\X$ and $\X'$}.
\end{align}

On the other hand, we have already observed in Example~\ref{ex:Lp0} that $u^p\,v \in A_{p, \B}$ implies \eqref{eq:exLp-1} and \eqref{eq:exLp-2}. We will use this and Boyd's interpolation theorem to obtain \eqref{eq:Xv-N1}. Assume now that $1<p_{\X} \leq q_{\X}<\infty$. Let $v$ be a $\B$-weight so that $v \in A_{\infty, \B}$. We claim that
\begin{equation}\label{eq:RIBFS-M:u-v}
\|(M_{\B}h)\,u\|_{\X(v)} \leq C \|h\,u \|_{\X(v)}, \quad 
\text{whenever $u^{p_{\X}}\, v \in A_{p_{\X}, \B}$ and $u^{q_{\X}}\, v \in A_{q_{\X}, \B}$},
\end{equation}
for every $h \in \M_{\mu}$. To see this, fix  $u$ so that $u^{p_{\X}}\, v \in A_{p_{\X}, \B}$, $u^{q_{\X}}\, v \in A_{q_{\X}, \B}$. By Lemma~\ref{lem:ApRH} parts \eqref{list:open-2} and \eqref{list:open-3}, there exists $r\in (1,p)$ such that $u^{p_{\X}/r} v \in A_{p_{\X}/r, \B}$ and $u^{q_{\X}\,r} v \in A_{q_{\X}\,r, \B}$. Hence,  since $\B$ is a Muckenhoupt basis, we arrive at
\begin{align}
\label{eq:RIBFS-1} \|(M_{\B}h)\,u\|_{L^{p_{\X}/r}(\Sigma, v)}
&\leq C \|h u\|_{L^{p_{\X}/r}(\Sigma, v)}, \quad \forall\,h \in \M_{\mu},
\\
\label{eq:RIBFS-2} \|(M_{\B}h)\,u\|_{L^{q_{\X}\,r}(\Sigma, \, v)} 
&\leq C \|h u\|_{L^{q_{\X}\,r}(\Sigma, \, v)}, \quad \forall\,h \in \M_{\mu}.
\end{align} 
Equivalently, the sublinear operator $h\mapsto M_{\B}(h\, u^{-1})\,u$ is bounded both on $L^{p_{\X}/r}(\Sigma, v)$ and on $L^{q_{\X}\,r}(\Sigma, v)$. Then, Boyd's interpolation theorem (cf. \cite[Theorem 5.16, p.\,153]{BS} or \cite[Theorem 8.44]{MMMMM20}) in the measure space $(\Sigma,v)$ gives as desired \eqref{eq:RIBFS-M:u-v}. 

Our next claim is that 
\begin{equation}\label{eq:RIBFS-M:u-v:dual} 
\|(M_{\B}h)(u\,v)^{-1}\|_{\X'_v} \leq C \|h\, (u\,v)^{-1} \|_{\X'_v},  \quad 
\text{if $u^{p_{\X}}\, v \in A_{p_{\X}, \B}$ and $u^{q_{\X}}\, v \in A_{q_{\X}, \B}$},
\end{equation}
for every $h \in \M_{\mu}$. To show this we note that if we set $\widetilde{u}=(u\,v)^{-1}$, then \eqref{eq:RIBFS-M:u-v} gives that
\begin{equation}\label{eq:RIBFS-M:u-v:dual:alt} 
\|(M_{\B}h)\widetilde{u}\|_{\X'_v} \leq C \|h\, \widetilde{u}\|_{\X'_v},  \quad 
\text{if $\widetilde{u}^{p_{\X'}}\, v \in A_{p_{\X'}, \B}$ and $\widetilde{u}^{q_{\X'}}\, v \in A_{q_{\X'}, \B}$}.
\end{equation}
On the other hand, by \eqref{eq:Boyd} and Lemma~\ref{lemma:AP-trivial} we have 
\[
\widetilde{u}^{p_{\X'}}\, v \in A_{p_{\X'}, \B}
\quad\Longleftrightarrow\quad
u^{-(q_\X)'}\,v^{1-(q_\X)'}\in A_{(q_{\X})', \B}
\quad\Longleftrightarrow\quad
u^{q_\X}\,v\in A_{q_{\X}, \B}
\]
and
\[
\widetilde{u}^{q_{\X'}}\, v \in A_{q_{\X'}, \B}
\quad\Longleftrightarrow\quad
u^{-(p_\X)'}\,v^{1-(p_\X)'}\in A_{(p_{\X})', \B}
\quad\Longleftrightarrow\quad
u^{p_\X}\,v\in A_{p_{\X}, \B}.
\]
All these eventually yield \eqref{eq:RIBFS-M:u-v:dual}. Combining \eqref{eq:RIBFS-M:u-v} and \eqref{eq:RIBFS-M:u-v:dual},  we conclude that if $\X$ is a rearrangement invariant Banach function space over $(\Sigma,\mu)$ with Boyd index satisfying $1<p_\X\le q_\X <\infty$, then
\begin{equation}\label{eq:intro-CMP-ri}
\begin{array}{c}
\|(M_{\B}h)\,u\|_{\X(v)} \leq C \|h\,u \|_{\X(v)}
\text{ and } 
\|(M'_{\B, v} h)\,u^{-1}\|_{\X'(v)} \leq C \|h\,u^{-1}\|_{\X'(v)},
\\[4pt]
\text{whenever $u^{p_{\X}}\, v \in A_{p_{\X}, \B}$, $u^{q_{\X}}\, v \in A_{q_{\X}, \B}$, and $v \in A_{\infty, \B}$}.
\end{array}
\end{equation}
Before considering some particular examples we note that when $v\equiv 1$ one can easily see that  
\begin{equation}\label{eq:intro-CMP-ri:v=1}
\begin{array}{c}
	\|(M_{\B}h)\,u\|_{\X} \leq C \|h\,u \|_{\X}
	\text{ and } 
	\|(M_{\B} h)\,u^{-1}\|_{\X'} \leq C \|h\,u^{-1}\|_{\X'},
	\\[4pt]
	\text{whenever $u^{p_{\X}}\in A_{p_{\X}, \B}$, $u^{q_{\X}}\in A_{q_{\X}, \B}$, or equivalently, $u\in A_{p_\X,q_\X}$ (cf.~\eqref{Apq}).}
\end{array}
\end{equation}

Consider the Lorentz spaces  $\X=L^{p,q}(\Sigma,\mu)$, $1<p<\infty$, $1\le q<\infty$ defined by the function norm
\begin{equation*} 
\|f\|_{L^{p,q}(\Sigma, \mu)} := \left(\int_{0}^{\infty} \big( t^{\frac1p} f^*_{\mu}(t) \big)^q \frac{dt}{t}\right)^{\frac1q}, 
\end{equation*}
when $1\le q<\infty$, and 
\begin{equation*} 
\|f\|_{L^{p,\infty}(\Sigma, \mu)} := \sup_{0<t<\infty} t^{\frac1p} f^*_{\mu}(t).  
\end{equation*}
These are rearrangement invariant Banach function space over $(\Sigma, \mu)$ and the Boyd indices are $p_{\X}=q_{\X}=p\in (1,\infty)$, see  \cite[Theorem~4.6, p.\,219]{BS}.   Hence, \eqref{eq:intro-CMP-ri} becomes
\begin{equation}\label{eq:intro-CMP-ri:Lpq}
\begin{array}{c}
\|(M_{\B}h)\,u\|_{L^{p,q}(\Sigma, v)} \leq C \|h\,u \|_{L^{p,q}(\Sigma, v)}
\text{ and } \\[4pt] 
\|(M'_{\B, v} h)\,u^{-1}\|_{L^{p',q'}(\Sigma, v)} \leq C \|h\,u^{-1}\|_{L^{p',q'}(\Sigma, v)},
\\[4pt]
\text{whenever $u^{p}\, v \in A_{p, \B}$, $v \in A_{\infty, \B}$, $1<p<\infty$, $1\le q\le\infty$}.
\end{array}
\end{equation}
Note that when $p=q$ this agrees with Example~\ref{ex:Lp0}.

The same occurs with the Orlicz space $\X=L^p(\log L)^\alpha(\Sigma,\mu)$ with $1<p<\infty$, $\alpha\in\re$. This is a rearrangement invariant Banach function space whose function norm is given  by 
\begin{equation}\label{Orlicz-norm}
\|f\|_{L^p(\log L)^\alpha(\Sigma,\mu)}:=\inf\Big\{\lambda>0:
\int_{\Rn} \Phi\Big(\frac{|f(x)|}{\lambda}\Big)dx \leq 1\Big\},
\end{equation}
where $\Phi(t)=t^p\,\log(e+t)^\alpha$, $t\ge 0$.
In this case we also have $p_{\X}=q_{\X}=p\in (1,\infty)$ and therefore \eqref{eq:intro-CMP-ri} means that
\begin{equation}\label{eq:intro-CMP-ri:LplogL}
\begin{array}{c}
\|(M_{\B}h)\,u\|_{L^p(\log L)^\alpha(\Sigma, v)} \leq C \|h\,u \|_{L^p(\log L)^\alpha(\Sigma, v)}
\text{ and } 
\\[4pt]
\|(M'_{\B, v}h)\,u^{-1}\|_{L^{p'}(\log L)^{\alpha\,(1-p')}(\Sigma, v)} \leq C \|h\,u^{-1}\|_{L^{p'}(\log L)^{\alpha\,(1-p')}(\Sigma, v)},
\\[4pt]
\text{whenever $u^{p}\, v \in A_{p, \B}$, $v \in A_{\infty, \B}$, $1<p<\infty$, $\alpha\in\re$}.
\end{array}
\end{equation}
Let us point out that when $\alpha=0$ we are back to Example~\ref{ex:Lp0}.

Our last examples are $\X=(L^4+L^6)(\Sigma,\mu)$ or $\X=(L^4\cap L^6)(\Sigma,\mu)$ which are Orlicz spaces whose function norms are given as in \eqref{Orlicz-norm} with $\Phi(t)\approx \max\{t^4,t^6\}$  and $\Phi(t)\approx \min\{t^4,t^6\}$, $t\ge 0$, respectively. In either case, if $\mu(\Sigma)=\infty$ then $p_\X=4$ and $q_\X=6$, hence we can rewrite \eqref{eq:intro-CMP-ri} as
\begin{equation}\label{eq:intro-CMP-ri:LplogL2}
\begin{array}{c}
\|(M_{\B}h)\,u\|_{\X(v)} \leq C_0 \|h\,u \|_{\X(v)}
\text{ and } 
\|(M'_{\B, v}h)\,u^{-1}\|_{\X'(v)} \leq C \|h\,u^{-1}\|_{\X'(v)},
\\[4pt]
\text{whenever $u^{4}\, v \in A_{4, \B}$, $u^{6}\, v \in A_{6, \B}$, and $v \in A_{\infty, \B}$,}
\end{array}
\end{equation}
with $\X=(L^4+L^6)(\Sigma,\mu)$ or $\X=(L^4\cap L^6)(\Sigma,\mu)$, where the associated spaces are respectively $\X'=(L^{4/3}\cap L^{6/5})(\Sigma,\mu)$ or $\X'=(L^{4/3}+ L^{6/5})(\Sigma,\mu)$. Of course the same can be done with $\X=(L^p+L^q)(\Sigma,\mu)$ or $\X=(L^p\cap L^q)(\Sigma,\mu)$ with $1<p, q<\infty$ in which case 
$p_\X=\min\{p,q\}$ and $q_\X=\max\{p,q\}$. Further details are left to the interested reader.

As mentioned above, in $(\Rn,\Ln)$, the set of all cubes $\mathcal{Q}$,  the set of all dyadic cubes $\mathcal{D}$, the set of all rectangles $\mathcal{R}$  whose sides are parallel to the coordinate axes, and the collection $\mathcal{Z}$ of Zygmund rectangles in $(\re^3, \L^3)$ whose sides are parallel to the coordinate axes and have lengths $s$, $t$ and $st$ with $s,t>0$, are all Muckenhoupt bases. The same occurs if we consider the collection of all metric balls in  a non-atomic doubling measure metric space $(\Sigma,d,\mu)$. In all the cases one has $A_{\infty, \B}\subset\bigcup_{s>1} RH_{s, \B}$. 
\end{example}

We close this subsection with the following technical lemma. 
\begin{lemma}\label{lem:hh}
	Fix $(\Sigma, \mu)$, a non-atomic $\sigma$-finite measure space with $\mu(\Sigma)>0$, and a basis $\B$. Let $u$ and $v$ be $\B$-weights so that  $v(B)<\infty$ for every $B\in\B$ and $v(\Sigma)>0$. Then for every $\varepsilon>0$ and for every non-negative function $h$ on $\Sigma$ with $\|h\,u\|_{\X_v}\le 1$, there exists a function $h_{\varepsilon}$ on $\Sigma$ such that 
	\begin{align}\label{eq:hhu}
		h \le h_{\varepsilon},\quad h_{\varepsilon} > 0 \ \mu \text{-a.e.}\quad\text{and}\quad \|h_{\varepsilon}u\|_{\X_v} \le 1+\varepsilon.
	\end{align} 
\end{lemma}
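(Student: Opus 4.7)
The plan is to set $h_\varepsilon = h + g_\varepsilon$, where $g_\varepsilon$ is a strictly positive function with $\|g_\varepsilon\,u\|_{\X_v}\le\varepsilon$; the required bound then follows immediately from the triangle inequality. The main task is therefore to exhibit such a $g_\varepsilon$, and the key technical point is that $u$ may be unbounded and the whole space need not be in $\X_v$, so we must decompose $\Sigma$ carefully before choosing coefficients.

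First, I would decompose $\Sigma$ into pieces on which both $v$ is finite and $u$ is bounded. Since $u$ is a $\B$-weight we have $0<u<\infty$ $\mu$-a.e., so the sets $F_j:=\{x\in\Sigma:\,u(x)\le j\}$ satisfy $F_j\uparrow\Sigma$ up to a $\mu$-null set. On the other hand, as observed in the discussion preceding Definition~\ref{def:BFS}, the measure $v=v\,d\mu$ is itself non-atomic and $\sigma$-finite, so there exist $v$-measurable sets $G_k$ with $\Sigma=\bigcup_{k\in\N}G_k$ and $v(G_k)<\infty$. Set $E_{j,k}:=F_j\cap G_k$. Then $u\,\mathbf{1}_{E_{j,k}}\le j\,\mathbf{1}_{E_{j,k}}$, $v(E_{j,k})<\infty$, and $\bigcup_{j,k\in\N}E_{j,k}$ exhausts $\Sigma$ modulo a $\mu$-null set.

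Next, by item (vi) of Definition~\ref{def:BFS}, $\|\mathbf{1}_{E_{j,k}}\|_{\X_v}<\infty$, and hence by item (iv),
$$
a_{j,k}:=\|\mathbf{1}_{E_{j,k}}\,u\|_{\X_v}\le j\,\|\mathbf{1}_{E_{j,k}}\|_{\X_v}<\infty.
$$
Choose positive numbers $(c_{j,k})_{j,k\in\N}$ with $\sum_{j,k\in\N}c_{j,k}(1+a_{j,k})\le\varepsilon$ (for instance $c_{j,k}:=\varepsilon\,2^{-j-k}\,C^{-1}(1+a_{j,k})^{-1}$ with $C:=\sum_{j,k\in\N}2^{-j-k}$) and define
$$
g_\varepsilon:=\sum_{j,k\in\N}c_{j,k}\,\mathbf{1}_{E_{j,k}}.
$$
Since every point of $\Sigma$ outside a $\mu$-null set belongs to some $E_{j,k}$, we have $g_\varepsilon>0$ $\mu$-a.e. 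Applying the triangle inequality to the finite partial sums and passing to the limit via item (v) of Definition~\ref{def:BFS} (monotone convergence of the norm for increasing nonnegative sequences), we obtain
$$
\|g_\varepsilon\,u\|_{\X_v}\le\sum_{j,k\in\N}c_{j,k}\,a_{j,k}\le\varepsilon.
$$

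Finally, set $h_\varepsilon:=h+g_\varepsilon$. Clearly $h\le h_\varepsilon$ and $h_\varepsilon>0$ $\mu$-a.e., and the triangle inequality yields
$$
\|h_\varepsilon\,u\|_{\X_v}\le\|h\,u\|_{\X_v}+\|g_\varepsilon\,u\|_{\X_v}\le 1+\varepsilon,
$$
as desired. The only mildly delicate point is the invocation of item~(v) to pass the norm through a countable sum; no genuine obstacle arises, since the double decomposition $\{E_{j,k}\}$ is constructed precisely so that all the quantities $a_{j,k}$ are finite.
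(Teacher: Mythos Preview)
Your proof is correct and follows essentially the same approach as the paper: both arguments set $h_\varepsilon=h+F$ for a strictly positive function $F$ built as a weighted sum of characteristic functions of sets on which $u$ is bounded and which have finite $v$-measure, then control $\|F\,u\|_{\X_v}$ via the triangle inequality and property~(v). The only cosmetic difference is that the paper uses a single index (intersecting $\mu$-finite sets $\Sigma_j$ with $\{u\le j,\,v\le j\}$) whereas you use a double index via the $\sigma$-finiteness of $v$; both decompositions achieve the same purpose.
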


\begin{proof}
	Since $(\Sigma, \mu)$ is $\sigma$-finite, there exists an increasing sequence of $\mu$-measurable sets $\{\Sigma_j\}_{j=1}^{\infty}$ such that $\Sigma=\bigcup_{j=1}^{\infty} \Sigma_j$ and $0<\mu(\Sigma_j)<\infty$ for each $j$. Set
\begin{equation}\label{def:E0-Ej}
		\begin{split}
		E_0 &:= \{x \in \Sigma: u(x)=0 \text{ or } u(x)=\infty \text{ or } v(x)=0 \text{ or } v(x)=\infty\},
		\\
		E_j &:=\{x \in \Sigma_j: 0<u(x) \le j, 0<v(x) \le j\},\quad j \ge 1,
				\end{split}
	\end{equation}
	Let us observe that $\Sigma=\bigcup_{j=0}^{\infty} E_j$ and also that $\{E_j\}_{j\ge 1}$ is a non-decreasing sequence. Hence, there exists $j_0\ge 1$ such that $\mu(E_{j_0})>0$ and $\mu (E_j)=0$ for $1\le j< j_0$. Note also that since $u$ and $v$ are $\B$-weights, then $\mu(E_0)=0$. Define then $E_0'= \Sigma\setminus \bigcup_{j=j_0}^\infty E_j$ which clearly satisfies $\mu(E_0')=0$. We claim that
	\begin{align}\label{eq:pinf}
		0 < \|\mathbf{1}_{E_j}u\|_{\X_v} < \infty, \quad \forall\,j \ge j_0. 
	\end{align}
	In fact, $\|\mathbf{1}_{E_j}u\|_{\X_v} \le j \|\mathbf{1}_{E_j}\|_{\X_v}<\infty$ since $v(E_j) \le j \mu(\Sigma_j)<\infty$. If $\|\mathbf{1}_{E_j}u\|_{\X_v}=0$, then item (i) in Definition \ref{def:BFS} gives that $u=0$ $v$-a.e.~in $E_j$. Since $0<v<\infty$ $\mu$-a.e.~in $\Sigma$, it follows that $u=0$ $\mu$-a.e.~in $E_j$, which leads to a contradiction since $\mu(E_j)>0$ for every $j\ge j_0$. Thus,  \eqref{eq:pinf} holds. Now we define 
	\begin{align*}
		h_{\varepsilon}(x) := h(x)+\varepsilon\, F(x) \quad\text{and}\quad F(x) := \mathbf{1}_{E_0'}(x) + \sum_{j=j_0}^{\infty} 2^{-j} \frac{\mathbf{1}_{E_j}(x)}{\|\mathbf{1}_{E_j}u\|_{\X_v}}, \quad x \in \Sigma. 
	\end{align*}
	Note that by \eqref{eq:pinf},  $h_{\varepsilon}$ and $F$ are well defined in $\Sigma$. 
	
	By construction $h \le h_{\varepsilon}$. On the other hand, the fact that $\mu(E_0')=0$ implies that $v(E_0')=0$, and hence $\|\mathbf{1}_{E_0'}\,u\|_{\X_v}=0$ by item (i) in Definition \ref{def:BFS}. This and the properties listed in Definition \ref{def:BFS} lead to $\|F\,u\|_{\X_v}\le 1$ and, thus,
	\[
	\|h_\varepsilon\,u\|_{\X_v}
	\le
	\|h\,u\|_{\X_v}
	+
	\varepsilon\,\|F\,u\|_{\X_v}
	\le
	1+\varepsilon.
	\]
	To complete the proof of \eqref{eq:hhu}, we just need to see that $F>0$ in $\Sigma \setminus E'_0$, since $\mu(E'_0)=0$.  Let $x \in \Sigma \setminus E_0'=\bigcup_{j=j_0}^{\infty} E_j$. Then there exists $j_x \ge j_0$ such that $x \in E_{j_x}$. From this and \eqref{eq:pinf}, we conclude that 
	\begin{align*}
		F(x) \ge 2^{-j_x} \frac{\mathbf{1}_{E_{j_x}}(x)}{\|\mathbf{1}_{E_{j_x}}u\|_{\X_v}} 
		= \frac{2^{-j_x}}{\|\mathbf{1}_{E_{j_x}}u\|_{\X_v}} >0. 
	\end{align*}
	The proof is complete. 
\end{proof}

\subsection{Modular spaces}

We say that $\Phi:[0,\infty)\to[0,\infty)$ is a {\tt Young function} if it is continuous, convex, strictly increasing, and satisfies 
\begin{equation}\label{eq:zi}
	\lim_{t\to 0^{+}}\frac{\Phi(t)}{t}=0,\qquad \lim_{t\to\infty}\frac{\Phi(t)}{t}=\infty.
\end{equation}
A function $\Phi$ satisfies the {\tt doubling condition}, or $\Phi \in \Delta_2$, if there is a constant $C_\Phi>0$ such that $\Phi(2t)\leq C_\Phi\,\Phi(t)$ for every $t>0$. Given a Young function $\Phi$, its complementary function $\overline{\Phi}: [0,\infty) \to [0,\infty)$ is defined as 
\begin{equation*}
	\overline{\Phi}(t):=\sup_{s>0}\{st-\Phi(s)\},
\end{equation*}
which clearly implies that 
\begin{equation}\label{eq:Young-ineq}
	st \leq \Phi(s) + \overline{\Phi}(t), \qquad s,t>0.
\end{equation}
Moreover, one can check that $\overline{\Phi}$ is also a Young function and
\begin{equation}\label{eq:Young-1}
	t\leq\Phi^{-1}(t)\overline{\Phi}^{-1}(t)\leq 2t,\qquad t>0.
\end{equation}
In turn, by replacing $t$ by $\Phi(t)$ in first inequality of \eqref{eq:Young-1}, we obtain
\begin{equation}\label{eq:Young-2}
	\overline{\Phi}\Big(\frac{\Phi(t)}{t}\Big)\leq\Phi(t),\qquad t>0.
\end{equation}
In analogy with \eqref{eq:Boydindices}, we define the dilation indices
\begin{equation}\label{eq:Phiindices}
	i_{\Phi} := \lim_{t\to0^{+}} \frac{\log h_{\Phi}(t)}{\log t} \qquad \text{and}\qquad 
	I_{\Phi} := \lim_{t\to\infty}\frac{\log h_{\Phi}(t)}{\log t}, 
\end{equation}
where $h_{\Phi}$ is defined as
\begin{equation*}
	h_{\Phi}(t) := \sup_{s>0} \frac{\Phi(st)}{\Phi(s)},\qquad t>0.
\end{equation*}
From the definitions, one can show that 
\begin{align}\label{dilation:duality}
	1\leq i_{\Phi}\leq I_{\Phi}\leq\infty,\qquad 
	(I_{\Phi})'=i_{\overline{\Phi}}, \qquad\text{and}\qquad (i_{\Phi})'=I_{\overline{\Phi}}.
\end{align} 
Additionally, it turns out that $\Phi\in\Delta_2$ if and only if $I_{\Phi}<\infty$, and hence 
\begin{equation}\label{eq:Delta2equiv}
	\Phi,\overline{\Phi}\in\Delta_2\qquad\text{ if and only if }\qquad
	1<i_{\Phi}\leq I_{\Phi}<\infty.
\end{equation}

Assume that $(\Sigma, \mu)$ is a given non-atomic $\sigma$-finite measure space with $\mu(\Sigma)>0$, for $f \in \M_{\mu}$ and a weight $w$, we define the modular $\rho_w^{\Phi}$ by
\begin{equation*}
	\rho_w^{\Phi}(f):=\int_\Sigma \Phi(|f(x)|)\,w(x)\,d\mu(x)=\int_\Sigma \Phi(|f(x)|)\,dw(x). 
\end{equation*}
When $w \equiv 1$, we write $\rho^{\Phi}$ in place of $\rho_w^{\Phi}$. We then introduce 
\begin{align*}
	\mathcal{M}_{w}^{\Phi} :=\{f\in \M_{\mu}: \rho_{w}^{\Phi}(f)<\infty\},
\end{align*}
which is referred to as a {\tt modular space}. 

As we have seen in Section \ref{sec:BFSextr}, to formulate the extrapolation results on Banach function spaces, one requires the boundedness of maximal operator $M_{\B}$ or its dual operator $M'_{\B,v}$. Likewise, to develop the extrapolation analogs on modular spaces, the assumptions in the current setting read as follows: 
\begin{align}
	\label{eq:PhiN1} \int_{\Sigma} \Phi((M_{\B}h)\, u)\, v\, d\mu 
	&\le \mathcal{N}_1\, \int_{\Sigma} \Phi(|h|\, u)\,v\, d\mu,\qquad\forall\,h \in \M_{\mu}, 
	\\
	\label{eq:PhiN2} \int_{\Sigma} \overline{\Phi}((M'_{\B, v}h)\, u^{-1})\, v\, d\mu 
	&\le \mathcal{N}_2\, \int_{\Sigma} \overline{\Phi}(|h|\, u^{-1})\,v\, d\mu,\qquad\forall\,h \in \M_{\mu}. 
\end{align}

\begin{lemma}\label{lem:APhiprev}
	Fix $(\Sigma, \mu)$, a non-atomic $\sigma$-finite measure space with $\mu(\Sigma)>0$, and a basis $\B$. Consider $v$, a non-negative $\mu$-measurable function so that $v(B)<\infty$ for every $B \in \B$ and $v(\Sigma)>0$; and $\Phi$ a Young function. Let $u: \Sigma \to [0, \infty]$ be such that \eqref{eq:PhiN1} holds. Then for every $B \in \B$, the following hold: 
	\begin{list}{\textup{(\theenumi)}}{\usecounter{enumi}\leftmargin=1cm \labelwidth=1cm \itemsep=0.2cm 
			\topsep=.2cm \renewcommand{\theenumi}{\alph{enumi}}}
		
		\item If  $\rho_v^{\Phi}(\mathbf{1}_B u)=\infty$ then $u \equiv\infty$ $\mu$-a.e.~in $B$. Hence, either  $\rho_v^{\Phi}(\mathbf{1}_B u)<\infty$ or $u \equiv\infty$ $\mu$-a.e.~in $B$; and either $u \in L^1(B, v)$ or $u \equiv\infty$ $\mu$-a.e.~in $B$.
		
		\item If  $\rho_v^{\Phi}(\mathbf{1}_B u)=0$  then $u \equiv0$ $v$-a.e.~in $B$. Hence, either  $\rho_v^{\Phi}(\mathbf{1}_B u)>0$  or $u \equiv 0$ $v$-a.e.~in $B$; and either $u>0$ $v$-a.e.~in $B$ or $u \equiv0$ $v$-a.e.~in $B$.
	\end{list}
	
	In particular, if in addition $0<u, v<\infty$ $\mu$-a.e.~in $\Sigma_\B$, then
	\begin{equation}\label{eq:Swfi}
		0<\rho_v^{\Phi}(\mathbf{1}_B u)<\infty \, \text{ for every } B \in \B.
	\end{equation}
\end{lemma}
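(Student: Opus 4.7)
The plan is to follow the blueprint of Lemma~\ref{lem:AXprev}, replacing the homogeneity of the norm with an explicit rescaling inside the modular inequality. The fundamental device remains the elementary lower bound $M_{\B} h(x) \ge \fint_B h\,d\mu$ for $x\in B$. Applied with $h = k\,\mathbf{1}_S$ for a $\mu$-measurable $S\subset B$ with $\mu(S)>0$ and an arbitrary $k>0$, the assumption \eqref{eq:PhiN1} together with the monotonicity of $\Phi$ yields the master estimate
\[
\int_B \Phi\!\Bigl(\tfrac{k\,\mu(S)}{\mu(B)}\,u\Bigr) v\,d\mu
\le \int_\Sigma \Phi\bigl((M_{\B}(k\mathbf{1}_S))\,u\bigr)\,v\,d\mu
\le \mathcal{N}_1 \int_S \Phi(k\,u)\, v\,d\mu.
\]
The freedom in $k$ is essential: because $\Phi$ is not assumed to be doubling, one cannot in general replace $\Phi(\lambda u)$ by $\Phi(u)$, and this absence of homogeneity is the main obstacle to overcome. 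The tactic is to tune $k$ in each case so that the argument on the left becomes exactly $u$.

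For part (a), I argue by contradiction. Suppose $\rho_v^\Phi(\mathbf{1}_B u)=\infty$ and $u$ is not $\equiv\infty$ $\mu$-a.e.\ in $B$; then $S_{r_0}:=\{x\in B: u(x)\le r_0\}$ has $\mu(S_{r_0})>0$ for some $r_0>0$. Choosing $S=S_{r_0}$ and $k=\mu(B)/\mu(S_{r_0})$ in the master estimate annihilates the averaging factor, so the left-hand side equals $\rho_v^\Phi(\mathbf{1}_B u)=\infty$, while the right-hand side is bounded by $\mathcal{N}_1\,\Phi(k\,r_0)\,v(B)<\infty$ (using $u\le r_0$ on $S_{r_0}$ and $v(B)<\infty$), a contradiction. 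The $L^1$ consequence then follows from a standard Young-function tail bound: since $\Phi(t)/t\to\infty$, there is $t_0$ with $\Phi(t)\ge t$ on $[t_0,\infty)$, so whenever $\rho_v^\Phi(\mathbf{1}_B u)<\infty$ one has $\int_B u\,v\,d\mu \le t_0\,v(B)+\rho_v^\Phi(\mathbf{1}_B u)<\infty$.

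For part (b), the implication ``$\rho_v^\Phi(\mathbf{1}_B u)=0 \Rightarrow u\equiv 0\ v\text{-a.e.\ in }B$'' is immediate because $\Phi$ is strictly increasing with $\Phi(0)=0$. For the dichotomy, suppose to the contrary that $S_0:=\{x\in B:u(x)=0\}$ has $\mu(S_0)>0$ (which holds as soon as $v(S_0)>0$) while $u$ is not $\equiv 0$ $v$-a.e.\ on $B$. Apply the master estimate with $S=S_0$ and $k=1$: the right-hand side vanishes because $u\equiv 0$ on $S_0$, hence $\int_B \Phi(\lambda u)\,v\,d\mu=0$ with $\lambda=\mu(S_0)/\mu(B)>0$. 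By the first implication of (b) this forces $u\equiv 0$ $v$-a.e.\ in $B$, contradicting the assumption. Finally, the ``in particular'' claim is immediate: under $0<u,v<\infty$ $\mu$-a.e.\ in $\Sigma_{\B}$, the function $u$ is neither $\equiv\infty$ $\mu$-a.e.\ nor $\equiv 0$ $v$-a.e.\ in $B$, so (a) and (b) together yield $0<\rho_v^\Phi(\mathbf{1}_B u)<\infty$.
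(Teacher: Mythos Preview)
Your proof is correct and follows essentially the same approach as the paper's. Your master estimate, obtained by testing \eqref{eq:PhiN1} on $h=k\,\mathbf{1}_S$, is precisely the paper's inequality \eqref{eq:APhiprev-3} (the paper takes $h=\frac{\mu(B)}{\mu(S)}\mathbf{1}_S$, which is your choice $k=\mu(B)/\mu(S)$), and the arguments for (a) and (b) are the same contradiction via the level sets $S_r$ and $S_0$. The only minor difference is in the $L^1$ consequence: you use the elementary tail bound $\Phi(t)\ge t$ for $t\ge t_0$ to get $\int_B u\,dv\le t_0\,v(B)+\rho_v^\Phi(\mathbf{1}_B u)$, whereas the paper applies Jensen's inequality $\Phi\bigl(\fint_B u\,dv\bigr)\le \fint_B \Phi(u)\,dv$; both are standard and valid.
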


\begin{proof}
	Fix $B\in \B$. Then, for every $h \in \M_{\mu}$ with $0<\fint_B h\, d\mu < \infty$ we have that 
	\begin{equation*}
		\int_{B}\Phi\Big(\Big(\fint_B h\,d\mu\Big)\, u\Big)\, v\,d\mu
		\leq\int_{\Sigma}\Phi(M_{\B}(\mathbf{1}_B h)\,u)\, v \,d\mu
		\leq \mathcal{N}_1\,\int_{B}\Phi(|h|u) v\,d\mu, 
	\end{equation*}
	where the last inequality follows from \eqref{eq:PhiN1}. In particular, for any measurable set $S\subset B$ with $\mu(S)>0$, taking $h=\frac{\mu(B)}{\mu(S)}\,\mathbf{1}_S$, we obtain 
	\begin{equation}\label{eq:APhiprev-3}
		\rho_v^{\Phi}(\mathbf{1}_B u) =
		\int_{B}\Phi( u)\, dv
		\leq \mathcal{N}_1\,\int_{B}\Phi\Big(\frac{\mu(B)}{\mu(S)}\,\mathbf{1}_S\,u\Big)\, dv.
	\end{equation}
	
	Assume next $\rho_v^{\Phi}(\mathbf{1}_B u)=\infty$. Set
	\begin{equation*}
		S_r:=\{x \in B: 0 \le u(x) \le r\}, \qquad r>0.
	\end{equation*}
	If $\mu(S_r)>0$ then  \eqref{eq:APhiprev-3} yields
	\begin{equation}\label{eq:APhiprev-5}
		\infty
		=
		\rho_v^{\Phi}(\mathbf{1}_B u) 
		\leq 
		\mathcal{N}_1\,\int_{B}\Phi\Big(\frac{\mu(B)}{\mu(S_r)}\,\mathbf{1}_{S_r}\,u\Big)\, dv
		\le
		\mathcal{N}_1\, \Phi\Big(\frac{\mu(B)}{\mu(S_r)}\,r\Big)\,v(B),
	\end{equation}
which contradicts our assumption $v(B)<\infty$ for every $B \in \B$. Hence $\mu(S_r)=0$ for every $r>0$, that is, $u \equiv \infty$ $\mu$-a.e.~in $B$. On the other hand, if $u \not\in L^1(B, v)$, then $\int_{B}u \, dv=\infty$. Note that $0<v(B)<\infty$ in this case. Hence, by Jensen's inequality, 
	\begin{align}
		\infty = \Phi\left(\fint_{B} u\, dv\right) 
		\le \fint_B \Phi(u)\, dv = v(B)^{-1} \rho_v^{\Phi}(\mathbf{1}_B u),  
	\end{align}
	which gives at once $\rho_v^{\Phi}(\mathbf{1}_B u)=\infty$. By the previous argument, $u \equiv \infty$ $\mu$-a.e.~in $B$. The proof of (a) is then complete.
	
	Next, assume that $\rho_v^{\Phi}(\mathbf{1}_B\,u)=0$. Then $\Phi(u(x))=0$ for $v$-a.e.~$x\in B$ and since $\Phi$ is strictly increasing with $\Phi(0)=0$, one readily gets $u \equiv 0$ $v$-a.e.~in $B$. On the other hand, if $u\equiv 0$ in a measurable set $S \subset B$ with $v(S)>0$ (hence $\mu(S)>0$), then \eqref{eq:APhiprev-3} gives  $\rho_v^{\Phi}(\mathbf{1}_B\,u)=0$, and hence, $u\equiv 0$ $v$-a.e. in $B$. This proves (b). 
	
	Finally, \eqref{eq:Swfi}  is a consequence of (a) and (b) since $u \equiv 0$ $v$-a.e.~in $B$ is equivalent to $u \equiv 0$ $\mu$-a.e.~in $B$ whenever $0<v<\infty$ $\mu$-a.e.
\end{proof}

\begin{lemma}\label{lem:NN}
	Fix $(\Sigma, \mu)$, a non-atomic $\sigma$-finite measure space with $\mu(\Sigma)>0$, and a basis $\B$. Let $\Phi$ be a Young function, and let $u$ and $v$ be $\B$-weights so that $v(B)<\infty$ for every $B\in\B$.  Then 
	\begin{list}{\textup{(\theenumi)}}{\usecounter{enumi}\leftmargin=1cm \labelwidth=1cm \itemsep=0.2cm 
			\topsep=.2cm \renewcommand{\theenumi}{\alph{enumi}}}
		\item If  \eqref{eq:PhiN1} holds, then $\mathcal{N}_1 \ge 1$. 
		
		\item If  \eqref{eq:PhiN2} holds, then $\mathcal{N}_2 \ge 1$. 
	\end{list}
\end{lemma}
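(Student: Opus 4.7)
My plan is to mimic the strategy used for the Banach function space analog in Lemma~\ref{lem:X-geq1}, replacing the role of $\|\mathbf{1}_B u\|_{\X_v}$ by the modular $\rho_v^{\Phi}(\mathbf{1}_B u)$ and exploiting Lemma~\ref{lem:APhiprev} as the quantitative ``positive and finite'' input that closes the argument.

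For part (a), I would fix any $B\in\B$ and test \eqref{eq:PhiN1} against $h=\mathbf{1}_B$. Since $M_{\B}\mathbf{1}_B(x)=1$ for every $x\in B$, the left-hand side dominates
\[
\int_B \Phi(u)\,v\,d\mu=\rho_v^{\Phi}(\mathbf{1}_B u),
\]
whereas the right-hand side equals $\mathcal{N}_1\,\rho_v^{\Phi}(\mathbf{1}_B u)$. Hence one has $\rho_v^{\Phi}(\mathbf{1}_B u)\leq \mathcal{N}_1\,\rho_v^{\Phi}(\mathbf{1}_B u)$, and the conclusion $\mathcal{N}_1\geq 1$ follows as soon as one can guarantee that $0<\rho_v^{\Phi}(\mathbf{1}_B u)<\infty$. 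This is precisely the content of \eqref{eq:Swfi} in Lemma~\ref{lem:APhiprev}, which applies because $u$ and $v$ are $\B$-weights (so $0<u,v<\infty$ $\mu$-a.e.~in $\Sigma=\Sigma_\B$) and $v(B)<\infty$ for every $B\in\B$.

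For part (b), rather than redo the whole computation I would show that \eqref{eq:PhiN2} is just an instance of an inequality of type \eqref{eq:PhiN1} with different data, so that part (a) may be invoked. Concretely, the very definition of $M'_{\B,v}$ gives $M'_{\B,v}h=M_{\B}(h\,v)/v$, so setting $g:=h\,v$ turns \eqref{eq:PhiN2} into
\[
\int_{\Sigma}\overline{\Phi}\bigl((M_{\B}g)\,\widetilde u\bigr)\,v\,d\mu
\;\leq\; \mathcal{N}_2\int_{\Sigma}\overline{\Phi}\bigl(|g|\,\widetilde u\bigr)\,v\,d\mu,\qquad \forall\,g\in\M_\mu,
\]
where $\widetilde u:=(u\,v)^{-1}$. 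Now $\overline{\Phi}$ is again a Young function, $\widetilde u$ is a $\B$-weight because $u$ and $v$ are, and $v(B)<\infty$ for every $B\in\B$ by hypothesis. Hence part (a), applied to the pair $(\widetilde u,v)$ and to the Young function $\overline{\Phi}$, yields $\mathcal{N}_2\geq 1$.

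The only non-routine step is ensuring strict positivity and finiteness of $\rho_v^{\Phi}(\mathbf{1}_B u)$ for some $B\in\B$; everything else is a direct consequence of $M_{\B}\mathbf{1}_B\geq \mathbf{1}_B$ and of rewriting $M'_{\B,v}$ in terms of $M_{\B}$. Since this positivity/finiteness input is already isolated in Lemma~\ref{lem:APhiprev}, the argument is short.
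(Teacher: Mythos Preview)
Your proposal is correct and follows essentially the same approach as the paper: test \eqref{eq:PhiN1} with $h=\mathbf{1}_B$, use $M_{\B}\mathbf{1}_B\ge\mathbf{1}_B$ to get $\rho_v^{\Phi}(\mathbf{1}_B u)\le\mathcal{N}_1\,\rho_v^{\Phi}(\mathbf{1}_B u)$, and for part (b) rewrite \eqref{eq:PhiN2} as an instance of \eqref{eq:PhiN1} with the $\B$-weight $\widetilde u=(u\,v)^{-1}$ and Young function $\overline{\Phi}$. If anything, you are slightly more explicit than the paper in invoking \eqref{eq:Swfi} from Lemma~\ref{lem:APhiprev} to guarantee $0<\rho_v^{\Phi}(\mathbf{1}_B u)<\infty$, which is indeed what allows division by this quantity.
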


\begin{proof}
	Let $B \in \B$. Observe that $M_{\B}(\mathbf{1}_B)(x) \equiv 1$ for every $x \in B$.  This and 	 \eqref{eq:PhiN1} readily give
	\begin{multline}\label{eq:geq1-1}
		\int_{B} \Phi(u) v\,d\mu
		=
		\int_{B}\Phi(M_{\B}(\mathbf{1}_B)\,u)\, v\,d\mu 
		\le
		\int_{\Sigma}\Phi(M_{\B}(\mathbf{1}_B)\,u)\, v\,d\mu 
		\\
		\le
		\mathcal{N}_1\, \int_{\Sigma}\Phi(\mathbf{1}_B\,u)\, v\,d\mu 
		=
		\int_{B} \Phi(u) v\,d\mu,
	\end{multline}
	thus $\mathcal{N}_1 \ge 1$. This shows (a). To prove (b), we observe that \eqref{eq:PhiN2} can be written as  
	\begin{align}\label{eq:geq1-3}
		\int_{\Sigma} \overline{\Phi}((M_{\B}h)\, u^{-1}\,v^{-1})\, v\, d\mu 
		&
		\le 
		\mathcal{N}_2 
		\int_{\Sigma} \overline{\Phi}(|h|\, u^{-1}\,v^{-1})\,v\, d\mu,\qquad\forall\,h \in \M_{\mu}. 
	\end{align}
	Besides, the fact that $u$ and $v$ are $\B$-weights gives that $u^{-1}\, v^{-1}$ are also $\B$-weights. Therefore, the conclusion (a) applied to \eqref{eq:geq1-3} eventually yields $ \mathcal{N}_2 \geq 1$.  
\end{proof}

\begin{example}\label{ex:Lp0:mod}
	Suppose that $(\Sigma, \mu)$ is a non-atomic $\sigma$-finite measure space with $\mu(\Sigma)>0$. Let $\B$ be a Muckenhoupt basis and let $u$, $v$ be two $\B$-weights. Given $1<p<\infty$, let $\Phi(t)=t^p$, $t\ge 0$, which is a Young function whose complementary function is $\overline{\Phi}(t)=(p-1)\,t^{p'}$, $t\ge 0$ . Clearly, $h_\Phi(t)=t^p$, $t\ge 0$, hence $i_\Phi=I_\Phi=p$. Note that \eqref{eq:PhiN1} and \eqref{eq:PhiN2} can be rewritten respectively as 
	\begin{align}	\label{eq:exLp-1:mod} 
		\int_{\Sigma} (M_{\B}h)^p\, u^p\, v\, d\mu 
		&\le 
		\mathcal{N}_1\, \int_{\Sigma} |h|^p\, u^p\,v\, d\mu,\qquad\forall\,h \in \M_{\mu}, 
		\\
		\label{eq:exLp-2:mod} 
		\int_{\Sigma} (M_{\B} h)^{p'}\, u^{-p'}\, v^{1-p'}\, d\mu 
		&\le 
		\mathcal{N}_2\, \int_{\Sigma} |h|^{p'}\, u^{-p'}\, v^{1-p'}\, d\mu ,\qquad\forall\,h \in \M_{\mu}. 
	\end{align}
	Since $\B$ is a Muckenhoupt basis, we have that $u^p\,v \in A_{p, \B}$ yields \eqref{eq:exLp-1:mod}, and $u^{-p'}v^{1-p'} \in A_{p', \B}$ implies \eqref{eq:exLp-2:mod}. On the other hand, by definition $u^p\,v \in A_{p, \B}$ if and only if  $u^{-p'}v^{1-p'} \in A_{p', \B}$. All these show that in this scenario, \eqref{eq:PhiN1} and \eqref{eq:PhiN2} (or, \eqref{eq:exLp-1:mod} and \eqref{eq:exLp-2:mod}) hold  provided $u^p\,v \in A_{p, \B}$. 
	
	As mentioned above, in $(\Rn,\Ln)$, the set of all cubes $\mathcal{Q}$,  the set of all dyadic cubes $\mathcal{D}$, the set of all rectangles $\mathcal{R}$  whose sides are parallel to the coordinate axes, and the collection $\mathcal{Z}$ of Zygmund rectangles in $(\re^3, \L^3)$ whose sides are parallel to the coordinate axes and have lengths $s$, $t$ and $st$ with $s,t>0$, are all Muckenhoupt bases. The same occurs if we consider the collection of all metric balls in  a non-atomic doubling measure metric space $(\Sigma,d,\mu)$. In each of these cases, \eqref{eq:exLp-1:mod} and \eqref{eq:exLp-2:mod} hold  provided $u^p\,v \in A_{p, \B}$. 
	
\end{example}

\begin{example}\label{ex:Phi:mod}
	Suppose that $(\Sigma, \mu)$ is a non-atomic $\sigma$-finite measure space with $\mu(\Sigma)>0$. Let $\B$ be a Muckenhoupt basis so that $A_{\infty, \B}\subset\bigcup_{s>1} RH_{s, \B}$,  and let $\Phi$ be a Young function. By interpolation for modular spaces, see \cite{Miy95,CH}, 
	\begin{align}\label{eq:w=1:mod}
		1<i_\Phi \leq I_\Phi<\infty\quad\Longrightarrow\quad \rho^{\Phi}(M_\B f)\le \mathcal{N}_1\,\rho^{\Phi}(|f|),\ \rho^{\overline{\Phi}}(M_\B f)\le \mathcal{N}_2\,\rho^{\overline{\Phi}}(|f|). 
	\end{align}
	
	On the other hand, we have already observed in Example~\ref{ex:Lp0:mod} that $u^p\,v \in A_{p, \B}$ implies \eqref{eq:exLp-1:mod} and \eqref{eq:exLp-2:mod}. We will use this and interpolation for modular spaces, see \cite{Miy95,CH}, to obtain \eqref{eq:PhiN1} . Assume now that $	1<i_\Phi \leq I_\Phi<\infty$. Let $v$ be a $\B$-weight so that $v \in A_{\infty, \B}$. We claim that
	\begin{equation}\label{eq:RIBFS-M:u-v:mod}
		\text{\eqref{eq:PhiN1} holds whenever $u^{i_\Phi}\, v \in A_{i_\Phi, \B}$ and $u^{I_\Phi}\, v \in A_{I_\Phi, \B}$}.
	\end{equation}
	To see this, fix  $u$ so that $u^{i_\Phi}\, v \in A_{i_\Phi, \B}$ and $u^{I_\Phi}\, v \in A_{I_\Phi, \B}$. By Lemma~\ref{lem:ApRH} parts \eqref{list:open-2} and \eqref{list:open-3}, there exists $r\in (1,p)$ such that $u^{i_\Phi/r} v \in A_{i_\Phi/r, \B}$ and $u^{I_\Phi\,r} v \in A_{I_\Phi\,r, \B}$. Hence,  since $\B$ is a Muckenhoupt basis, we arrive at
	\begin{align}
		\label{eq:RIBFS-1:mod} \|(M_{\B}h)\,u\|_{L^{i_\Phi/r}(\Sigma, v)}
		&\leq C \|h u\|_{L^{i_\Phi/r}(\Sigma, v)}, \quad \forall\,h \in \M_{\mu},
		\\
		\label{eq:RIBFS-2_:mod} \|(M_{\B}h)\,u\|_{L^{I_\Phi\,r}(\Sigma, \, v)} 
		&\leq C \|h u\|_{L^{I_\Phi\,r}(\Sigma, \, v)}, \quad \forall\,h \in \M_{\mu}.
	\end{align} 
	Equivalently, the sublinear operator $h\mapsto M_{\B}(h\, u^{-1})\,u$ is bounded both on $L^{i_\Phi/r}(\Sigma, v)$ and on $L^{I_\Phi\,r}(\Sigma, v)$. 
	Then, interpolation for modular spaces, see \cite{Miy95,CH},  much as in \cite[Lemma~4.20]{CMP}, gives as desired \eqref{eq:RIBFS-M:u-v:mod}.
	
	Our next claim is that 
	\begin{equation}\label{eq:RIBFS-M:u-v:dual:mod} 
		\int_{\Sigma} \overline{\Phi}((M_{\B}h)\, (u\,v)^{-1}\, v\, d\mu 
		\leq C\, \int_{\Sigma} \overline{\Phi}(|h|\, (u\,v)^{-1})\, v\, d\mu, 
		\quad \forall\,h \in \M_{\mu}, 
	\end{equation}
	whenever $u^{i_\Phi}\, v \in A_{i_\Phi, \B}$ and $u^{I_\Phi}\, v \in A_{I_\Phi, \B}$. To show this we note that if we set $\widetilde{u}=(u\,v)^{-1}$, then \eqref{eq:RIBFS-M:u-v:mod} gives that
	\begin{equation}\label{eq:RIBFS-M:u-v:dual:alt:mod} 
		\int_{\Sigma} \overline{\Phi}((M_{\B}h)\, \widetilde{u})\, v\, d\mu 
		\leq C\, \int_{\Sigma} \overline{\Phi}(|h|\, \widetilde{u})\, v\, d\mu ,  \quad 
		\text{if  $\widetilde{u}^{i_{\overline{\Phi}}}\, v \in A_{i_{\overline{\Phi}}, \B}$,\, $\widetilde{u}^{I_{\overline{\Phi}}}\, v \in A_{I_{\overline{\Phi}}, \B}$}.
	\end{equation}
	On the other hand, by \eqref{dilation:duality} and Lemma~\ref{lemma:AP-trivial} we have 
	\[
	\widetilde{u}^{i_{\overline{\Phi}}}\, v \in A_{i_{\overline{\Phi}}, \B}
	\iff 
	u^{-(I_\Phi)'}\,v^{1-(I_\Phi)'}\in A_{(I_\Phi)', \B}
	\iff 
	u^{I_\Phi}\,v\in A_{I_\Phi, \B}
	\]
	and
	\[
	\widetilde{u}^{I_{\overline{\Phi}}}\, v \in A_{I_{\overline{\Phi}}, \B}
	\iff 
	u^{-(i_\Phi)'}\,v^{1-(i_\Phi)'}\in A_{(i_\Phi)', \B}
	\iff 
	u^{i_\Phi}\,v\in A_{i_\Phi, \B}.
	\]
	All these eventually yield \eqref{eq:RIBFS-M:u-v:dual:mod}. Combining \eqref{eq:RIBFS-M:u-v:mod} and \eqref{eq:RIBFS-M:u-v:dual:mod},  we conclude that if $\Phi$ is a Young function with dilation indices satisfying $1<i_\Phi\le I_\Phi <\infty$, then
	\begin{equation}\label{eq:intro-CMP-ri:mod}
		\text{\eqref{eq:PhiN1} and \eqref{eq:PhiN2} hold whenever $u^{i_\Phi}\, v \in A_{i_\Phi, \B}$, $u^{I_\Phi}\, v \in A_{I_\Phi, \B}$, and $v \in A_{\infty, \B}$}.
	\end{equation}
	Before considering some particular examples we note that when $v\equiv 1$ one can easily see that  $u^{i_\Phi} \in A_{i_\Phi, \B}$ and $ u^{I_\Phi}\in A_{I_\Phi, \B}$ if and only if $u\in A_{i_\Phi,I_\Phi}$ (cf.~\eqref{Apq}).

	Consider the Young function $\Phi(t)=t^p\,(\log(e+t))^\alpha$, $1<p<\infty$, $\alpha\in\re$ whose complimentary function is  $\overline{\Phi}(t)\approx t^{p'}\,(\log(e+t))^{\alpha\,(1-p')}$. The dilation indices are $i_{\Phi}=I_{\Phi}=p\in (1,\infty)$,  Hence, \eqref{eq:intro-CMP-ri:mod} becomes
	\begin{equation}\label{eq:intro-CMP-ri:LplogL:mod}
		\text{\eqref{eq:PhiN1} and \eqref{eq:PhiN2} hold whenever
			$u^{p}\, v \in A_{p, \B}$, $v \in A_{\infty, \B}$, $1<p<\infty$}.
	\end{equation}
	Note that when $\alpha=0$ this agrees with Example~\ref{ex:Lp0:mod}.

	Our last examples are $\Phi(t)\approx \min\{t^4,t^6\}$ or $\Phi(t)\approx\max\{t^4,t^6\}$ whose complementary functions are respectively $\overline{\Phi}(t)\approx \max\{t^{4/3},t^{6/5}\}$ and $\overline{\Phi}(t)\approx \min\{t^{4/3},t^{6/5}\}$. In either case $i_{\Phi}=4$ and $I_{\Phi}=6$, hence we can rewrite \eqref{eq:intro-CMP-ri:mod} as
	\begin{equation}\label{eq:intro-CMP-ri:L2L4:mod}
		\text{\eqref{eq:PhiN1} and \eqref{eq:PhiN2} hold whenever $u^{4}\, v \in A_{4, \B}$, $u^{6}\, v \in A_{6, \B}$, and $v \in A_{\infty, \B}$,}
	\end{equation}
	Of course the same can be done with $\Phi(t)\approx \min\{t^p,t^q\}$ or $\Phi(t)\approx\max\{t^p,t^q\}$ with $1<p, q<\infty$ in which case 
	$i_{\Phi}=\min\{p,q\}$ and $I_{\Phi}=\max\{p,q\}$. Further details are left to the interested reader.
	
	As mentioned above, in $(\Rn,\Ln)$, the set of all cubes $\mathcal{Q}$,  the set of all dyadic cubes $\mathcal{D}$, the set of all rectangles $\mathcal{R}$  whose sides are parallel to the coordinate axes, and the collection $\mathcal{Z}$ of Zygmund rectangles in $(\re^3, \L^3)$ whose sides are parallel to the coordinate axes and have lengths $s$, $t$ and $st$ with $s,t>0$, are all Muckenhoupt bases. The same occurs if we consider the collection of all metric balls in  a non-atomic doubling measure metric space $(\Sigma,d,\mu)$. In all the cases one has $A_{\infty, \B}\subset\bigcup_{s>1} RH_{s, \B}$. 
\end{example}

\begin{lemma}\label{lem:Phh}
	Fix $(\Sigma, \mu)$, a non-atomic $\sigma$-finite measure space with $\mu(\Sigma)>0$, and a basis $\B$. Let $\Phi$ be a Young function, and let $u$ and $v$ be $\B$-weights so that  $v(B)<\infty$ for every $B\in\B$ and $v(\Sigma)>0$. Then for every $\varepsilon \in (0, 1)$ and for every non-negative function $h$ on $\Sigma$ with $0<\rho_v^{\Phi}(h\,u)<\infty$, there exists a function $h_{\varepsilon}$ on $\Sigma$ such that 
	\begin{align}\label{eq:Phhu}
		h \le \frac{h_{\varepsilon}}{1-\varepsilon},\quad h_{\varepsilon} > 0 \ \mu \text{-a.e.,} 
		\quad\text{and}\quad \rho_v^{\Phi}(h_{\varepsilon}\,u) \le  \rho_v^{\Phi}(h\,u) 
	\end{align} 
\end{lemma}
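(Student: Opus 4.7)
The plan is to mirror the strategy of Lemma~\ref{lem:hh}, replacing the triangle-inequality argument (valid for the Banach-function-space norm) by a convexity argument for the Young function $\Phi$, since the modular $\rho_v^\Phi$ is not subadditive. Specifically, I will look for $h_{\varepsilon}$ of the form
\[
h_\varepsilon := (1-\varepsilon)\,h + \varepsilon\,G
\]
for an auxiliary function $G > 0$ $\mu$-a.e.\ satisfying $\rho_v^{\Phi}(G\,u) \le \rho_v^{\Phi}(h\,u)$. Convexity of $\Phi$ together with $(1-\varepsilon)+\varepsilon = 1$ gives the pointwise bound $\Phi(h_\varepsilon\,u) \le (1-\varepsilon)\,\Phi(h\,u) + \varepsilon\,\Phi(G\,u)$, which upon integration against $dv$ yields $\rho_v^{\Phi}(h_\varepsilon\,u) \le \rho_v^{\Phi}(h\,u)$. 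Meanwhile $h_\varepsilon \ge (1-\varepsilon)\,h$, equivalent to $h \le h_\varepsilon/(1-\varepsilon)$, and $h_\varepsilon \ge \varepsilon\,G > 0$ $\mu$-a.e.\ by construction.

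To construct $G$ I would reuse the decomposition from the proof of Lemma~\ref{lem:hh}. Writing $\Sigma = \bigcup_{j\ge 1}\Sigma_j$ with $0 < \mu(\Sigma_j) < \infty$, and setting
\[
E_j := \{x \in \Sigma_j : 0 < u(x) \le j,\ 0 < v(x) \le j\},\qquad F_j := E_j \setminus \bigcup_{k<j} E_k,
\]
the hypothesis that $u,v$ are $\B$-weights (together with the standing assumption $\mu(\Sigma\setminus\Sigma_\B) = 0$) guarantees that the disjoint sets $\{F_j\}_{j\ge 1}$ cover $\Sigma$ up to a $\mu$-null set. On $F_j$ one has $u \le j$, $v \le j$, and $\mu(F_j) \le \mu(\Sigma_j) < \infty$, hence $\int_{F_j}\Phi(c\,u)\,dv \le \Phi(c\,j)\,j\,\mu(\Sigma_j)$ for any $c > 0$. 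Since $\Phi$ is continuous with $\Phi(0) = 0$ and the assumption $\rho_v^{\Phi}(h\,u) > 0$ makes the right-hand side strictly positive, I can select each $c_j > 0$ small enough that $\int_{F_j}\Phi(c_j\,u)\,dv \le 2^{-j}\,\rho_v^{\Phi}(h\,u)$. Setting $G := \sum_{j\ge 1} c_j\,\mathbf{1}_{F_j}$, which is strictly positive $\mu$-a.e., the disjointness of the $F_j$ and monotone convergence give $\rho_v^{\Phi}(G\,u) = \sum_{j\ge 1}\int_{F_j}\Phi(c_j\,u)\,dv \le \rho_v^{\Phi}(h\,u)$, as required.

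I do not anticipate any serious obstacle. The only genuine departure from Lemma~\ref{lem:hh} is that one cannot perturb by $h + \varepsilon F$ as there (no triangle inequality is available for $\rho_v^\Phi$), so one must take a convex combination $(1-\varepsilon)\,h + \varepsilon\,G$ and exploit convexity of $\Phi$; this is precisely what forces the factor $1-\varepsilon$ in the statement $h \le h_\varepsilon/(1-\varepsilon)$. The positivity hypothesis $\rho_v^{\Phi}(h\,u) > 0$ is essential for the construction of $G$, since it permits the coefficients $c_j$ to be strictly positive while still controlling the modular of $G\,u$.
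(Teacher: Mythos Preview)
Your proposal is correct and follows essentially the same approach as the paper: both take $h_\varepsilon=(1-\varepsilon)h+\varepsilon G$ and exploit convexity of $\Phi$, constructing the auxiliary $G$ as a weighted combination of indicators over an exhaustion of $\Sigma$ by sets on which $u$, $v$, and $\mu$ are controlled. The only cosmetic difference is that you disjointify the exhaustion and choose the constants $c_j$ via continuity of $\Phi$ at $0$, whereas the paper keeps the nested sets $E_j$, uses the explicit normalization $2^{-j}/(1+\rho_v^\Phi(\mathbf{1}_{E_j}u))$, and absorbs the size of $\rho_v^\Phi(h\,u)$ through the factor $\theta=\min\{1,\rho_v^\Phi(h\,u)\}$; both variants yield $\rho_v^\Phi(G\,u)\le\rho_v^\Phi(h\,u)$ and the argument concludes identically.
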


\begin{proof}
We modify the proof of Lemma~\ref{lem:hh} as follows. Use that $(\Sigma, \mu)$ is $\sigma$-finite to find an increasing sequence of $\mu$-measurable sets $\{\Sigma_j\}_{j=1}^{\infty}$ such that $\Sigma=\bigcup_{j=1}^{\infty} \Sigma_j$ and $0<\mu(\Sigma_j)<\infty$ for each $j$. Consider $E_0$ and the increasing family  $\{E_j\}_{j\ge 1}$ introduced in \eqref{def:E0-Ej} so that $\Sigma=\bigcup_{j=0}^{\infty} E_j$. Recall that there exists $j_0\ge 1$ such that $\mu(E_{j_0})>0$ and $\mu(E_0')=0$ with $E_0'= \Sigma\setminus \bigcup_{j=j_0}^\infty E_j$. We claim that 
\begin{align}\label{eq:Pinf}
	0 < \rho_v^{\Phi}(\mathbf{1}_{E_j}u) < \infty, \quad \forall j \ge j_0. 
\end{align}
In fact, $\rho_v^{\Phi}(\mathbf{1}_{E_j}\,u) \le j\, \Phi(j)\, \mu(\Sigma_j)<\infty$ and the lower bound is obvious since $u,v>0$ in $E_j$ and $\mu(E_j)>0$. 
Let $\theta:=\min\{1, \rho_v^{\Phi}(h\,u)\}\in(0,1]$, and for every $\varepsilon>0$  set
	\begin{align*}
	h_{\varepsilon} := (1-\varepsilon)\,h + \varepsilon\,\theta\, F \quad\text{and}\quad 
	F :=\mathbf{1}_{E'_0} + \sum_{j=j_0}^{\infty} 2^{-j}\, \frac{\mathbf{1}_{E_j}}{1+\rho_v^{\Phi}(\mathbf{1}_{E_j}\,u)}. 
\end{align*}
Note that by \eqref{eq:Pinf},  $h_{\varepsilon}$ and $F$ are well defined in $\Sigma$. By construction $(1-\varepsilon)\,h \le h_{\varepsilon}$. Given $x \in \Sigma \setminus E'_0=\bigcup_{j=j_0}^{\infty} E_j$,  there exists $j_x \ge j_0$ such that $x \in E_{j_x}$. From this and \eqref{eq:Pinf}, we conclude that 
\begin{align*}
	F(x) \ge 2^{-j_x} \frac{\mathbf{1}_{E_{j_x}}(x)}{1+\rho_v^{\Phi}(\mathbf{1}_{E_{j_x}}\,u)} 
	= \frac{2^{-j_x}}{1+\rho_v^{\Phi}(\mathbf{1}_{E_{j_x}}u)} >0. 
\end{align*}
This and the  fact that $\mu(E_0')=0$ readily implies that $h_{\varepsilon}\ge \varepsilon\,\theta\,F>0$ $\mu$-a.e. On the other hand, since $\mu(E_0')=0$ and $\Phi$ is convex, one has 
\begin{align*}
	\rho_v^{\Phi}(h_{\varepsilon}\,u)
	&\le (1-\varepsilon)\, \rho_v^{\Phi}(h\,u) + \varepsilon\,\theta\, \rho_v^{\Phi}(F\,u)
	\\
	&=(1-\varepsilon) \rho_v^{\Phi}(h\,u) + \varepsilon\,\theta\,  \rho_v^{\Phi}(F\,u\, \mathbf{1}_{E'_0}) 
	+ \varepsilon\,\theta\,  \rho_v^{\Phi}(F\,u\, \mathbf{1}_{\Sigma \setminus E'_0})
	\\
	&\le (1-\varepsilon) \rho_v^{\Phi}(h\,u) + \varepsilon\,\theta\,  \sum_{j=j_0}^{\infty} 2^{-j} 
	\frac{ \rho_v^{\Phi}(\mathbf{1}_{E_j}\,u)}{1+\rho_v^{\Phi}(\mathbf{1}_{E_j}\,u)}
	\\
	&\le
	(1-\varepsilon) \rho_v^{\Phi}(h\,u) + \varepsilon\,\theta
	\\
		&\le
	\rho_v^{\Phi}(h\,u). 
\end{align*}
This shows \eqref{eq:Phhu} and the proof is complete. 
\end{proof}

\section{Extrapolation on weighted Banach function spaces}\label{sec:BFSextr}

This section is devoted to establishing a variety of  extrapolation theorems on the general weighted Banach function spaces introduced above. We begin with the so-called $A_p$ extrapolation. Hereafter, a family of extrapolation pairs $\F$ is a collection of pairs $(f, g)$ of nonnegative measurable functions.
\begin{theorem}\label{thm:BFSAp}
Suppose that $(\Sigma, \mu)$ is a non-atomic $\sigma$-finite measure space with $\mu(\Sigma)>0$. Let $\B$ be a basis and let $\F$ be a family of extrapolation pairs. Let $u$ and $v$ be $\B$-weights on $(\Sigma, \mu)$ such that $v(B)<\infty$ for every $B\in\B$, and let $\X_v$ be a Banach function space over $(\Sigma, v)$. Let $M_{\B}$ denote the Hardy-Littlewood maximal function on $(\Sigma,\mu)$ associated with $\B$ and let $M'_{\B, v}h:=M_{\B}(h\,v)/v$ for each $h \in\M_{\mu}$. Then, the following hold:	
\begin{list}{\textup{(\theenumi)}}{\usecounter{enumi}\leftmargin=1cm \labelwidth=1cm \itemsep=0.2cm \topsep=.2cm \renewcommand{\theenumi}{\alph{enumi}}}
	
	\item Let $p_0\in (1,\infty)$, and assume that there are $\mathcal{N}_1,\mathcal{N}_2<\infty$ so that 
	\begin{align}
	\label{eqn:thm:BFSAp:M} \|(M_{\B}h)\,u\|_{\X_v} &\leq \mathcal{N}_1 \|h\,u\|_{\X_v}, \quad \forall\,h \in \M_{\mu}, 
	\\
	\label{eqn:thm:BFSAp:Mprime} \|(M'_{\B, v}h)\,u^{-1} \|_{\X'_v} &\leq \mathcal{N}_2 \|h\,u^{-1}\|_{\X'_v}, \quad \forall\,h \in\M_{\mu}.
	\end{align}
	If for every $w \in A_{p_0, \B}$, one has
	\begin{equation}\label{eq:uXv-3}
	\|f\|_{L^{p_0}(\Sigma,w)} \leq \Psi([w]_{A_{p_0, \B}})\|g\|_{L^{p_0}(\Sigma,w)}, \qquad(f,g) \in \F, 
	\end{equation}  
	where $\Psi: [1, \infty) \to [1, \infty)$ is a non-decreasing function, then 
	\begin{equation}\label{eq:uXv-4}
	\|f\,u\|_{\X_v} \leq C_0\, \|g\,u\|_{\X_v}, \qquad(f,g) \in \F,
	\end{equation}
	and 
	\begin{equation}\label{eq:uXv-54-alt}
	\bigg\|\Big(\sum_j f_j^{p_0}\Big)^{\frac1{p_0}}u\bigg\|_{\X_v} 
	\leq C_0 \bigg\|\Big(\sum_j g_j^{p_0}\Big)^{\frac1{p_0}}u\bigg\|_{\X_v}, \quad \{(f_j, g_j)\}_j \subset \F, 
	\end{equation}
	where $C_0=2^{3+\frac{2}{p'_0}}\,\Psi(2^{p_0}\,\mathcal{N}_1^{p_0-1}\, \mathcal{N}_2)$.

	\item 	Assume that there is $\mathcal{N}_2<\infty$ so that \eqref{eqn:thm:BFSAp:Mprime} holds. If for every $w \in A_{1, \B}$, one has
	\begin{equation}\label{eq:uXv-3:1}
	\|f\|_{L^{1}(\Sigma,w)} \leq \Psi([w]_{A_{1, \B}})\|g\|_{L^{1}(\Sigma,w)}, \qquad(f,g) \in \F, 
	\end{equation}  
	where $\Psi: [1, \infty) \to [1, \infty)$ is a non-decreasing function, then 
	\begin{equation}\label{eq:uXv-4:1}
	\|f\,u\|_{\X_v} \leq 8\, \Psi(2\,\mathcal{N}_2)\, \|g\,u\|_{\X_v}, \qquad(f,g) \in \F. 
	\end{equation}
	and 
	\begin{equation}\label{eq:uXv-54-alt:1}
	\bigg\|\Big(\sum_j f_j\Big)\, u\bigg\|_{\X_v} 
	\leq 
	8\, \Psi(2\,\mathcal{N}_2)\, \bigg\|\Big(\sum_j g_j\Big)\,u\bigg\|_{\X_v}, \qquad \{(f_j, g_j)\}_j \subset \F. 
	\end{equation}
\end{list}
Moreover, in any of the two scenarios if $\B$ is additionally assumed to be a Muckenhoupt basis, then \eqref{eq:uXv-3} or \eqref{eq:uXv-3:1} imply that for every $q \in (1, \infty)$, 
\begin{equation}\label{eq:uXv-5}
\bigg\|\Big(\sum_j f_j^q\Big)^{\frac1q} u\bigg\|_{\X_v} 
\leq C\, \bigg\|\Big(\sum_j g_j^q\Big)^{\frac1q}u\bigg\|_{\X_v}, \qquad \{(f_j, g_j)\}_j \subset \F. 
\end{equation}
\end{theorem}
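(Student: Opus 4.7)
The plan is a Rubio de Francia algorithm tailored to the two weighted norms $\|\cdot\,u\|_{\X_v}$ and $\|\cdot\,u^{-1}\|_{\X'_v}$. Define
\begin{equation*}
\mathcal{R}_1 k := \sum_{j=0}^{\infty} \frac{M_\B^j k}{(2\mathcal{N}_1)^j},\qquad
\mathcal{R}_2 k := \sum_{j=0}^{\infty} \frac{(M'_{\B,v})^j k}{(2\mathcal{N}_2)^j}.
\end{equation*}
A routine geometric-series computation, combined with \eqref{eqn:thm:BFSAp:M} and \eqref{eqn:thm:BFSAp:Mprime}, yields $k\le \mathcal{R}_1 k$, $[\mathcal{R}_1 k]_{A_{1,\B}}\le 2\mathcal{N}_1$, $\|\mathcal{R}_1 k\cdot u\|_{\X_v}\le 2\|k\,u\|_{\X_v}$ and, using the identity $M'_{\B,v}k=M_{\B}(kv)/v$, $k\le \mathcal{R}_2 k$, $[\mathcal{R}_2 k\cdot v]_{A_{1,\B}}\le 2\mathcal{N}_2$, $\|\mathcal{R}_2 k\cdot u^{-1}\|_{\X'_v}\le 2\|k\,u^{-1}\|_{\X'_v}$.

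For part (a), fix $(f,g)\in\F$ and assume $\|gu\|_{\X_v}<\infty$. By \eqref{eq:fX-norm} and Remark~\ref{rem:supoverg} it suffices to bound $\int fu\cdot G\,dv$ uniformly over nonnegative $G\in\X'_v$ with $0<\|G\|_{\X'_v}\le 1$. Write $H:=uG$, so $\|Hu^{-1}\|_{\X'_v}\le 1$, and use Lemma~\ref{lem:hh} to replace $g$ and $G$ by strictly positive majorants $g_\varepsilon\ge g$ and $G_\varepsilon\ge G$ (hence $H_\varepsilon:=uG_\varepsilon\ge H$) whose norms are inflated by at most $1+\varepsilon$; this guarantees that the iterated maximal functions are strictly positive and finite $\mu$-a.e. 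The key weight is
\begin{equation*}
W := \bigl(\mathcal{R}_2 H_\varepsilon\cdot v\bigr)\,\bigl(\mathcal{R}_1 g_\varepsilon\bigr)^{1-p_0},
\end{equation*}
which by Lemma~\ref{lemma:AP-trivial}(c) belongs to $A_{p_0,\B}$ with $[W]_{A_{p_0,\B}}\le 2^{p_0}\mathcal{N}_1^{p_0-1}\mathcal{N}_2$. Splitting $fH_\varepsilon v=(fW^{1/p_0})\,(H_\varepsilon v\,W^{-1/p_0})$ and using Hölder's inequality with exponents $(p_0,p_0')$, the first factor is controlled by \eqref{eq:uXv-3}, while the pointwise identity $(1-p_0)(1-p_0')=1$ together with $g\le \mathcal{R}_1 g_\varepsilon$ and $H_\varepsilon^{p_0'}(\mathcal{R}_2 H_\varepsilon)^{1-p_0'}\le H_\varepsilon$ yield
\begin{equation*}
g^{p_0} W \le \mathcal{R}_1 g_\varepsilon\cdot \mathcal{R}_2 H_\varepsilon\cdot v,\qquad (H_\varepsilon v)^{p_0'} W^{1-p_0'}\le H_\varepsilon\cdot \mathcal{R}_1 g_\varepsilon\cdot v.
\end{equation*}
Integrating against $d\mu$ and applying the generalized Hölder inequality \eqref{eq:Holder} in the pair $(\X_v,\X'_v)$, both resulting integrals are dominated by constant multiples of $(1+\varepsilon)^{O(1)}\|gu\|_{\X_v}$ via the Rubio de Francia norm bounds. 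Assembling the pieces and letting $\varepsilon\to 0^{+}$ produces \eqref{eq:uXv-4} with a constant of the claimed form $C_0$.

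Part (b) follows from the same blueprint but without a Hölder split: choose $w:=\mathcal{R}_2 H_\varepsilon\cdot v\in A_{1,\B}$ with constant $\le 2\mathcal{N}_2$, estimate $\int fH_\varepsilon v\,d\mu\le \int f\,dw\le \Psi(2\mathcal{N}_2)\int g\,dw$ by \eqref{eq:uXv-3:1}, and close using \eqref{eq:Holder} and $\|\mathcal{R}_2 H_\varepsilon\cdot u^{-1}\|_{\X'_v}\le 2(1+\varepsilon)$. The vector-valued inequalities \eqref{eq:uXv-54-alt} and \eqref{eq:uXv-54-alt:1} are obtained by enlarging $\F$: the pair $\bigl((\sum_j f_j^{p_0})^{1/p_0},(\sum_j g_j^{p_0})^{1/p_0}\bigr)$ (resp.\ $(\sum_j f_j,\sum_j g_j)$) satisfies the same scalar hypothesis with the same $\Psi$ because $L^{p_0}(w)$-norms interact additively with $p_0$-th powers, and then the scalar conclusion proved above applies. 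Finally, \eqref{eq:uXv-5} for general $q\in(1,\infty)$ is obtained by a two-step argument: first invoke classical Rubio de Francia extrapolation on the weighted Lebesgue spaces $L^{q}(\Sigma,w)$ (this is where the Muckenhoupt basis assumption enters) to upgrade \eqref{eq:uXv-3} or \eqref{eq:uXv-3:1} to the scalar estimate at the exponent $q$; then apply \eqref{eq:uXv-54-alt} with $q$ in place of $p_0$.

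The main technical obstacle is the bookkeeping required to run the Rubio de Francia iteration in an abstract weighted Banach function space: one must secure that $\mathcal{R}_1 g_\varepsilon$ and $\mathcal{R}_2 H_\varepsilon$ are genuine $\B$-weights (strictly positive and finite $\mu$-a.e.\ on $\Sigma_\B=\Sigma$) so that the quantitative $A_{1,\B}$ bounds feed Lemma~\ref{lemma:AP-trivial}(c) legitimately. Lemmas~\ref{lem:AXprev}, \ref{lem:X-geq1}, and especially Lemma~\ref{lem:hh} are precisely what allow these positivity/finiteness subtleties to be cleared without losing the quantitative constants; once they are in place the proof reduces to the clean Hölder--factorization/$A_{p_0,\B}$-weight interplay described above.
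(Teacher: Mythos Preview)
Your proposal is correct and follows essentially the same Rubio de Francia blueprint as the paper: the same iteration algorithms $\mathcal{R}_1,\mathcal{R}_2$, the same $A_{p_0,\B}$ weight $W=(\mathcal{R}_2 H_\varepsilon\cdot v)(\mathcal{R}_1 g_\varepsilon)^{1-p_0}$, the same H\"older split combined with \eqref{eq:Holder}, and the same handling of the vector-valued and Muckenhoupt-basis extensions. The one organizational difference is that the paper factors the core estimate into a separate proposition (Proposition~\ref{prop:extrapol-maintool}) asserting the existence of $w=w(f,g)\in A_{p_0,\B}$ with the two-sided embedding \eqref{eq:embedding:f-g}, and then in the theorem proof truncates $f$ to $f_N$ so that $\|f_N u\|_{\X_v}<\infty$ before invoking the proposition; your direct duality argument (bounding $\int f u G\,dv$ uniformly in $G$) bypasses the need for that truncation step. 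A small point: Lemma~\ref{lem:hh} is stated for functions with norm at most $1$, so strictly speaking you should normalize $g$ before applying it (the paper sets $h_1=g/\|gu\|_{\X_v}$), but this is cosmetic.
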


\begin{remark}
Theorem \ref{thm:BFSAp} generalizes \cite[Theorems~3.9 and 4.10]{CMP}, \cite[Theorem~2.6]{CW}, and \cite[Theorem 8.2]{MMMMM20}. Indeed, if $\X_v=L^p(\Rn, v)$ with $1<p<\infty$, then both \eqref{eqn:thm:BFSAp:M} and \eqref{eqn:thm:BFSAp:Mprime} are equivalent to $u^p v \in A_{p, \B}$. On the other hand, if $\X_v$ is a rearrangement invariant Banach function space with $1<p_{\X}\le q_{\X}<\infty$ and $u \equiv 1$, then $v \in A_{p_{\X}, \B}$ implies both  \eqref{eqn:thm:BFSAp:M} and \eqref{eqn:thm:BFSAp:Mprime} hold (cf. \cite[Lemma~4.12]{CMP}) provided that $\B$ is open  for the Muckenhoupt classes. Finally, if we take $\X_v=L^{p(\cdot)}(\Rn,\Ln)$, then \eqref{eqn:thm:BFSAp:M} and \eqref{eqn:thm:BFSAp:Mprime} agree with the condition that $(p(\cdot), u)$ is an $M$-pair (cf. \cite[p.~1209]{CW}).  
\end{remark} 

\begin{remark}\label{remark:rescale}
	As in \cite[Section 3.3]{CMP} one can easily rescale in the previous result. To be more precise, suppose that for some $r>0$ and $p_0\in[r,\infty]$ there holds
	\begin{equation}\label{eq:uXv-3:rescale}
	\|f\|_{L^{p_0}(\Sigma,w)} \leq \Psi([w]_{A_{p_0/r, \B}})\|g\|_{L^{p_0}(\Sigma,w)}, \qquad(f,g) \in \F, 
	\end{equation}  
	for all $w\in A_{p_0/r,\B}$, and where $\Psi: [1, \infty) \to [1, \infty)$ is a non-decreasing function. Then, one can rewrite \eqref{eq:uXv-3:rescale} as
	\begin{equation}\label{eq:uXv-3:rescale:alt}
	\|f^r\|_{L^{p_0/r}(\Sigma,w)} \leq \Psi([w]_{A_{p_0/r, \B}})\|g^r\|_{L^{p_0/r}(\Sigma,w)}, \qquad(f,g) \in \F.
	\end{equation}  
	Thus we can apply Theorem~\ref{thm:BFSAp} to the previous expression (that is, to the family $\F_r$ of pairs $(f^r,g^r)$ with $(f,g)\in\F$) to easily obtain, using the notation  introduced in \eqref{Xr}, 
	\begin{equation}\label{eq:uXv-4:rescale}
	\|f\,u\|_{\X_v^r} \lesssim \|g\,u\|_{\X_v^r}, \qquad(f,g) \in \F. 
	\end{equation}
	provided
	\begin{align}
	\label{thm:BFSAp:M:rescale} \norm{(M_{\B}h)\,u^r}_{\X_v} &\lesssim \|h\,u^r\|_{\X_v}, \quad \forall\,h \in\M_{\mu}, 
	\\
	\label{thm:BFSAp:Mprime:rescale} \|(M'_{\B, v}h)\,u^{-r} \|_{\X'_v} &\lesssim \|h\,u^{-r}\|_{\X'_v}, \quad \forall\,h \in\M_{\mu}.
	\end{align}
	when $r<p_0$ and assuming only \eqref{thm:BFSAp:Mprime:rescale} when $r=p_0$. Further details are left to the interested reader. 	
\end{remark}

The proof of the previous result will be based on the following proposition which is interesting on its own right.
\begin{proposition}\label{prop:extrapol-maintool}
	Suppose that $(\Sigma, \mu)$ is a non-atomic $\sigma$-finite measure space with $\mu(\Sigma)>0$. Let $\B$ be a basis,  $u$ and $v$ be $\B$-weights on $(\Sigma, \mu)$ such that $v(B)<\infty$ for every $B\in\B$, and let $\X_v$ be a Banach function space over $(\Sigma, v)$. Let $M_{\B}$ denote the Hardy-Littlewood maximal function on $(\Sigma,\mu)$ associated with $\B$ and let $M'_{\B, v}h := M_{\B}(h\,v)/v$ for each $h \in\M_{\mu}$. 
	Then, for every $f,g\in \M_\mu$ so that $\|f\,u\|_{\X_v}, \|g\,u\|_{\X_v}<\infty$ the following hold:	
	\begin{list}{\textup{(\theenumi)}}{\usecounter{enumi}\leftmargin=1cm \labelwidth=1cm \itemsep=0.2cm \topsep=.2cm \renewcommand{\theenumi}{\alph{enumi}}}

	\item Let $p_0\in (1,\infty)$, and assume that there are $\mathcal{N}_1,\mathcal{N}_2<\infty$ so that both \eqref{eqn:thm:BFSAp:M} and \eqref{eqn:thm:BFSAp:Mprime} hold. Then, there exists a $\B$-weight $w=w(f,g)\in A_{p_0,\B}$ satisfying $[w]_{A_{p_0,\B}}\le 2^{p_0}\,\mathcal{N}_1^{p_0-1} \,\mathcal{N}_2$ and such that
	\begin{equation}\label{eq:embedding:f-g}
	\|f\,u\|_{\X_v} \leq 2^{1+\frac{4}{p'_0}}\,\|f\|_{L^{p_0}(\Sigma,w)}  
	\quad\text{ and }\quad 
	\|g\|_{L^{p_0}(\Sigma,w)}\le  2^{\frac2{p_0}}\,\|g\,u\|_{\X_v}. 	
	\end{equation}
	In particular, there exists a $\B$-weight $w=w(f)\in A_{p_0,\B}$ such that $[w]_{A_{p_0,\B}}\le 2^{p_0}\,\mathcal{N}_1^{p_0-1} \,\mathcal{N}_2$ and 
	\begin{equation}\label{eq:embedding:f}
	2^{-1-\frac{4}{p'_0}}\, \|f\,u\|_{\X_v}\leq  \|f\|_{L^{p_0}(\Sigma,w)}  \le 2^{\frac2{p_0}}\, \|f\,u\|_{\X_v}. 	
	\end{equation}
	
	\item Assume that there is $\mathcal{N}_2<\infty$ so that \eqref{eqn:thm:BFSAp:M} holds. Then, there exists a Muckenhoupt weight $w=w(f,g)\in A_{1,\B}$ satisfying $[w]_{A_{p_0,\B}}\le 2\,\mathcal{N}_2$ and such that
	\begin{equation}\label{eq:embedding:f-g:1}
	\|f\,u\|_{\X_v} \leq 2\,\|f\|_{L^{1}(\Sigma,w)}  
	\qquad\text{and}\qquad 
	\|g\|_{L^{1}(\Sigma,w)}\le  4\,\|g\,u\|_{\X_v}. 	
	\end{equation}
	In particular, there exists a $\B$-weight $w=w(f)\in A_{1,\B}$ satisfying $[w]_{A_{1,\B}}\le 2\,\mathcal{N}_2$ and such that
	\begin{equation}\label{eq:embedding:f:1}
	2^{-1}\, \|f\,u\|_{\X_v}\leq  \|f\|_{L^{1}(\Sigma,w)}  \le 4\, \|f\,u\|_{\X_v}. 	
	\end{equation}
		\end{list}
	\end{proposition}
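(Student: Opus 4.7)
The plan is to employ a Rubio de Francia iteration adapted to the weighted Banach function space setting. The key device is to replace $M_{\B}$ and $M'_{\B,v}$ by the conjugated operators
\[
Sh:=u\,M_{\B}(h/u), \qquad S'h:=u^{-1}\,M'_{\B, v}(h\,u),
\]
which, by \eqref{eqn:thm:BFSAp:M} and \eqref{eqn:thm:BFSAp:Mprime}, are bounded on $\X_v$ with norm $\mathcal{N}_1$ and on $\X'_v$ with norm $\mathcal{N}_2$, respectively. The corresponding iterates
\[
\mathcal{R}h:=\sum_{k=0}^{\infty}\frac{S^kh}{(2\mathcal{N}_1)^k}, \qquad
\mathcal{R}'h:=\sum_{k=0}^{\infty}\frac{(S')^kh}{(2\mathcal{N}_2)^k}
\]
then satisfy the standard Rubio de Francia properties: (i) $h\le \mathcal{R}h$ and $h\le \mathcal{R}'h$; (ii) $\|\mathcal{R}h\|_{\X_v}\le 2\|h\|_{\X_v}$ and $\|\mathcal{R}'h\|_{\X'_v}\le 2\|h\|_{\X'_v}$; and (iii) the pointwise bounds $M_{\B}(\mathcal{R}h/u)\le 2\mathcal{N}_1(\mathcal{R}h/u)$ and $M_{\B}(\mathcal{R}'h\cdot uv)\le 2\mathcal{N}_2(\mathcal{R}'h\cdot uv)$. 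Hence both $\mathcal{R}h/u$ and $\mathcal{R}'h\cdot uv$ lie in $A_{1,\B}$ with constants bounded by $2\mathcal{N}_1$ and $2\mathcal{N}_2$.

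For part (a), I would define
\[
w:=(\mathcal{R}'h_2\cdot uv)\,(\mathcal{R}h_1/u)^{1-p_0} = (\mathcal{R}'h_2)\,(\mathcal{R}h_1)^{1-p_0}\,u^{p_0}\,v,
\]
which, by Lemma~\ref{lemma:AP-trivial}(c), lies in $A_{p_0,\B}$ with $[w]_{A_{p_0,\B}}\le 2^{p_0}\mathcal{N}_1^{p_0-1}\mathcal{N}_2$, as required. After disposing of the trivial cases $\|fu\|_{\X_v}=0$ or $\|gu\|_{\X_v}=0$, the auxiliary functions are chosen as follows. Apply Lemma~\ref{lem:hh} (with $\varepsilon=1$) to $g/\|gu\|_{\X_v}$ to produce $h_\varepsilon>0$ with $h_\varepsilon\ge g/\|gu\|_{\X_v}$ and $\|h_\varepsilon u\|_{\X_v}\le 2$, and set $h_1:=h_\varepsilon u$, so $h_1>0$, $h_1\ge gu/\|gu\|_{\X_v}$, and $\|h_1\|_{\X_v}\le 2$. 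For $h_2$, use Remark~\ref{rem:supoverg} together with \eqref{eq:fX-norm} to select $\varphi_0\ge 0$ with $\|\varphi_0\|_{\X'_v}\le 1$ and $\int fu\,\varphi_0\,dv\ge \tfrac12\|fu\|_{\X_v}$, and apply Lemma~\ref{lem:hh} to $\X'_v$ (with trivial weight) to produce $h_2>0$ with $h_2\ge \varphi_0$ and $\|h_2\|_{\X'_v}\le 2$. Thus $w=w(f,g)$ is well defined.

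The two estimates in \eqref{eq:embedding:f-g} follow from Hölder's inequality. For the upper bound on $\|g\|_{L^{p_0}(\Sigma,w)}$, the pointwise inequality $gu\le \|gu\|_{\X_v}\mathcal{R}h_1$ yields $(gu)^{p_0}(\mathcal{R}h_1)^{1-p_0}\le \|gu\|_{\X_v}^{p_0-1}(gu)$; integrating against $(\mathcal{R}'h_2)v\,d\mu$ and pairing via \eqref{eq:Holder} delivers $\int g^{p_0}w\,d\mu\le 4\|gu\|_{\X_v}^{p_0}$. For the upper bound on $\|fu\|_{\X_v}$, the identity $(1-p_0)(1-p_0')=1$ gives $w^{1-p_0'}=(\mathcal{R}'h_2)^{1-p_0'}(\mathcal{R}h_1)\,u^{-p_0'}\,v^{1-p_0'}$, and Hölder with exponents $p_0,p_0'$ applied to $\int f\varphi_0 uv\,d\mu\ge\tfrac12\|fu\|_{\X_v}$ reduces the task to controlling $\int\varphi_0^{p_0'}(\mathcal{R}'h_2)^{1-p_0'}(\mathcal{R}h_1)v\,d\mu$. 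The pointwise bound $\varphi_0^{p_0'}(\mathcal{R}'h_2)^{1-p_0'}\le \varphi_0$ (from $\varphi_0\le h_2\le \mathcal{R}'h_2$) followed by one more Hölder pairing bounds this by $\|\varphi_0\|_{\X'_v}\|\mathcal{R}h_1\|_{\X_v}\le 4$, yielding the desired estimate. Part (b) follows the same blueprint with the simpler $w:=\mathcal{R}'h_2\cdot uv\in A_{1,\B}$, for which only $\mathcal{R}'$ (and hence only \eqref{eqn:thm:BFSAp:Mprime}) is needed; the bound on $fu$ comes directly from the pointwise inequality $\varphi_0\le \mathcal{R}'h_2$. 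Finally, \eqref{eq:embedding:f} and \eqref{eq:embedding:f:1} are obtained from \eqref{eq:embedding:f-g} and \eqref{eq:embedding:f-g:1} by taking $g=f$.

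The main technical subtlety is the design of $S$ and $S'$: they absorb the pointwise multiplier $u$ so that $\mathcal{R}h/u$ and $\mathcal{R}'h\cdot uv$ become bona fide $A_{1,\B}$ weights whose product yields an $A_{p_0,\B}$ weight with an algebraic structure compatible with the $\X_v$--$\X'_v$ duality used in the Hölder estimates. Without this conjugation, the two inequalities in \eqref{eq:embedding:f-g} cannot be obtained simultaneously from a single weight $w$.
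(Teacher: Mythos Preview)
Your proof is correct and is essentially the same as the paper's. The conjugated operators $S$ and $S'$ are merely a change of variables: unwinding $S^kh=u\,M_{\B}^k(h/u)$ and $(S')^kh=u^{-1}(M'_{\B,v})^k(hu)$ shows that your $\mathcal{R}h_1/u$ and $\mathcal{R}'h_2\cdot u$ coincide with the paper's $\mathcal{R}\widetilde{h}_1$ and $\mathcal{R}'\widetilde{h}_2$, and hence your weight $w$ equals the paper's $(\mathcal{R}\widetilde{h}_1)^{1-p_0}(\mathcal{R}'\widetilde{h}_2)\,v$; the only cosmetic difference is that for the bound on $\|fu\|_{\X_v}$ you exploit the pointwise inequality $\varphi_0^{p_0'}(\mathcal{R}'h_2)^{1-p_0'}\le\varphi_0$ before pairing, whereas the paper first passes to $\mathcal{R}'\widetilde{h}_2$ and then bounds $\int(\mathcal{R}\widetilde{h}_1)(\mathcal{R}'\widetilde{h}_2)\,dv$ --- this yields a slightly sharper constant $2^{1+2/p_0'}$ on your side, well within the claimed $2^{1+4/p_0'}$.
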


Assuming this result momentarily we can easily prove Theorem \ref{thm:BFSAp}:

\begin{proof}[Proof of Theorem~\ref{thm:BFSAp}]
Fix $(f, g) \in \F$, and without loss of generality, we may assume that $\|g\,u\|_{\X_v}<\infty$, otherwise there is nothing to prove. We claim that  $f<\infty$ $\mu$-a.e. Otherwise, there exists a measurable set $E \subset \Sigma$ with $\mu(E)>0$ such that $f=\infty$ on $E$. In view of \eqref{eq:uXv-3} and the fact that $\B$-weights are $\mu$-a.e positive on $\Sigma$, it follows that 
\begin{align}\label{eq:gAi}
\|g\|_{L^{p_0}(\Sigma, w)}=\infty \quad\text{ for every } w \in A_{p_0, \B},
\end{align} 
and this clearly contradicts \eqref{eq:embedding:f} in Proposition~\ref{prop:extrapol-maintool} applied to $g$.

To proceed we recall that $(\Sigma, \mu)$ is $\sigma$-finite, hence there exists an increasing sequence of $\mu$-measurable sets  $\{\Sigma_j\}_{j=1}^{\infty}$ such that $\Sigma=\bigcup_{j=1}^{\infty} \Sigma_j$ and $\mu(\Sigma_j)<\infty$ for each $j$. For every $N \ge 1$, we define 
\[
f_N = f\, \mathbf{1}_{\Sigma'_N} := f\, \mathbf{1}_{\{x\in \Sigma_N: f(x)\leq N, u(x) \le N, v(x) \le N\}}.
\] 
Note that $v(\Sigma'_N) \le N \mu(\Sigma_N)<\infty$. Since $\X_v$ is a Banach function space over $(\Sigma, v)$, thanks to the property (vi) in Definition \ref{def:BFS}, one has  
\begin{align*}
\|f_N\, u\|_{\X_v} \le N^2 \|\mathbf{1}_{\Sigma'_N}\|_{\X_v}<\infty. 
\end{align*}
Consider first the case $p_0>1$. Then applying Proposition~\ref{prop:extrapol-maintool} part (a), we can find a weight $w=w(f_N, g) \in A_{p_0, \B}$ with $[w]_{A_{p_0, \B}} \le 2^{p_0}\mathcal{N}_1^{p_0-1} \mathcal{N}_2$ such that 
\begin{equation}\label{eq:fNu}
\|f_N\,u\|_{\X_v} \le 2^{1+\frac{4}{p'_0}} \|f_N\|_{L^{p_0}(\Sigma, w)} \quad\text{and}\quad 
\|g\|_{L^{p_0}(\Sigma, w)} \le 2^{\frac{2}{p_0}} \|g\,u\|_{\X_v}.  
\end{equation}
This, together with \eqref{eq:uXv-3}, yields 
\begin{multline}\label{eq:fNugu}
\|f_Nu\|_{\X_v} \le 2^{1+\frac{4}{p'_0}} \|f_N\|_{L^{p_0}(\Sigma, w)} 
\le 2^{1+\frac{4}{p'_0}} \|f\|_{L^{p_0}(\Sigma, w)} 
\\
\le 2^{1+\frac{4}{p'_0}} \Psi([w]_{A_{p_0, \B}}) \|g\|_{L^{p_0}(\Sigma, w)} 
\le 2^{3+\frac{2}{p'_0}} \Psi(2^{p_0}\mathcal{N}_1^{p_0} \mathcal{N}_2) \|g\,u\|_{\X_v}.  
\end{multline}
Recalling that $f<\infty$ $\mu$-a.e.~and that $u, v$ are $\B$-weights we have $f_N\,u \nearrow f\,u$ as $N \to \infty$ $\mu$-a.e., hence also $v$-a.e., since $v$ is a $\B$-weight. Therefore, \eqref{eq:fNugu} and property (v) in Definition \ref{def:BFS} immediately give \eqref{eq:uXv-4}, as desired. The case $p_0=1$ follows using the same argument but invoking Proposition~\ref{prop:extrapol-maintool} part (b), details are left to the interested reader.

To complete the proof we need to justify the vector-valued inequalities \eqref{eq:uXv-54-alt}, \eqref{eq:uXv-54-alt:1}, and \eqref{eq:uXv-5}. For any $q \in (1, \infty)$, introduce the new family $\F_{\ell^q}$ of extrapolation pairs $(F, G)$, where 
\begin{align}\label{def:F-vv}
\F_{\ell^q}:=\Big\{(F,G)=\Big(\Big(\sum_{j} f_j^q \Big)^{\frac1q},\Big(\sum_{j} g_j^q \Big)^{\frac1q}\Big): \{(f_j, g_j)\}_j \subset \F\Big\}. 
\end{align}
Note that for every $(F,G)\in\F_{\ell^{p_0}}$ and for every $w\in A_{p_0,\B}$, one easily gets from \eqref{eq:uXv-3} if $p_0>1$ or \eqref{eq:uXv-3:1} if $p_0=1$
\begin{align*}
\|F\|_{L^{p_0}(\Sigma, w)}^{p_0} 
= \sum_{j} 
\int_{\Sigma} f_j^{p_0} w\, d\mu
\le 
 \Psi([w]_{A_{p_0, \B}})^{p_0}\,
\sum_{j} \int_{\Sigma} g_j^q w\, d\mu 
= 
 \Psi([w]_{A_{p_0, \B}})^{p_0} \|G\|_{L^{p_0}(\Sigma, w)}^{p_0}. 
\end{align*}
Thus, \eqref{eq:uXv-54-alt} follows from \eqref{eq:uXv-4} when $p_0>1$ and \eqref{eq:uXv-54-alt:1} follows from \eqref{eq:uXv-4:1} when $p_0=1$, either of them applied to $\F_{\ell^{p_0}}$: for every $(F,G)\in\F_{\ell^{p_0}}$,
\begin{multline*}
\bigg\|\Big(\sum_j f_j^{p_0}\Big)^{\frac1{p_0}} u\bigg\|_{\X_v} 
=\|F\,u\|_{\X_v} \leq 2^{3+\frac{2}{p'_0}}\Psi(2^{p_0}\mathcal{N}_1^{p_0-1} \mathcal{N}_2)\, \|G\,u\|_{\X_v}
\\
=2^{3+\frac{2}{p'_0}}\Psi(2^{p_0}\mathcal{N}_1^{p_0-1} \mathcal{N}_2)\, \bigg\|\Big(\sum_j g_j^{p_0}\Big)^{\frac1{p_0}}u\bigg\|_{\X_v}.
\end{multline*}

We are left with showing \eqref{eq:uXv-5} under the additional assumption that $\B$ is a Muckenhoupt basis. The main point is that such a condition allows us to apply Rubio de Francia extrapolation theorem in the present context of Muckenhoupt basis (cf.~\cite[Theorem~3.9]{CMP}) to replace $p_0$ in \eqref{eq:uXv-3} by any $q\in (1,\infty)$. More specifically, fix $q\in (1,\infty)$ and note that \eqref{eqn:thm:BFSAp:M} and \eqref{eqn:thm:BFSAp:Mprime} hold with $u\equiv 1$, $v\in A_{q,\B}$, $\X_v=L^{q}(\Sigma, v)$, precisely because  $\B$ is a Muckenhoupt basis and hence $M_\B$ is bounded in $\X_v=L^{q}(\Sigma, v)$ and in $\X_v=L^{q'}(\Sigma, v^{1-q'})$ for every $v\in A_{q,\B}$. As such \eqref{eq:uXv-4} in this particular case implies a version of  \eqref{eq:uXv-3} with $q$ in place of $p_0$. This becomes our new initial assumption and we readily obtain 
\eqref{eq:uXv-54-alt} with $q$ in place of $p_0$, and this is eventually \eqref{eq:uXv-5}, completing the proof. 
\end{proof}

\begin{proof}[Proof of Proposition~\ref{prop:extrapol-maintool}]
	We fist observe that \eqref{eq:embedding:f-g} (resp.~\eqref{eq:embedding:f-g:1}) with $g:=f$ readily implies \eqref{eq:embedding:f} (resp.~\eqref{eq:embedding:f-g:1}). We then focus on obtaining \eqref{eq:embedding:f-g} and \eqref{eq:embedding:f-g:1}. With this goal in mind, we generalize some of the ideas of \cite[Theorem~4.10]{CMP}. Fix $f, g\in\M_\mu$ with $\|f\,u\|_{\X_v}<\infty$ and  $\|g\,u\|_{\X_v}<\infty$. We start with (a), that is, we fix $p_0>1$ and assume \eqref{eqn:thm:BFSAp:M} and \eqref{eqn:thm:BFSAp:Mprime} for some finite constants $\mathcal{N}_1$ and $\mathcal{N}_2$. For any $h_1,h_2\in\M_\mu$ with $h_1, h_2>0$ $\mu$-a.e., $h_1\, u \in \X_v$ and $h_2\, u^{-1} \in \X'_v$, we define the Rubio de Francia iteration algorithms as:
	\begin{equation*}
	\mathcal{R}h_1 :=\sum_{k=0}^{\infty}\frac{M_{\B}^k h_1}{2^k\, \mathcal{N}_1^k} \qquad\text{and}\qquad
	\mathcal{R}'h_2 :=\sum_{k=0}^{\infty}\frac{(M'_{\B, v})^k h_2}{2^k\, \mathcal{N}_2^k},
	\end{equation*} 
	where $M_{\B}^0$ and $(M'_{\B, v})^0$ denote the identity operator, while for every $k\ge 1$, we write $M_{\B}^k=M_{\B}\circ \dots\circ M_{\B}$ for the $k$-th iteration of $M_{\B}$ and	$(M'_{\B,v})^k=M'_{\B,v}\circ \dots\circ M'_{\B,v}$ for the $k$-th iteration of $M'_{\B,v}$.
	We claim that the following hold: 
	\begin{alignat}{2}
	h_1 &\leq \mathcal{R}h_1\qquad\qquad &	h_2 &\leq \mathcal{R}' h_2, 
	\label{RdF1}
	\\[4pt]
	\norm{(\mathcal{R}h_1)\,u}_{\X_v} &\leq 2\,\norm{h_1\, u}_{\X_v}, \qquad\qquad &		\norm{(\mathcal{R}'h_2)\,u^{-1}}_{\X'_v} &\leq 2\,\norm{h_2\, u^{-1}}_{\X'_v},
	\label{RdF2}
	\\[4pt]
	\left[\mathcal{R}h_1\right]_{A_{1, \B}}&\leq 2\, \mathcal{N}_1, \qquad\qquad &			[(\mathcal{R}' h_2)\, v]_{A_{1, \B}}&\leq 2\, \mathcal{N}_2.
	\label{RdF3}
	\end{alignat}
	Indeed, \eqref{RdF1} and \eqref{RdF2} are immediate consequences of the definitions, \eqref{eqn:thm:BFSAp:M} and \eqref{eqn:thm:BFSAp:Mprime}. 
	On the other hand, using that $h_1, h_2$ are positive almost everywhere, we can obtain from \eqref{RdF1}
	\begin{equation*}
	\mathcal{R}h_1(x) \ge h_1(x)>0, \quad \mu \text{-a.e.~} x \in \Sigma,
	\quad\text{and}\quad
	\mathcal{R}'h_2(x) \ge h_2(x)>0, \quad \mu \text{-a.e.~} x \in \Sigma.
	\end{equation*} 
	Besides, since by assumption $v(B)<\infty$ for every $B\in\B$ and $\X_v$ is a Banach function space over $(\Sigma, v\,d\mu)$, by the property (vi) in Definition \ref{def:BFS}, and \eqref{RdF2}, one has for every $B\in\B$
	\begin{equation*}
	\int_{B} (\mathcal{R}h_1)\,u\, v\,d\mu 
	=
	\int_{B} (\mathcal{R}h_1)\,u\,dv
	\le C_B \|(\mathcal{R}h_1)\,u\|_{\X_v} 
	\le 
	2
	C_B \|h_1 u\|_{\X_v}<\infty. 
	\end{equation*}
	This in turn implies that $(\mathcal{R}h_1)\,u\,v<\infty$ $\mu$-a.e.~in $\Sigma_\B$. This eventually shows that $\mathcal{R}h_1$ is a $\B$-weight. Likewise, $\mathcal{R}'h_2$ is also a $\B$-weight and so is $(\mathcal{R}'h_2) \, v$. Moreover,
	\[
	M_{\B}(\mathcal{R} h_1) 
	\leq 
	\sum_{k=0}^{\infty} \frac{M_{\B}^{k+1} h_1}{2^k \mathcal{N}_1^k} 
	\le  
	2\,\mathcal{N}_1\, \mathcal{R} h_1
	\quad\text{ and }\quad 	
	M'_{\B,v}(\mathcal{R}' h_2) 
	\leq 
	\sum_{k=0}^{\infty} \frac{(M'_{\B, v})^{k+1} h_2}{2^k \mathcal{N}_2^k} 
	\leq 2\,\mathcal{N}_2\, \mathcal{R}'h_2.
		\]
	These readily imply \eqref{RdF3}. 
	
	To proceed, we claim that it suffices to consider the case $\|f\,u\|_{\X_v}>0$ and $\|g\,u\|_{\X_v}>0$. Indeed, if $\|f\,u\|_{\X_v}=0$, or $\|g\,u\|_{\X_v}=0$, or both, we just use the claim with $f$, or $g$, or both, replaced by $\mathbf{1}_B$ for some fixed $B \in \B$. Note that $\|\mathbf{1}_B u\|_{\X_v}>0$ by \eqref{eq:BuXv} in Lemma~\ref{lem:AXprev}. Assume then that  $0<\|f\,u\|_{\X_v}, \|g\,u\|_{\X_v}<\infty$.  Set $h_1:= \frac{g}{\norm{gu}_{\X_v}}$ so that  $h_1\ge 0$ $\mu$-a.e.~and $\|h_1 u\|_{\X_v}=1$. In view of \eqref{eq:fX-norm}, there exists a non-negative function $h \in \X'_v$ with $\|h\|_{\X'_v} \leq 1$ such that 
	\begin{align}\label{eq:fhu}
	\|f\,u\|_{\X_v} \le 2 \int_{\Sigma} f\, h\,u\,dv= \int_{\Sigma} f\, h_2\,v\, d\mu
	\end{align}
	where $h_2:=h\,u$ satisfies $h_2\,u^{-1}\in \X'_v$ with $\|h_2\,u^{-1}\|_{\X'_v} \leq 1$. Note that since $\|f\,u\|_{\X_v}>0$ it follows that $\mu(\{h>0\})>0$, hence $h_2$ is not zero $\mu$-a.e.~in $\Sigma$.  Applying Lemma \ref{lem:hh} with $\varepsilon=1$ to $h_1$ and $h_2$, we can find two functions $\widetilde{h}_1,\widetilde{h}_2> 0$ $\mu$-a.e.~such that 
	\begin{align}\label{eq:hhh}
	h_1 \le \widetilde{h}_1, \quad h_2 \le \widetilde{h}_2,\quad 
	\|\widetilde{h}_1\,u\|_{\X_v} \le 2 \quad\text{and}\quad \|\widetilde{h}_2\,u^{-1}\|_{\X'_v} \le 2.  
	\end{align} 
	Set
	\begin{equation*}
	w:=(\mathcal{R}\widetilde{h}_1)^{1-p_0} (\mathcal{R}'\widetilde{h}_2)\, v. 
	\end{equation*}
	From \eqref{RdF3} and \eqref{eq:Ap-product}, we deduce that $w \in A_{p_0, \B}$ and 
	\begin{equation*}
	[w]_{A_{p_0}} \leq [\mathcal{R}\widetilde{h}_1]_{A_{1, \B}}^{p_0-1} [(\mathcal{R}'\widetilde{h}_2)\,v]_{A_{1, \B}}
	\leq 2^{p_0} \mathcal{N}_1^{p_0-1} \mathcal{N}_2.
	\end{equation*}
	Also, using \eqref{eq:Holder} and \eqref{RdF2} we obtain 
	\begin{multline}\label{eq:RgRh}
	\int_{\Sigma} \mathcal{R}\widetilde{h}_1\, \mathcal{R}'\widetilde{h}_2\,v\,d\mu
	=
	\int_{\Sigma} (\mathcal{R}\widetilde{h}_1)\,u\, (\mathcal{R}'\widetilde{h}_2)\,u^{-1}\,dv
	\\
	\leq \|(\mathcal{R}\widetilde{h}_1)\,u\|_{\X_v} \|(\mathcal{R}'\widetilde{h}_2)\,u^{-1}\|_{\X'_v}  
	\leq 4\,\|\widetilde{h}_1\,u\|_{\X_v}\, \|\widetilde{h}_2\,u^{-1}\|_{\X'_v} \leq 16
	\end{multline}
	and
	\begin{multline}\label{eq:RgRh*}
	\int_{\Sigma} g\, (\mathcal{R}'\widetilde{h}_2)\,v\,d\mu
	=
	\int_{\Sigma} g\,u\, (\mathcal{R}'\widetilde{h}_2)\,u^{-1}\,dv
	\\
	\leq \|g\,u\|_{\X_v}\, \|(\mathcal{R}'\widetilde{h}_2)\,u^{-1}\|_{\X'_v}  
	\leq 2\,\|g\,u\|_{\X_v}\,\|\widetilde{h}_2\,u^{-1}\|_{\X'_v} \leq 4\,\|g\,u\|_{\X_v}.
	\end{multline}
	Then, by \eqref{eq:fhu}, \eqref{eq:hhh}, \eqref{RdF1}, Hölder's inequality, and \eqref{eq:RgRh}, 
	\begin{multline*}
	2^{-1} \|f\,u\|_{\X_v} \le \int_{\Sigma} f\, \widetilde{h}_2\,v\, d\mu 
	\le 
	\int_{\Sigma} f\, (\mathcal{R}\widetilde{h}_1)^{-\frac{1}{p'_0}}\, 
	(\mathcal{R}\widetilde{h}_1)^{\frac{1}{p'_0}}\, (\mathcal{R}'\widetilde{h}_2)\, v\, d\mu
	\\[4pt]
	\leq 
	\left(\int_{\Sigma} f^{p_0}\,(\mathcal{R}\widetilde{h}_1)^{1-p_0}\, (\mathcal{R}'\widetilde{h}_2)\, v\,d\mu\right)^{\frac{1}{p_0}} 
	\left(\int_{\Sigma} (\mathcal{R}\widetilde{h}_1)\,(\mathcal{R}'(\widetilde{h}_2)\,v\,d\mu\right)^{\frac{1}{p_0'}} 
	\le 
	16^{\frac{1}{p'_0}} \|f\|_{L^{p_0}(\Sigma, w)}.
	\end{multline*}
	This shows the first inequality in \eqref{eq:embedding:f-g}. On the other hand, $g/\norm{gu}_{\X_v}=h_1\le \widetilde{h}_1\le \mathcal{R}\widetilde{h}_1$ by \eqref{eq:hhh},  \eqref{RdF1}. Thus, \eqref{eq:RgRh*} yields 
	\begin{multline*}
	\|g\|_{L^{p_0}(\Sigma,w_0)} 
	=\left(\int_{\Sigma} g^{p_0}\,(\mathcal{R}\widetilde{h}_1)^{1-p_0}\,
	(\mathcal{R}' \widetilde{h}_2)\,v\, d\mu\right)^{\frac{1}{p_0}}
	\\[4pt]
	\leq \|g\,u\|_{\X_v}^{\frac{1}{p_0'}} \left(\int_{\Sigma} g\, (\mathcal{R}'\widetilde{h}_2)\,v\,d\mu\right)^{\frac{1}{p_0}}
	\le 2^{\frac{2}{p_0}} \norm{gu}_{\X_v}.  
	\end{multline*} 
	This shows the second estimate in \eqref{eq:embedding:f-g} and completes the proof of the case $p_0>1$.
	
	Let us next deal with (b), that is, we consider the case $p_0=1$ and only assume that there exists $\mathcal{N}_2<\infty$ so that \eqref{eqn:thm:BFSAp:Mprime} holds. We follow the proof of the previous case and this time we do not use $\mathcal{R}$, that is, \eqref{eqn:thm:BFSAp:M} needs not to be assumed (hence $\mathcal{N}_1$ could be infinity).  With the notation above, let us set $w:=\mathcal{R}'(\widetilde{h}_2)\,v$. The second estimate in \eqref{RdF3} implies $w \in A_{1, \B}$ and $[w]_{A_{1, \B}} \le 2\,\mathcal{N}_2$. Additionally, \eqref{eq:fhu} and the second estimate in \eqref{eq:hhh} give
	\begin{align*}
	2^{-1} \|f\,u\|_{\X_v} 
	\le 
	\int_{\Sigma} f\, \widetilde{h}_2\,v\, d\mu 
	\le \int_{\Rn} f\, (\mathcal{R}'\widetilde{h}_2)\,v\, d\mu = \|f\|_{L^1(\Sigma, w)}. 
	\end{align*}
	On the other hand, \eqref{eq:Holder}, the second estimate in \eqref{RdF2}, and the last in \eqref{eq:hhh} readily imply
	\begin{multline*}
	\|g\|_{L^1(\Sigma, w)} 
	= 
	\int_{\Sigma} g\, (\mathcal{R}' \widetilde{h}_2)\,v\, d\mu
	=
	\int_{\Sigma} g\,u\, (\mathcal{R}' \widetilde{h}_2)\,u^{-1}\,dv
	\\
	\le 
	\|g\,u\|_{\X_v}\, \|(\mathcal{R}' \widetilde{h}_2)\,u^{-1}\|_{\X'_v} 
	\le 
	2\, \|g\,u\|_{\X_v} \|\widetilde{h}_2\,u^{-1}\|_{\X'_v} 
	\le 4\, \|g\,u\|_{\X_v}. 
	\end{multline*}
	These prove \eqref{eq:embedding:f-g:1}. 
\end{proof}

The next goal is to prove an extrapolation theorem for $A_{\infty, \B}$ weights. 

\begin{theorem}\label{thm:BFSAi}
Suppose that $(\Sigma, \mu)$ is a non-atomic $\sigma$-finite measure space with $\mu(\Sigma)>0$. Let $\B$ be a Muckenhoupt basis and let $\F$ be a family of extrapolation pairs. Let $u$ and $v$ be $\B$-weights on $(\Sigma, \mu)$ such that $v(B)<\infty$ for every $B\in\B$, and let $\X_v$ be a Banach function space over $(\Sigma, v)$. Let $M_{\B}$ denote the Hardy-Littlewood maximal function on $(\Sigma,\mu)$ associated with $\B$ and let $M'_{\B, v}h := M_{\B}(h\,v)/v$ for each $h \in \M_{\mu}$. Assume that there exists $\mathcal{N}<\infty$ such that
\begin{align}\label{eq:BFSAi-1} 
\|(M'_{\B, v} h)\, u^{-1}\|_{\X'_v} \le \mathcal{N} \|h\, u^{-1}\|_{\X'_v},\quad\forall\,h \in \M_{\mu}. 
\end{align} 
If for some $p_0 \in (0,\infty)$ and for every $w \in A_{\infty, \B}$, 
\begin{equation}\label{eq:BFSAi-2}
\|f\|_{L^{p_0}(\Sigma,w)} \leq \Psi([w]_{A_{\infty,\B}}) \|g\|_{L^{p_0}(\Sigma,w)}, \quad(f,g) \in \F, 
\end{equation} 
where $\Psi:[1,\infty) \to [1,\infty)$ is a non-decreasing function, then for every $p\in (0, \infty)$, 
\begin{equation}\label{eq:BFSAi-3}
\|f^p\,u\|_{\X_v} \leq C\, \|g^p\,u\|_{\X_v},\qquad(f,g) \in \F, 
\end{equation}
and for every $q \in (0, \infty)$, 
\begin{equation}\label{eq:BFSAi-4}
\bigg\|\Big(\sum_j f_j^q\Big)^{\frac{p}{q}} u\bigg\|_{\X_v} 
\leq C\, \bigg\|\Big(\sum_j g_j^q\Big)^{\frac{p}{q}}\bigg\|_{\X_v}, \qquad \{(f_j, g_j)\}_j \subset \F. 
\end{equation}
\end{theorem}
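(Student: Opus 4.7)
The plan is to follow the Rubio de Francia scheme used in Proposition~\ref{prop:extrapol-maintool}, but with two simplifications specific to the $A_\infty$ setting. First, only the iteration based on $M'_{\B,v}$ is required (there is no analogue of $\mathcal{R}$), since \eqref{eq:BFSAi-1} is a one-sided hypothesis that gives no control of $M_\B$ on $\X_v$. Second, the weight produced by that iteration automatically lies in $A_{1,\B}\subset A_{\infty,\B}$, which is precisely the class on which \eqref{eq:BFSAi-2} is postulated, so the initial estimate applies directly.

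Before running the main argument I would invoke the well-known self-improving property of the $A_\infty$ hypothesis: if \eqref{eq:BFSAi-2} holds at some $p_0\in(0,\infty)$, then it in fact holds at every exponent $p_1\in(0,\infty)$, with a possibly different non-decreasing function $\Psi_1$. This is classical in the Euclidean framework (cf.~\cite[Theorem~2.1]{CMP04}) and the argument adapts to a general Muckenhoupt basis. Consequently, fixing an arbitrary $p\in(0,\infty)$, we may assume without loss of generality that \eqref{eq:BFSAi-2} holds at the exponent $p$.

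Now fix $(f,g)\in\F$ with $\|g^p u\|_{\X_v}<\infty$ (the general case follows by a truncation step as in the proof of Theorem~\ref{thm:BFSAp}). Using \eqref{eq:fX-norm} together with Remark~\ref{rem:supoverg}, select a non-negative $h\in\X'_v$ with $\|h\|_{\X'_v}\leq 1$ so that $\|f^p u\|_{\X_v}\leq 2\int_\Sigma f^p (uh)\,v\,d\mu$. Set $h_2:=uh$ and apply Lemma~\ref{lem:hh} to obtain $\tilde h_2\geq h_2$ with $\tilde h_2>0$ $\mu$-a.e.~and $\|\tilde h_2\, u^{-1}\|_{\X'_v}\leq 2$. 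Define
\[
\mathcal{R}'\tilde h_2:=\sum_{k=0}^{\infty}\frac{(M'_{\B,v})^k\tilde h_2}{(2\mathcal{N})^k},
\]
which, by the same reasoning as in the proof of Proposition~\ref{prop:extrapol-maintool}, satisfies $\tilde h_2\leq\mathcal{R}'\tilde h_2$, $\|(\mathcal{R}'\tilde h_2)\,u^{-1}\|_{\X'_v}\leq 4$, and $W:=(\mathcal{R}'\tilde h_2)\,v\in A_{1,\B}\subset A_{\infty,\B}$ with $[W]_{A_{\infty,\B}}\leq 2\mathcal{N}$. Combining the self-improved initial hypothesis at the exponent $p$ with \eqref{eq:Holder} then gives
\[
\|f^p u\|_{\X_v}\leq 2\int_\Sigma f^p\,W\,d\mu\leq 2\,\Psi_1(2\mathcal{N})^p\int_\Sigma g^p\,W\,d\mu\leq 8\,\Psi_1(2\mathcal{N})^p\,\|g^p u\|_{\X_v},
\]
which is \eqref{eq:BFSAi-3}. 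The vector-valued inequality \eqref{eq:BFSAi-4} follows by running \eqref{eq:BFSAi-3} on the family $\F_{\ell^q}$ defined in \eqref{def:F-vv}, which inherits \eqref{eq:BFSAi-2} at the exponent $p_0$ by the same term-by-term computation used in the proof of Theorem~\ref{thm:BFSAp}.

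The main obstacle I anticipate is the self-improving step just invoked, namely upgrading \eqref{eq:BFSAi-2} from the single exponent $p_0$ to every exponent $p\in(0,\infty)$. In the Euclidean setting this is a standard consequence of the reverse Hölder inequality for $A_\infty$ weights; in the present generality it requires checking that the argument of \cite[Theorem~2.1]{CMP04} transfers to the Muckenhoupt-basis framework without extra hypotheses on $\B$. Once this is settled, the remainder of the proof is simply a one-sided variant of Proposition~\ref{prop:extrapol-maintool}.
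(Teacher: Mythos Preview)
Your approach is essentially the paper's: self-improve the $A_\infty$ hypothesis from $p_0$ to every exponent, then apply the one-sided Rubio de Francia iteration based on $M'_{\B,v}$ (which is exactly Theorem~\ref{thm:BFSAp}\,(b)/Proposition~\ref{prop:extrapol-maintool}\,(b) specialized to the pairs $(f^p,g^p)$). The vector-valued step is also identical.

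The one place where your write-up diverges from the paper is the self-improving step, and here your diagnosis is slightly off. You flag it as possibly needing reverse H\"older and worry whether \cite[Theorem~2.1]{CMP04} transfers to general Muckenhoupt bases. The paper does not appeal to reverse H\"older at all: it obtains the self-improvement \emph{internally}, using only the Muckenhoupt-basis assumption. Concretely, given $p\in(0,\infty)$ and $w\in A_{\infty,\B}$, pick $s>\max\{p/p_0,1\}$ with $w\in A_{s,\B}$, set $r=p_0 s/p>1$, rewrite \eqref{eq:BFSAi-2} as an $L^r$ estimate for the rescaled pairs $(f^{p_0/r},g^{p_0/r})$ valid for all $A_{r,\B}$ weights, and then apply Theorem~\ref{thm:BFSAp}\,(a) with $u\equiv1$, $\X_v=L^q(\Sigma,v)$ (legitimate precisely because $\B$ is a Muckenhoupt basis). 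Taking $q=s$ and $v=w$ gives \eqref{eq:BFSAi-2} at exponent $p$. So your ``main obstacle'' dissolves once you recycle the $A_p$ extrapolation already proved in the paper, and no openness or reverse H\"older property of $\B$ is required.
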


\begin{proof} 
The first thing that we are going to show is that \eqref{eq:BFSAi-2} holds for all $p\in (0,\infty)$ and $w\in A_{\infty,\B}$. This is an extension of \cite{CMP04}, \cite[Corollary~3.15]{CMP} to our current setting. For completeness we include the argument which follows easily from Theorem~\ref{thm:BFSAp}. To see this take an arbitrary $p\in (0,\infty)$ and $w\in A_{\infty, \B}$. The definition of $A_{\infty, \B}$ and the fact that the $A_{q,\B}$ are nested imply that we can pick $s>\max\{p/p_0,1\}$ so that $w\in A_{s,\B}$. Select then $r=p_0\,s/p\in (1,\infty)$ and note that  \eqref{eq:BFSAi-2} implies in particular that
\begin{equation}\label{qwerwege}
\|f^{p_0/r}\|_{L^{r}(\Sigma,w_0)} \leq \Psi([w]_{A_{r,\B}}) \|g^{p_0/r}\|_{L^{r}(\Sigma,w_0)}, \quad(f,g) \in \F,
\end{equation}
for every $w_0\in A_{r,\B}$. With this in hand we are going to invoke Rubio de Francia extrapolation theorem in this context. More precisely,  we first observe that \eqref{eqn:thm:BFSAp:M} and \eqref{eqn:thm:BFSAp:Mprime} hold with $u\equiv 1$, $v\in A_{r,\B}$, $\X_v=L^{r}(\Sigma, v)$, because  $\B$ is a Muckenhoupt basis and hence $M_\B$ is bounded in $\X_v=L^{r}(\Sigma, v)$ and in $\X_v=L^{r'}(\Sigma, v^{1-r'})$ for every $v\in A_{r,\B}$. Hence, Theorem~\ref{thm:BFSAp} part (a) applied in this scenario with \eqref{qwerwege} playing the role of \eqref{eq:uXv-3} (and where it may be convenient to think of the pairs $(f^{p_0/r},g^{p_0/r})$ as the elements of a new family $\F_{p_0/r}$) yields 
\begin{equation}\label{qwerwege:1}
\|f^{p_0/r}\|_{L^{q}(\Sigma,w_0)} \lesssim \|g^{p_0/r}\|_{L^{q}(\Sigma,w_0)}, \quad(f,g) \in \F,
\end{equation}
for every $w_0\in A_{q,\B}$,  since $q>1$. Taking in particular $q=s>1$, and $w_0=w \in A_{s,\B}$, our choice of $r$ readily implies 
\begin{equation}\label{qwerwege:22}
\|f\|_{L^{p}(\Sigma,w)} \lesssim \|g\|_{L^{p}(\Sigma,w)}, \quad(f,g) \in \F,
\end{equation}
where we recall that $p\in (0,\infty)$ and $w\in A_{\infty,\B}$ are arbitrary.

We are now ready to establish the desired estimates. Fixed $p\in (0,\infty)$,  we have by \eqref{qwerwege:22} that
\begin{equation}\label{qwerwege:2}
\|f^p\|_{L^{1}(\Sigma,w)} \leq C\, \|g^p\|_{L^{1}(\Sigma,w)}, \quad(f,g) \in \F,
\end{equation}
for every $w\in A_{\infty,\B}$, hence in particular for every $w\in A_{1,\B}$. We are now ready to invoke Theorem~\ref{thm:PhiAp} part (b) (the reader may  find convenient to introduce the family $\F_p$ consisting of the pairs $(f^p, g^p)$ with $(f,g)\in\F$) to conclude that \eqref{eq:uXv-4:1} yields \eqref{eq:BFSAi-3} as desired:
\[
\|f^p\,u\|_{\X_v} \lesssim \|g^p\,u\|_{\X_v}, \qquad(f,g) \in \F. 
\]

To obtain \eqref{eq:BFSAi-4} we observe that, with the notation introduced in \eqref{def:F-vv}, one has that \eqref{qwerwege:22} implies for any $0<q<\infty$
\begin{equation}\label{qwerwege:3}
\|F\|_{L^{q}(\Sigma,w)}^q 
= \sum_{j} 
\int_{\Sigma} f_j^{q}\, w\, d\mu
\lesssim  
\sum_{j} \int_{\Sigma} g_j^q\, w\, d\mu 
= 
\|G\|_{L^{q}(\Sigma, w)}^q, \quad(F,G) \in \F_{\ell^q},
\end{equation}
for every $w\in A_{\infty,\B}$. The same argument we have used to show that \eqref{qwerwege:2} yields \eqref{eq:BFSAi-3} can be then repeated to see that 
\eqref{qwerwege:3} implies \eqref{eq:BFSAi-4}. This completes the proof. 
\end{proof}

We formulate the limited range extrapolation on rearrangement invariant Banach function spaces as follows.  
\begin{theorem}\label{thm:lim-RIBFS}
	Suppose that $(\Sigma, \mu)$ is a non-atomic $\sigma$-finite measure space with $\mu(\Sigma)>0$. Let $\B$ be a Muckenhoupt basis so that $A_{\infty, \B}\subset\bigcup_{s>1} RH_{s, \B}$, let $\F$ be a family of extrapolation pairs, and let $1 \le \p_{-}< \p_{+}\le \infty$. Assume that for some $\p_0 \in [\p_-, \p_+]$ and for every $w_0 \in A_{\p_0/{\p_{-}}, \B} \cap RH_{(\p_+/\p_0)', \B}$, there holds
	\begin{equation}\label{eq:lim-3}
		\|f\|_{L^{\p_0}(\Sigma,w_0)} \leq C \|g\|_{L^{\p_0}(\Sigma,w_0)},\qquad(f,g) \in \F. 
	\end{equation}
	If $\X$ is a rearrangement invariant Banach function space over $(\Sigma, \mu)$ such that $\X^{\frac1r}$ is a Banach function space for some $r>\p_-$
	and 
	$\p_{-}<p_{\X} \le q_{\X}< \p_{+}$, 
	then 
	\begin{equation}\label{eq:lim-4}
		\|f\,u\|_{\X} \leq C \|g\,u\|_{\X}, \qquad(f, g) \in \F, 
	\end{equation}
	for every 
	\begin{equation}\label{eq:upuq} 	
		u^{p_{\X}} \in A_{p_{\X}/\p_{-}, \B} \cap RH_{(\p_{+}/p_{\X})', \B} \, \text{ and }\,  u^{q_{\X}} \in A_{q_{\X}/\p_{-}, \B} \cap RH_{(\p_{+}/q_{\X})',\B}. 
	\end{equation}
\end{theorem}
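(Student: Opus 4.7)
My plan is to deduce Theorem~\ref{thm:lim-RIBFS} from Theorem~\ref{thm:BFSAp} by a rescaling argument. The key identity $\|hu\|_\X=\|h^r\,u^r\|_{\X^{1/r}}^{1/r}$ transfers the desired $\X$-estimate into an analogous estimate on the rearrangement-invariant Banach function space $\Y:=\X^{1/r}$ for a carefully chosen $r>\p_-$. The rescaled form of Theorem~\ref{thm:BFSAp} (Remark~\ref{remark:rescale}), applied to the rescaled family $\F_r:=\{(f^r,g^r):(f,g)\in\F\}$ and weight $U:=u^r$, will then deliver \eqref{eq:lim-4}.

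First I will pick $r\in(\p_-,p_\X)$, sufficiently close to $\p_-$, so that $\X^{1/r}$ is a Banach function space and $u^{p_\X}\in A_{p_\X/r,\B}$, $u^{q_\X}\in A_{q_\X/r,\B}$. Such an $r$ exists: by hypothesis $\X^{1/r_0}$ is a Banach function space for some $r_0>\p_-$, and the identity $\X^{1/r}=(\X^{1/r_0})^{r_0/r}$ combined with the fact that raising a Banach function space to a power $\ge 1$ preserves the Banach function space property shows that $\X^{1/r}$ is a Banach function space for every $r\in(\p_-,r_0]$. Likewise, Lemma~\ref{lem:ApRH} (applicable since $A_{\infty,\B}\subset\bigcup_{s>1}RH_{s,\B}$) yields openness of the two $A_p\cap RH_s$ conditions in \eqref{eq:upuq}, so that, upon shrinking $r$ toward $\p_-$, the purely $A_p$ conditions above can be guaranteed. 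The space $\Y=\X^{1/r}$ is then rearrangement invariant with Boyd indices $1<p_\Y=p_\X/r\le q_\Y=q_\X/r<\infty$, and setting $U:=u^r$, Example~\ref{ex:RIBFS} (specifically \eqref{eq:intro-CMP-ri:v=1}) produces
\[
\|(M_\B h)\,U\|_\Y\lesssim\|h\,U\|_\Y\quad\text{and}\quad\|(M_\B h)\,U^{-1}\|_{\Y'}\lesssim\|h\,U^{-1}\|_{\Y'},
\]
which are precisely \eqref{eqn:thm:BFSAp:M}--\eqref{eqn:thm:BFSAp:Mprime} for $\Y$, $U$, and $v\equiv 1$.

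To invoke the rescaled Theorem~\ref{thm:BFSAp} with parameter $r$ and initial exponent $\p_0$, I still need $\|f\|_{L^{\p_0}(w)}\lesssim\|g\|_{L^{\p_0}(w)}$ for every $w\in A_{\p_0/r,\B}$, whereas \eqref{eq:lim-3} supplies it only for the strictly smaller class $A_{\p_0/\p_-,\B}\cap RH_{(\p_+/\p_0)',\B}$. To bridge this gap I plan to first carry out a preliminary limited-range $L^p$ extrapolation (extending \cite[Theorem~3.31]{CMP} to the present Muckenhoupt basis setting) that propagates \eqref{eq:lim-3} to every $p\in(\p_-,\p_+)$ and every $w\in A_{p/\p_-,\B}\cap RH_{(\p_+/p)',\B}$, and then use Lemma~\ref{lem:RHApq} together with the freedom in choosing $r$ and the rescaling exponent to convert the $A\cap RH$ weight class into the plain $A_{\p_0/r,\B}$ class required. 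Once this is done, Theorem~\ref{thm:BFSAp} produces $\|f^r\,U\|_\Y\lesssim\|g^r\,U\|_\Y$, and the identity $\|fu\|_\X=\|f^r\,U\|_\Y^{1/r}$ delivers \eqref{eq:lim-4}. The hard part will be precisely this initial-hypothesis step: reconciling the $RH$-restricted class with the plain $A_{\p_0/r}$ class ultimately calls for a limited-range Rubio de Francia iteration with two coordinated iterations (one per endpoint $\p_-$ and $\p_+$) that modifies the construction of Proposition~\ref{prop:extrapol-maintool} to produce weights in $A_{\p_0/\p_-,\B}\cap RH_{(\p_+/\p_0)',\B}$ rather than merely in $A_{\p_0/r,\B}$.
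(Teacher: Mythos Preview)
Your proposal contains a genuine gap, and you yourself have put your finger on it. The reduction to Theorem~\ref{thm:BFSAp} via Remark~\ref{remark:rescale} requires the starting estimate to hold for \emph{every} weight in a plain $A_{\p_0/r,\B}$ class. When $\p_+<\infty$ the hypothesis \eqref{eq:lim-3} (even after limited-range $L^p$ extrapolation to every $p\in(\p_-,\p_+)$) never supplies this: it only covers the strictly smaller class $A_{p/\p_-,\B}\cap RH_{(\p_+/p)',\B}$. Choosing $r$ close to $\p_-$ enlarges the target class $A_{p/r,\B}$, which is the wrong direction; and Lemma~\ref{lem:RHApq} concerns the weights $u$ appearing in the conclusion, not the weights $w$ in the hypothesis, so it cannot eliminate the reverse-H\"older restriction. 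The equivalence \eqref{eq:JN} does transform $w\in A_p\cap RH_s$ into $w^s\in A_\tau$, but this rescaling of the weight is incompatible with the rescaling of the norm: $\|f\|_{L^p(w)}$ cannot be rewritten as $\|\widetilde f\|_{L^{\widetilde p}(w^s)}$ for a pair-independent $\widetilde f$. Hence there is no way to feed the limited-range hypothesis into the black box of Theorem~\ref{thm:BFSAp}.

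Your final paragraph essentially concedes this and states that one must ``modify the construction of Proposition~\ref{prop:extrapol-maintool} to produce weights in $A_{\p_0/\p_-,\B}\cap RH_{(\p_+/\p_0)',\B}$.'' That modification \emph{is} the proof. The paper carries it out explicitly: after reducing to $\p_-=1$ and choosing suitable parameters $p_*,s,\alpha_1,\alpha_2,\beta_1,\beta_2$, it builds two Rubio de Francia iterations $\mathcal{R}_1$ and $\mathcal{R}_2$ acting on the scaled spaces $\X^{1/\alpha_1}$ and $(\Y')^{1/\alpha_2}$ (with $\Y=\X^{1/s}$), using the maximal-function bounds \eqref{eq:intro-CMP-ri:v=1} at the indices governed by the $A_{p_\X,q_\X}$ condition coming from Lemma~\ref{lem:RHApq}. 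The weight $w=H_1^{-s(r_*-1)}H_2\,u^s$ manufactured from these two iterations satisfies $w^{(\p_+/p_*)'}\in A_{\tau_{p_*},\B}$ via the $A_1$ factorization \eqref{eq:Ap-product}, hence $w\in A_{p_*,\B}\cap RH_{(\p_+/p_*)',\B}$ by \eqref{eq:JN}; this is precisely how both endpoints $\p_-$ and $\p_+$ are encoded simultaneously. Your outline of rescaling to $\Y=\X^{1/r}$ and checking the maximal-function bounds there is a reasonable warm-up, but it omits the entire construction that produces a weight in the correct restricted class, and without that step the argument does not close.
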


\begin{remark}
	The classical limited range extrapolation \cite[Theorem~4.9]{AM1} and \cite[Theorem~3.31]{CMP} is a particular case of Theorem~\ref{thm:lim-RIBFS} by simply taking $\X=L^p(\Sigma,\mu)$ with $\p_-<p<\p_+$. Note that with $r=p>\p_{-}$ we have $\X^{\frac1r}= L^{1}(\Sigma,\mu)$ is a Banach function space. Note that \eqref{eq:lim-4} and \eqref{eq:upuq} can be rewritten as 
	\[
	\|f\,u\|_{L^p(\Sigma,\mu)}
	\le
	C\, \|g\,u\|_{L^p(\Sigma,\mu)}, \qquad(f, g) \in \F, 
	\]
	for all $u^{p} \in A_{p/\p_{-}, \B} \cap RH_{(\p_{+}/p)', \B}$. 
\end{remark}

\begin{proof}
	We first observe that it suffices to consider the case $\p_{-}=1$. The general case follows by rescaling. Indeed,  we just need to set  $\widetilde{\p}_{-}=1$, $\widetilde{\p}_{+}=\p_{+}/\p_{-}$, $\widetilde{\p}_0=\p_0/\p_{-}$ and consider the new family $\widetilde{\F}$ consisting of the pairs $(\widetilde{f}, \widetilde{g}) =(f^{\p_{-}}, g^{\p_{-}})$ with $(f, g) \in \F$. 
	
	Assume from now on that $\p_{-}=1$. We may also assume that $\p_+<\infty$, otherwise the desired conclusion follows from Theorem~\ref{thm:BFSAp}. We claim that under the hypothesis \eqref{eq:lim-3}, by the same techniques as in \cite[Theorem~3.31]{CMP}, and using \eqref{eq:JN}  and that $\B$ is Muckenhoupt basis, one has for every $p_0 \in (1, \p_{+})$ and for every $w_0 \in A_{p_0, \B} \cap RH_{(\p_+/p_0)', \B}$,
	\begin{equation}\label{eq:ARH-2}
		\|f\|_{L^{p_0}(\Sigma, w_0)} \leq C \|g\|_{L^{p_0}(\Sigma, w_0)}, \quad (f,g) \in \F.
	\end{equation}

	Fix $\X$ as in the statement and let $u$ satisfy \eqref{eq:upuq}. Set $p:=p_{\X}$ and $q:=q_{\X}$. By Lemma~\ref{lem:RHApq} we have 
	\begin{equation}\label{43qfawfeg}
		u \in A_{p, q(\p_+/q)', \B}. 
	\end{equation}
	Let $t=p\,\p_+/q$ and note that $p<t<\p_+$. Pick $p_*$ with $1<p_*<\min\{t, 1+\frac{(r-1)\,(t-1)}{p-1}\}$. 
	Write   $\tau_{p_*}:=\big(\frac{\p_+}{p_*}\big)' (p_*-1)+1$ and define
	\begin{equation}\label{eq:sss1}
		s:=1+\frac{(p_*-1)\,(p-1)}{t-1},
		\quad
		\alpha_1:=\frac{t'}{(t/p)'}=\frac{t-p}{t-1}
		,
	\end{equation}
	\begin{equation}\label{eq:sss2}
		\beta_1:=\frac{p}{t-1},\quad
		\alpha_2:=\Big(\frac{\p_+}{p_*}\Big)',\quad\beta_2:=s\alpha_2 - \beta_1(\tau_{p_*}-1).
	\end{equation}
	Observe that one has  $1<s<\min\{p_*, p,r\}$, hence $\Y:=\X^{\frac1s}$ is a rearrangement invariant Banach function space. Easy computations yield
	\[
	\frac{\alpha_1+\beta_1}{\alpha_1}
	=
	\Big(\frac{t}{p} \Big)'
	=
	\Big(\frac{\p_+}{q} \Big)'
	\qquad\text{and}\qquad
	(\alpha_1+\beta_1)\,\Big(\frac{p}{\alpha_1} \Big)'
	=
	\frac{t}{t-1}\bigg(\frac{p\,(t-1)}{(t-p)} \bigg)'
	=
	p'.
	\]
	Hence, \eqref{43qfawfeg} is equivalent to $u^{\alpha_1+\beta_1}\in A_{\frac{p}{\alpha_1}, \frac{q}{\alpha_1},\B}$. This, the fact that $\X^{\frac{1}{\alpha_1}}$ is a Banach function space since $\alpha_1<1$ (this happens because $p>1$), and \eqref{eq:intro-CMP-ri:v=1} readily give that there exists $\mathcal{N}_1\in [1,\infty)$ such that 
	\begin{align}
		\label{eq:lim-M1} \|(M_{\B}h)\,u^{\alpha_1+\beta_1}\|_{\X^{\frac{1}{\alpha_1}}} 
		&\le \mathcal{N}_1 \|h\,u^{\alpha_1+\beta_1}\|_{\X^{\frac{1}{\alpha_1}}}, \quad\forall h \in \M_{\mu}. 
	\end{align}

	On the other hand, we also have
	\[
	\frac1{(p/s)'}
	=
	1-\frac{s}{p}
	=
	\frac1{p'}-\frac{p_*-1}{(t-1)\,p'}
	=
	\frac1{p'}\,\frac{t-p_*}{t-1}
	\]
	and
	\[
	\frac{\beta_2}{\alpha_2}
	=
	s-\beta_1\,\frac{\tau_{p_*}-1}{\alpha_2}
	=
	s-\beta_1\,(p_*-1)
	=
	1-\frac{p_*-1}{t-1}
	=
	\frac{t-p_*}{t-1}.
	\]
	Thus, 
	\begin{equation}\label{52fgwegve:1}
		\frac{\beta_2}{\alpha_2}\,\Big(\frac{p}{s}\Big)'
		=
		p'.
	\end{equation}
	To continue, we observe that 
	\[
	p_*-s
	=
	(p_*-1)\,\bigg(1-\frac{p-1}{t-1}\bigg)
	=
	\frac{(p_*-1)\,(t-p)}{t-1}
	=
	\frac{(p_*-1)\,(\p_+-q)\,p}{q\,(t-1)},
	\]
	which implies
	\begin{align*}
		\frac{s\,\p_+-p_*\,q}{\p_+-q}
		=
		s-\frac{(p_*-s)\,q}{\p_+-q}
		=
		s-\frac{(p_*-1)\,p}{t-1}
		=
		1-\frac{p_*-1}{t-1}
		=
		\frac{t-p_*}{t-1}
		=
		\frac{\beta_2}{\alpha_2}.
	\end{align*}
	Consequently, 
	\begin{align}\label{52fgwegve:2}
		\frac{1}{\beta_2\,\Big(\frac{(q/s)'}{\alpha_2}\Big)'}
		=
		\frac{\alpha_2}{\beta_2}\,
		\bigg[\frac1{\alpha_2}-\frac{1}{(q/s)'}\bigg]
		=
		\frac{\alpha_2}{\beta_2}\,
		\Big(\frac{s}{q}-\frac{p_*}{\p_+}\Big)
		=
		\frac{\alpha_2}{\beta_2}\,
		\frac{s\,\p_+-p_*\,q}{q\,\p_+}
		=
		\frac1{q\,(\p_+/q)'}.
	\end{align}
	Collecting \eqref{52fgwegve:1} and \eqref{52fgwegve:2} we see that \eqref{43qfawfeg} is equivalent to $u^{-\beta_2}\in A_{p_{\tilde{\Y}}, q_{\tilde{\Y}},\B}$ where $\tilde{\Y}=(\Y')^{\frac{1}{\alpha_2}}$, which is a quasi-Banach function space whose Boyd indices satisfy
	\[
	p_{\tilde{\Y}}
	=
	\frac{p_{\Y'}}{\alpha_2}
	=
	\frac{(q_{\Y})'}{\alpha_2}
	=
	\frac{(q/s)'}{\alpha_2},
	\qquad
	q_{\tilde{\Y}}
	=
	\frac{q_{\Y'}}{\alpha_2}
	=
	\frac{(p_{\Y})'}{\alpha_2}
	=
	\frac{(p/s)'}{\alpha_2}. 
	\]
	Thus, \eqref{eq:intro-CMP-ri:v=1} readily yields that there exists $\mathcal{N}_2\in [1,\infty)$ such that 
	\begin{align}
		\label{eq:lim-M2} \|(M_{\B}h)\,u^{-\beta_2}\|_{(\Y')^{\frac{1}{\alpha_2}}} 
		&\le \mathcal{N}_2 \|h\,u^{-\beta_2}\|_{(\Y')^{\frac{1}{\alpha_2}}}, \quad\forall h \in \M_{\mu}.  
	\end{align}
	We would like to note that strictly speaking one cannot invoke \eqref{eq:intro-CMP-ri:v=1} right away since we do not know whether $(\Y')^{\frac{1}{\alpha_2}}$ is a Banach function space because $\alpha_2>1$. Nonetheless, we can overcome this difficulty by working with the sublinear operator $M_{\B,\alpha_2} h:=M_\B(|h|^{\alpha_2})^{\frac1{\alpha_2}}$ which is bounded in $L^\gamma(\Sigma, v)$ whenever $\gamma>\alpha_2$ and $v\in A_{\gamma/\alpha_2,\B}$. This and the ideas used in Example~\ref{ex:RIBFS} (see \eqref{eq:RIBFS-M:u-v}--\eqref{eq:RIBFS-2}) show that $\|(M_{\B,\alpha_2} h)\,u^{-\frac{\beta_2}{\alpha_2}}\|_{\Y'}\lesssim \|h\,u^{-\frac{\beta_2}{\alpha_2}}\|_{\Y'}$ for every $h \in \M_{\mu}$. This immediately implies \eqref{eq:lim-M2}.

	Once we have obtained \eqref{eq:lim-M1} and \eqref{eq:lim-M2}, for any $h_1, h_2 \in \M_\mu$ with $h_1, h_2>0$ $\mu$-a.e., $h_1\, u^{\alpha_1+\beta_1} \in \X^{\frac{1}{\alpha_1}}$ and $h_2\, u^{-\beta_2} \in (\Y')^{\frac{1}{\alpha_2}}$, we define the Rubio de Francia iteration algorithms as:
	\begin{equation*}
		\mathcal{R}_1 h_1 :=\sum_{k=0}^{\infty}\frac{M_\B^k h_1}{2^k\, \mathcal{N}_1^k} \qquad\text{and}\qquad
		\mathcal{R}_2 h_2 :=\sum_{k=0}^{\infty}\frac{M_\B^k h_2}{2^k\, \mathcal{N}_2^k}.
	\end{equation*} 
	We claim that
	\begin{align}
		\label{eq:lim-hRh-1} h_1 \leq \mathcal{R}_1 h_1,\quad 
		\|(\mathcal{R}_1 h_1)\, u^{\alpha_1+\beta_1}\|_{\X^{\frac{1}{\alpha_1}}} 
		\leq 2\, \|h_1\, u^{\alpha_1+\beta_1}\|_{\X^{\frac{1}{\alpha_1}}}, 
		\quad [\mathcal{R}_1 h_1]_{A_{1,\B}} \leq 2\, \mathcal{N}_1,  
		\\ 
		\label{eq:lim-hRh-2} h_2 \leq \mathcal{R}_2 h_2,\quad 
		\|(\mathcal{R}_2 h_2)\, u^{-\beta_2}\|_{(\Y')^{\frac{1}{\alpha_2}}} 
		\leq C_0\, \|h_2\, u^{-\beta_2}\|_{(\Y')^{\frac{1}{\alpha_2}}}, 
		\quad [\mathcal{R}_2 h_2]_{A_{1,\B}} \leq 2\, \mathcal{N}_2, 
	\end{align}
	with $C_0=(1-2^{-\frac{1}{\alpha_2}})^{-\alpha_2}$. 
	The first and last conditions in both \eqref{eq:lim-hRh-1} and \eqref{eq:lim-hRh-2} are obtained as before and we omit the details. The second condition in \eqref{eq:lim-hRh-1} can be proved as before upon observing that $\alpha_1<1$ (since $p>1$) and hence $\X^{\frac{1}{\alpha_1}}$ is a Banach function space. The second condition in \eqref{eq:lim-hRh-2} needs some extra work since $(\Y')^{\frac{1}{\alpha_2}}$ may not a Banach function spaces as $\alpha_2>1$. In any case, the fact that $\alpha_2>1$ implies
	\begin{multline*}
		\|(\mathcal{R}_2 h_2)\, u^{-\beta_2}\|_{(\Y')^{\frac{1}{\alpha_2}}}^{\frac{1}{\alpha_2}}  
		=
		\|(\mathcal{R}_2 h_2)^{\frac{1}{\alpha_2}}\,  u^{-\frac{\beta_2}{\alpha_2}}\|_{\Y'}
		\le
		\Big\| \sum_{k=0}^{\infty}\frac{(M^k h_2)^{\frac{1}{\alpha_2}}}{(2^k\, \mathcal{N}_2^k)^{\frac{1}{\alpha_2}}} \,  u^{-\frac{\beta_2}{\alpha_2}}\Big\|_{\Y'}
		\\
		\le
		\sum_{k=0}^{\infty}\frac{\big\|(M^k h_2)^{\frac{1}{\alpha_2}}\,u^{-\frac{\beta_2}{\alpha_2}}\big\|_{\Y'}}{(2^k\, \mathcal{N}_2^k)^{\frac{1}{\alpha_2}}} 
		=
		\sum_{k=0}^{\infty}\frac{\|(M^k h_2)\,u^{-\beta_2}\|_{(\Y')^{\frac{1}{\alpha_2}}}^{\frac{1}{\alpha_2}}  }{(2^k\, \mathcal{N}_2^k)^{\frac{1}{\alpha_2}}} 
		\\
		\le
		\|h_2\,u^{-\beta_2}\|_{(\Y')^{\frac{1}{\alpha_2}}}^{\frac{1}{\alpha_2}} \,
		\sum_{k=0}^{\infty}2^{-\frac{k}{\alpha_2}}
		=
		(1-2^{-\frac{1}{\alpha_2}})^{-1}\,\|h_2\,u^{-\beta_2}\|_{(\Y')^{\frac{1}{\alpha_2}}}^{\frac{1}{\alpha_2}} .
	\end{multline*}
	We would like to observe that in the present scenario $\X^{\frac{1}{\alpha_1}}$ is a Banach function space, but if this were not the case one could still prove the second condition in \eqref{eq:lim-hRh-1} with a different constant, much as we have done with the second condition in \eqref{eq:lim-hRh-2}. This is not relevant in this proof but may be needed in the proof of Theorem~\ref{thm:LRE}. 
	
	We are now ready to turn to the proof of \eqref{eq:lim-4}. We first observe that $1<p_*<t\le \p_+$, since $1< p\le q$. Hence \eqref{eq:ARH-2} holds. Considering then the approach used in Theorem \ref{thm:BFSAp}, we only need to prove that for every $f, g\in \M_{\mu}$ with $0<\|f\,u\|_{\X}<\infty$ and $0<\|g\,u\|_{\X}<\infty$, there exist a constant $C>0$ and a $\B$-weight $w=w(f, g) \in A_{p_*, \B} \cap RH_{(\p_+/p_*)', \B}$ such that 
	\begin{align}\label{eq:lim-emb}
		\|f\,u\|_{\X} \le C\, \|f\|_{L^{p_*}(\Sigma, w)} \quad\text{and}\quad 
		\|g\|_{L^{p_*}(\Sigma, w)} \le C\, \|g\,u\|_{\X}. 
	\end{align}   
	To show \eqref{eq:lim-emb}, fix $f, g \in \M_{\mu}$ with $0<\|f\,u\|_{\X}<\infty$ and $0<\|g\,u\|_{\X}<\infty$. Set
	\begin{equation}\label{eq:fg}
		h_1(x):=\frac{g(x)}{\|g\,u\|_{\X}},\quad x \in \Sigma.
	\end{equation}
	Recall the definition of $s$ in \eqref{eq:sss1} and let $r_*:=p_*/s$, which satisfies $r_*>1$ as $s<p_*$. Using the fact that as observed above $\Y=\X^{\frac1s}$ is a Banach function space, by \eqref{eq:fX-norm}, there exists a non-negative function $h_2 \in \Y'$ with $\|h_2\|_{\Y'} \le 1$ such that 
	\begin{align}\label{eq:lim-fu}
		\|f\,u\|_{\X}^s = \|f^s\, u^s\|_{\Y} \le 2 \int_{\Sigma} f^s\, u^s\, h_2 \, d\mu.  
	\end{align}
	Applying Lemma \ref{lem:hh} with $\varepsilon=1$ to $h_1$ and $h_2$, we can find two functions $\widetilde{h}_1,\widetilde{h}_2> 0$ $\mu$-a.e.~such that 
	\begin{align}\label{eq:hhXY}
		h_1 \le \widetilde{h}_1, \quad h_2 \le \widetilde{h}_2,\quad 
		\|\widetilde{h}_1\,u\|_{\X} \le 2, \quad\text{and}\quad \|\widetilde{h}_2\|_{\Y'} \le 2.  
	\end{align}

	Now we define 
	\begin{align}\label{eq:HRv}
		H_1 := \mathcal{R}_1(\widetilde{h}_1^{\alpha_1} u^{-\beta_1})^{\frac{1}{\alpha_1}} u^{\frac{\beta_1}{\alpha_1}} \quad\text{and}\quad 
		H_2 := \mathcal{R}_2(\widetilde{h}_2^{\alpha_2} u^{\beta_2})^{\frac{1}{\alpha_2}} u^{-\frac{\beta_2}{\alpha_2}}. 
	\end{align}
	It follows from \eqref{eq:lim-hRh-1}, \eqref{eq:lim-hRh-2}, and \eqref{eq:hhXY} that 
	\begin{align}
		\label{eq:lim-RdF-1} \widetilde{h}_1 \le H_1, \qquad 
		\|H_1\,u\|_{\X} \le 2^{1+\frac{1}{\alpha_1}},  
		\qquad [H_1^{\alpha_1} u^{-\beta_1}]_{A_1} \le 2\,\mathcal{N}_1, 
	\end{align} 
and
		\begin{align}
				\label{eq:lim-RdF-2} \widetilde{h}_2 \le H_2, \qquad 
		\|H_2\|_{\Y'} \le 2\,C_0^{\frac{1}{\alpha_2}},  
		\qquad\,  [H_2^{\alpha_2} u^{\beta_2}]_{A_1} \le 2\,\mathcal{N}_2.  
	\end{align} 
	Then invoking \eqref{eq:Holder}, \eqref{eq:hhXY}, \eqref{eq:lim-RdF-1} and \eqref{eq:lim-RdF-2}, we obtain 
	\begin{align}\label{eq:lim-H}
		\mathcal{I} :&= \int_{\Sigma} H_1^s\, H_2\, u^s \, d\mu 
		\leq \|H_1^s\, u^s\|_{\Y}\, \|H_2\|_{\Y'}
		=\|H_1\, u\|_{\X}^{s} \|H_2\|_{\Y'}
		\leq 2^{1+s+\frac{s}{\alpha_1}}\,C_0^{\frac{1}{\alpha_2}}.
	\end{align}
	Set $w:=H_1^{-s\,(r_*-1)}\, H_2\, u^s$.  Due to \eqref{eq:sss1} and \eqref{eq:sss2} one has 
	\[
	\frac{s\,(r_*-1)}{p_*-1}
	=
	\frac{p_*-s}{p_*-1}
	=
	\Big(1-\frac{p-1}{t-1}\Big)
	=
	\frac{t-p}{t-1}
	=
	\alpha_1
	\]
	and 
	\begin{equation*}
		w^{(\p_{+}/{p_*})'} = (H_1^{\alpha_1}\, u^{-\beta_1})^{1-\tau_{p_*}}\, (H_2^{\alpha_2}\, u^{\beta_2}). 
	\end{equation*}
	This together with \eqref{eq:Ap-product}, \eqref{eq:lim-RdF-1} and \eqref{eq:lim-RdF-2} implies that 
	\begin{align*}
		[w^{(\p_{+}/{p_*})'}]_{A_{\tau_{p_*}, \B}} 
		\le [H_1^{\alpha_1}\, u^{-\beta_1}]_{A_{1, \B}}^{\tau_{p_*}-1}\, [H_2^{\alpha_2}\, u^{\beta_2}]_{A_{1, \B}} 
		\le 2^{\tau_{p_*}}\, \mathcal{N}_1^{\tau_{p_*}-1}\, \mathcal{N}_2. 
	\end{align*}
	Hence, this and \eqref{eq:JN} give $w \in A_{p_*, \B} \cap RH_{(\p_+/p_*)', \B}$. 
	
	On the other hand, \eqref{eq:lim-fu}, \eqref{eq:hhXY}, \eqref{eq:lim-RdF-1}, \eqref{eq:lim-RdF-2}, Hölder's inequality, and \eqref{eq:lim-H} imply 
	\begin{multline*}
		\|f\,u\|_{\X}^s 
		\le 
		2\, \int_{\Sigma} f^s\, H_2\, u^s \, d\mu 
		= 2\,\int_{\Sigma} f^s\, H_1^{-\frac{s}{r'_*}}\, H_1^{\frac{s}{r'_*}}\, H_2\, u^s \, d\mu
		\\
		\leq 2\, \mathcal{I}^{\frac{1}{r'_0}}\, \left(\int_{\Sigma} f^{p_*}\, H_1^{-s(r_*-1)}\, H_2\, u^s \, d\mu \right)^{\frac{1}{r_*}}
		\le 2^{1+\frac{1}{r'_*}(1+s+\frac{s}{\alpha_1})}\,C_0^{\frac{1}{r_*'\,\alpha_2}}\, \|f\|_{L^{p_*}(\Sigma,\, w)}^s. 
	\end{multline*}
	Furthermore, by \eqref{eq:fg}, \eqref{eq:hhXY}, \eqref{eq:lim-RdF-1} there holds $g \leq H_1 \|g\,u\|_{\X}$. This and \eqref{eq:lim-H} yield
	\begin{align*}
		\|g\|_{L^{p_*}(\Sigma, w)}^{p_*} 
		= \int_{\Sigma} g^{p_*}\, H_1^{-s(r_*-1)}\, H_2\, u^s \, d\mu
		\leq \|g\,u\|_{\X}^{p_*} \int_{\Sigma} H_1^s\, H_2\, u^s \, d\mu
		\le  2^{1+s+\frac{s}{\alpha_1}}\,C_0^{\frac{1}{\alpha_2}}\, \|g\,u\|_{\X}^{p_*}.
	\end{align*}
	We have therefore established \eqref{eq:lim-emb} and the proof is complete. 
\end{proof}

Using the ideas in the proof of Theorem \ref{thm:lim-RIBFS}, we conclude the following general limited range extrapolation on Banach function spaces. The detailed proof is left to the interested reader. 
\begin{theorem}\label{thm:LRE}
	Suppose that $(\Sigma, \mu)$ is a non-atomic $\sigma$-finite measure space with $\mu(\Sigma)>0$. Let $\B$ be a Muckenhoupt basis and $\mathcal{F}$ be a family of extrapolation pairs.  Fix $1 \le \p_{-}< \p_{+}<\infty$ and $\p_{-} \le \p_0 \le \p_{+}$. 
	Let $u$ and $v$ be $\B$-weights on $(\Sigma, \mu)$ such that $v(B)<\infty$ for every $B\in\B$, and let $\X_v$ be a Banach function space over $(\Sigma, v)$. Let $M_{\B}$ denote the Hardy-Littlewood maximal function on $(\Sigma,\mu)$ associated with $\B$ and let $M'_{\B, v}h:=M_{\B}(h\,v)/v$ for each $h \in\M_{\mu}$.  Assume that there exist $p_{*} \in (\p_{-}, \p_{+})$, $s \in (0, p_{*})$ and $\mathcal{N}_1, \mathcal{N}_2<\infty$ such that $\Y_v:=\X_v^{\frac1s}$ is a Banach function space, and 
	\begin{align}
		\label{eq:LRE-1} \|(M_{\B}h)\,u^{\alpha_1+\beta_1}\|_{\X_v^{\frac{1}{\alpha_1}}} 
		&\le \mathcal{N}_1 \|h\,u^{\alpha_1+\beta_1}\|_{\X_v^{\frac{1}{\alpha_1}}}, \quad\forall h \in \M_{\mu}, 
		\\
		\label{eq:LRE-2} \|(M_{\B,v}' h)\,u^{-\beta_2}\|_{(\Y_v')^{\frac{1}{\alpha_2}}} 
		&\le \mathcal{N}_2 \|h\,u^{-\beta_2}\|_{(\Y_v')^{\frac{1}{\alpha_2}}}, \quad\forall h \in \M_{\mu}, 
	\end{align}
	where 
	\begin{align}
		\label{eq:LRE-aa} &\alpha_1 = \p_{-} \left(\frac{p_{*}-s}{p_{*}-\p_{-}}\right), \quad
		\alpha_2 = \left(\frac{\p_{+}}{p_{*}}\right)', \quad \beta_1 \in \R,
		\\
		\label{eq:LRE-bb} &\beta_2=s\alpha_2-\beta_1(\tau_{p_{*}}-1), \quad
		\tau_{p_{*}}=\left(\frac{\p_{+}}{p_{*}}\right)'\left(\frac{p_{*}}{\p_{-}}-1\right)+1. 
	\end{align} 
	If for every $w_0 \in A_{\p_0/{\p_{-}}, \B} \cap RH_{(\p_{+}/\p_0)', \B}$, 
	\begin{equation}\label{eq:LRE-3}
		\|f\|_{L^{\p_0}(\Sigma,w_0)} \leq C \|g\|_{L^{\p_0}(\Sigma,w_0)},\qquad(f,g) \in \F, 
	\end{equation}
	then 
	\begin{equation}\label{eq:LRE-4}
		\|f\,u\|_{\X_v} \leq C \|g\,u\|_{\X_v}, \qquad(f, g) \in \F. 
	\end{equation}
\end{theorem}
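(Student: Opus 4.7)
The plan is to adapt the argument from the proof of Theorem~\ref{thm:lim-RIBFS}, with the parameters $\alpha_1,\alpha_2,\beta_1,\beta_2,s,\tau_{p_*}$ playing the same roles. As a preliminary reduction, I would pass to the case $\p_-=1$ by rescaling: replacing $\F$ with $\widetilde{\F}=\{(f^{\p_-},g^{\p_-}):(f,g)\in\F\}$ and the exponents $\p_0,\p_+,p_*,s$ with $\p_0/\p_-,\p_+/\p_-,p_*/\p_-,s/\p_-$ respectively. Then, starting from the initial hypothesis \eqref{eq:LRE-3} and using that $\B$ is a Muckenhoupt basis together with the reverse Hölder factorization \eqref{eq:JN} as in \cite[Theorem~3.31]{CMP}, I would upgrade \eqref{eq:LRE-3} to
\[
\|f\|_{L^{p_*}(\Sigma,w)}\leq C\,\|g\|_{L^{p_*}(\Sigma,w)},\qquad (f,g)\in\F,
\]
for every $w\in A_{p_*,\B}\cap RH_{(\p_+/p_*)',\B}$, where $p_*$ is the exponent appearing in \eqref{eq:LRE-1}--\eqref{eq:LRE-2}.

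With this extended estimate in hand, I would construct two Rubio de Francia iteration algorithms $\mathcal{R}_1$ and $\mathcal{R}_2$: the former based on the boundedness \eqref{eq:LRE-1} in $\X_v^{1/\alpha_1}$ with weight factor $u^{\alpha_1+\beta_1}$, and the latter based on \eqref{eq:LRE-2} in $(\Y_v')^{1/\alpha_2}$ with weight factor $u^{-\beta_2}$. When $\alpha_i>1$ the corresponding quasi-Banach space may fail to be a Banach function space, and I would treat that case by working with the sublinear operator $M_{\B,\alpha_i}h:=M_\B(|h|^{\alpha_i})^{1/\alpha_i}$ in place of $M_\B$, exactly as done with $M_{\B,\alpha_2}$ in the proof of Theorem~\ref{thm:lim-RIBFS}. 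Fixing $(f,g)\in\F$ with $0<\|f\,u\|_{\X_v},\|g\,u\|_{\X_v}<\infty$ (truncating $f$ as in the proof of Theorem~\ref{thm:BFSAp} if necessary), using the duality \eqref{eq:fX-norm} in $\Y_v=\X_v^{1/s}$ and applying Lemma~\ref{lem:hh} to make the relevant functions strictly positive, I would produce $\widetilde h_1\ge g/\|g\,u\|_{\X_v}$ and $\widetilde h_2$ dominating a near-dual of $f^s u^s$, with norm control in $\X_v$ and in $\Y_v'$ respectively. Following \eqref{eq:HRv} I would then define
\[
H_1:=\mathcal{R}_1\bigl(\widetilde h_1^{\alpha_1}u^{-\beta_1}\bigr)^{1/\alpha_1}u^{\beta_1/\alpha_1},\quad H_2:=\mathcal{R}_2\bigl(\widetilde h_2^{\alpha_2}u^{\beta_2}\bigr)^{1/\alpha_2}u^{-\beta_2/\alpha_2},\quad w:=H_1^{-s(r_*-1)}H_2\,u^s,
\]
where $r_*:=p_*/s$.

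The crucial step is to verify $w\in A_{p_*,\B}\cap RH_{(\p_+/p_*)',\B}$. After the reduction $\p_-=1$, the identities in \eqref{eq:LRE-aa}--\eqref{eq:LRE-bb} give $\tau_{p_*}-1=\alpha_2(p_*-1)$ and, using \eqref{eq:LRE-aa}, $s(r_*-1)\,\alpha_2=(p_*-s)\alpha_2=\alpha_1(p_*-1)\alpha_2=\alpha_1(\tau_{p_*}-1)$. Combined with \eqref{eq:LRE-bb} this yields the key factorization
\[
w^{(\p_+/p_*)'}=\bigl(H_1^{\alpha_1}u^{-\beta_1}\bigr)^{1-\tau_{p_*}}\bigl(H_2^{\alpha_2}u^{\beta_2}\bigr).
\]
The $A_{1,\B}$-bounds for $H_1^{\alpha_1}u^{-\beta_1}$ and $H_2^{\alpha_2}u^{\beta_2}$ produced by the two algorithms, together with \eqref{eq:Ap-product}, give $w^{(\p_+/p_*)'}\in A_{\tau_{p_*},\B}$, and \eqref{eq:JN} then converts this into the required limited-range Muckenhoupt condition. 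The two embedding estimates
\[
\|f\,u\|_{\X_v}\le C\,\|f\|_{L^{p_*}(\Sigma,w)}\qquad\text{and}\qquad \|g\|_{L^{p_*}(\Sigma,w)}\le C\,\|g\,u\|_{\X_v}
\]
follow from Hölder's inequality in $\Y_v$ and its associate along the lines of \eqref{eq:lim-H}, and combined with the extended initial estimate they close the argument. The main obstacle is precisely the bookkeeping for the parameter relations that forces $w^{(\p_+/p_*)'}$ to split into the two $A_1$-pieces produced by $\mathcal{R}_1$ and $\mathcal{R}_2$; once that identity is in place the rest is a routine adaptation of the proof of Theorem~\ref{thm:lim-RIBFS}, including the treatment of the possibly non-Banach spaces $\X_v^{1/\alpha_1}$ and $(\Y_v')^{1/\alpha_2}$.
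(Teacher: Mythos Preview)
Your outline follows exactly the route the paper itself indicates: it states that Theorem~\ref{thm:LRE} is proved ``using the ideas in the proof of Theorem~\ref{thm:lim-RIBFS}'' and leaves the details to the reader. The rescaling to $\p_-=1$, the passage to exponent $p_*$ via the weighted-$L^p$ limited range extrapolation, the two Rubio de Francia algorithms, the treatment of the possibly quasi-Banach scales $\X_v^{1/\alpha_1}$ and $(\Y_v')^{1/\alpha_2}$, and the embedding estimates are all the right ingredients, and your verification of the identity $s(r_*-1)\alpha_2=\alpha_1(\tau_{p_*}-1)$ is correct.

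There is, however, one point where the adaptation is not as routine as your last paragraph suggests. Because $\mathcal{R}_2$ is built from $M'_{\B,v}$ rather than $M_{\B}$, the $A_{1,\B}$ conclusion it yields is $[(\mathcal{R}_2 h)\,v]_{A_{1,\B}}\le 2\mathcal{N}_2$, not $[\mathcal{R}_2 h]_{A_{1,\B}}\le 2\mathcal{N}_2$ (compare \eqref{RdF3} in Proposition~\ref{prop:extrapol-maintool}). Thus your claimed factor $H_2^{\alpha_2}u^{\beta_2}$ is not in $A_{1,\B}$; rather $H_2^{\alpha_2}u^{\beta_2}\,v$ is. At the same time, the duality \eqref{eq:fX-norm} in $\Y_v$ is against $dv$, so the H\"older step produces $\int_\Sigma f^{p_*}H_1^{-s(r_*-1)}H_2 u^s\,v\,d\mu$, which forces $w=H_1^{-s(r_*-1)}H_2 u^s\,v$ (not the $v$-free expression you wrote). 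Plugging this into your factorization gives
\[
w^{\alpha_2}=\bigl(H_1^{\alpha_1}u^{-\beta_1}\bigr)^{1-\tau_{p_*}}\bigl(H_2^{\alpha_2}u^{\beta_2}\,v\bigr)\cdot v^{\alpha_2-1},
\]
and the stray factor $v^{\alpha_2-1}$ prevents the direct product-of-$A_1$ argument from closing. This discrepancy---absent in Theorem~\ref{thm:lim-RIBFS} where $v\equiv 1$---is the one genuinely new obstacle in the general case, and it has to be resolved by threading suitable powers of $v$ through the input of $\mathcal{R}_2$ and then rechecking both the $(\Y_v')^{1/\alpha_2}$-norm control and the pointwise domination $H_2\ge\widetilde h_2$. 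Your proposal glosses over precisely this step.
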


\begin{remark}
	Theorem \ref{thm:LRE} refines the limited range extrapolation on the variable Lebesgue space $L^{p(\cdot)}(\Rn, \Ln)$ established in \cite[Proposition~5.8]{CW}. Indeed, let $p(\cdot)$ be a measurable function with $\p_{-}<p_{-} \le p_{+}<\p_{+}$.  Let $\X=L^{p(\cdot)}(\Rn, \Ln)$ and let $\B$ be the collection of all balls in $\Rn$. Then 
	\begin{align*}
		\X^{\frac{1}{\alpha_1}} = L^{\frac{p(\cdot)}{\alpha_1}}(\Rn, \Ln), \quad 
		\Y=L^{\frac{p(\cdot)}{s}}(\Rn, \Ln),\quad 
		(\Y')^{\frac{1}{\alpha_2}}=L^{ \left(\frac{p(\cdot)}{s} \right)' \frac{1}{\alpha_2}}(\Rn, \Ln). 
	\end{align*}
	Note that $\Y$ is a Banach function space if $p_{-}>s$. Thus, in this setting, our assumption becomes that there exist $p_{*} \in (\p_{-}, \p_{+})$, $0<s<\min\{p_{*}, \p_{-}\}$ and $\mathcal{N}_1, \mathcal{N}_2<\infty$ such that 
	\begin{equation}\label{eq:LRE-12}
		\text{both \eqref{eq:LRE-1} and \eqref{eq:LRE-2} hold.}
	\end{equation} 
	If we assume in addition that $p(\cdot) \in {\rm LH}$, then \eqref{eq:LRE-12} holds provided that 
	$u^{\alpha_1+\beta_1} \in A_{p(\cdot)/\alpha_1, \B}$, $u^{-\beta_2} \in A_{(p(\cdot)/s)'/\alpha_2, \B}$, ans $s$ satisfies 
	\begin{align*}
		\max\bigg\{p_{-} - p_{*} \Big(\frac{p_{-}}{\p_{-}}-1\Big), \, \frac{p_{*}p_{+}}{\p_{+}}\bigg\} 
		< s < \min\{p_{*}, \, p_{-}\}, 
	\end{align*}
	see \cite[p.1230]{CW} for details. Consequently, we obtain the limited range extrapolation on the variable Lebesgue spaces. 
\end{remark}

\section{Extrapolation on modular spaces}\label{sec:modular}

In this section we establish a variety of  extrapolation theorems on modular spaces. We begin with the so-called $A_p$ extrapolation. As before, a family of extrapolation pairs $\F$ is a collection of pairs $(f, g)$ of nonnegative measurable functions. Our main result generalizes \cite[Theorems~4.15]{CMP}:

\begin{theorem}\label{thm:PhiAp}
	Suppose that $(\Sigma, \mu)$ is a non-atomic $\sigma$-finite measure space with $\mu(\Sigma)>0$. Let $\B$ be a basis and let $\F$ be a family of extrapolation pairs. Let $u$ and $v$ be $\B$-weights on $(\Sigma, \mu)$ such that $v(B)<\infty$ for every $B\in\B$, and let $\Phi$ be a Young function such that $\Phi\in\Delta_2$ \textup{(}equivalently, $I_\Phi<\infty$\textup{)} with a doubling constant $C_{\Phi}$. Let $M_{\B}$ denote the Hardy-Littlewood maximal function on $(\Sigma,\mu)$ associated with $\B$ and let $M'_{\B, v}h:=M_{\B}(h\,v)/v$ for each $h \in\M_{\mu}$. Then, the following hold:	

\begin{list}{\textup{(\theenumi)}}{\usecounter{enumi}\leftmargin=1cm \labelwidth=1cm \itemsep=0.2cm \topsep=.2cm \renewcommand{\theenumi}{\alph{enumi}}}
	
	\item Let $p_0\in (1,\infty)$, and assume that there are $\mathcal{N}_1,\mathcal{N}_2<\infty$ so that 
	\begin{align}
		\label{eq:Phifu-1} \int_{\Sigma} \Phi((M_{\B}h)\, u)\, v\, d\mu 
		&\le \mathcal{N}_1 \int_{\Sigma} \Phi(|h|\, u)\,v\, d\mu,\quad\forall\,h \in \M_{\mu}, 
		\\
		\label{eq:Phifu-2} \int_{\Sigma} \overline{\Phi}((M'_{\B, v}h)\, u^{-1})\, v\, d\mu 
		&\le \mathcal{N}_2 \int_{\Sigma} \overline{\Phi}(|h|\, u^{-1})\,v\, d\mu,\quad\forall\,h \in \M_{\mu}. 
	\end{align} 
	If for every $w \in A_{p_0, \B}$, one has 
\begin{equation}\label{eq:Phifu-3}
	\|f\|_{L^{p_0}(\Sigma,w)} \leq \Psi([w]_{A_{p_0, \B}}) \|g\|_{L^{p_0}(\Sigma,w)}, \quad(f,g) \in \F, 
\end{equation} 
	where $\Psi: [1, \infty) \to [1, \infty)$ is a non-decreasing function, then 
	\begin{equation}\label{eq:Phifu-4}
		\int_{\Sigma}\Phi(f\,u)\,v\,d\mu
		\leq 
			C_1\, \int_{\Sigma}\Phi(g\,u)\,v\,d\mu,\qquad(f,g) \in \F,
	\end{equation}
	and 
	\begin{equation}\label{eq:Phifu-4-vv}
		\int_{\Sigma}\Phi \bigg(\Big(\sum_j f_j^{p_0} \Big)^{\frac1{p_0}}u \bigg)\,v\,d\mu 
		\leq 
		  C_1 \int_{\Sigma}\Phi \bigg(\Big(\sum_j g_j^{p_0}\Big)^{\frac{1}{p_0}}u \bigg) v\,d\mu, \quad \{(f_j, g_j)\} \subset \F, 
	\end{equation}
	where $C_1=C_{\Phi}\, \max\{C_0, C_0^{2\,I_\Phi}\}$ and $C_0=	2^{3+\frac2{p_0'}}\,\Psi(2\,\mathcal{N}_1^{p_0-1}\, \mathcal{N}_2)$.

	\item 	Assume that there is $\mathcal{N}_2<\infty$ so that \eqref{eq:Phifu-2} holds. If for every $w \in A_{1, \B}$, one has
	\begin{equation}\label{eq:Phifu-3:1}
		\|f\|_{L^{1}(\Sigma,w)} \leq \Psi([w]_{A_{1, \B}})\|g\|_{L^{1}(\Sigma,w)}, \qquad(f,g) \in \F, 
	\end{equation}  
	where $\Psi: [1, \infty) \to [1, \infty)$ is a non-decreasing function, then 
	\begin{equation}\label{eq:Phifu-4:1}
		\int_{\Sigma}\Phi(f\,u)\,v\,d\mu
		\leq C_1 \int_{\Sigma}\Phi(g\,u)\,v\,d\mu,\qquad(f,g) \in \F,
	\end{equation}
	and 
	\begin{equation}\label{eq:Phifu-4-vv:1}
		\int_{\Sigma}\Phi \bigg(\Big(\sum_j f_j \Big)\,u \bigg)\,v\,d\mu 
		\leq C_1 \int_{\Sigma}\Phi \bigg(\Big(\sum_j g_j \Big)\,u \bigg)\,v\,d\mu,\quad \{(f_j, g_j)\} \subset \F, 
	\end{equation} 
	where $C_1=C_{\Phi}\, \max\{C_0, C_0^{2\,I_\Phi}\}$ and $C_0=8\,\Psi(2\,\mathcal{N}_2)$. 
\end{list}
	Moreover, in any of the two scenarios if $\B$ is additionally assumed to be a Muckenhoupt basis, then \eqref{eq:Phifu-1} or \eqref{eq:Phifu-2} imply that for every $q \in (1, \infty)$, 
\begin{equation}\label{eq:Phifu-5}
\int_{\Sigma}\Phi \bigg(\Big(\sum_j f_j^q \Big)^{\frac1q}\,u \bigg)\,v\,d\mu 
\leq C\, \int_{\Sigma}\Phi \bigg(\Big(\sum_j g_j^q \Big)^{\frac1q}\,u \bigg)\,v\,d\mu,\quad \{(f_j, g_j)\} \subset \F. 
\end{equation}
\end{theorem}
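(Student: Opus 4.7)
Proof plan: The strategy mirrors Theorem~\ref{thm:BFSAp}, with a modular analog of Proposition~\ref{prop:extrapol-maintool} as the central tool. Fix a pair $(f,g)\in\F$; after reducing (via the hypothesis \eqref{eq:Phifu-3} and a truncation argument exactly as in the proof of Theorem~\ref{thm:BFSAp}) to the case $0<\rho_v^\Phi(fu),\rho_v^\Phi(gu)<\infty$, the aim is to construct a weight $w\in A_{p_0,\B}$ with $[w]_{A_{p_0,\B}}\le 2^{p_0}\mathcal N_1^{p_0-1}\mathcal N_2$ and embed the modular estimates into the weighted $L^{p_0}$ inequality.

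For the construction, take $h_1:=g$ and, following the natural dual witness suggested by Young's inequality \eqref{eq:Young-2}, set $h_2:=u\,\Phi(fu)/(fu)\,\mathbf 1_{\{f>0\}}$, so that $\int_\Sigma fh_2\,v\,d\mu=\rho_v^\Phi(fu)$ and $\rho_v^{\overline{\Phi}}(h_2u^{-1})\le\rho_v^\Phi(fu)$. Applying Lemma~\ref{lem:Phh} (for $\Phi$, $h_1u$) and its analog for the Young function $\overline\Phi$ applied to $h_2u^{-1}$ (the lemma does not require $\Delta_2$) produces strictly positive perturbations $\widetilde h_1,\widetilde h_2$. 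I then form the Rubio de Francia iterates
\[
\mathcal R\widetilde h_1:=\sum_{k\ge 0}(2\mathcal N_1)^{-k}M_\B^k\widetilde h_1,\qquad \mathcal R'\widetilde h_2:=\sum_{k\ge 0}(2\mathcal N_2)^{-k}(M'_{\B,v})^k\widetilde h_2.
\]
The standard iteration bounds give $[\mathcal R\widetilde h_1]_{A_{1,\B}}\le 2\mathcal N_1$ and $[(\mathcal R'\widetilde h_2)v]_{A_{1,\B}}\le 2\mathcal N_2$, so that $w:=(\mathcal R\widetilde h_1)^{1-p_0}(\mathcal R'\widetilde h_2)v\in A_{p_0,\B}$ with constant at most $2^{p_0}\mathcal N_1^{p_0-1}\mathcal N_2$ by Lemma~\ref{lemma:AP-trivial}(c). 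Convexity of $\Phi$ combined with the $\Delta_2$ assumption ($I_\Phi<\infty$) yields $\rho_v^\Phi(\mathcal R\widetilde h_1\,u)\lesssim\rho_v^\Phi(gu)$, while convexity of $\overline\Phi$ alone—no $\Delta_2$ needed on $\overline\Phi$—gives the halved version $\rho_v^{\overline\Phi}(\tfrac12\mathcal R'\widetilde h_2\,u^{-1})\lesssim\rho_v^\Phi(fu)$.

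Combining with Hölder at exponent $p_0$ and Young's inequality \eqref{eq:Young-ineq} (which replaces the Banach function space duality pairing of Proposition~\ref{prop:extrapol-maintool}), I obtain $\rho_v^\Phi(fu)\lesssim\|f\|_{L^{p_0}(\Sigma,w)}\cdot I^{1/p_0'}$ together with $\|g\|_{L^{p_0}(\Sigma,w)}^{p_0}\lesssim I$, where $I:=\int(\mathcal R\widetilde h_1)(\mathcal R'\widetilde h_2)v\,d\mu$ is bounded by Young's inequality by $\rho_v^\Phi(\mathcal R\widetilde h_1\,u)+\rho_v^{\overline\Phi}(\tfrac12\mathcal R'\widetilde h_2\,u^{-1})\lesssim\rho_v^\Phi(gu)+\rho_v^\Phi(fu)$. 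Invoking \eqref{eq:Phifu-3} produces an inequality of the form $\rho_v^\Phi(fu)\le D\bigl(\rho_v^\Phi(fu)+\rho_v^\Phi(gu)\bigr)$. The main obstacle will be absorbing the diagonal $\rho_v^\Phi(fu)$ term, since the constant $D$ is not small; my plan is to handle this by rescaling the pair to $(\alpha f,\alpha g)$ with $\alpha$ chosen so that $\rho_v^\Phi(\alpha gu)=1$ (permissible because \eqref{eq:Phifu-3} is positively homogeneous in the pair and hence also holds for $(\alpha f,\alpha g)$), running the argument to bound $\rho_v^\Phi(\alpha fu)$ by an absolute constant, and transferring back via the $\Delta_2$ dilation bound $\Phi(\lambda t)\le h_\Phi(\lambda)\Phi(t)$ applied twice (once for $fu$ and once for $gu$), which produces the exponent $2I_\Phi$ in $C_1=C_\Phi\max(C_0,C_0^{2I_\Phi})$. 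Part (b) with $p_0=1$ is analogous and simpler, dropping $\mathcal R$ since only \eqref{eq:Phifu-2} is assumed. The vector-valued bounds \eqref{eq:Phifu-4-vv} and \eqref{eq:Phifu-4-vv:1} follow by applying the scalar inequality to the enlarged family of pairs $\bigl((\sum_j f_j^{p_0})^{1/p_0},(\sum_j g_j^{p_0})^{1/p_0}\bigr)$, which inherits the hypothesis by linearity of the $L^{p_0}$ integral; and \eqref{eq:Phifu-5} follows by using the Muckenhoupt basis property to invoke classical Rubio de Francia extrapolation \cite[Theorem~3.9]{CMP} to upgrade \eqref{eq:Phifu-3} from $p_0$ to any $q\in(1,\infty)$, and then repeating the previous argument.
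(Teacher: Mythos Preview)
Your construction—the choice of $h_1=g$, the dual witness $h_2=u\,\Phi(fu)/(fu)\,\mathbf 1_{\{f>0\}}$, the Rubio de Francia iterates, and the weight $w=(\mathcal R\widetilde h_1)^{1-p_0}(\mathcal R'\widetilde h_2)v$—all match the paper's proof of Proposition~\ref{prop:extrapol-maintool:mod} exactly, and the treatment of the vector-valued conclusions \eqref{eq:Phifu-4-vv}, \eqref{eq:Phifu-4-vv:1}, \eqref{eq:Phifu-5} is correct.

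The genuine gap is your absorption step. You correctly arrive at
\[
\rho_v^\Phi(fu)\le D\bigl(\rho_v^\Phi(fu)+\rho_v^\Phi(gu)\bigr)
\]
with $D$ not small, but your proposed fix—rescaling the pair to $(\alpha f,\alpha g)$ with $\rho_v^\Phi(\alpha gu)=1$—does not work. After rescaling you obtain $\rho_v^\Phi(\alpha fu)\le D\,\rho_v^\Phi(\alpha fu)+D$, and the coefficient $D$ in front of the diagonal term is \emph{unchanged}; when $D\ge 1$ this inequality is vacuous. Homogeneous rescaling treats $f$ and $g$ symmetrically, so it cannot create the asymmetry needed to absorb.

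The paper's mechanism is different. It inserts a free parameter $\theta\in(0,1]$ \emph{inside} Young's inequality: writing $st=\theta\cdot(\theta^{-1}s)\cdot t$ and applying \eqref{eq:Young-ineq} to the product $(\theta^{-1}s)\cdot t$ gives $st\le\theta\,\Phi(\theta^{-1}s)+\theta\,\overline\Phi(t)$. Applied termwise to the double sum defining $\int(\mathcal R\widetilde h_1)(\mathcal R'\widetilde h_2)\,dv$, and using that $M_\B$ is positively homogeneous so that \eqref{eq:Phifu-1} applies equally well to $\theta^{-1}h_1$, this yields
\[
I\le 4\,\theta\bigl(\rho_v^\Phi(\theta^{-1}gu)+\rho_v^\Phi(fu)\bigr),
\]
where the diagonal term $\rho_v^\Phi(fu)$ now carries the small factor $\theta$ while only the $g$-modular is penalized by $\theta^{-1}$. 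After chaining through \eqref{eq:Phifu-3} one gets $\rho_v^\Phi(f_Nu)\le \tfrac{C_0\theta}{2}\bigl(\rho_v^\Phi(\theta^{-1}gu)+\rho_v^\Phi(f_Nu)\bigr)$; choosing $\theta=\min\{1,C_0^{-1}\}$ makes the diagonal coefficient $\le\tfrac12$, allowing absorption on the truncations $f_N$. Finally the $\Delta_2$ bound $\Phi(\lambda t)\le C_\Phi\lambda^{2I_\Phi}\Phi(t)$ for $\lambda>1$ converts $\rho_v^\Phi(\theta^{-1}gu)$ back to a multiple of $\rho_v^\Phi(gu)$, producing $C_1=C_\Phi\max\{C_0,C_0^{2I_\Phi}\}$. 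The essential point you are missing is that the parameter must be placed asymmetrically—inside the argument of $\Phi$ for the $g$-term only—rather than as a common scaling of the pair.
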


\begin{remark}\label{remark:rescale:mod}
	As in Remark \ref{remark:rescale} one can easily rescale in the previous result. To be more precise, suppose that for some $r>0$ and $p_0\in[r,\infty]$ there holds
	\begin{equation}\label{eq:uXv-3:rescale:mod}
		\|f\|_{L^{p_0}(\Sigma,w)} \leq \Psi([w]_{A_{p_0/r, \B}})\|g\|_{L^{p_0}(\Sigma,w)}, \qquad(f,g) \in \F, 
	\end{equation}  
	for all $w\in A_{p_0/r,\B}$, and where $\Psi: [1, \infty) \to [1, \infty)$ is a non-decreasing function. Then, much as we did there we may apply Theorem~\ref{thm:PhiAp} with the family of pairs $(f^r,g^r)$ with $(f,g)\in\F$) to easily obtain, with $\Phi_r(t):=\Phi(t^r)$, $t\ge 0$,  
	\begin{equation}\label{eq:uXv-4:rescale:mod}
	\int_{\Sigma}\Phi_r(f\,u)\,v\,d\mu\lesssim 
	\int_{\Sigma}\Phi_r(g\,u)\,v\,d\mu,\qquad(f,g) \in \F,
	\end{equation}
	provided
	\begin{align}
\label{thm:BFSAp:M:rescale:mod} \int_{\Sigma} \Phi((M_{\B}h)\, u^r)\, v\, d\mu 
	&\le \mathcal{N}_1 \int_{\Sigma} \Phi(|h|\, u^r)\,v\, d\mu,\quad\forall\,h \in \M_{\mu}, 
	\\
\label{thm:BFSAp:Mprime:rescale:mod} 	\int_{\Sigma} \overline{\Phi}((M'_{\B, v}h)\, u^{-r})\, v\, d\mu 
	&\le \mathcal{N}_2 \int_{\Sigma} \overline{\Phi}(|h|\, u^{-r})\,v\, d\mu,\quad\forall\,h \in \M_{\mu}. 
\end{align} 
	when $r<p_0$ and assuming only \eqref{thm:BFSAp:Mprime:rescale:mod} when $r=p_0$. Further details are left to the interested reader. 	
\end{remark}

The proof of the previous result will be based on the following proposition which is interesting on its own right.
\begin{proposition}\label{prop:extrapol-maintool:mod}
	Suppose that $(\Sigma, \mu)$ is a non-atomic $\sigma$-finite measure space with $\mu(\Sigma)>0$. Let $\B$ be a basis, $u$ and $v$ be $\B$-weights on $(\Sigma, \mu)$ such that $v(B)<\infty$ for every $B\in\B$, and let $\Phi$ be a Young function. Let $M_{\B}$ denote the Hardy-Littlewood maximal function on $(\Sigma,\mu)$ associated with $\B$ and let $M'_{\B, v}h := M_{\B}(h\,v)/v$ for each $h \in\M_{\mu}$. 
	Then, for every $f,g\in \M_\mu$ so that $0<\rho_v^{\Phi}(f\,u), \rho_v^{\Phi}(g\,u)<\infty $ the following hold:	
	\begin{list}{\textup{(\theenumi)}}{\usecounter{enumi}\leftmargin=1cm \labelwidth=1cm \itemsep=0.2cm \topsep=.2cm \renewcommand{\theenumi}{\alph{enumi}}}
		
	\item Let $p_0\in (1,\infty)$, and assume that there are $\mathcal{N}_1,\mathcal{N}_2<\infty$ so that both \eqref{eq:Phifu-1} and \eqref{eq:Phifu-2} hold. Then, there exists a $\B$-weight $w=w(f,g)\in A_{p_0,\B}$ satisfying $[w]_{A_{p_0,\B}}\le 2^{p_0}\,\mathcal{N}_1^{p_0-1} \,\mathcal{N}_2$ and such that for any $\theta\in (0,1]$
		\begin{equation}\label{eq:embedding:f-g:mod1}
		\rho_v^{\Phi}(f\,u) \leq 2^{1+\frac2{p_0'}}\,\theta^{\frac1{p_0'}}\big(\rho_v^{\Phi}(\theta^{-1}\,g\,u) + 	\rho_v^{\Phi}(f\,u)\big)^{\frac1{p_0'}}\,\|f\|_{L^{p_0}(\Sigma, w)}
			. 	
		\end{equation}
	and
		\begin{equation}\label{eq:embedding:f-g:mod2}
		\|g\|_{L^{p_0}(\Sigma,w)}\le  2\,	\theta^{\frac{1}{p_0}} \big(\rho_v^{\Phi}(\theta^{-1}\,g\,u) + 	\rho_v^{\Phi}(f\,u)\big)^{\frac1{p_0}}
		. 	
	\end{equation}
		In particular, there exists a $\B$-weight $w=w(f)\in A_{p_0,\B}$ such that $[w]_{A_{p_0,\B}}\le 2^{p_0}\,\mathcal{N}_1^{p_0-1} \,\mathcal{N}_2$ and 
		\begin{equation}\label{eq:embedding:f:mod}
			2^{-1-\frac{3}{p'_0}}\, \rho_v^{\Phi}(f\,u)^{^{\frac1{p_0}}} \leq  \|f\|_{L^{p_0}(\Sigma,w)}  \le 2^{1+\frac{1}{p_0}}\, \rho_v^{\Phi}(f\,u)^{^{\frac1{p_0}}} . 	
		\end{equation}
		
		\item Assume that there is $\mathcal{N}_2<\infty$ so that \eqref{eq:Phifu-2} holds. Then, there exists a $\B$-weight $w=w(f,g)\in A_{1,\B}$ satisfying $[w]_{A_{p_0,\B}}\le 2\,\mathcal{N}_2$ and such that
		\begin{equation}\label{eq:embedding:f-g:1:mod}
			\rho_v^{\Phi}(f\,u)\leq 2\,\|f\|_{L^{1}(\Sigma,w)}  
			\quad\text{and}\quad 
			\|g\|_{L^{1}(\Sigma,w)}\le  2\,	\theta\big(\rho_v^{\Phi}(\theta^{-1}\,g\,u) + 	\rho_v^{\Phi}(f\,u)\big). 
		\end{equation}
		In particular, there exists a $\B$-weight $w=w(f)\in A_{1,\B}$ satisfying $[w]_{A_{1,\B}}\le 2\,\mathcal{N}_2$ and such that
		\begin{equation}\label{eq:embedding:f:1:mod}
2^{-1}\, \rho_v^{\Phi}(f\,u)\leq  \|f\|_{L^{1}(\Sigma,w)}  \le 4\, \rho_v^{\Phi}(f\,u). 	
		\end{equation}
	\end{list}
\end{proposition}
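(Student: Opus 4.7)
The proof adapts Proposition~\ref{prop:extrapol-maintool} to modular spaces by replacing the duality between $\X_v$ and $\X'_v$ with Young's inequality \eqref{eq:Young-ineq} for the conjugate pair $(\Phi,\overline{\Phi})$. Fix $f,g\in\M_\mu$ with $0<\rho_v^{\Phi}(fu),\,\rho_v^{\Phi}(gu)<\infty$ and $\theta\in(0,1]$; we may assume $\rho_v^{\Phi}(\theta^{-1}gu)<\infty$, since otherwise the inequalities in (a) are trivial.

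\textbf{Rubio de Francia algorithms.} For part (a), introduce
\[
\mathcal{R}h := \sum_{k=0}^{\infty}\frac{M_\B^k h}{(2\mathcal{N}_1)^k}, \qquad
\mathcal{R}'h := \sum_{k=0}^{\infty}\frac{(M'_{\B,v})^k h}{(2\mathcal{N}_2)^k}.
\]
These enjoy four properties: $h\le\mathcal{R}h$ and $h\le\mathcal{R}'h$; $M_\B(\mathcal{R}h)\le 2\mathcal{N}_1\mathcal{R}h$ and $M'_{\B,v}(\mathcal{R}'h)\le 2\mathcal{N}_2\mathcal{R}'h$, which give $[\mathcal{R}h]_{A_{1,\B}}\le 2\mathcal{N}_1$ and $[(\mathcal{R}'h)v]_{A_{1,\B}}\le 2\mathcal{N}_2$; and the key modular bounds $\rho_v^{\Phi}(\mathcal{R}h\cdot u/2)\le\rho_v^{\Phi}(hu)$ together with $\rho_v^{\overline{\Phi}}(\mathcal{R}'h\cdot u^{-1}/2)\le\rho_v^{\overline{\Phi}}(hu^{-1})$. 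The latter are obtained by iterating \eqref{eq:Phifu-1} and \eqref{eq:Phifu-2} to get $\rho_v^{\Phi}(M_\B^k h\cdot u)\le\mathcal{N}_1^k\rho_v^{\Phi}(hu)$ and its $\overline{\Phi}$-analogue; since $\mathcal{N}_j\ge 1$ by Lemma~\ref{lem:NN}, the coefficients $c_k=(2\mathcal{N}_j)^{-k}$ satisfy $\sum c_k\le 2$, so introducing a dummy zero term to form a convex combination and invoking Jensen's inequality (which uses only that $\Phi$ is convex with $\Phi(0)=0$, thereby bypassing any $\Delta_2$ assumption) yields the claim. That $\mathcal{R}h$ and $(\mathcal{R}'h)v$ are genuine $\B$-weights follows from the finiteness of the relevant modulars together with item (vi) of Definition~\ref{def:BFS}, using $v(B)<\infty$.

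\textbf{Weight construction.} Set $h_1:=\theta^{-1}g$ and, motivated by \eqref{eq:Young-2}, $h_2:=\Phi(fu)/f$ (interpreted as $0$ where $f=0$). Then $\rho_v^{\Phi}(h_1 u)=\rho_v^{\Phi}(\theta^{-1}gu)$, the inequality $\rho_v^{\overline{\Phi}}(h_2 u^{-1})\le\rho_v^{\Phi}(fu)$ follows from \eqref{eq:Young-2}, and the identity $\int_\Sigma f\,h_2\,v\,d\mu=\rho_v^{\Phi}(fu)$ holds. Apply Lemma~\ref{lem:Phh} (once to $\Phi$, once to $\overline{\Phi}$) with a small $\varepsilon\in(0,1)$ to replace $h_i$ by $\tilde h_i>0$ $\mu$-a.e.~satisfying $(1-\varepsilon)h_i\le\tilde h_i$ without increasing the respective modulars. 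Define
\[
w := \theta^{-(p_0-1)}\,(\mathcal{R}\tilde h_1)^{1-p_0}\,(\mathcal{R}'\tilde h_2)\,v.
\]
Since the scalar $\theta^{-(p_0-1)}$ is invisible to the $A_{p_0,\B}$ characteristic, Lemma~\ref{lemma:AP-trivial}(c) yields $w\in A_{p_0,\B}$ with $[w]_{A_{p_0,\B}}\le 2^{p_0}\mathcal{N}_1^{p_0-1}\mathcal{N}_2$. This scalar is the device that redistributes the $\theta$-dependence symmetrically across $\|f\|_{L^{p_0}(\Sigma,w)}$ and $\|g\|_{L^{p_0}(\Sigma,w)}$, producing the exponents $\theta^{1/p_0'}$ and $\theta^{1/p_0}$ in \eqref{eq:embedding:f-g:mod1}--\eqref{eq:embedding:f-g:mod2}.

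\textbf{Embedding estimates.} Starting from $\rho_v^{\Phi}(fu)=\int_\Sigma f\,h_2\,v\,d\mu\le(1-\varepsilon)^{-1}\int_\Sigma f\,\mathcal{R}'\tilde h_2\,v\,d\mu$ and applying Hölder's inequality via the factorization $f\,\mathcal{R}'\tilde h_2=[f(\mathcal{R}\tilde h_1)^{-1/p_0'}(\mathcal{R}'\tilde h_2)^{1/p_0}]\cdot[(\mathcal{R}\tilde h_1)^{1/p_0'}(\mathcal{R}'\tilde h_2)^{1/p_0'}]$ produces $\rho_v^{\Phi}(fu)\lesssim\theta^{1/p_0'}\|f\|_{L^{p_0}(\Sigma,w)}\,\mathcal{I}^{1/p_0'}$, where $\mathcal{I}:=\int_\Sigma(\mathcal{R}\tilde h_1)(\mathcal{R}'\tilde h_2)\,v\,d\mu$. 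Applying Young's inequality at scale $1/2$, namely $(\mathcal{R}\tilde h_1\,u)(\mathcal{R}'\tilde h_2\,u^{-1})\le 4\Phi(\mathcal{R}\tilde h_1\,u/2)+4\overline{\Phi}(\mathcal{R}'\tilde h_2\,u^{-1}/2)$, and integrating against $v$, yields $\mathcal{I}\le 4(\rho_v^{\Phi}(\theta^{-1}gu)+\rho_v^{\Phi}(fu))$. For the second bound, $g=\theta h_1\le\theta\tilde h_1/(1-\varepsilon)\le\theta\mathcal{R}\tilde h_1/(1-\varepsilon)$ combined with the definition of $w$ gives $\|g\|_{L^{p_0}(\Sigma,w)}^{p_0}\le\theta(1-\varepsilon)^{-p_0}\mathcal{I}$; letting $\varepsilon\to 0$ then yields both claimed estimates (with the specialization $g=f$ giving \eqref{eq:embedding:f:mod}). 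Part (b) follows the same scheme but is simpler: take $w:=(\mathcal{R}'\tilde h_2)v\in A_{1,\B}$, so no $\mathcal{R}$ iteration is needed and only \eqref{eq:Phifu-2} is invoked. The main technical obstacle throughout is the absence of a $\Delta_2$ hypothesis on $\Phi$: without it, Jensen's inequality only controls $\rho_v^{\Phi}(\mathcal{R}h\cdot u/2)$ rather than $\rho_v^{\Phi}(\mathcal{R}h\cdot u)$ itself, and the resulting extra factor $1/2$ inside the modular argument must be absorbed by applying Young's inequality at the matching scale, producing the factor $4$ in the estimate for $\mathcal{I}$.
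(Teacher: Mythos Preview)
Your argument is well-organized and the Jensen-based modular bound $\rho_v^{\Phi}(\mathcal{R}h\cdot u/2)\le\rho_v^{\Phi}(hu)$ is a clean device. However, there is a quantifier mismatch that prevents the proof from establishing the proposition as stated. The statement asserts the existence of a \emph{single} weight $w=w(f,g)$ such that \eqref{eq:embedding:f-g:mod1}--\eqref{eq:embedding:f-g:mod2} hold \emph{for every} $\theta\in(0,1]$; you instead fix $\theta$ first and build $w$ from $h_1=\theta^{-1}g$ and an explicit factor $\theta^{-(p_0-1)}$, so your $w$ depends on $\theta$ (note that $\tilde h_1$, produced by Lemma~\ref{lem:Phh}, is \emph{not} positively homogeneous in its input, so the two $\theta$-dependences do not cancel). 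The same problem recurs with $\varepsilon$: since $\tilde h_i$ and hence $w$ depend on $\varepsilon$, letting $\varepsilon\to 0$ does not furnish a single weight achieving the stated constants. What you actually prove is the weaker assertion ``for each $\theta$ (and each $\varepsilon$) there exists $w$'', which, while adequate for the downstream application in Theorem~\ref{thm:PhiAp}, is not the proposition.

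The paper's route avoids this by taking $h_1=g$, defining $w=(\mathcal{R}\tilde h_1)^{1-p_0}(\mathcal{R}'\tilde h_2)\,v$ with \emph{no} $\theta$, and introducing $\theta$ only at the Young-inequality step via the identity $ab=\theta\,(\theta^{-1}a)\,b\le\theta\big(\Phi(\theta^{-1}a)+\overline{\Phi}(b)\big)$. Crucially, this is applied \emph{term by term} to the double sum $\sum_{j,k}(2\mathcal{N}_1)^{-j}(2\mathcal{N}_2)^{-k}\int (M_\B^j h_1)u\,((M'_{\B,v})^k h_2)u^{-1}\,dv$, and the positive homogeneity of $M_\B$ allows the $\theta^{-1}$ to commute with the iterates so that $\rho_v^{\Phi}(\theta^{-1}(M_\B^j h_1)u)\le\mathcal{N}_1^j\,\rho_v^{\Phi}(\theta^{-1}h_1 u)$. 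This yields the $\theta$-uniform bound $\int \mathcal{R}\tilde h_1\,\mathcal{R}'\tilde h_2\,dv\le 4\theta\big(\rho_v^{\Phi}(\theta^{-1}gu)+\rho_v^{\Phi}(fu)\big)$ with $w$ fixed. Your global Jensen bound on $\mathcal{R}$ does not interact well with this scaling: it controls $\rho_v^{\Phi}(\mathcal{R}\tilde h_1\,u/2)$ by $\rho_v^{\Phi}(\tilde h_1 u)$, but not $\rho_v^{\Phi}(\theta^{-1}\mathcal{R}\tilde h_1\,u/2)$ by $\rho_v^{\Phi}(\theta^{-1}\tilde h_1 u)$ unless Lemma~\ref{lem:Phh} is reapplied at each scale $\theta$---which again makes $\tilde h_1$, and hence $w$, $\theta$-dependent. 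A secondary point: for \eqref{eq:embedding:f-g:mod2} you bound $\|g\|_{L^{p_0}(w)}^{p_0}$ by $\theta(1-\varepsilon)^{-p_0}\mathcal{I}$ and then invoke $\mathcal{I}\le 4(\cdots)$; even in the limit $\varepsilon\to 0$ this gives the constant $4^{1/p_0}$, which exceeds the stated constant $2$ when $p_0<2$. The paper instead bounds $\int g\,\mathcal{R}'\tilde h_2\,dv$ directly by a single-sum estimate with constant $2$, which combined with $g\le 2\mathcal{R}\tilde h_1$ produces the factor $2$ exactly.
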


Assuming this result momentarily we can easily prove Theorem \ref{thm:PhiAp}:

\begin{proof}[Proof of Theorem \ref{thm:PhiAp}]
Fix $(f, g) \in \F$. We may assume that $\rho_v^{\Phi}(f\,u)>0$ and $\rho_v^{\Phi}(g\,u)<\infty$, otherwise there is nothing to prove. We claim that $\rho_v^{\Phi}(g\,u)>0$. Otherwise $g=0$ $\mu$-a.e.~(since $u,v$ are $\B$-weights) and by \eqref{eq:Phifu-3} we have that $\|f\|_{L^{p_0}(\Sigma, w)}= 0$ for every $w \in A_{p_0, \B}$. In turn,  $f=0$ $\mu$-a.e., which violates our assumption  $\rho_v^{\Phi}(f\,u)>0$. We also claim that $f<\infty$ $\mu$-a.e. Otherwise, there exists a measurable set $E\subset \Sigma$ with $\mu(E)>0$ such that $f=\infty$ on $E$. By \eqref{eq:Phifu-3} and the fact that $\B$-weights are $\mu$-a.e.~positive, it follows that 
$\|g\|_{L^{p_0}(\Sigma, w)}= \infty \quad\text{ for every } w \in A_{p_0, \B}$. This clearly contradicts Proposition~\ref{prop:extrapol-maintool:mod} applied to $g$ since $0<\rho_v^{\Phi}(g\,u)<\infty$. 

To continue we recall that since $(\Sigma, \mu)$ is $\sigma$-finite, there exists an increasing sequence of $\mu$-measurable sets  $\{\Sigma_j\}_{j=1}^{\infty}$ such that $\Sigma=\bigcup_{j=1}^{\infty} \Sigma_j$ and $\mu(\Sigma_j)<\infty$ for all $j$. For every $N \ge 1$, we define 
	\[
	f_N := f\, \mathbf{1}_{\{x\in \Sigma_N : f(x)\leq N, u(x) \le N, v(x) \le N\}}
	\] 
	and one clearly has
	\begin{align}\label{34rfqfr}
		\rho_v^{\Phi}(f_N\,u) \le N \Phi(N^2) \mu(\Sigma_N) < \infty. 
	\end{align}
	Recalling that $f<\infty$ $\mu$-a.e.~and that $u, v$ are $\B$-weights  we have $f_N\,u\nearrow f\,u$ as $N \to \infty$ $\mu$-a.e., hence also $v$-a.e., since $v$ is a $\B$-weight. These and the monotone convergence theorem imply that $\rho_v^{\Phi}(f_N\,u)\nearrow \rho_v^{\Phi}(f\,u)$ as $N\to\infty$, hence the assumption  $\rho_v^{\Phi}(f\,u)>0$ readily yields that $\rho_v^{\Phi}(f_N\,u)>0$ for $N\ge N_0$.

	Consider first the case $p_0>1$. Fixed $N\ge N_0$, apply Proposition \ref{prop:extrapol-maintool:mod} part (a) to $f_N$ and $g$ (which satisfy $0<\rho_v^{\Phi}(f_N\,u), \rho_v^{\Phi}(g\,u)<\infty$) so that there exists $w=w(f_N, g) \in A_{p_0, \B}$ with $[w]_{A_{p_0, \B}} \le 2\,\mathcal{N}_1^{p_0-1} \mathcal{N}_2$ such that for any $\theta\in (0,1]$ there hold
	\begin{align}
		\label{eq:PfNu-1} 
			\rho_v^{\Phi}(f_N\,u) 
			\leq 
			2^{1+\frac2{p_0'}}\,\theta^{\frac1{p_0'}}\big(\rho_v^{\Phi}(\theta^{-1}\,g\,u) + 	\rho_v^{\Phi}(f_N\,u)\big)^{\frac1{p_0'}}\,\|f\|_{L^{p_0}(\Sigma, w)} 
				\end{align} 
			and
	\begin{align}
		\label{eq:PfNu-2} 
		\|g\|_{L^{p_0}(\Sigma,w)}
		\le  2\,	\theta^{\frac{1}{p_0}} \big(\rho_v^{\Phi}(\theta^{-1}\,g\,u) + 	\rho_v^{\Phi}(f\,u)\big)^{\frac1{p_0}}.
		\end{align}
	We can then invoke \eqref{eq:Phifu-3} to deduce that 
	\begin{align}\label{eq:PfPg}
		\rho_v^{\Phi}(f_Nu) 
		&\le 
		2^{1+\frac2{p_0'}}\,\theta^{\frac1{p_0'}}\big(\rho_v^{\Phi}(\theta^{-1}\,g\,u) + 	\rho_v^{\Phi}(f_N\,u)\big)^{\frac1{p_0'}}\,\|f\|_{L^{p_0}(\Sigma, w)} 
		\nonumber\\
		&\le 
		2^{1+\frac2{p_0'}}\,\Psi([w]_{A_{p_0}})\,\theta^{\frac1{p_0'}}\,\big(\rho_v^{\Phi}(\theta^{-1}\,g\,u) + 	\rho_v^{\Phi}(f_N\,u)\big)^{\frac1{p_0'}}\,	 \|g\|_{L^{p_0}(\Sigma, w)} 		
		\nonumber\\
		&
		\le
		2^{2+\frac2{p_0'}}\,\Psi(2\,\mathcal{N}_1^{p_0-1} \mathcal{N}_2)\,\theta\,\big(\rho_v^{\Phi}(\theta^{-1}\,g\,u) + 	\rho_v^{\Phi}(f_N\,u)\big)
				\nonumber\\
		&
		=: C_0\, \frac{\theta}{2}\,\big(\rho_v^{\Phi}(\theta^{-1}\,g\,u) + 	\rho_v^{\Phi}(f_N\,u)\big).
	\end{align}
Setting $\theta:=\min\{1, C_0^{-1}\}$, we can use \eqref{34rfqfr} to hide the last term on the right-hand side and conclude that 
	\begin{align*}
		\rho_v^{\Phi}(f_N\,u) \le C_0\,\theta\, \rho_v^{\Phi}(\theta^{-1}\,g\,u). 
	\end{align*}
	As mentioned above $\rho_v^{\Phi}(f_N\,u)\nearrow \rho_v^{\Phi}(f\,u)$ as $N\to\infty$ and therefore
	\begin{align}\label{eq:Nugu}
		\rho_v^{\Phi}(f\,u) \le C_0\,\theta\, \rho_v^{\Phi}(\theta^{-1}\,g\,u). 
	\end{align}
	Moreover, using that $I_\Phi<\infty$, then it follows from \eqref{eq:Phiindices} that there is a constant $C_{\Phi}>0$ such that 
\begin{equation*}
	\Phi(\lambda t) \leq C_{\Phi}\,\lambda^{2\,I_{\Phi}}\, \Phi(t), \quad\text{ for every } t>0 \text{ and } \lambda>1.
\end{equation*} 
Hence,  
	\[
		\rho_v^{\Phi}(f\,u) \le C_0\,\theta\, C_{\Phi}\, \theta^{-2\,I_{\Phi}}\,\rho_v^{\Phi}(g\,u)
		=
		C_{\Phi}\, \max\{C_0, C_0^{2\,I_\Phi}\}\,\rho_v^{\Phi}(g\,u),
	\]	
	where $C_0=	2^{3+\frac2{p_0'}}\,\Psi(2\,\mathcal{N}_1^{p_0-1} \mathcal{N}_2)$. The case $p_0=1$ follows using the same argument but invoking Proposition~\ref{prop:extrapol-maintool:mod} part (b), details are left to the interested reader.
	
To complete the proof we need to justify the vector-valued inequalities  \eqref{eq:Phifu-4-vv}, \eqref{eq:Phifu-4-vv:1}, and \eqref{eq:Phifu-5}. The first two ones are obtained much as in the proof of Theorem~\ref{thm:BFSAp}, details are left the reader. Regarding \eqref{eq:Phifu-5}, as shown at the end of  Theorem~\ref{thm:BFSAp} from
either \eqref{eq:Phifu-3} or \eqref{eq:Phifu-3:1} one can obtain a version of \eqref{eq:Phifu-3} valid for any $1<q<\infty$ in place of $p_0$. This becomes our new initial assumption and we readily obtain \eqref{eq:Phifu-4-vv} with $q$ in place of $p_0$, and this is eventually \eqref{eq:Phifu-5}, completing the proof. 
\end{proof}

\begin{remark}
It follows from the proof that one can remove the assumption $I_\Phi<\infty$ at the cost of obtaining estimates of the form \eqref{eq:Nugu}. That is, in place of \eqref{eq:Phifu-4} we would get
\begin{equation*}
\int_{\Sigma}\Phi(f\,u)\,v\,d\mu
\leq 
\min\{1, C_0\}\,
\int_{\Sigma}\Phi((\max\{1, C_0\}\,g\,u)\,v\,d\mu,\qquad(f,g) \in \F,
\end{equation*}
with $C_0=	2^{3+\frac2{p_0'}}\,\Psi(2\,\mathcal{N}_1^{p_0-1} \mathcal{N}_2)$.  One can also formulate analogous versions of \eqref{eq:Phifu-4-vv}, \eqref{eq:Phifu-4:1},  \eqref{eq:Phifu-4-vv:1}, and \eqref{eq:Phifu-5}. Details are left to the reader. 

\end{remark}

\begin{proof}[Proof of Proposition~\ref{prop:extrapol-maintool:mod}]
	Note that \eqref{eq:embedding:f-g:mod1}, \eqref{eq:embedding:f-g:mod2} (resp.~\eqref{eq:embedding:f-g:1:mod}) with $g:=f$ and $\theta=1$ immediately imply \eqref{eq:embedding:f:mod} (resp.~\eqref{eq:embedding:f-g:1:mod}). Thus, we just need to obtain \eqref{eq:embedding:f-g:mod1}, \eqref{eq:embedding:f-g:mod2}, and \eqref{eq:embedding:f-g:1:mod}. To do that,  we generalize some of the ideas of \cite[Theorem~4.15]{CMP}. Fix $f, g\in\M_\mu$ with $\rho_v^{\Phi}(f\,u), \rho_v^{\Phi}(g\,u)<\infty $. We start with (a), that is, we fix $p_0>1$ and assume \eqref{eq:Phifu-1} and \eqref{eq:Phifu-2} for some finite constants $\mathcal{N}_1$ and $\mathcal{N}_2$. For any $h_1,h_2\in\M_\mu$ with $h_1, h_2>0$ $\mu$-a.e., $\rho_v^{\Phi}(h_1\,u), \rho_v^{\overline{\Phi}}(h_2\,u^{-1})<\infty $, we define the Rubio de Francia iteration algorithms as:
	\begin{equation*}
		\mathcal{R}h_1 :=\sum_{k=0}^{\infty}\frac{M_{\B}^k h_1}{2^k \mathcal{N}_1^k} \qquad\text{and}\qquad
		\mathcal{R}'h_2 :=\sum_{k=0}^{\infty}\frac{(M'_{\B, v})^k h_2}{2^k \mathcal{N}_2^k},
	\end{equation*} 
	where $M_{\B}^0$ and $(M'_{\B, v})^0$ denote the identity operator, while for every $k\ge 1$, we write $M_{\B}^k=M_{\B}\circ \dots\circ M_{\B}$ for the $k$-th iteration of $M_{\B}$ and	$(M'_{\B,v})^k=M'_{\B,v}\circ \dots\circ M'_{\B,v}$ for the $k$-th iteration of $M'_{\B,v}$.
	We claim that the following hold: 
	\begin{alignat}{2}
		h_1 &\leq \mathcal{R}h_1\qquad\qquad &	h_2 &\leq \mathcal{R}' h_2, 
		\label{RdF1:mod}
		\\[4pt]
		\left[\mathcal{R}h_1\right]_{A_{1, \B}}&\leq 2\, \mathcal{N}_1, \qquad\qquad &			[(\mathcal{R}' h_2)\, v]_{A_{1, \B}}&\leq 2\, \mathcal{N}_2.
		\label{RdF3:mod}
	\end{alignat}
	Indeed, \eqref{RdF1:mod} is an immediate consequence of the definitions, \eqref{eq:Phifu-1}, and \eqref{eq:Phifu-2}.  
	To show \eqref{RdF3:mod}, using that $h_1, h_2$ are positive almost everywhere, we can obtain from \eqref{RdF1:mod}
	\begin{equation*}
		\mathcal{R}h_1(x) \ge h_1(x)>0, \quad \mu \text{-a.e.~} x \in \Sigma,
		\quad\text{and}\quad
		\mathcal{R}'h_2(x) \ge h_2(x)>0, \quad \mu \text{-a.e.~} x \in \Sigma.
	\end{equation*} 
	Besides, \eqref{eq:Young-ineq}, \eqref{eq:Phifu-1}, Lemma~\ref{lem:NN}, and the fact that $v(B)<\infty$ for every $B\in\B$  yield that for every $B\in\B$
	\begin{align*}
		\int_{B} (\mathcal{R}h_1)\,u\, v\,d\mu 
		&=
		\sum_{k=0}^{\infty}\frac{1}{2^k\, \mathcal{N}_1^k} 	\int_{B} (M_{\B}^k h_1)\,u\,dv
		\\
		&\le		
		\sum_{k=0}^{\infty}\frac{1}{2^k\, \mathcal{N}_1^k} 	\bigg( \int_{\Sigma} \Phi((M_{\B}^k h_1)\,u)\,dv+\int_{B} \overline{\Phi}(1)\,dv\bigg)
		\\
		&\le
		\sum_{k=0}^{\infty}\frac{1}{2^k\, \mathcal{N}_1^k} 	\big( \mathcal{N}_1^k\, \rho_v^{\Phi}(h_1\,u) + \overline{\Phi}(1)\,v(B)\big)
				\\
		&=		
		2\,\rho_v^{\Phi}(h_1\,u)+ 2\,\overline{\Phi}(1)\,v(B)
	<\infty
	\end{align*}
	and
		\begin{align*}
		\int_{B} (\mathcal{R}'h_2)\,u^{-1}\, v\,d\mu 
		&=
		\sum_{k=0}^{\infty}\frac{1}{2^k\, \mathcal{N}_2^k} 	\int_{B} ((M'_{\B,v})^k h_2)\,u^{-1}\,dv
		\\
		&\le		
		\sum_{k=0}^{\infty}\frac{1}{2^k\, \mathcal{N}_2^k} 	\bigg( \int_{\Sigma} \overline{\Phi}(((M'_{\B,v})^k h_2)\,u^{-1})\,dv+\int_{B} \Phi(1)\,dv\bigg)
		\\
		&\le
		\sum_{k=0}^{\infty}\frac{1}{2^k\, \mathcal{N}_2^k} 	\big( \mathcal{N}_2^k\, \rho_v^{\overline{\Phi}}(h_2\,u^{-1}) + \Phi(1)\,v(B)\big)
		\\
		&=		
		2\, \rho_v^{\overline{\Phi}}(h_2\,u^{-1})+ 2\,\Phi(1)\,v(B)
		<\infty. 
	\end{align*}
	These and the  fact that $u$ and $v$ are $\B$-weights readily imply that $\mathcal{R}h_1, \mathcal{R}'h_2<\infty$ $\mu$-a.e.~in $\Sigma_\B$. Hence, $\mathcal{R}h_1$ and $\mathcal{R}'h_2$ are $\B$-weights. Moreover,
	\[
	M_{\B}(\mathcal{R} h_1) 
	\leq 
	\sum_{k=0}^{\infty} \frac{M_{\B}^{k+1} h_1}{2^k \mathcal{N}_1^k} 
	\le  
	2\,\mathcal{N}_1\, \mathcal{R} h_1,
	\quad\text{ and }\quad
		M'_{\B,v}(\mathcal{R}' h_2) 
		\leq 
		\sum_{k=0}^{\infty} \frac{(M'_{\B, v})^{k+1} h_2}{2^k \mathcal{N}_2^k} 
		\leq 2\,\mathcal{N}_2\, \mathcal{R}'h_2.
\]
	These readily imply \eqref{RdF3:mod}. 

On the other hand we claim that for any $0<\theta\le 1$
\begin{equation}\label{RdF2:mod:new1}
	\int_\Sigma \mathcal{R}h_1 \,\mathcal{R}'h_2\,dv
	\le
	4\,	\theta\,\big(\rho_v^{\Phi}(\theta^{-1}\,h_1\,u) + \rho_v^{\overline{\Phi}}(h_2\,u^{-1})\big)
\end{equation}
and
\begin{equation}\label{RdF2:mod:new2}
	\int_\Sigma h_1 \,\mathcal{R}'h_2\,dv
	\le
	2\,\theta\,\big(\rho_v^{\Phi}(\theta^{-1}\,h_1\,u) + \rho_v^{\overline{\Phi}}(h_2\,u^{-1})\big).
\end{equation}
To see these we use Young's inequality \eqref{eq:Young-ineq}, the sublinearity of $M_\B$ and $M'_{\B,v}$, \eqref{eq:Phifu-1},  \eqref{eq:Phifu-2}, and Lemma~\ref{lem:NN} to obtain
\begin{align*}
&\int_\Sigma \mathcal{R}h_1 \,\mathcal{R}'h_2\,dv
=
\theta\,\sum_{j=0}^\infty\sum_{k=0}^\infty \frac1{2^j \mathcal{N}_1^j}\,\frac1{2^k \mathcal{N}_2^k} 
\int_\Sigma \theta^{-1}\,(M_{\B}^{j} h_1)\,u\,((M'_{\B,v})^{k} h_2)\,u^{-1}\,dv  
\\
&
\qquad\le
\theta\,\sum_{j=0}^\infty\sum_{k=0}^\infty \frac1{2^j \mathcal{N}_1^j}\,\frac1{2^k \mathcal{N}_2^k} 
\bigg(
\int_\Sigma \Phi\big(\theta^{-1}\,(M_{\B}^{j} h_1)\,u\big)\,dv  
+
\int_\Sigma \Phi\big(((M'_{\B,v})^{k} h_2)\,u^{-1}\big)\,dv  
\bigg)
\\
&
\qquad\le
\theta\,
\sum_{j=0}^\infty\sum_{k=0}^\infty \frac1{2^j \mathcal{N}_1^j}\,\frac1{2^k \mathcal{N}_2^k} 
\bigg(
\mathcal{N}_1^j\,\int_\Sigma \Phi\big(\theta^{-1}\,h_1\,u\big)\,dv  
+
\mathcal{N}_2^k\,\int_\Sigma \Phi\big(h_2\,u^{-1}\big)\,dv  
\bigg)
\\
&
\qquad\le
4\,\theta\,\big(\rho_v^{\Phi}(\theta^{-1}\,h_1\,u)  
+
\rho_v^{\overline{\Phi}}(h_2\,u^{-1})\big)
\end{align*}
and
\begin{align*}
&\int_\Sigma h_1 \,\mathcal{R}'h_2\,dv
	=
	\theta\,\sum_{k=0}^\infty \frac1{2^k \mathcal{N}_2^k} 
	\int_\Sigma \theta^{-1}\,h_1\,u\,((M'_{\B,v})^{k} h_2)\,u^{-1}\,dv  
	\\
	&
	\qquad\le
	\theta\,\sum_{k=0}^\infty \frac1{2^k \mathcal{N}_2^k} 
	\bigg(
	\int_\Sigma \Phi(\theta^{-1}\,h_1\,u)\,dv  
	+
	\int_\Sigma \Phi\big(((M'_{\B,v})^{k} h_2)\,u^{-1}\big)\,dv  
	\bigg)
	\\
	&
	\qquad\le
	\theta\,
	\sum_{k=0}^\infty \frac1{2^k \mathcal{N}_2^k} 
	\bigg(\int_\Sigma \Phi\big(\theta^{-1}\,h_1\,u\big)\,dv  
	+
	\mathcal{N}_2^k\,\int_\Sigma \Phi\big(h_2\,u^{-1}\big)\,dv  
	\bigg)
	\\
	&
	\qquad\le
	2\,\theta\,\big(\rho_v^{\Phi}(\theta^{-1}\,h_1\,u)  
	+
	\rho_v^{\overline{\Phi}}(h_2\,u^{-1})\big).
\end{align*}

Once we have obtained \eqref{RdF1:mod}, \eqref{RdF3:mod}, \eqref{RdF2:mod:new1}, and \eqref{RdF2:mod:new2} we proceed to show \eqref{eq:embedding:f-g:mod1} and \eqref{eq:embedding:f-g:mod2}. Set $h_1:= g$ and $h_2=\frac{\Phi(f\,u)}{f}\,\mathbf{1}_{\{f>0\}}$ and note that $h_1, h_2\ge 0$ $\mu$-a.e.~verify 
\begin{align}\label{avvasrver}
	\rho_v^{\Phi}(h_1\,u)= \rho_v^{\Phi}(g\,u)<\infty \qquad\text{and}\qquad \rho_v^{\overline{\Phi}}(h_2\,u^{-1})\le \rho_v^{\Phi}(f\,u)<\infty,
\end{align}
where we have used that $\Phi$ and $\overline{\Phi}$ are Young functions and \eqref{eq:Young-2}. Since $\rho_v^{\Phi}(f\,u)$, $\rho_v^{\Phi}(g\,u)>0$ then $\rho_v^{\Phi}(h_1\,u),  \rho_v^{\overline{\Phi}}(h_2\,u^{-1})>0$ and we can then apply Lemma \ref{lem:Phh} with $\varepsilon=\frac12$ and both $\Phi$ and $\overline{\Phi}$ to find
$\widetilde{h}_1,\widetilde{h}_2> 0$ $\mu$-a.e.~such that 
\begin{align}\label{eq:hhh:mod1}
	h_1 \le 2\,\widetilde{h}_1, \qquad 		 \rho_v^{\Phi}(\widetilde{h}_1\,u) \le  \rho_v^{\Phi}(h_1\,u)=\rho_v^{\Phi}(g\,u) 
\end{align} 
and
\begin{align}\label{eq:hhh:mod2}
	h_2 \le 2\, \widetilde{h}_2,\qquad 
	\rho_v^{\overline{\Phi}}(\widetilde{h}_2\,u^{-1}) \le  \rho_v^{\Phi}(h_2\,u^{-1})\le \rho_v^{\Phi}(f\,u).  
\end{align} 
With these in mind we note that
	\begin{align}\label{eq:fhu:mod}
		\rho_v^{\Phi}(f\,u)
		= 
		\int_{\Sigma} \Phi(f\,u)\,dv
		=
		\int_{\Sigma} f\, h_2\,u\,dv
		\le
		2\, \int_{\Sigma} f\, \widetilde{h}_2\,u\,dv
		.
	\end{align}
	Set
	\begin{equation*}
		w:=(\mathcal{R}\widetilde{h}_1)^{1-p_0} (\mathcal{R}'\widetilde{h}_2)\, v. 
	\end{equation*}
	From \eqref{RdF3:mod} and \eqref{eq:Ap-product}, we deduce that $w \in A_{p_0, \B}$ with
	\begin{equation*}
		[w]_{A_{p_0}} \leq [\mathcal{R}\widetilde{h}_1]_{A_{1, \B}}^{p_0-1} [(\mathcal{R}'\widetilde{h}_2)\,v]_{A_{1, \B}}
		\leq 2^{p_0} \mathcal{N}_1^{p_0-1} \mathcal{N}_2.
	\end{equation*}
	Thus, for any $\theta\in (0,1]$, by \eqref{eq:fhu:mod},  Hölder's' inequality, \eqref{RdF2:mod:new1}, \eqref{eq:hhh:mod1}, and \eqref{eq:hhh:mod2}
	\begin{align*}
	\rho_v^{\Phi}(f\,u)
		&\le 
		2\,\int_{\Rn} f\, (\mathcal{R}\widetilde{h}_1)^{-\frac{1}{p'_0}}\, 
		(\mathcal{R}\widetilde{h}_1)^{\frac{1}{p'_0}}\, \mathcal{R}'\widetilde{h}_2\, dv
		\\
		&\leq 
		2\,\left(\int_{\Sigma} f^{p_0}\,(\mathcal{R}\widetilde{h}_1)^{1-p_0}\, \mathcal{R}'\widetilde{h}_2\, dv\right)^{\frac{1}{p_0}} 
		\left(\int_{\Sigma} \mathcal{R}\widetilde{h}_1\,\mathcal{R}'\widetilde{h}_2\,dv\right)^{\frac{1}{p_0'}} 
		\\
		&\le 
		2^{1+\frac2{p_0'}}\,\theta^{\frac1{p_0'}}\big(\rho_v^{\Phi}(\theta^{-1}\,\widetilde{h}_1\,u) + \rho_v^{\overline{\Phi}}(\widetilde{h}_2\,u^{-1})\big)^{\frac1{p_0'}}\,\|f\|_{L^{p_0}(\Sigma, w)}
			\\
		&\le 
		2^{1+\frac2{p_0'}}\,\theta^{\frac1{p_0'}}\big(\rho_v^{\Phi}(\theta^{-1}\,g\,u) + 	\rho_v^{\Phi}(f\,u)\big)^{\frac1{p_0'}}\,\|f\|_{L^{p_0}(\Sigma, w)}. 
	\end{align*}
	This shows \eqref{eq:embedding:f-g:mod1}. On the other hand, $g=h_1\le 2\, \widetilde{h}_1\le 2\, \mathcal{R}\widetilde{h}_1$ by \eqref{eq:hhh:mod2} and  \eqref{RdF1:mod}. Thus, \eqref{RdF2:mod:new2}, \eqref{eq:hhh:mod1}, and \eqref{eq:hhh:mod2} yield
	\begin{multline*}
		\|g\|_{L^{p_0}(\Sigma,w_0)} 
=\left(\int_{\Sigma} g^{p_0}\,(\mathcal{R}\widetilde{h}_1)^{1-p_0}\,
		(\mathcal{R}' \widetilde{h}_2)\,v\, d\mu\right)^{\frac{1}{p_0}}
\leq 2^{\frac1{p_0'}} \left(\int_{\Sigma} g\, (\mathcal{R}'\widetilde{h}_2)\,v\,d\mu\right)^{\frac{1}{p_0}}
		\\
\le 
		2\,	\theta^{\frac{1}{p_0}} \big(\rho_v^{\Phi}(\theta^{-1}\,\widetilde{h}_1\,u) + \rho_v^{\overline{\Phi}}(\widetilde{h}_2\,u^{-1})\big)^{\frac1{p_0}}
\le 
		2\,	\theta^{\frac{1}{p_0}} \big(\rho_v^{\Phi}(\theta^{-1}\,g\,u) + \rho_v^{\Phi}(f\,u)\big)^{\frac1{p_0}}.
	\end{multline*} 
	This shows \eqref{eq:embedding:f-g:mod2} and completes the proof of the case $p_0>1$.
	
	Let us next deal with (b), that is, we consider the case $p_0=1$ and only assume that there exists $\mathcal{N}_2<\infty$ so that \eqref{eq:Phifu-2} holds. We follow the proof of the previous case and this time we do not use $\mathcal{R}$, that is, \eqref{eq:Phifu-1} needs not to be assumed (hence $\mathcal{N}_1$ could be infinity).  With the notation above, let us set $w:=\mathcal{R}'(\widetilde{h}_2)\,v$. The second estimate in \eqref{RdF3:mod} implies $w \in A_{1, \B}$ and $[w]_{A_{1, \B}} \le 2\,\mathcal{N}_2$. Additionally, \eqref{eq:fhu:mod} and the first estimate in \eqref{RdF1:mod} give
	\begin{align*}
		\rho_v^{\Phi}(f\,u)
	\le
	2\, \int_{\Sigma} f\, \widetilde{h}_2\,u\,dv	
	=2\,  \|f\|_{L^1(\Sigma, w)}. 
	\end{align*}
	On the other hand, for any $\theta\in (0,1]$, by \eqref{RdF2:mod:new2} and \eqref{eq:hhh:mod2}
		\begin{multline*}
		\|g\|_{L^1(\Sigma, w)} 
		= 
		\int_{\Sigma} g\, (\mathcal{R}' \widetilde{h}_2)\,v\, d\mu
		\le
		2\,\theta\,\big(\rho_v^{\Phi}(\theta^{-1}\,g\,u)  
		+
		\rho_v^{\overline{\Phi}}(\widetilde{h}_2\,u^{-1})\big)
		\\
	\le 
		2\,\theta\,\big(\rho_v^{\Phi}(\theta^{-1}\,g\,u)  + \rho_v^{\Phi}(f\,u^{-1})\big).
	\end{multline*}
	These prove \eqref{eq:embedding:f-g:1:mod} and the proof is then complete. 
\end{proof}

The next goal is to prove an extrapolation theorem for $A_{\infty, \B}$ weights. 

\begin{theorem}\label{thm:PhiAi}
	Suppose that $(\Sigma, \mu)$ is a non-atomic $\sigma$-finite measure space with $\mu(\Sigma)>0$. Let $\B$ be a Muckenhoupt basis and let $\F$ be a family of extrapolation pairs. Let $u$ and $v$ be $\B$-weights on $(\Sigma, \mu)$ such that $v(B)<\infty$ for every $B\in\B$, and let $\Phi$ be a Young function such that $\Phi\in\Delta_2$ \textup{(}equivalently, $I_\Phi<\infty$\textup{)} . Let $M_{\B}$ denote the Hardy-Littlewood maximal function on $(\Sigma,\mu)$ associated with $\B$ and let $M'_{\B, v}h := M_{\B}(h\,v)/v$ for each $h \in \M_{\mu}$. Assume that there exists $\mathcal{N}<\infty$ such that
	\begin{align}\label{eq:PhiAi-1} 
		\int_{\Sigma} \overline{\Phi}((M'_{\B,v} f)\, u^{-1})\, v\, d\mu 
		\le \mathcal{N} \int_{\Sigma} \overline{\Phi}(f\, u^{-1})\, v\, d\mu,\quad\forall f \in \M_{\mu}. 
	\end{align}
	If for some $p_0 \in (0,\infty)$ and for every $w \in A_{\infty, \B}$, 
	\begin{equation}\label{eq:PhiAi-2}
		\|f\|_{L^{p_0}(\Sigma,w)} \leq \Psi([w]_{A_{\infty,\B}}) \|g\|_{L^{p_0}(\Sigma,w)}, \quad(f,g) \in \F, 
	\end{equation} 
	where $\Psi:[1,\infty) \to [1,\infty)$ is a non-decreasing function, then for every $p\in (0, \infty)$, 
	\begin{equation}\label{eq:PhiAi-3}
		\int_{\Sigma}\Phi(f^p\, u)\,v\,d\mu \leq C\,\Psi(2\, \mathcal{N})^p 
		\int_{\Sigma}\Phi(g^p\, u)\,v\,d\mu,\qquad(f,g) \in \F, 
	\end{equation}
	and	for every $q \in (0, \infty)$, 
	\begin{equation}\label{eq:PhiAi-4}
		\int_{\Sigma}\Phi \bigg(\Big(\sum_j f_j^q \Big)^{\frac1q}\,u \bigg)\,v\,d\mu 
		\leq C\, \int_{\Sigma}\Phi \bigg(\Big(\sum_j g_j^q \Big)^{\frac1q}\,u \bigg)\,v\,d\mu,\qquad \{(f_j, g_j)\} \subset \F. 
	\end{equation}
\end{theorem}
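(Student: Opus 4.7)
My plan is to prove Theorem \ref{thm:PhiAi} by mirroring the proof of Theorem \ref{thm:BFSAi} almost verbatim, replacing Theorem \ref{thm:BFSAp}(b) with its modular counterpart Theorem \ref{thm:PhiAp}(b) at the final step. The crucial observation is that the only hypothesis needed to apply Theorem \ref{thm:PhiAp}(b) is \eqref{eq:Phifu-2}, which is exactly our assumption \eqref{eq:PhiAi-1}. Therefore, once the initial $A_\infty$ estimate \eqref{eq:PhiAi-2} is ``shifted'' from the exponent $p_0$ to an $A_1$ estimate at exponent $1$ (after a suitable rescaling of the family $\F$), the conclusions \eqref{eq:PhiAi-3} and \eqref{eq:PhiAi-4} will drop out.

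The first step, and the main step, will be to bootstrap \eqref{eq:PhiAi-2} to an arbitrary exponent $p\in (0,\infty)$: namely, for every $w \in A_{\infty, \B}$,
\[
\|f\|_{L^{p}(\Sigma, w)} \lesssim \|g\|_{L^{p}(\Sigma, w)}, \qquad (f, g) \in \F.
\]
Given such $p$ and $w$, I will choose $s \ge \max\{1, p/p_0\}$ large enough that $w \in A_{s, \B}$ (using the definition of $A_{\infty, \B}$) and set $r := p_0 s/p \ge 1$. Then \eqref{eq:PhiAi-2} can be rewritten as
\[
\|f^{p_0/r}\|_{L^{r}(\Sigma, w_0)} \lesssim \|g^{p_0/r}\|_{L^{r}(\Sigma, w_0)}
\]
for every $w_0 \in A_{r, \B}\subset A_{\infty, \B}$. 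Since $\B$ is a Muckenhoupt basis, both \eqref{eqn:thm:BFSAp:M} and \eqref{eqn:thm:BFSAp:Mprime} hold with $u \equiv 1$, $v=w_0$ and $\X_v = L^r(\Sigma, w_0)$, and I can therefore apply Theorem \ref{thm:BFSAp}(a) to the new family $\F_{p_0/r}:=\{(f^{p_0/r}, g^{p_0/r}) : (f,g)\in \F\}$ to upgrade the estimate above to an $L^{s}(\Sigma, w_0)$ bound, valid for every $w_0 \in A_{s, \B}$. Setting $w_0 = w$ and unwinding $p_0 s/r=p$ yields the desired scalar $L^p$ estimate.

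Once the bootstrap is in hand, the rest is bookkeeping. For \eqref{eq:PhiAi-3}, I will fix $p \in (0, \infty)$ and apply the bootstrap at exponent $1$ to the rescaled family $\F_p := \{(f^p, g^p): (f, g) \in \F\}$; this produces exactly the hypothesis \eqref{eq:Phifu-3:1} of Theorem \ref{thm:PhiAp}(b), whose conclusion \eqref{eq:Phifu-4:1} translates immediately to \eqref{eq:PhiAi-3}. For the vector-valued estimate \eqref{eq:PhiAi-4}, I will apply the bootstrap at exponent $q$ to the family $\F_{\ell^q}$ introduced in \eqref{def:F-vv}: the identity $\int F^q\,w\,d\mu = \sum_j \int f_j^q\,w\,d\mu$ (and similarly for $G$) shows that the hypothesis transfers from $\F$ to $\F_{\ell^q}$, so the scalar modular estimate just proven, applied to $\F_{\ell^q}$ with $p=1$, gives \eqref{eq:PhiAi-4}.

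The main obstacle is concentrated in the bootstrap step: the rescaling argument relies on an appropriate choice of auxiliary exponents so that standard Rubio de Francia extrapolation on Muckenhoupt bases is applicable, which is precisely why the Muckenhoupt basis hypothesis (rather than merely the openness of $A_{\infty,\B}$) is invoked. After that, the reduction to Theorem \ref{thm:PhiAp}(b) is essentially algebraic, and the doubling condition $\Phi \in \Delta_2$ (equivalently $I_\Phi<\infty$) is precisely what Theorem \ref{thm:PhiAp} itself requires, so no further structural assumption on $\Phi$ will be needed.
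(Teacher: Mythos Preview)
Your proposal is correct and follows the paper's proof essentially verbatim: the bootstrap from $p_0$ to an arbitrary exponent $p$ is precisely the argument already carried out in the proof of Theorem~\ref{thm:BFSAi} (the paper simply cites \eqref{qwerwege:22} and \eqref{qwerwege:3} from there), and the subsequent appeal to Theorem~\ref{thm:PhiAp}(b) for both the scalar and the $\F_{\ell^q}$ vector-valued conclusions is exactly what the paper does. One small slip to fix: when you invoke Theorem~\ref{thm:BFSAp}(a) to pass from exponent $r$ to $s$, the target space should be $\X_v=L^s(\Sigma,w_0)$ with $w_0\in A_{s,\B}$ (not $L^r$), since the conclusion \eqref{eq:uXv-4} lives in $\X_v$; and you should choose $s>\max\{1,p/p_0\}$ strictly so that $r>1$ and part~(a) applies.
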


\begin{proof} 
We recall that in the proof of Theorem~\ref{thm:BFSAi} we have already obtained that \eqref{eq:PhiAi-2} implies \eqref{qwerwege:22} and hence for every $p\in (0,\infty)$ and for every $w\in A_{1,\B}$ one has
	\begin{equation}\label{qwerwege:2:mod}
		\|f^p\|_{L^{1}(\Sigma,w)} \leq C\, \|g^p\|_{L^{1}(\Sigma,w)}, \quad(f,g) \in \F.
	\end{equation}
We are now ready to invoke Theorem~\ref{thm:PhiAp} part (a) (again the reader may  find convenient to introduce the family $\F_p$ consisting of the pairs $(f^p, g^p)$ with $(f,g)\in\F$) to conclude that \eqref{eq:Phifu-4:1} yields \eqref{eq:PhiAi-3} as desired. To obtain \eqref{eq:PhiAi-4} we observe that as in the proof of Theorem~\ref{thm:BFSAi} we have shown that \eqref{qwerwege:3} holds for every $0<q<\infty$ and every $w\in A_{\infty,\B}$. The same argument we have used to show that \eqref{qwerwege:2:mod} yields  \eqref{eq:PhiAi-3} can be then repeated to see that \eqref{qwerwege:3} implies \eqref{eq:PhiAi-4}. This completes the proof. 
\end{proof}

\section{Applications}\label{sec:applications}
The goal of this section is to present some applications of the extrapolation results obtained above. We will see that those can be applied to not only study the well-posedness of the Dirichlet problem for elliptic systems, but also to establish some weighted inequalities for some significant operators and the associated commutators in various situations, some explored and some unexplored before. 

\subsection{The Dirichlet problem for elliptic systems in the upper half-space}

We fix $d,M\in{\N}$ with $d\geq 2$ and we consider a homogeneous, constant (complex) coefficient, $M\times M$ second-order system $L$ in $\R^d$. Specifically, for every vector-valued function $u=(u_\beta)_{1\leq\beta\leq M}$, we write 
\begin{equation}\label{eq:Lu}
Lu:=\left(a_{jk}^{\alpha\beta}\partial_j\partial_k u_\beta\right)_{1\leq\alpha\leq M},
\end{equation} 
where $a_{jk}^{\alpha\beta}\in\mathbb{C}$ for every $j,k\in\{1,\dots,d\}$ and $\alpha,\beta\in\{1,\dots,M\}$. Here and elsewhere in this section, we use the convention of summation over repeated indices. We also assume that $L$ is {\tt elliptic}, in the sense that there exists a constant $\kappa_0>0$ such that the following Legendre-Hadamard condition holds: 
\begin{equation}\label{eq:elliptic} 
\begin{array}{c}
\Re\left[a_{jk}^{\alpha\beta}\xi_j\xi_k\overline{\zeta_\alpha}\zeta_\beta\right]
\geq\kappa_0\left|\xi\right|^2\left|\zeta\right|^2\,\,\text{ for every}
\\[10pt]
\xi=\left(\xi_j\right)_{1\leq j\leq d}\in\R^d
\,\,\text{ and }\,\,\zeta=\left(\zeta_\alpha\right)_{1\leq\alpha\leq M}\in\mathbb{C}^M.
\end{array}
\end{equation} 
In the scalar case (i.e. $M=1$), elliptic operators include the Laplacian $\Delta=\sum\limits_{j=1}^d \partial_j^2$ or, more generally, operators of the form ${\rm div}(A\nabla)$, where $A=(a_{jk})_{1\leq j,k\leq d}\in\mathbb{C}^{d\times d}$ satisfies the scalar version of \eqref{eq:elliptic}, that is,
$
\inf_{\xi\in \mathbb{S}^{d-1}}{\rm Re}\,\big[a_{rs}\xi_r\xi_s\bigr]>0,
$, 
where $\mathbb{S}^{d-1}$ stands for the unit sphere in $\R^d$. Regarding the case $M>1$, an example of an elliptic system is the complex version of the Lamé system of elasticity in $\R^d$, given by 
\begin{equation*}
L:=\mu\Delta+(\lambda+\mu)\nabla{\rm div},
\end{equation*} 
where the constants $\lambda,\mu\in{\mathbb{C}}$ (called Lamé moduli in the literature) satisfy  $\Re\mu>0$ and $\Re(2\mu+\lambda)>0$,
conditions which are indeed equivalent to \eqref{eq:elliptic}.

We are interested in showing well-posedness for boundary value problems in the upper half-space. With this purpose in mind, we fix $d\in\mathbb{N}$ with $d\geq 2$ and denote the upper half-space in $\R^d$ as 
\begin{equation*}
\R^d_{+}:=\{(x',t) \in \R^d:\,x'\in\R^{d-1},t>0\}.
\end{equation*}
We also identify the boundary $\partial\R^d_{+}$ with $\R^{d-1}$ via $\partial \R^d_{+}\ni(x',0)\equiv x'\in\R^{d-1}$. The cone with vertex at $x'\in\R^{d-1}$ and aperture $\kappa>0$ is given by
\begin{equation*}
\Gamma_{\kappa}(x'):=\{(y',t)\in\R^d_{+}:|x'-y'|<\kappa t\}.
\end{equation*}
Given a vector-valued function $u:\R^d_{+}\to\mathbb{C}^M$, we define its nontangential boundary trace (whenever it is meaningful) as  
\begin{equation*}
\big(u\big|^{{}^{\kappa-{\rm n.t.}}}_{\partial\R^d_{+}}\big)(x'):=
\lim_{\substack{\R^d_{+}\ni y\to(x',0)\\y\in\Gamma_{\kappa}(x')}} u(y),
\quad x' \in \R^{d-1},
\end{equation*}
and the nontangential maximal function of $u$ as
\begin{equation*}
\mathcal{N}_{\kappa}u(x') := \esssup\{|u(y)|:y\in\Gamma_{\kappa}(x')\},
\quad x' \in \R^{d-1}.
\end{equation*}

In order to frame this setting in our general discussion, we let $\Sigma=\R^{d-1}$ and let $\mu=\L^{d-1}$ be the Lebesgue measure in $\R^{d-1}$. In what follows we will implicitly assume that $\L^{d-1}$ is the underlying measure and write $\mathbb{M}$ (in place of $\mathbb{M}_{\L^{d-1}}$) to denote the set of Lebesgue measurable functions in $\R^{d-1}$. Analogously, when we write that some condition occurs a.e.~we mean that it does $\L^{d-1}$-a.e. We let $\B$ denote the collection of all euclidean balls $\R^{d-1}$, in which case $M_\B$ is the classical Hardy-Littlewood maximal function (with respect to uncentered balls) and we will simply write $M$. Of course, one can equivalently work with cubes in place of balls as the corresponding maximal function is pointwise equivalent to $M$. In this context $w$ is a $\B$-weight (we will simply say that $w$ is a weight) if $w\in\mathbb{M}$ with $0<w<\infty$ a.e. In this fashion, $A_{p,\B}$ or $RH_{s,\B}$  are the classical Muckenhoupt and reverse Hölder classes and will be denoted by $A_p$ and $RH_s$, respectively.

For every elliptic system as above there exists an associated Agmon-Douglis-Nirenberg Poisson kernel in $\R^d_+$  \cite{ADN1, ADN2}, see also \cite{KMR2, Sol}. This, \cite[Theorems 2.4 and 3.1]{K-MMMM}, and \cite[Theorem 1.1]{MMMMM18} (see also \cite[Theorem 3.2]{K-MMMM}) allow us to formulate the following result:

\begin{theorem}\label{thm:Poisson}
Let $L$ be a homogeneous, second-order, constant complex coefficient, elliptic $M\times M$ system in $\R^d$ as in \eqref{eq:Lu}-\eqref{eq:elliptic}.
Then the following statements hold:
\begin{list}{\textup{(\theenumi)}}{\usecounter{enumi}\leftmargin=1cm \labelwidth=1cm \itemsep=0.2cm 
		\topsep=.2cm \renewcommand{\theenumi}{\alph{enumi}}}

\item There exists a matrix-valued function 
$P^L=(P^L_{\alpha\beta})_{1\leq\alpha,\beta\leq M}:\R^{d-1}\to\mathbb{C}^{M\times M}$, 
called the {\tt Poisson kernel} for $L$ in $\R^d_{+}$, such that 
$P^L\in{\mathscr{C}}^\infty(\R^{d-1})$, and for some constant $C\in(0,\infty)$
\begin{equation}\label{eq:Poisson-decay}
|P^L(x')|\leq\frac{C}{(1+|x'|^2)^{d/2}} \qquad\text{and}\qquad \int_{\R^{d-1}}P^L(x')\,dx'=I_{M\times M},
\end{equation} 
where $I_{M\times M}$ denotes the $M\times M$ identity matrix. 

\item The function $K^L(x',t):=P_t^L(x')= t^{1-d}P^L(x'/t)$ satisfies $K^L\in{\mathscr{C}}^\infty\big(\overline{\R^d_{+}}\setminus \{0\}\big)$. Furthermore, if we set $K^L:=\big(K^L_{\alpha\beta}\big)_{1\leq\alpha,\beta\leq M}$ then
\begin{equation}\label{eq:Poisson-LK0}
LK^L_{\cdot\beta}=0\,\,\text{ in }\,\,\R^d_{+} \,\,\text{ for every }\,\,\beta\in\{1,\dots,M\},
\end{equation}
where $K^L_{\cdot\beta}:=\big(K^L_{\alpha\beta}\big)_{1\leq\alpha\leq M}$ is the $\beta$-th column in $K^L$. 

\item Assume that $f=(f_\beta)_{1\le\beta\le M}:\R^{d-1}\to\C^M$ is a Lebesgue measurable function such that 
\begin{equation}\label{zdfgwseg}
\int_{\R^{d-1}} \frac{|f(x')|}{1+|x'|^d} dx' < \infty,
\end{equation}
and define $u(x',t):=(P^L_t\ast f)(x')$ with $P^L$ as in \textup{(a)}. Then $u$ is meaningfully defined via an absolutely convergent integral, and for every aperture $\kappa>0$, it satisfies 
\begin{equation}\label{eq:Poisson-u-prop} 
u\in\mathscr{C}^\infty(\R^d_{+},{\mathbb{C}}^M),\quad\, 
Lu=0\,\,\text{ in }\,\,\R^d_{+},\quad\,\,
u\big|^{{}^{\kappa-{\rm n.t.}}}_{\partial\R^d_{+}}=f\text{ a.e.~on }\,\R^{d-1},
\end{equation}
and there is a constant $C\in(0,\infty)$ such that 
\begin{equation}\label{eq:Poisson-u-prop2} 
|f(x')|\leq\mathcal{N}_{\kappa}u(x')\leq C\,M f(x'), \qquad \text{for a.e.~}x'\in\R^{d-1}.
\end{equation}

\item Assume that $u\in\big[{\mathscr{C}}^{\infty}(\R^d_{+})\big]^M$ satisfies $Lu=0$ in $\R^d_{+}$ and 
\begin{equation*}
	\int_{\R^{d-1}}\frac{({\mathcal{N}}_{\kappa}u)(x')}{1+|x'|^{d-1}}\,dx'<\infty.
\end{equation*}
Then, 
\[
\begin{cases}
u\big|^{{}^{\kappa-{\rm n.t.}}}_{\partial{\R}^d_{+}}\ \text{exists a.e.~on}\ \R^{d-1},
\\[12pt]
\displaystyle u\big|^{{}^{\kappa-{\rm n.t.}}}_{\partial{\R}^d_{+}} \in L^1\Big(\R^{d-1},\frac{dx'}{1+|x'|^{d-1}}\Big),
\\[14pt]
u(x',t)=\Big(P^L_t\ast\big(u\big|^{{}^{\kappa-{\rm n.t.}}}_{\partial \R^d_{+}}\big)\Big)(x')\ \text{for each}\ (x',t)\in \R^d_{+}.
\end{cases}
\]
 In particular, if $u\big|^{{}^{\kappa-{\rm n.t.}}}_{\partial \R^d_{+}}=0$ a.e.~on $\R^{d-1}$, then $u\equiv 0$.

\end{list}
\end{theorem}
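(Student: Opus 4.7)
The plan is to assemble this result from ingredients already available in the literature on constant-coefficient elliptic systems, since each of the four statements reflects a known piece of the Agmon--Douglis--Nirenberg theory on the upper half-space. For parts (a) and (b), I would appeal to the construction of the Poisson kernel in \cite{ADN1, ADN2} (see also \cite{KMR2, Sol}). The symbol-calculus construction (invert the Fourier symbol of $L$ along the boundary and select the branch that decays as $t\to\infty$) produces a smooth $M\times M$ matrix kernel $P^L$; the pointwise decay in \eqref{eq:Poisson-decay} follows from the homogeneity and ellipticity of $L$, while the normalization $\int_{\R^{d-1}} P^L\,dx' = I_{M\times M}$ encodes the fact that $\{P^L_t\}_{t>0}$ is an approximate identity. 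The formula $K^L(x',t) = t^{1-d}P^L(x'/t)$ then automatically inherits smoothness on $\overline{\R^d_+}\setminus\{0\}$ and, by the way $P^L$ is constructed, each column $K^L_{\cdot\beta}$ is annihilated by $L$ in $\R^d_+$.

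For part (c), the decay from (a) combined with \eqref{zdfgwseg} gives absolute convergence of the integral $u(x',t) = (P^L_t*f)(x')$, because for each fixed $(x',t)$ one has $|K^L(x'-y',t)| \le C_{x',t}(1+|y'|)^{-d}$. Differentiation under the integral sign, justified by analogous pointwise bounds on derivatives of $K^L$ away from the origin, yields $u\in\mathscr{C}^\infty(\R^d_+,\C^M)$ and $Lu=0$ via (b). The a.e.\ nontangential trace identity $u|^{\kappa-\mathrm{n.t.}}_{\partial\R^d_+}=f$ is then the standard approximation-of-the-identity statement for dilates admitting a radially decreasing integrable majorant, carried out in \cite[Theorems~2.4 and~3.1]{K-MMMM}. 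The same majorant argument produces the pointwise bound $\mathcal{N}_\kappa u(x')\le C\,Mf(x')$, while the lower bound $|f(x')|\le \mathcal{N}_\kappa u(x')$ is immediate from the definition of the nontangential maximal function once the trace exists a.e.

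The main obstacle is part (d), which is a genuine uniqueness/representation theorem rather than a routine computation, and I would invoke \cite[Theorem~1.1]{MMMMM18} (see also \cite[Theorem~3.2]{K-MMMM}) to settle it. The structure of the argument is as follows: first, use the assumed weighted integrability of $\mathcal{N}_\kappa u$ together with a Fatou-type extraction to produce the nontangential boundary trace $f:=u|^{\kappa-\mathrm{n.t.}}_{\partial\R^d_+}$ a.e., and then transfer the integrability from $\mathcal{N}_\kappa u$ to $f$, which shows that $f$ satisfies \eqref{zdfgwseg} so that the Poisson extension $v:=P^L_t*f$ from (c) is well-defined. Then $w:=u-v$ is $L$-harmonic in $\R^d_+$, has vanishing nontangential trace a.e., and its nontangential maximal function inherits the same weighted $L^1$ integrability from those of $u$ and $v$. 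The heart of the matter is the Liouville-type uniqueness assertion that any such $w$ must be identically zero; this is the content of the cited results, and it is nontrivial precisely because no pointwise maximum principle is available for general elliptic systems, so one proceeds instead via a representation/duality argument using the Poisson kernel and a careful truncation in $t$. Once $w\equiv 0$ is obtained, the representation formula for $u$ follows at once, and the final ``in particular'' statement is immediate.
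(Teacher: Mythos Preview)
Your proposal is correct and matches the paper's approach exactly: the paper does not prove this theorem but simply states it as a compilation of known results, citing \cite{ADN1, ADN2, KMR2, Sol} for the Poisson kernel construction, \cite[Theorems~2.4 and~3.1]{K-MMMM} for part~(c), and \cite[Theorem~1.1]{MMMMM18} (see also \cite[Theorem~3.2]{K-MMMM}) for part~(d). Your write-up is in fact more detailed than the paper's single-sentence attribution, and the additional explanations you provide for each part are accurate.
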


With Theorem~\ref{thm:Poisson} in hand, we are now ready to prove well-posedness for boundary value problems.

\begin{theorem}\label{thm:BVP-BFS}
Let $L$ be an elliptic constant complex coefficient second-order $M\times M$ system as in \eqref{eq:Lu}-\eqref{eq:elliptic}. Let $v$ and $w$ be weights on $(\R^{d-1},\L^{d-1})$ such that $v \in L^1_{\loc}(\R^{d-1},\L^{d-1})$. Fix an aperture parameter $\kappa>0$ and a Banach function space $\X_v$ over $(\R^{d-1}, v)$, where as usual $dv=v\,d \L^{d-1}$.   Let $M$ denote the Hardy-Littlewood maximal function on $(\R^{d-1},\L^{d-1})$ and let $M'_{v}h:=M(h\,v)/v$ for each $h \in\M$. Assume that there is $C_0<\infty$ so that 
\begin{align}
\label{eq:B1} \norm{(Mh)\, w}_{\X_v} &\leq C_0\, \|h\,w\|_{\X_v}, \quad \forall\,h \in\M, 
\\
\label{eq:B2} \|(M'_{v} h)\, w^{-1} \|_{\X'_v} &\leq C_0\, \|h\,w^{-1}\|_{\X'_v}, \quad \forall\,h \in\M.
\end{align}
Then, the Dirichlet Problem 
\begin{equation}\label{eq::BVP-BFS}
\left\lbrace
\begin{array}{l}
u\in\mathscr{C}^{\infty}(\R^d_{+},\mathbb{C}^M),
\\[4pt]
Lu=0\,\,\text{ in }\,\,\R^d_{+},
\\[4pt] 
(\mathcal{N}_{\kappa}u)\, w\in\X_v,
\\[4pt]
\restr{u}{\partial\R^d_{+}}^{{}^{\kappa-{\rm n.t.}}}=f
\,\,\text{ on }\,\,\R^{d-1},\quad f\,w\in\X_v.
\end{array}
\right.
\end{equation}
is well-posed. More specifically, there is a unique solution and it is given by 
\begin{equation}\label{eq:BVP-BFS-sol}
u(x',t)=(P_t^L*f)(x'), \qquad(x',t)\in{\R}^d_{+},
\end{equation}
where $P^L$ denotes the Poisson kernel for $L$ in $\R^d_+$ from Theorem~\ref{thm:Poisson}. Furthermore, there is a constant $C\in[1,\infty)$ such that 
\begin{equation}\label{eq:BVP-BFS-estimate}
\|f\,w\|_{\X_v}\le \|(\mathcal{N}_{\kappa}u)\,w\|_{\X_v} \leq C\, \|f\,w\|_{\X_v}.
\end{equation}
\end{theorem}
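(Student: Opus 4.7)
The plan is to show that the function $u$ defined by \eqref{eq:BVP-BFS-sol} is the unique solution of \eqref{eq::BVP-BFS}, with both estimates in \eqref{eq:BVP-BFS-estimate} following from the pointwise sandwich $|f|\le\mathcal{N}_\kappa u\le C\,Mf$ of Theorem~\ref{thm:Poisson}(c) combined with the maximal bound \eqref{eq:B1}. The technical heart of the argument is the auxiliary estimate
\begin{equation}\label{eq:key-aux}
\big\|[w\,v\,(1+|\cdot|^{d-1})]^{-1}\big\|_{\X'_v}<\infty,
\end{equation}
which serves both to make sense of the Poisson integral defining $u$ and to trigger uniqueness via Theorem~\ref{thm:Poisson}(d).

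To prove \eqref{eq:key-aux}, I would first rewrite \eqref{eq:B2} using $M'_vh=M(hv)/v$ and the substitution $g=hv$ to obtain $\|(Mg)(wv)^{-1}\|_{\X'_v}\le C_0\,\|g(wv)^{-1}\|_{\X'_v}$ for every $g\in\M$. This is precisely the condition \eqref{eq:Xv-N1} for the pair $(\X'_v,(wv)^{-1})$ over $(\R^{d-1},v)$. Since $v\in L^1_{\loc}$ and both $w,v$ are weights, Lemma~\ref{lem:AXprev} gives $\|\mathbf{1}_{B(0,1)}(wv)^{-1}\|_{\X'_v}<\infty$. Testing the inequality above with $g=\mathbf{1}_{B(0,1)}$, together with the standard pointwise lower bound $M\mathbf{1}_{B(0,1)}(x')\ge c_d\,(1+|x'|^{d-1})^{-1}$ and property (iv) of Definition~\ref{def:BFS}, then yields \eqref{eq:key-aux}.

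With \eqref{eq:key-aux} in hand, existence and uniqueness follow quickly. For existence: since $fw\in\X_v$, the generalized Hölder inequality \eqref{eq:Holder}, combined with \eqref{eq:key-aux} and the trivial estimate $(1+|x'|^d)^{-1}\le C\,(1+|x'|^{d-1})^{-1}$, gives $\int_{\R^{d-1}}|f(x')|(1+|x'|^d)^{-1}\,dx'<\infty$, which is the integrability \eqref{zdfgwseg}. Theorem~\ref{thm:Poisson}(c) then produces $u=P^L_t*f$ satisfying \eqref{eq:Poisson-u-prop}--\eqref{eq:Poisson-u-prop2}; applying \eqref{eq:B1} to $Mf$ and invoking property (iv) of Definition~\ref{def:BFS} yields \eqref{eq:BVP-BFS-estimate}, and in particular $(\mathcal{N}_\kappa u)w\in\X_v$. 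For uniqueness, let $u$ be the difference of any two solutions of \eqref{eq::BVP-BFS}, so $(\mathcal{N}_\kappa u)w\in\X_v$ and $u|^{{}^{\kappa-{\rm n.t.}}}_{\partial\R^d_+}=0$; a second application of \eqref{eq:Holder} and \eqref{eq:key-aux} gives
\[
\int_{\R^{d-1}}\frac{(\mathcal{N}_\kappa u)(x')}{1+|x'|^{d-1}}\,dx' \le \|(\mathcal{N}_\kappa u)w\|_{\X_v}\,\big\|[w\,v\,(1+|\cdot|^{d-1})]^{-1}\big\|_{\X'_v}<\infty,
\]
so Theorem~\ref{thm:Poisson}(d) forces $u\equiv 0$. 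The main obstacle is the reduction to \eqref{eq:key-aux}: the two intertwined maximal bounds \eqref{eq:B1}--\eqref{eq:B2} have to be used on opposite sides of Hölder duality, and the test-function choice $\mathbf{1}_{B(0,1)}$ is what converts the dual boundedness into the explicit polynomial decay demanded by the growth hypotheses in Theorem~\ref{thm:Poisson}.
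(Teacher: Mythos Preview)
Your proof is correct, and it takes a genuinely different and more direct route than the paper's.

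The paper establishes the key integrability condition $\int_{\R^{d-1}}|h(x')|(1+|x'|^{d-1})^{-1}\,dx'<\infty$ (whenever $hw\in\X_v$) by invoking the full extrapolation machinery of Theorem~\ref{thm:BFSAp}: it builds the family of pairs $(F,G)=\big((\int|h|(1+|\cdot|^{d-1})^{-1})\mathbf{1}_{B(0',1)},\,|h|\big)$, verifies the weighted estimate $\|F\|_{L^p(\nu)}\lesssim [\nu]_{A_p}^{1+1/p}\|G\|_{L^p(\nu)}$ for every $\nu\in A_p$ via the bound $M(\mathbf{1}_{B(0',1)})\approx(1+|\cdot|^{d-1})^{-1}$ and the $L^{p'}(\nu^{1-p'})$-boundedness of $M$, and then extrapolates using both \eqref{eq:B1} and \eqref{eq:B2} as hypotheses.

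Your argument bypasses extrapolation entirely. By rewriting \eqref{eq:B2} as a maximal bound on $\X'_v$ with weight $(wv)^{-1}$, you can appeal directly to Lemma~\ref{lem:AXprev} (applied to the associate space $\X'_v$, which is itself a Banach function space over $(\R^{d-1},v)$) to get $\|\mathbf{1}_{B(0,1)}(wv)^{-1}\|_{\X'_v}<\infty$, and then a single test with $g=\mathbf{1}_{B(0,1)}$ and the lower bound $M\mathbf{1}_{B(0,1)}\gtrsim(1+|\cdot|^{d-1})^{-1}$ yields \eqref{eq:key-aux}. The integrability then comes from one application of the generalized H\"older inequality \eqref{eq:Holder}. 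This is more elementary and has the conceptual advantage of isolating exactly where each hypothesis is used: \eqref{eq:B2} alone suffices for the integrability (and hence for uniqueness via Theorem~\ref{thm:Poisson}(d)), while \eqref{eq:B1} is only needed for the upper bound $\|(\mathcal{N}_\kappa u)w\|_{\X_v}\le C\|fw\|_{\X_v}$. The paper's route, on the other hand, illustrates the extrapolation theorem in action, which is thematically consistent with the article's purpose.
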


\begin{proof}
Let $p \in (1, \infty)$ and $\nu \in A_{p}$. For every $h \in \M$ write 
\begin{align}\label{eq:FG-1}
F := \Big(\int_{\R^{d-1}} \frac{|h(x')|}{1+|x'|^{d-1}}\,dx'\Big)\mathbf{1}_{B(0', 1)}  \qquad\text{and}\qquad G:=|h|, 
\end{align}
where $0'$ denotes the origin of $\mathbb{R}^{d-1}$.
It is not difficult to see that $M(\mathbf{1}_{B(0',1)})(x')\approx (1+|x'|^{d-1})^{-1}$ for every $x'\in\R^{d-1}$ (see for instance \cite[Lemma~2.1]{K-MMMM}). Using Hölder's inequality and \eqref{eq:Msharp}, we obtain that 
\begin{align}\label{eq:FG-2}
\|F\|_{L^p(\R^{d-1}, \nu)} 
&=\Big(\int_{\R^{d-1}} \frac{|h(x')|}{1+|x'|^{d-1}}\,dx'\Big) \, \nu(B(0', 1))^{\frac1p}
\nonumber\\
&\approx \Big(\int_{\R^{d-1}}|h(x')|M(\mathbf{1}_{B(0',1)})(x')\,dx'\Big) \, \nu(B(0', 1))^{\frac1p}
\nonumber\\
&\leq \|h\|_{L^p(\R^{d-1}, \nu)}\, \|M(\mathbf{1}_{B(0',1)})\|_{L^{p'}(\R^{d-1}, \nu^{1-p'})} \, \nu(B(0', 1))^{\frac1p}
\nonumber\\
&\leq C\, [\nu]_{A_{p}}\, \|h\|_{L^p(\R^{d-1}, \nu)} \, \nu(B(0', 1))^{\frac1p} \, \nu^{1-p'}(B(0', 1))^{\frac{1}{p'}} 
\nonumber\\
&\leq C\, [\nu]_{A_{p}}^{1+\frac1p}\, \|G\|_{L^p(\R^{d-1}, \nu)}.  
\end{align}
This together with Theorem \ref{thm:BFSAp} gives
\begin{align*}
\Big(\int_{\R^{d-1}} \frac{|h(x')|}{1+|x'|^{d-1}}\,dx'\Big) \, \|\mathbf{1}_{B(0', 1)}\, w\|_{\X_v}
=\|F\,w\|_{\X_v} \le C \|G\,w\|_{\X_v} = C \|h\,w\|_{\X_v}, 
\end{align*}
which, by \eqref{eq:BuXv}, immediately implies that 
\begin{align}\label{5q2fg5gg}
\int_{\R^{d-1}} \frac{|h(x')|}{1+|x'|^{d-1}}\,dx' < \infty, \qquad\text{for every $h \in \M$ with $\|h\,w\|_{\X_v}<\infty$}.
\end{align}

Given $f \in \M$ with $\|f\,w\|_{\X_v}<\infty$,  \eqref{5q2fg5gg} applied to $f$ immediately implies \eqref{zdfgwseg}. Hence, \eqref{eq:Poisson-u-prop} in Theorem~\ref{thm:Poisson} says that $u$ defined in \eqref{eq:BVP-BFS-sol} satisfies 
the first, second, and fourth condition in \eqref{eq::BVP-BFS}. Also, \eqref{eq:Poisson-u-prop2} and \eqref{eq:B1} yield
\begin{equation*}
	\|f\,w\|_{\X_v}\le \|(\mathcal{N}_{\kappa}u)\, w\|_{\X_v}
	\leq C\,\|(M f)\,w\|_{\X_v}
	\leq 
	C\,\|f\,w\|_{\X_v}.
\end{equation*}
Thus, the third condition in \eqref{eq::BVP-BFS}, and \eqref{eq:BVP-BFS-estimate}, hold. Finally, \eqref{5q2fg5gg} applied to $\mathcal{N}_{\kappa}u$ and Theorem~\ref{thm:Poisson} item (d) readily imply that $u$, the solution of  \eqref{eq::BVP-BFS}, is unique, hence it must be the one given in \eqref{eq:BVP-BFS-sol}.
\end{proof}

\begin{example}\label{ex:Lpvar:Dir}
Recalling the definition of  the variable Lebesgue space $L^{p(\cdot)}(\R^{d-1},\L^{d-1})$ in Example~\ref{ex:Lpvar} and for $L$ as above 
we obtain the following consequence. Assume that $1<p_-\le p_+<\infty$, $p(\cdot)\in{\rm LH}$, and $w\in A_{p(\cdot)}$ 
(cf.~\eqref{eq:Lpvar-pp}--\eqref{eq:Lpvar-Apvar}). Then \eqref{eq:Ap=AX} allows us to apply Theorem~\ref{thm:BVP-BFS} with $v\equiv 1$ and $\X_v=L^{p(\cdot)}(\R^{d-1},\L^{d-1})$ to obtain that the Dirichlet Problem 
\begin{equation}\label{eq::BVP-Lp-var}
	\left\lbrace
	\begin{array}{l}
		u\in\mathscr{C}^{\infty}(\R^d_{+},\mathbb{C}^M),
		\\[4pt]
		Lu=0\,\,\text{ in }\,\,\R^d_{+},
		\\[4pt] 
		(\mathcal{N}_{\kappa}u)\, w\in L^{p(\cdot)}(\R^{d-1},\L^{d-1}),
		\\[4pt]
		\restr{u}{\partial\R^d_{+}}^{{}^{\kappa-{\rm n.t.}}}=f
		\,\,\text{ on }\,\,\R^{d-1},\quad f\,w\in L^{p(\cdot)}(\R^{d-1},\L^{d-1}).
	\end{array}
	\right.
\end{equation}
is well-posed. More specifically, there is a unique solution, given by $u(x',t)=(P_t^L*f)(x')$, $(x',t)\in{\R}^d_{+}$, and satisfying 
\begin{equation}\label{eq:BVP-Lpvar-estimate}
	\|(\mathcal{N}_{\kappa}u)\,w\|_{L^{p(\cdot)}(\R^{d-1},\L^{d-1})} \leq C\, \|f\,w\|_{L^{p(\cdot)}(\R^{d-1},\L^{d-1})}.
\end{equation}
The case $w\equiv 1$ was obtained in \cite[Example~3]{K-MMMM}. 
\end{example}

\begin{example}\label{ex:RIBFS:Dir}
Let $\X$ be a rearrangement invariant Banach function space over $(\R^{d-1},\L^{d-1})$ such that $1<p_\X\le q_\X<\infty$. Assume that  $u^{p_{\X}}\, v \in A_{p_{\X}}$, $u^{q_{\X}}\, v \in A_{q_{\X}}$, and $v \in A_{\infty}$. Then for $L$ as above, \eqref{eq:intro-CMP-ri} allows us to apply Theorem~\ref{thm:BVP-BFS} with $\X_v=\X(v)$ (cf. \eqref{def:Xv:ri})  to obtain that the Dirichlet Problem 
\begin{equation}\label{eq::BVP-RIBFS}
	\left\lbrace
	\begin{array}{l}
		u\in\mathscr{C}^{\infty}(\R^d_{+},\mathbb{C}^M),
		\\[4pt]
		Lu=0\,\,\text{ in }\,\,\R^d_{+},
		\\[4pt] 
		(\mathcal{N}_{\kappa}u)\, w\in \X(v),
		\\[4pt]
		\restr{u}{\partial\R^d_{+}}^{{}^{\kappa-{\rm n.t.}}}=f
		\,\,\text{ on }\,\,\R^{d-1},\quad f\,w\in \X(v).
	\end{array}
	\right.
\end{equation}
is well-posed. More specifically, there is a unique solution, given by $u(x',t)=(P_t^L*f)(x')$, $(x',t)\in{\R}^d_{+}$, and satisfying 
\begin{equation}\label{eq:BVP-RIBFS-estimate}
	\|(\mathcal{N}_{\kappa}u)\,w\|_{\X(v)} \leq C\, \|f\,w\|_{\X(v)}.
\end{equation}
The case $w\equiv 1$ was obtained in \cite[Theorem~1.5]{K-MMMM}.

This covers the cases $\X=L^p$, $L^{p,q}$, or $L^{p}(\log L)^{\alpha}$ with $1<p<\infty$, $1\le q\le \infty$, and $\alpha\in\re$, in which case we have $p_\X=q_\X=p$, and the weights  satisfy $u^{p}\, v \in A_{p}$, $v \in A_{\infty}$. Analogously, we can consider the spaces $\X=(L^p+L^q)$ or $\X=(L^p\cap L^q)$, with $1<p,q<\infty$, in which case $p_\X=\min\{p,q\}$ and $q_\X=\max\{p,q\}$ and the weights satisfy $u^{p}\, v \in A_{p}$, $u^{q}\, v \in A_{q}$, and $v \in A_{\infty}$.
\end{example}

\begin{theorem}\label{thm:BVP-Phi}
Let $L$ be an elliptic constant complex coefficient second-order $M\times M$ system as in \eqref{eq:Lu}-\eqref{eq:elliptic}. Let $v$ and $w$ be weights on $(\R^{d-1},\L^{d-1})$ such that $v \in L^1_{\loc}(\R^{d-1},\L^{d-1})$. Fix an aperture parameter $\kappa>0$ and a Young function $\Phi\in\Delta_2$ (equivalently, $I_\Phi<\infty$). Let $M$ denote the Hardy-Littlewood maximal function on $(\R^{d-1},\L^{d-1})$ and let $M'_{v}h:=M(h\,v)/v$ for each $h \in\M$. Assume that there is $C_0<\infty$ so that 
\begin{align}
\label{eq:PB1} \int_{\R^{d-1}} \Phi((M_{\B}h)\, w)\, v\, d\L^{d-1} 
&\le C_0 \int_{\R^{d-1}} \Phi(|h|\, w)\,v\, d\L^{d-1},\quad\forall\,h \in \M, 
\\
\label{eq:PB2} \int_{\R^{d-1}} \overline{\Phi}((M'_{\B, v}h)\, w^{-1})\,v \, d\L^{d-1} 
&\le C_0 \int_{\R^{d-1}} \overline{\Phi}(|h|\, w^{-1})\,v\, d\L^{d-1},\quad\forall\,h \in \M. 
\end{align}
Then, the Dirichlet Problem 
\begin{equation}\label{eq::BVP-Phi}
\left\lbrace
\begin{array}{l}
u\in\mathscr{C}^{\infty}(\R^d_{+},\mathbb{C}^M),
\\[4pt]
Lu=0\,\,\text{ in }\,\,\R^d_{+},
\\[4pt] 
\Phi((\mathcal{N}_{\kappa}u)\, w)\in L^1(\R^{d-1}, v),
\\[4pt]
\restr{u}{\partial\R^d_{+}}^{{}^{\kappa-{\rm n.t.}}}=f
\,\,\text{ on }\,\,\R^{d-1},\quad \Phi(|f|\,w)\in L^1(\R^{d-1}, v).
\end{array}
\right.
\end{equation}
is well-posed. More specifically, there is a unique solution and it is given by 
\begin{equation}\label{eq:BVP-Phi-sol}
u(x',t)=(P_t^L*f)(x'),\qquad(x',t)\in{\R}^d_{+},
\end{equation}
where $P^L$ denotes the Poisson kernel for $L$ in $\R^d_+$ from Theorem~\ref{thm:Poisson}. Furthermore, there is a constant $C\in[1,\infty)$ such that
\begin{equation}\label{eq:BVP-Phi-estimate}
\int_{\R^{d-1}} \Phi(|f|\,w)\,v\, d\L^{d-1}\le \int_{\R^{d-1}}\Phi((\mathcal{N}_{\kappa}u)\, w)\,v \, d\L^{d-1} \leq C \int_{\R^{d-1}} \Phi(|f|\,w)\,v\, d\L^{d-1}.
\end{equation}
\end{theorem}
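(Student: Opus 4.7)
The plan is to mimic the proof of Theorem~\ref{thm:BVP-BFS}, replacing the use of Theorem~\ref{thm:BFSAp} with Theorem~\ref{thm:PhiAp}, and carefully exploiting that $\Phi\in\Delta_2$.

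First, the key technical step is to show that any $h\in\M$ with $\int_{\R^{d-1}}\Phi(|h|\,w)\,v\,d\L^{d-1}<\infty$ automatically satisfies $\int_{\R^{d-1}} |h(x')|/(1+|x'|^{d-1})\,dx'<\infty$. For this I fix $p_0\in(1,\infty)$ (say $p_0=2$) and, following \eqref{eq:FG-1}--\eqref{eq:FG-2}, set $F:=C_h\,\mathbf{1}_{B(0',1)}$ with $C_h:=\int_{\R^{d-1}}|h|/(1+|x'|^{d-1})\,dx'$ and $G:=|h|$. Repeating verbatim the computation in Theorem~\ref{thm:BVP-BFS} one obtains, for every $\nu\in A_{p_0}$,
\[
\|F\|_{L^{p_0}(\R^{d-1},\nu)}\le C\,[\nu]_{A_{p_0}}^{1+\frac{1}{p_0}}\,\|G\|_{L^{p_0}(\R^{d-1},\nu)}.
\]
Then I apply Theorem~\ref{thm:PhiAp}~(a) (with the role of the weight ``$u$'' played by our $w$, and with assumptions \eqref{eq:PB1}--\eqref{eq:PB2} providing \eqref{eq:Phifu-1}--\eqref{eq:Phifu-2}) to the family $\F=\{(F,G)\}$, to conclude
\[
\int_{\R^{d-1}}\Phi(F\,w)\,v\,d\L^{d-1}\le C\int_{\R^{d-1}}\Phi(|h|\,w)\,v\,d\L^{d-1}<\infty.
\]
Since $w,v$ are weights, $w>0$ $\L^{d-1}$-a.e.~on $B(0',1)$ and $v(B(0',1))>0$; hence $C_h=\infty$ would force the left-hand side to be infinite, a contradiction, so $C_h<\infty$.

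With this in hand, existence and the explicit formula proceed exactly as in the BFS case. Given $f$ with $\int\Phi(|f|\,w)\,v\,d\L^{d-1}<\infty$, the first step yields \eqref{zdfgwseg}, so Theorem~\ref{thm:Poisson}~(c) guarantees that $u(x',t):=(P_t^L*f)(x')$ is smooth, satisfies $Lu=0$ in $\R^d_+$, has $u|^{\kappa\text{-n.t.}}_{\partial\R^d_+}=f$ $\L^{d-1}$-a.e., and obeys $|f(x')|\le \mathcal{N}_\kappa u(x')\le C\,Mf(x')$. The lower bound $\int\Phi(|f|\,w)\,v\,d\L^{d-1}\le\int\Phi((\mathcal{N}_\kappa u)\,w)\,v\,d\L^{d-1}$ is immediate. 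For the upper bound in \eqref{eq:BVP-Phi-estimate}, I invoke $\Phi\in\Delta_2$: since $I_\Phi<\infty$, there exists a constant $C_\Phi'$ with $\Phi(Ct)\le C_\Phi'\,\Phi(t)$ for all $t\ge 0$, so \eqref{eq:PB1} gives
\[
\int_{\R^{d-1}}\Phi((\mathcal{N}_\kappa u)\,w)\,v\,d\L^{d-1}\le C_\Phi'\int_{\R^{d-1}}\Phi((Mf)\,w)\,v\,d\L^{d-1}\le C_\Phi'\,C_0\int_{\R^{d-1}}\Phi(|f|\,w)\,v\,d\L^{d-1}.
\]

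Finally, uniqueness is obtained by applying the first step to $h:=\mathcal{N}_\kappa u$ for an arbitrary solution $u$ of \eqref{eq::BVP-Phi}: the hypothesis $\Phi((\mathcal{N}_\kappa u)\,w)\in L^1(\R^{d-1},v)$ yields $\int \mathcal{N}_\kappa u(x')/(1+|x'|^{d-1})\,dx'<\infty$, so Theorem~\ref{thm:Poisson}~(d) forces $u(x',t)=(P_t^L*f)(x')$. The main obstacle in this argument is the transfer in step one: one must pass from a weighted Lebesgue inequality for the pair $(F,G)$ to a modular inequality via Theorem~\ref{thm:PhiAp}, and then interpret finiteness of the modular integral as pointwise finiteness of $F$, which is where the weight assumptions on $v,w$ and the strict monotonicity of $\Phi$ are needed. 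The $\Delta_2$ condition on $\Phi$ is what makes the absorption of the constant $C$ in $\mathcal{N}_\kappa u\le C\,Mf$ possible.
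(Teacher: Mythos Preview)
Your proof is correct and follows essentially the same approach as the paper's: both define the pair $(F,G)$ as in \eqref{eq:FG-1}, invoke the weighted Lebesgue estimate \eqref{eq:FG-2} together with Theorem~\ref{thm:PhiAp} to obtain the modular inequality \eqref{eq:ABwv}, deduce \eqref{5q2fg5gg:mod} from the fact that $0<w,v<\infty$ a.e., and then use Theorem~\ref{thm:Poisson}(c)--(d) combined with \eqref{eq:PB1} and $\Phi\in\Delta_2$ exactly as you do. The only cosmetic difference is that you fix $p_0=2$ while the paper leaves $p$ generic.
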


\begin{proof}
Proceeding as in the proof of Theorem~\ref{thm:BVP-BFS}, and with the notation introduced in \eqref{eq:FG-1} we see that \eqref{eq:FG-2} and Theorem~\ref{thm:PhiAp} imply
for every $h\in \mathbb{M}$
\begin{multline}\label{eq:ABwv}
\int_{B(0', 1)}  \Phi \Big(\Big(\int_{\R^{d-1}} \frac{|h(x')|}{1+|x'|^{d-1}}\,dx'\Big)\,w\Big )\,v\, d\L^{d-1}
=
\int_{\R^{d-1}} \Phi(F\, w)\,v \, d\L^{d-1} 
\\
\le 
C\, 
\int_{\R^{d-1}} \Phi(G\, w)\,v \, d\L^{d-1}
=
 C \int_{\R^{d-1}} \Phi(|h|\, w)\,v \, d\L^{d-1}. 
\end{multline}
Since $0<w, v<\infty$ $\L^{d-1}$-a.e., it then follows that 
\begin{align}\label{5q2fg5gg:mod}
\int_{\R^{d-1}} \frac{|h(x')|}{1+|x'|^{d-1}}\,dx' < \infty, \quad\text{for every $h \in \M$ with $\Phi(|h|\,w)\in L^1(\R^{d-1}, v)$}.
\end{align}

Given $f \in \M$ with $\Phi(|f|\,w)\in L^1(\R^{d-1}, v)$,  \eqref{5q2fg5gg:mod} applied to $f$ immediately implies \eqref{zdfgwseg}. Hence, \eqref{eq:Poisson-u-prop} in Theorem~\ref{thm:Poisson} says that $u$ defined in \eqref{eq:BVP-Phi-sol} satisfies 
the first, second, and fourth condition in \eqref{eq::BVP-Phi}. Also, \eqref{eq:Poisson-u-prop2},  the fact that $\Phi\in\Delta_2$, and \eqref{eq:PB1} yield
\begin{multline*}
\int_{\R^{d-1}}\Phi(|f|\,w)\,v\,d\L^{d-1}
\le 	\int_{\R^{d-1}} \Phi((\mathcal{N}_{\kappa}u)\,w)\,v\,d\L^{d-1} 
	\leq \int_{\R^{d-1}} \Phi(C\,(Mf)\, w)\,d\L^{d-1} 
	\\
	\leq C\, \int_{\R^{d-1}} \Phi((Mf)\, w)\,d\L^{d-1} 
	\leq C\, \int_{\R^{d-1}}\Phi(|f|\,w)\,v\,d\L^{d-1}.
\end{multline*}
Thus, the third condition in \eqref{eq::BVP-Phi}, and \eqref{eq:BVP-Phi-estimate}, hold. Finally, \eqref{5q2fg5gg:mod} applied to $\mathcal{N}_{\kappa}u$ and Theorem~\ref{thm:Poisson} item (d) readily imply that $u$, the solution of \eqref{eq::BVP-Phi}, is unique, hence it must be the one given in \eqref{eq:BVP-Phi-sol}.
\end{proof}

\begin{example}\label{ex:Phi:mod:Dir}
As discussed in Example~\ref{ex:Phi:mod}, if $\Phi$ is a Young function such that $1<i_\Phi\le I_\Phi<\infty$, and we assume that $u^{i_\Phi}\, v \in A_{i_\Phi}$, $u^{I_\Phi}\, v \in A_{I_\Phi}$, and $v \in A_{\infty}$, then 
\eqref{eq:intro-CMP-ri:mod} allows us to invoke Theorem~\ref{thm:BVP-Phi}. This can be applied to the cases $\Phi(t)=t^p\,(\log(e+t))^\alpha$, or $\Phi(t)=t^p\,(\log(e+t))^\alpha\,(\log\log(e^e+t))^\beta$, with $1<p<\infty$, $\alpha, \beta\in\re$, in which case $i_{\Phi}=I_{\Phi}=p$, and the weights satisfy $u^{p}\, v \in A_{p}$  and $v \in A_{\infty}$. Also, if $\Phi(t)\approx \min\{t^p,t^q\}$ or $\Phi(t)\approx\max\{t^p,t^q\}$ with $1<p, q<\infty$ we have  
$i_{\Phi}=\min\{p,q\}$ and $I_{\Phi}=\max\{p,q\}$ and our weights satisfy  $u^{p}\, v \in A_{p}$, $u^{q}\, v \in A_{q}$, and $v \in A_{\infty}$.
\end{example}

\subsection{Layer potential operators on uniformly rectifiable domains} \label{subsec:LP-UR}
We say that a Lebes\-gue measurable set $\Omega \subset \R^d$ has locally finite perimeter if its measure theoretic boundary 
\begin{align*}
\partial_{*}\Omega := \bigg\{x \in \partial \Omega: 
\limsup_{r \to 0^{+}} \frac{\L^d(B(x,r) \cap \Omega)}{r^d} >0, 
\limsup_{r \to 0^{+}} \frac{\L^d(B(x,r) \backslash \Omega)}{r^d} >0 \bigg\} 
\end{align*} 
satisfies 
\begin{align*}
\mathcal{H}^{d-1}(\partial_{*}\Omega \cap K)<\infty,\quad \text{for each compact set } K \subset \R^d, 
\end{align*}
where $\mathcal{H}^{d-1}$ denotes the $(d-1)$-dimensional Hausdorff measure. 

Alternatively, a Lebesgue measurable set $\Omega \subset \R^d$ has locally finite perimeter if $\mu_{\Omega}:=\nabla \mathbf{1}_{\Omega}$ (in the sense of distributions) is an $\R^d$-valued Borel measure in $\R^d$ of locally finite total variation. Then the work of De Giorgi-Federer (cf., e.g., \cite{EG}) gives that 
\begin{align*}
\mu_{\Omega} = \nabla \mathbf{1}_{\Omega} 
= -\nu\,  \mathcal{H}^{d-1}|_{\partial_{*}\Omega}, 
\end{align*}
where $\nu \in [L^{\infty}(\partial_{*}\Omega, \H^{d-1})]^d$ is an $\R^d$-valued function satisfying $|\nu(x)|=1$ at $\H^{d-1}$-a.e.~$x \in \partial_{*}\Omega$. We shall refer to $\nu$ above as the {\tt geometric measure theoretic outward unit normal} to $\Omega$. 

\begin{definition}
A closed set $E \subset \R^d$ is called an {\tt Ahlfors regular set} if there exists a constant $C \geq 1$ such that
\begin{align*}
C^{-1} r^{d-1} \leq \mathcal{H}^{d-1}(B(x, r) \cap E) \leq C r^{d-1},\quad \forall x \in E \text{ and } r \in (0, 2\diam(E)). 
\end{align*}
An open, nonempty, proper subset $\Omega$ of $\R^d$ is called an {\tt Ahlfors regular domain} provided $\partial \Omega$ is an Ahlfors regular set and $\mathcal{H}^{d-1}(\partial \Omega \backslash \partial_{*} \Omega)=0$. 
\end{definition}

\begin{definition}
A closed set $E \subset \R^d$ is said to be a {\tt uniformly rectifiable set} (or simply a UR set) if $E$ is an Ahlfors regular set and there exist $\theta, M \in (0, \infty)$ such that for every $x \in E$ and $r \in (0, 2 \diam(E))$ it is possible to find a Lipschitz
map $\phi: B_{d-1}(0, r) \to \R^d$ with Lipschitz constant at most $M$ and such that 
\begin{align*}
\mathcal{H}^{d-1}(E \cap B(x,r) \cap \phi(B_{d-1}(0,r))) \geq \theta r^{d-1}. 
\end{align*}
An open, nonempty, proper subset $\Omega$ of $\R^d$ is called a {\tt uniformly rectifiable domain} (or simply UR domain) provided $\partial \Omega$ is a UR set and $\mathcal{H}^{d-1}(\partial \Omega \backslash \partial_{*} \Omega)=0$. 
\end{definition}

\begin{definition}
An open, nonempty, proper subset $\Omega$ of $\R^d$ is said to satisfy a {\tt local John condition} if there exist $\theta \in (0, 1)$  and $r_0>0$ (with the requirement that $r_0=\infty$ if $\partial \Omega$ is unbounded) such that for every $x \in \partial \Omega$  and $r \in (0, r_0)$ one may find $x_r \in B(x, r) \cap \Omega$ such that $B(x_r, \theta r) \subset \Omega$ and with the property that for each $y \in B(x, r) \cap \partial \Omega$ there exists a rectifiable path $\gamma_y:[0, 1] \to \overline{\Omega}$ whose length is no more than $\theta^{-1} r$ and such that
\begin{align*}
\gamma_y(0) = y,\quad  \gamma_y(1) = x_r, \quad 
\dist(\gamma_y(t), \partial \Omega) >\theta |\gamma_y(t)-y|,~ \forall t \in (0, 1]. 
\end{align*}
Finally, a nonempty open set $\Omega \subset \R^d$ which is not dense in $\R^d$ is said to satisfy a {\tt two-sided local John condition} if both $\Omega$ and $\R^d \backslash \overline{\Omega}$ satisfy a local John condition.
\end{definition}

\begin{lemma}[{\cite[Corollary~3.14]{HMT}}]\label{lem:John}  
Let $\Omega \subset \R^d$ be a domain satisfying a two-sided local John condition and whose boundary is Ahlfors regular. 
Then $\Omega$ is a UR domain of locally finite perimeter. 
\end{lemma}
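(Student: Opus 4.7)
Since this statement is quoted verbatim from \cite[Corollary~3.14]{HMT}, the natural plan is to reduce it to the results established there and sketch the three properties that must be verified: (i) $\Omega$ has locally finite perimeter, (ii) $\partial\Omega$ is a UR set, and (iii) $\mathcal{H}^{d-1}(\partial\Omega\setminus\partial_*\Omega)=0$. The first step is to record that the two-sided local John condition immediately yields two-sided corkscrew conditions for $\Omega$: each point $x\in\partial\Omega$ and each small scale $r$ admit interior and exterior corkscrew points $x_r\in B(x,r)\cap\Omega$ and $x_r^c\in B(x,r)\setminus\overline{\Omega}$ with $B(x_r,\theta r)\subset\Omega$ and $B(x_r^c,\theta r)\subset\R^d\setminus\overline{\Omega}$. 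Together with the Ahlfors regularity of $\partial\Omega$ this places $\Omega$ in the scope of the geometric framework of \cite{HMT}.

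Next, I would derive (iii) from the two-sided corkscrew property and Ahlfors regularity. The idea is that at every $x\in\partial\Omega$ the density upper and lower bounds
\[
\limsup_{r\to 0^+}\frac{\L^d(B(x,r)\cap\Omega)}{r^d}>0,\qquad \limsup_{r\to 0^+}\frac{\L^d(B(x,r)\setminus\Omega)}{r^d}>0
\]
are forced by the corkscrew balls $B(x_r,\theta r)$ and $B(x_r^c,\theta r)$, so every boundary point is in fact measure-theoretic, i.e., $\partial\Omega=\partial_*\Omega$, which is stronger than (iii). Combined with the Ahlfors regularity, this gives $\mathcal{H}^{d-1}(\partial_*\Omega\cap K)<\infty$ for every compact $K$, and via the De Giorgi--Federer structure theorem one obtains (i), namely the local finiteness of $\mu_\Omega=-\nu\,\mathcal{H}^{d-1}\!\restriction\!\partial_*\Omega$.

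The main obstacle is (ii): proving that the local John condition upgrades Ahlfors regularity to uniform rectifiability. This is the deep content of \cite[Corollary~3.14]{HMT} and rests on the David--Semmes characterizations of UR sets. The plan here would be to invoke one of the geometric criteria for UR, for instance the existence of big pieces of Lipschitz images or the weak exterior convexity condition, both of which can be produced from the John paths $\gamma_y$ and the interior/exterior corkscrews at every scale. Concretely, one uses the John paths to build, on every surface ball $B(x,r)\cap\partial\Omega$, a Lipschitz graph of comparable $\mathcal{H}^{d-1}$-measure, thereby fulfilling the big pieces of Lipschitz graphs condition that is known to be equivalent to UR on Ahlfors regular sets. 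Since the full technical argument is carried out in \cite{HMT}, the clean formulation is simply to cite that reference and conclude that $\partial\Omega$ is UR and $\mathcal{H}^{d-1}(\partial\Omega\setminus\partial_*\Omega)=0$, hence $\Omega$ is a UR domain of locally finite perimeter.
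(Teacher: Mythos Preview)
The paper does not prove this lemma at all: it is stated as a direct citation of \cite[Corollary~3.14]{HMT} with no accompanying proof, so there is nothing to compare against. Your sketch is consistent with that citation and in fact supplies more detail than the paper does; the outline of (i)--(iii) is correct, and you rightly identify that the two-sided corkscrew condition gives $\partial\Omega=\partial_*\Omega$ and that the substantive step (uniform rectifiability) must be deferred to \cite{HMT}.
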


Throughout this subsection, abbreviate $\sigma=\mathcal{H}^{d-1}|_{\partial \Omega}$ and denote by $\nu$ the geometric measure theoretic outward unit normal to $\Omega$. Define the maximal operators $T_{*}$ and $T^{\#}_{*}$ by 
\begin{equation}\label{xfgbgsb}
T_{*}f(x) := \sup_{\varepsilon>0} |T_{\varepsilon}f(x)| \quad\text{and}\quad 
T_{*}^{\#}f(x) := \sup_{\varepsilon>0} |T^{\#}_{\varepsilon}f(x)|, 
\end{equation}
where 
\begin{align*}
T_{\varepsilon}f(x) = \int_{\substack{y \in \partial \Omega \\ |x-y|>\varepsilon}} 
\langle x-y, \nu(y) \rangle K(x-y) f(y) d\sigma(y),  
\\
T^{\#}_{\varepsilon}f(x) = \int_{\substack{y \in \partial \Omega \\ |x-y|>\varepsilon}} 
\langle y-x, \nu(x) \rangle K(x-y) f(y) d\sigma(y),
\end{align*}
where  $K \in \mathscr{C}^N(\R^d \backslash \{0\})$ is a complex-valued function which is even and positive homogeneous of degree $-d$, and $N=N(d) \in \N$ is large enough. We also consider the principal-value singular integral operators $T$ and $T^{\#}$ (whenever they exist): 
\begin{equation}\label{eq:layer-def}
Tf(x) := \lim_{\varepsilon \to 0^{+}} T_{\varepsilon}f(x) \quad\text{and}\quad 
T^{\#}f(x) := \lim_{\varepsilon \to 0^{+}} T^{\#}_{\varepsilon}f(x).  
\end{equation}

Let $\w_{d-1}$ denote the area of the unit sphere in $\R^d$. If we take $K(x)=\w_{d-1}^{-1}\,|x|^{-d}$ above, then 
$K \in \mathscr{C}^N(\R^d \backslash \{0\})$ is even and homogeneous of degree $-d$ for any $N \in \N$. 
It is easy to see that the first operator in \eqref{eq:layer-def} coincides in this case with the harmonic double layer potential: 
\begin{align}\label{eq:layer-double} 
\mathcal{K}_{\Delta}f(x) := \lim_{\varepsilon \to 0^{+}} \frac{1}{\w_{d-1}} 
\int_{\substack{y \in \partial \Omega \\ |x-y|>\varepsilon}} 
\frac{\langle x-y, \nu(y) \rangle}{|x-y|^d} f(y) d\sigma(y). 
\end{align}
One can also introduce the Riesz transform: 
\begin{align}\label{eq:Rj-f}
R_j f(x) := \lim_{\varepsilon \to 0^{+}} \frac{1}{\w_{d-1}} 
\int_{\substack{y \in \partial \Omega \\ |x-y|>\varepsilon}} 
\frac{x_j-y_j}{|x-y|^d} f(y) d\sigma(y), \quad j=1,\ldots,d.  
\end{align}
The layer potential operators have some applications in geometric measure theory and PDE. Indeed, Hofmann, Mitrea and Taylor \cite[Section~4]{HMT} characterized bounded regular SKT domains by means of the compactness or close to compactness of the harmonic double layer $\mathcal{K}_{\Delta}$ along with the boundedness of the commutators of the Riesz transform and the components of the outer unit normal. The case of\, unbounded SKT domains is considered in the recent work \cite{MMMMM20} (see also \cite{Mar}) and the compactness or close to compactness is replaced by the smallness of the boundedness constants of such operators.  Also, the Riesz transform was used by Mitrea et al.~to investigate the regularity of various domains including Lyapunov domains of order $\alpha$, UR domains, regular SKT domains, and Reifenberg flat domains, see \cite[Theorems~1.1-1.4, 7.7]{MiMiV}. On the other hand, the layer potential operators can be also used in the study of boundary value problems.  For any bounded Ahlfors regular domain satisfying a two-sided John condition, using the method of layer potentials, Hofmann, Mitrea and Taylor \cite[Section~7]{HMT} established the well-posedness of elliptic boundary value problems such as the Dirichlet, Neumann and transmission problems for the Laplace operator, the Stokes system, the Lamé system, and Maxwell's equations. For bounded $\Lip \cap {\rm vmo}_1$ domains, the same method was also utilized in \cite[Section~4]{HMT2} to study the Dirichlet, regularity and oblique derivative problems, as well as the Poisson problem with a Dirichlet boundary condition.  In the unbounded case the reader is referred to \cite{MMMMM20, Mar}. More general results about layer potentials and applications can be found in \cite{HMaM} and \cite{HMM}. 

To state our results, we need to introduce the John-Nirenberg space of functions of bounded mean oscillations on Ahlfors regular sets. Given a domain $\Omega \subset \R^d$, for each $x \in \partial \Omega$ and $r>0$ define the surface ball 
$\Delta:=\Delta(x, r) :=B(x, r) \cap \partial \Omega$ and denote $r_{\Delta}:=r$. For any constant $\lambda>0$, we also define 
$\lambda \Delta:=\Delta(x, \lambda r)$. We shall then denote by ${\rm BMO}(\partial \Omega, \sigma)$ the space of all functions $f \in L^1_{\loc}(\partial \Omega, \sigma)$  with the property 
\begin{align*}
\|f\|_{{\rm BMO}(\partial \Omega, \sigma)} 
:= \sup_{\Delta \subset \partial \Omega} \fint_{\Delta} |f-f_{\Delta}| d\sigma, 
\end{align*}
where the supremum is taken over all surface balls $\Delta \subset \partial \Omega$ and 
$h_{\Delta}= \fint_{\Delta} h d\sigma$ for any locally integrable function.

Given a linear operator $\mathbb{T}$, we define (whenever it makes sense) the first order commutator of $\mathbb{T}$ and the operator $\mathbf{M}_b$ of pointwise multiplication by a measurable function $b$ by 
\[
\mathbf{C}_{b}^{1}(\mathbb{T}) f(x) 
=
[\mathbf{M}_b, \mathbb{T}]f(x)
=
b(x)\,\mathbb{T}f(x)- \mathbb{T}(b\,f)(x)
=
\mathbb{T}\big( (b(x)-b(\cdot)) f(\cdot) \big)(x).
\]
One can also define the higher order commutators of $\mathbb{T}$ with a measurable function $b$ by
the recursive formula $\mathbf{C}_{b}^{k}(\mathbb{T}) f=\mathbf{C}_{b}^{1}(\mathbb{T})\circ \mathbf{C}_{b}^{k-1}(\mathbb{T}) f$ for every $k\ge 2$. One can then see that
\[
\mathbf{C}_{b}^{k}(\mathbb{T}) f(x) 
=
\mathbb{T}\big( (b(x)-b(\cdot))^k f(\cdot) \big)(x), \qquad k\ge 0,
\]
where it is understood that $\mathbf{C}_{b}^{0}(\mathbb{T}) f=\mathbb{T} f$. The previous definition can be extended to linearizable operators, that is, to operators $\mathbb{T}$ so that $\mathbb{T}f=\|\vec{\mathbb{T}}f\|_{\mathbb{B}}$ for some $\mathbb{B}$-valued linear operator $\vec{\mathbb{T}}$ and some Banach space $\mathbb{B}$. In this way we set, for all $k\ge 0$, 
\[
\mathbf{C}_{b}^{k}(\mathbb{T}) f(x) 
:=
\|\mathbf{C}_{b}^{k}(\vec{\mathbb{T}}) f(x) \|_{\mathbb{B}}
=
\big\|\vec{\mathbb{T}}\big( (b(x)-b(\cdot))^k f(\cdot) \big)\big\|_{\mathbb{B}}
=
\mathbb{T}\big( (b(x)-b(\cdot))^k f(\cdot) \big)(x).
\]
All these motivate the following general definition. Given some operator $\mathbb{T}$, $m \in \N_{+}$, $\alpha=(\alpha_1, \ldots, \alpha_m) \in \N^m$, and ${\bf b}=(b_1,\ldots,b_m)$ a vector of measurable functions, we define the $\alpha$-th order commutator of $\mathbb{T}$ with $\mathbf{M}_{\bf b}$ as 
\begin{align*}
\mathbf{C}_{\bf b}^{\alpha}(\mathbb{T}) f(x) := \mathbb{T}\bigg( \prod_{i=1}^m (b_i(x)-b_i(\cdot))^{\alpha_i} f(\cdot) \bigg)(x).  
\end{align*}
In the case where $\mathbb{T}$ is linear $\mathbf{C}_{\bf b}^{\alpha}(\mathbb{T})$ is then the composition of the commutators 
$\mathbf{C}_{b_1}^{\alpha_1}(\mathbb{T}) f$, \dots,  $\mathbf{C}_{b_m}^{\alpha_m}(\mathbb{T}) f$. 

The following theorem extends the results in \cite[Chapters~4 \& 8]{MMMMM20} and \cite[Chapter~5]{Mar}.

\begin{theorem}\label{thm:doamin-extra}
Let $\Omega \subset \R^d$ be an Ahlfors regular domain satisfying a two-sided local John condition. Let $\B$ the collection of all surface balls on $\partial \Omega$ and write $M_{\B}h$ for the associated Hardy-Littlewood maximal function. Let $u$ and $v$ be two measurable functions on $(\partial \Omega, \sigma)$ such that $0<u,v<\infty$ $\sigma$-a.e.~and $v \in L^1_{\loc}(\partial \Omega, \sigma)$. Let $N=N(d) \in \N$ be a sufficiently large integer and $K \in \mathscr{C}^N(\R^d \backslash \{0\})$ be a complex-valued function which is even and positive homogeneous of degree $-d$. Let $m \in \N_+$, ${\bf b} \in [\BMO(\partial \Omega, \sigma)]^m$ and $\nu \in [\BMO(\partial \Omega, \sigma)]^d$. Then for each operator $\mathbb{T}\in \{T,T^{\#},T_{*},T_{*}^{\#}\}$ as in \eqref{xfgbgsb}--\eqref{eq:layer-def}, the following statements hold:
\begin{list}{\textup{(\theenumi)}}{\usecounter{enumi}\leftmargin=1cm \labelwidth=1cm \itemsep=0.2cm 
		\topsep=.2cm \renewcommand{\theenumi}{\alph{enumi}}}
	
\item If  $\X_v$ is a Banach function space over $(\partial \Omega, vd\sigma)$ such that 
\begin{align*}
\norm{(M_{\B}h)\,u}_{\X_v} &\leq C_0\, \|h\,u\|_{\X_v}, \quad \forall\,h \in\M_{\sigma}, 
\\
\|(M'_{\B, v} h)\,u^{-1} \|_{\X'_v} &\leq C_0\, \|h\,u^{-1}\|_{\X'_v}, \quad \forall\,h \in\M_{\sigma}, 
\end{align*}
for some $C_0<\infty$, then
\begin{align*}
	\big\| |\mathbb{T}f|\, u \big\|_{\X_v} 
	 \leq C\,  	\| f\, u \big\|_{\X_v}\quad\text{ and }\quad 
	\big\| |\mathbf{C}_{\bf b}^{\alpha}(\mathbb{T})f|\, u \big\|_{\X_v} 
	\leq C\,  \Big(\prod_{i=1}^m \|b_i\|_{\BMO}^{\alpha_i}\Big)	\| f\, u \big\|_{\X_v}. 
\end{align*}
and, for every $q \in (1, \infty)$,  
\begin{align*}
\bigg\|\Big(\sum_j |\mathbb{T}f_j|^q \Big)^{\frac1q}\,  u \bigg\|_{\X_v} 
& \leq C  \bigg\|\Big(\sum_j |f_j|^q \Big)^{\frac1q}\, u\bigg\|_{\X_v},
\\
\bigg\|\Big(\sum_j |\mathbf{C}_{\bf b}^{\alpha}(\mathbb{T})f_j|^q \Big)^{\frac1q}\,  u \bigg\|_{\X_v} 
& \leq C\,\Big(\prod_{i=1}^m \|b_i\|_{\BMO}^{\alpha_i}\Big)\,  \bigg\|\Big(\sum_j |f_j|^q \Big)^{\frac1q}\, u\bigg\|_{\X_v}. 
\end{align*}

\item  If $\Phi$ is a Young function such that $\Phi \in \Delta_2$ (equivalently, $I_{\Phi}<\infty$) and 
\begin{align*}
\int_{\partial \Omega} \Phi((M_{\B}h)\, u)\, v\, d\sigma 
&\le C_0 \int_{\partial \Omega} \Phi(|h|\, u)\, v\, d\sigma,\quad\forall\,h \in \M_{\sigma}, 
\\
\int_{\partial \Omega} \overline{\Phi}((M'_{\B, v}h)\,u^{-1})\, v\, d\sigma 
&\le C_0\, \int_{\partial \Omega} \overline{\Phi}(|h|\, u^{-1})\,v\, d\sigma,\quad\forall\,h \in \M_{\sigma}, 
\end{align*} 
for some $C_0<\infty$, then
\begin{align*}
	\int_{\partial \Omega} \Phi( |\mathbb{T}f|\, u )\,v\, d\sigma
	& \leq C  \int_{\partial \Omega} \Phi(|f|\,u) \,v\, d\sigma, 
	\\
	\int_{\partial \Omega} \Phi( |\mathbf{C}_{\bf b}^{\alpha}(\mathbb{T})f|\, u )\,v\, d\sigma
	& \leq C\,\Big(\prod_{i=1}^m \|b_i\|_{\BMO}^{\alpha_i}\Big)\,  \int_{\partial \Omega} \Phi(|f|\,u) \,v\, d\sigma, 
\end{align*}
and, for every $q \in (1, \infty)$,    
\begin{align*}
\int_{\partial \Omega} \Phi \bigg(\Big(\sum_j |\mathbb{T}f_j|^q \Big)^{\frac1q} u \bigg)\,v\, d\sigma
& \leq C  \int_{\partial \Omega} \Phi \bigg(\Big(\sum_j |f_j|^q \Big)^{\frac1q} u \bigg)\,v\, d\sigma, 
\\
\int_{\partial \Omega} \Phi \bigg(\Big(\sum_j |\mathbf{C}_{\bf b}^{\alpha}(\mathbb{T})f_j|^q \Big)^{\frac1q} u \bigg)\,v\, d\sigma
& \leq C\,  
\Big(\prod_{i=1}^m \|b_i\|_{\BMO}^{\alpha_i}\Big)\,
\int_{\partial \Omega} \Phi \bigg(\Big(\sum_j |f_j|^q \Big)^{\frac1q} u \bigg)\,v\, d\sigma. 
\end{align*}
\end{list}
\end{theorem}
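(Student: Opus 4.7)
The plan is to reduce Theorem~\ref{thm:doamin-extra} to a direct application of the extrapolation machinery developed above. Since $(\partial\Omega,\sigma)$ is Ahlfors regular, it is a non-atomic $\sigma$-finite measure space of homogeneous type with $\sigma(\partial\Omega)>0$, and the collection $\B$ of surface balls is a Muckenhoupt basis satisfying $A_{\infty,\B}\subset\bigcup_{s>1}RH_{s,\B}$ (see the discussion preceding \eqref{eq:Msharp}). Moreover, by Lemma~\ref{lem:John}, $\Omega$ is a UR domain, so the standard weighted theory for principal-value singular integrals and their commutators with a kernel of the form $\langle x-y,\nu(y)\rangle K(x-y)$ or $\langle y-x,\nu(x)\rangle K(x-y)$ is at our disposal.

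The first step is to establish the base weighted Lebesgue estimates. Fix any $p_0\in(1,\infty)$; we claim that for each $\mathbb{T}\in\{T,T^{\#},T_{*},T_{*}^{\#}\}$ and every $w\in A_{p_0,\B}$,
\begin{equation*}
\|\mathbb{T}f\|_{L^{p_0}(\partial\Omega,w)}\leq C\,\|f\|_{L^{p_0}(\partial\Omega,w)},
\end{equation*}
and, for every $m\in\N_{+}$, $\alpha\in\N^m$, and $\mathbf{b}\in[\BMO(\partial\Omega,\sigma)]^m$,
\begin{equation*}
\|\mathbf{C}_{\bf b}^{\alpha}(\mathbb{T})f\|_{L^{p_0}(\partial\Omega,w)}\leq C\Big(\prod_{i=1}^m\|b_i\|_{\BMO}^{\alpha_i}\Big)\|f\|_{L^{p_0}(\partial\Omega,w)}.
\end{equation*}
For the maximal operators $T_*$ and $T_*^{\#}$, the unweighted $L^2$ boundedness on UR domains is the David-Semmes theorem; the extension to $L^{p_0}(w)$ for $w\in A_{p_0,\B}$ then follows from the Calderón-Zygmund theory on the space of homogeneous type $(\partial\Omega,\sigma)$ (the factor $\nu(x)\in L^\infty$ appearing in the $T^{\#}$ variant is harmless). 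For the principal-value operators $T$ and $T^{\#}$, the two-sided local John condition guarantees a.e.~existence of the limits through results in \cite{HMT} and \cite{MMMMM20}, and Cotlar-type inequalities then transfer the maximal bound to the principal-value case. Finally, the commutator estimate is obtained via a Coifman-Rochberg-Weiss argument adapted to the Muckenhoupt basis $\B$; detailed implementations in exactly this setting appear in \cite[Chapters~4 \& 8]{MMMMM20} and \cite[Chapter~5]{Mar}.

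With the base estimates in hand, part (a) follows by applying Theorem~\ref{thm:BFSAp} to the family of extrapolation pairs $\F_{\mathbb{T}}:=\{(|\mathbb{T}f|,|f|)\}_{f}$ and, separately, to $\F_{\mathbb{T},\mathbf{b},\alpha}:=\big\{\big(|\mathbf{C}_{\bf b}^{\alpha}(\mathbb{T})f|,\big(\prod_{i=1}^m\|b_i\|_{\BMO}^{\alpha_i}\big)|f|\big)\big\}_{f}$. The hypotheses of part (a) on $\X_v$ and $u$ are precisely \eqref{eqn:thm:BFSAp:M} and \eqref{eqn:thm:BFSAp:Mprime}, so the theorem yields both the scalar inequality \eqref{eq:uXv-4} and, since $\B$ is a Muckenhoupt basis, the full range of vector-valued inequalities \eqref{eq:uXv-5} for any $q\in(1,\infty)$. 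Part (b) is handled identically by invoking Theorem~\ref{thm:PhiAp} instead, whose hypotheses match those in part (b) and whose conclusions \eqref{eq:Phifu-4} and \eqref{eq:Phifu-5} give the required modular bounds.

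The main obstacle is the verification of the base estimates in the weighted scale $L^{p_0}(w)$ for arbitrary $w\in A_{p_0,\B}$. Although the operators in question are of Calderón-Zygmund type on $(\partial\Omega,\sigma)$, their pointwise definition via principal values requires the additional geometric hypothesis (the two-sided local John condition) beyond mere uniform rectifiability; this is why we cannot work on a general UR domain. Once this base step is carried out, the remainder of the argument is a clean dispatch of the abstract extrapolation Theorems~\ref{thm:BFSAp} and \ref{thm:PhiAp}: no further condition linking $u$ or $v$ to the Muckenhoupt classes is needed, only the boundedness of $M_\B$ and $M'_{\B,v}$ in the appropriate paired spaces, which is precisely the hypothesis of each part of the statement.
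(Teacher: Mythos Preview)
Your proposal is correct and follows essentially the same route as the paper: reduce to base weighted estimates $\|\mathbb{T}f\|_{L^{p_0}(\partial\Omega,w)}\lesssim\|f\|_{L^{p_0}(\partial\Omega,w)}$ and $\|\mathbf{C}_{\bf b}^{\alpha}(\mathbb{T})f\|_{L^{p_0}(\partial\Omega,w)}\lesssim\big(\prod_i\|b_i\|_{\BMO}^{\alpha_i}\big)\|f\|_{L^{p_0}(\partial\Omega,w)}$ for all $w\in A_{p_0,\B}$, then feed these into Theorems~\ref{thm:BFSAp} and~\ref{thm:PhiAp}. The only cosmetic difference is that the paper sources the commutator bound by observing that $T,T^{\#}$ are linear and $T_{*},T_{*}^{\#}$ are linearizable and invoking \cite[Theorem~3.22]{BMMST}, whereas you describe it as a Coifman--Rochberg--Weiss argument; these are the same mechanism.
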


\begin{proof}
In view of Theorems \ref{thm:BFSAp} and \ref{thm:PhiAp}, we are reduced to proving that for every (or some) $p \in (1, \infty)$ and every $w \in A_{p, \B}$, 
\begin{align}
\label{eq:layer-1} \|\mathbb{T}f\|_{L^p(\partial \Omega, w)} &\leq C \|f\|_{L^p(\partial \Omega, w)}, 
\\
\label{eq:layer-2} \|C_{\bf b}^{\alpha}(\mathbb{T})f\|_{L^p(\partial \Omega, w)} &\leq C 
\prod_{i=1}^m \|b_i\|_{\BMO(\partial \Omega, \sigma)}^{\alpha_i} \|f\|_{L^p(\partial \Omega, w)},   
\end{align} 
where the constant $C$ depends only on $d$, $p$, $\alpha$, $[w]_{A_{p, \B}}$, $\|\nu\|_{[{\rm BMO}(\partial \Omega, \sigma)]^d}$, the local John constants of $\Omega$ and the Ahlfors-David regular constant of $\partial \Omega$.  The inequality \eqref{eq:layer-1} can be found in \cite[Chapter~4]{MMMMM20} or \cite[Chapter~5]{Mar}. Note that $T$ and $T^{\#}$ are linear, and $T_{*}$ and $T_{*}^{\#}$ are linearizable. Thus, the estimate \eqref{eq:layer-2} follows at once from \eqref{eq:layer-1} and  \cite[Theorem~3.22]{BMMST}. 
\end{proof}

In the previous result we can take $v\equiv 1$ and $\X_v=L^{p(\cdot)}(\R^{d-1},\L^{d-1})$ as in Example~\ref{ex:Lpvar}, and assume that $1<p_-\le p_+<\infty$, $p(\cdot)\in{\rm LH}$, $u\in A_{p(\cdot)}$  (cf.~\eqref{eq:Lpvar-pp}--\eqref{eq:Lpvar-Apvar}). In the case of rearrangement Banach function spaces we can consider $\X=L^p$, $L^{p,q}$, or $L^{p}(\log L)^{\alpha}$ with $1<p<\infty$, $1\le q\le \infty$, and $\alpha\in\re$, in which case we have $p_\X=q_\X=p$ and weights $u,v$ satisfying $u^{p}\, v \in A_{p}$, $v \in A_{\infty}$. Analogously, we can consider the spaces $\X=(L^p+L^q)$ or $\X=(L^p\cap L^q)$, with $1<p,q<\infty$, in which case $p_\X=\min\{p,q\}$ and $q_\X=\max\{p,q\}$ if $\partial\Omega$ is unbounded and the weights satisfy $u^{p}\, v \in A_{p}$, $u^{q}\, v \in A_{q}$, and $v \in A_{\infty}$. 

Regarding the modular inequalities, we can consider a Young function $\Phi$ such that $1<i_\Phi\le I_\Phi<\infty$, and assume that $u^{i_\Phi}\, v \in A_{i_\Phi}$, $u^{I_\Phi}\, v \in A_{I_\Phi}$, and $v \in A_{\infty}$.  This can be applied to the cases $\Phi(t)=t^p\,(\log(e+t))^\alpha$, or $\Phi(t)=t^p\,(\log(e+t))^\alpha\,(\log\log(e^e+t))^\beta$, with $1<p<\infty$, $\alpha, \beta\in\re$, in which case $i_{\Phi}=I_{\Phi}=p$, and the weights satisfy $u^{p}\, v \in A_{p}$  and $v \in A_{\infty}$. Also, if $\Phi(t)\approx \min\{t^p,t^q\}$ or $\Phi(t)\approx\max\{t^p,t^q\}$ with $1<p, q<\infty$ we have $i_{\Phi}=\min\{p,q\}$ and $I_{\Phi}=\max\{p,q\}$ and our weights satisfy  $u^{p}\, v \in A_{p}$, $u^{q}\, v \in A_{q}$, and $v \in A_{\infty}$.

\subsection{Non-homogeneous square functions} 
In this section we work with $\B$ being the collection of all cubes in $\Rn$ with sides parallel to the coordinate axes and with a
Borel measure $\mu$ on $\Rn$ of order $m \in \R_{+}$, that is, 
\begin{align*}
\mu(B(x,r)) \leq C_{\mu} r^m,\ \ x \in \Rn, \ r > 0.
\end{align*}
One can equivalently write this condition (with a different constant) using cubes in place of balls. As in \cite{OP} after a possible rotation we may assume that $\mu(\partial Q)=0$ for every cube $Q\subset\re^n$ with sides parallel to the coordinate axes. Let $M_{\mu}$ be the centered maximal function 
\begin{align}\label{def:HL-cent}
	M_{\mu}f(x) := \sup_{r>0} \fint_{Q(x, r)} |f|\, d\mu, 
\end{align}
with $Q(x, r)$ denoting the cube centered at $x$ with sidelength $r$.  This should be compared with $M_{\B}$ which is the Hardy-Littlewood maximal function associated with $\B$, and therefore the sup is taken over all cubes containing the point in question. Thus, $M_{\mu}f(x)\le M_{\B}f(x)$ for every $x\in\re^n$ and every $\mu$-measurable function, but since $\mu$ may not be doubling these two maximal functions are not comparable. It was obtained in \cite[Theorem 3.1]{OP} that $M_{\mu}$ is bounded on $L^p(w)$ for any $1<p<\infty$ and $w \in A_{p, \B}$. However, in general $M_\B$ is not expected to satisfy weighted norm inequalities. In the language introduced earlier, $\B$ is a basis, but may not be a Muckenhoupt basis.  Nonetheless, we are going to be able to extrapolate as Theorems~\ref{thm:BFSAp} and \ref{thm:PhiAp} do not require such a condition.

We introduce the vertical square function 
\begin{align*}
g_{\mu}(f)(x) = \bigg(\int_{0}^{\infty} |\theta_t^{\mu} f(x)|^2 \frac{dt}{t}\bigg)^{\frac12}, \qquad\text{where}\quad  
\theta_t^{\mu} f(x) =\int_{\Rn} s_t(x,y) f(y) d\mu(y).
\end{align*}
The kernel $s_t : \R^{n} \times \R^{n} \rightarrow \C$ satisfies, for some $\alpha>0$,  the size condition
\begin{align}\label{eq:size} 
|s_t(x, y)| \lesssim \frac{t^{\alpha}}{(t+|x-y|)^{m+\alpha}}
\end{align}
and the smoothness condition
\begin{align}
\label{eq:smooth} |s_t(x, y)-s_t(x', y)|  + |s_t(y, x)-s_t(y,x')| 
\lesssim \frac{|x-x'|^{\alpha}}{(t+|x-y|)^{m+\alpha}}, 
\end{align}
whenever $|x-x'| < t/2$.

\begin{theorem}\label{thm:square}
Let $\mu$ be a Borel measure of order $m$ on $\Rn$ and $\B$ be the collection of all cubes in $\Rn$ with sides parallel to the coordinate axes. Let $u$ and $v$ be weights on $(\Rn, d\mu)$ such that $v \in L^1_{\loc}(\Rn, d\mu)$. Assume that for every cube $Q \subset \Rn$  there is a measurable function $b_Q$ such that  $\supp b_Q \subset Q$, $\|b_Q\|_{L^{\infty}(\mu)} \lesssim 1$, 
\begin{align}\label{eq:hypo-square}
\left|\fint_{Q} b_Q \ d\mu \right| \gtrsim 1, 
\quad\text{ and }\quad \sup_{Q} \frac{1}{\mu(3Q)} \int_{0}^{\ell(Q)} \int_{Q} 
|\theta_t b_Q(x)|^2 d\mu(x) \frac{dt}{t} \lesssim 1.  
\end{align} 
\begin{list}{\textup{(\theenumi)}}{\usecounter{enumi}\leftmargin=1cm \labelwidth=1cm \itemsep=0.2cm 
		\topsep=.2cm \renewcommand{\theenumi}{\alph{enumi}}}
	
\item If $\X_v$ is a Banach function space on $(\Rn,v\,d\mu)$ such that 
\begin{align*}
\norm{(M_{\B}h)\,u}_{\X_v} &\leq C_0 \|h\,u\|_{\X_v}, \quad \forall\,h \in\M_{\mu}, 
\\
\|(M'_{\B, v} h)\,u^{-1} \|_{\X'_v} &\leq C_0 \|h\,u^{-1}\|_{\X'_v}, \quad \forall\,h \in\M_{\mu}, 
\end{align*}
for some $C_0<\infty$, then 
\begin{align}\label{eq:gmuf-X}
\|g_{\mu}(f)\,u\|_{\X_v} \leq C  \|f\, u\|_{\X_v}. 
\end{align}
\item  If $\Phi$ is a Young function such that $\Phi \in \Delta_2$ (equivalently, $I_{\Phi}<\infty$) and 
\begin{align*}
\int_{\Rn} \Phi((M_{\B}h)\, u)\, v\, d\mu 
&\le C_0 \int_{\Rn} \Phi(|h|\, u)\, v\, d\mu,\quad\forall\,h \in \M_{\mu}, 
\\
\int_{\Rn} \overline{\Phi}((M'_{\B, v}h)\,u^{-1}) v\, d\mu 
&\le C_0 \int_{\Rn} \overline{\Phi}(|h|\, u^{-1})\,v\, d\mu,\quad\forall\,h \in \M_{\mu},  
\end{align*} 
for some $C_0<\infty$,
then 
\begin{equation}\label{eq:gmuf-Phi}
\int_{\Rn} \Phi(g_{\mu}(f)\,u)\, v\, d\mu \leq C \int_{\Rn} \Phi(|f|\, u)\, v\, d\mu. 
\end{equation}
\end{list}
\end{theorem}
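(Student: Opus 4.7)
\textbf{Proof plan for Theorem~\ref{thm:square}.}

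The plan is to reduce both parts~(a) and~(b) to a single scalar weighted estimate and then feed it into the extrapolation machinery of Theorems~\ref{thm:BFSAp} and~\ref{thm:PhiAp}. Concretely, the goal is to prove that for some fixed $p_0\in(1,\infty)$, say $p_0=2$, and for every $w\in A_{p_0,\B}$ one has
\begin{equation}\label{eq:sq-base}
\|g_\mu(f)\|_{L^{p_0}(\Rn,\,w\,d\mu)}\le C\,[w]_{A_{p_0,\B}}^{\gamma}\,\|f\|_{L^{p_0}(\Rn,\,w\,d\mu)},
\end{equation}
for some $\gamma>0$ and every reasonable $f$. Once \eqref{eq:sq-base} is in hand, consider the family of extrapolation pairs
\[
\F:=\bigl\{(g_\mu(f),|f|)\,:\,f\in L^{p_0}(\Rn,w\,d\mu)\text{ for all }w\in A_{p_0,\B}\bigr\}
\]
(truncating $f$ if necessary so that the target function is a genuine Lebesgue measurable function with finite norm). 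The hypotheses in~(a), respectively~(b), are precisely the assumptions \eqref{eqn:thm:BFSAp:M}--\eqref{eqn:thm:BFSAp:Mprime}, respectively \eqref{eq:Phifu-1}--\eqref{eq:Phifu-2}, so Theorem~\ref{thm:BFSAp}(a) applied to $\F$ yields \eqref{eq:gmuf-X} and Theorem~\ref{thm:PhiAp}(a) yields \eqref{eq:gmuf-Phi}. Thus the entire proof reduces to establishing \eqref{eq:sq-base}.

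To prove \eqref{eq:sq-base}, the first sub-step is the unweighted bound $\|g_\mu(f)\|_{L^2(\mu)}\lesssim\|f\|_{L^2(\mu)}$. The kernel $s_t$ satisfies the standard size~\eqref{eq:size} and Hölder-type regularity~\eqref{eq:smooth} against the measure $\mu$ of order~$m$, and the testing hypothesis~\eqref{eq:hypo-square} provides the accretive family $\{b_Q\}_Q$ expected in a local $Tb$ theorem. This is the local $Tb$ theorem for square functions in the non-homogeneous context, whose proof rests on a stopping-time decomposition and paraproduct estimates; we will invoke the available version in the non-doubling setting (as developed by Nazarov-Treil-Volberg, Hytönen, Martikainen, and others).

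The second sub-step is to upgrade the $L^2(\mu)$ bound to the Muckenhoupt-weighted estimate~\eqref{eq:sq-base}. Since $\B$ is the basis of cubes and $\mu$ is only of order $m$, $\B$ need not be a Muckenhoupt basis in the sense of \eqref{eq:MBLp} for $M_\B$; however, the centered maximal function $M_\mu$ from~\eqref{def:HL-cent} is bounded on $L^p(\mu,w)$ for every $1<p<\infty$ and $w\in A_{p,\B}$ by the Orobitg-P\'erez theorem \cite[Theorem~3.1]{OP}. From the size/regularity estimates \eqref{eq:size}--\eqref{eq:smooth} one derives, as in the doubling setting, a pointwise sharp-function (or non-homogeneous sparse) domination of $g_\mu(f)$ by an $M_\mu$-type operator applied to $f$. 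Combined with the $L^2(\mu)$ boundedness from the first sub-step and the weighted bounds for $M_\mu$, a good-$\lambda$ / Fefferman-Stein interpolation argument gives \eqref{eq:sq-base} for $p_0=2$ and every $w\in A_{2,\B}$.

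\textbf{Main obstacle.} The genuine difficulty lies entirely in the second sub-step. In the doubling world the passage from an $L^2$ inequality to $A_p$-weighted bounds is routine, but here the relevant maximal function for the weights ($M_\mu$, centered, controlled by Orobitg-P\'erez) is not the same as the maximal function $M_\B$ attached to the basis, and the measure $\mu$ may be wildly non-doubling. The delicate point is therefore to produce a sharp-type or sparse domination of $g_\mu$ whose right-hand side is a Calder\'on-Zygmund-type object that behaves well against $A_{p,\B}$-weights. Once that domination is in place, the extrapolation step is entirely mechanical and routine.
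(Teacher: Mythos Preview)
Your proposal is correct and follows essentially the same route as the paper: reduce (a) and (b) via Theorems~\ref{thm:BFSAp} and~\ref{thm:PhiAp} to a single weighted estimate $\|g_\mu(f)\|_{L^p(w)}\lesssim\|f\|_{L^p(w)}$ for $w\in A_{p,\B}$, and obtain the latter by combining the unweighted $L^2(\mu)$ bound (from the non-homogeneous local $Tb$ theorem in \cite{MMO}, together with the weak $(1,1)$ endpoint) with a good-$\lambda$ inequality relative to the centered maximal function $M_\mu$, using Orobitg--P\'erez \cite{OP} for the weighted boundedness of $M_\mu$. The paper carries out exactly this good-$\lambda$ argument, working with a truncated square function $g_{\mu,t_0}$ and a Whitney decomposition of $\{g_{\mu,t_0}f>\lambda\}$, so your identification of both the reduction and the main obstacle is on target.
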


\begin{proof}
In order to obtain \eqref{eq:gmuf-X} and \eqref{eq:gmuf-Phi},  we will prove the weighted inequality: 
\begin{align}\label{eq:gLp}
\|g_{\mu}(f)\|_{L^p(w)} \leq C \|f\|_{L^p(w)}, \quad\forall\,p \in (1, \infty) \quad\text{and}\quad\forall\,w \in A_{p, \B}.    
\end{align}
Thus, \eqref{eq:gmuf-X} and \eqref{eq:gmuf-Phi} respectively follow from Theorems \ref{thm:BFSAp} and \ref{thm:PhiAp}. 

To show \eqref{eq:gLp}, we borrow some idea from \cite[Theorem~2.22]{T2} and \cite[Theorem~5.3]{MMV}, and establish a good-$\lambda$ inequality. We would like to mention that the estimate \eqref{eq:gLp} in the unweighted case was studied in \cite{MMO}. For any $0<t_0<1$,  we define the $t_0$-truncated version of $g_{\mu}$ by 
\begin{align}
g_{\mu,t_0}(f)(x) = \bigg(\int_{t_0}^{t_0^{-1}} |\theta_t^{\mu} f(x)|^2 \frac{dt}{t}\bigg)^{\frac12}, \quad x \in \Rn.   
\end{align}
Fix $t_0\in(0,1)$ and set
\[
\Omega_{\lambda}:=\{x \in \Rn: g_{\mu,t_0}(f)(x) > \lambda\}, \quad \lambda>0. 
\]
We may assume that $f$ is compactly supported and bounded with $\supp f \subset B_0$ for some ball $B_0 \subset \Rn$.   
We would like to apply Whitney decomposition, and to do that we first show that $\Omega_{\lambda}$ is an open proper subset of $\Rn$. 
Indeed, by the smoothness condition \eqref{eq:smooth}, we have for all $|x-x'|<t_0/2$ and $t \geq t_0$, 
\begin{multline*}
|\theta_t^{\mu}f(x) - \theta_t^{\mu}f(x')| 
\lesssim \int_{\Rn} \frac{|x-x'|^{\alpha}}{(t+|x-y|)^{m+\alpha}} |f(y)| d\mu(y) 
\\
\leq t^{-\alpha_0} |x-x'|^{\alpha} \|f\|_{L^p(\mu)} 
\bigg(\int_{\Rn} \frac{d\mu(y)}{(t_0+|x-y|)^{(m+\alpha-\alpha_0)\,p'}} \bigg)^{\frac1{p'}}  
\leq C_{t_0} t^{-\alpha_0} |x-x'|^{\alpha} \|f\|_{L^p(\mu)},  
\end{multline*}
where $\alpha_0$ is an auxiliary parameter such that $0<\alpha_0<\alpha$. This implies that 
\begin{align*}
|g_{\mu, t_0}f(x) - g_{\mu, t_0}f(x')| 
&\leq C_{t_0} |x-x'|^{\alpha} \|f\|_{L^p(\mu)}, 
\end{align*}
and hence, the mapping $x \mapsto g_{\mu, t_0}f(x)$ is continuous. Thus, $\Omega_{\lambda}$ is an open set.  
On the other hand, by \eqref{eq:size} it is not hard to see that
\begin{align}
|g_{\mu, t_0}f(x)| & \leq \|f\|_{L^{\infty}(\mu)} \bigg(\int_{t_0}^{\infty} \bigg( \int_{B_0} 
\frac{t^{\alpha}}{(t+|x-y|)^{m+\alpha}} d\mu(y) \bigg)^2 \frac{dt}{t} \bigg)^{\frac12}  
\nonumber \\
&\leq \|f\|_{L^{\infty}(\mu)}  \left(\int_{t_0}^{\infty} 
\frac{t^{-2\alpha'_0}\mu(B_0)^2}{(t_0+\dist(x, B_0))^{2(m-\alpha'_0)}} \frac{dt}{t} \right)^{\frac12}  
\nonumber \\
&\leq C(f, t_0) \frac{\mu(B_0)}{(t_0+\dist(x, B_0))^{(m-\alpha'_0)}} \to 0,  \quad\text{as}\quad |x| \to \infty, 
\label{Qvavffa}
\end{align}
where $0<\alpha'_0<m$ is some fixed parameter. This readily implies that $\Omega_{\lambda} \subsetneq \Rn$. Keeping these facts in mind and applying Whitney decomposition 
\cite[Charper~VI]{Stein}, one can find a collection $\{Q_i\}_{i}$ of closed dyadic cubes with disjoint interiors such that 
\begin{equation}\label{eq:W-dec}
\Omega_{\lambda} = \bigcup_{i} Q_i,\quad  
\kappa Q_i \cap \Omega_{\lambda}^c \neq \emptyset,
\quad\text{and}\quad \sum_{i} \mathbf{1}_{10Q_i}(x)\le\kappa_0  \mathbf{1}_{\Omega}(x),
\end{equation} 
for some $\kappa>20$ and where $\kappa_0$ depends just on $n$. 

For later use we observe that from \eqref{eq:size} one can easily see that for any measurable function $h$ we have 
\begin{multline}\label{aFWfveV}
\int_{\Rn} \frac{|h(y)|}{(t+|x-y|)^{m+\alpha}} d\mu(y) 
\\
\lesssim 
t^{-(m+\alpha)}\int_{B(x,t)}|h(y)|d\mu(y) 
+
\sum_{j=0}^\infty \int_{B(x,2^{j+1}t)\setminus B(x,2^{j}t)} \frac{|h(y)|}{|x-y|^{m+\alpha}} d\mu(y) 
\\
\lesssim
M_{\mu}h(x) \sum_{j=-1}^\infty (2^j\,t)^{-(m+\alpha)} \mu(Q(x,2^{j+2}t)) 
\lesssim t^{-\alpha}\, M_{\mu}h(x),  
\end{multline} 
where $M_\mu$ is the centered Hardy-Littlewood maximal function, see \eqref{def:HL-cent}.

To proceed we observe that under the hypotheses of Theorem \ref{thm:square}, using non-homo\-ge\-neous analysis and probabilistic methods, it was obtained in  \cite[Theorem~1.4]{MMO} that $g_{\mu}$ is bounded on $L^2(\mu)$, and that only uses the smoothness condition with respect to the second variable \eqref{eq:smooth}.   
Subsequently, by means of the non-homogeneous Calderón-Zygmund decomposition of Tolsa \cite[Lemma~2.4]{T1} or \cite[Lemma~2.14]{T2}, the same authors proved that the $L^2(\mu)$ boundedness of $g_{\mu}$ implies that
\begin{align}\label{eq:g-weak}
g_{\mu}:L^1(\mu) \to L^{1,\infty}(\mu) \quad \text{boundedly}, 
\end{align}
and, by interpolation, 
\begin{align}\label{eq:g-Lp}
g_{\mu}:L^p(\mu) \to L^p(\mu) \quad \text{boundedly for  all } 1 <p \leq 2.  
\end{align}

We are going to apply \eqref{eq:g-weak} to derive the following localized good-$\lambda$ inequality: for any given $\varepsilon>0$ and $\gamma \in (0,1)$, there exists $\delta=\delta(\varepsilon,\gamma)>0$ such that for any given a Whitney cube $Q_i$,
\begin{equation}\label{eq:QQ}
\mu(E_{Q_i}) := 
\mu \big(\big\{x \in Q_i: g_{\mu, t_0}(f)(x) > (1+\varepsilon)\lambda, M_{\mu}f(x) \leq \delta \lambda  \big\}\big)
\leq \gamma \mu(4Q_i).
\end{equation}

Let us then show \eqref{eq:QQ}. Fix a Whitney cube $Q_i$ and we may assume that $E_{Q_i}\not=\emptyset$, otherwise the estimate is trivial. This and the fact that $\kappa Q_i \cap \Omega_{\lambda}^c \neq \emptyset$ allow us to pick $x_i\in E_{Q_i}$ and $x_i' \in \kappa Q_i \cap \Omega_{\lambda}^c$.  
We claim that 
\begin{align}\label{eq:ggM}
g_{\mu, t_0}(f\, \mathbf{1}_{(2Q_i)^c})(x) \leq g_{\mu, t_0}(f)(x_i') + C_0\, M_{\mu}f(x), \quad \forall\,x \in Q_i.  
\end{align}
To see this, fix $x \in Q_i$ and split 
\begin{align}\label{eq:J1J2J3}
g_{\mu, t_0}(f\, \mathbf{1}_{(2Q_i)^c})(x) 
&\leq \bigg(\int_{0}^{2\kappa \diam(Q_i)} |\theta_t^{\mu}(f\, \mathbf{1}_{(2Q_i)^c})(x)|^2 \frac{dt}{t} \bigg)^{\frac12}
\nonumber \\
&\qquad + \bigg(\int_{\max\{t_0,2\kappa \diam(Q_i)\}}^{t_0^{-1}} |\theta_t^{\mu}(f\, \mathbf{1}_{(2Q_i)^c})(x_i')|^2 \frac{dt}{t} \bigg)^{\frac12}
\nonumber\\
&\qquad + \bigg(\int_{2\kappa \diam(Q_i)}^{\infty} |\theta_t^{\mu}(f\, \mathbf{1}_{(2Q_i)^c})(x) 
-\theta_t^{\mu}(f\, \mathbf{1}_{(2Q_i)^c})(x_i')|^2 \frac{dt}{t} \bigg)^{\frac12}
\nonumber\\
&=:\mathcal{J}_1 + \mathcal{J}_2 + \mathcal{J}_3,
\end{align}
where it is understood that $\mathcal{J}_2=0$ when $2\kappa \diam(Q_i)>t_0^{-1}$. 
To bound $\mathcal{J}_1$, we note that $Q(x, \ell(Q_i)/2) \subset 2Q_i$. Then, using the size condition \eqref{eq:size}, we have for any $t>0$, 
\begin{align*}
|\theta_t^{\mu}(f\, \mathbf{1}_{(2Q_i)^c})(x)| 
&\lesssim \int_{(2Q_i)^c} \frac{t^{\alpha}}{(t+|x-y|)^{m+\alpha}}  |f(y)| d\mu(y)
\\
&\le \sum_{j=0}^{\infty} \int_{Q(x, 2^j\ell(Q_i)) \setminus Q(x, 2^{j-1}\ell(Q_i))}   
\frac{t^{\alpha}}{|x-y|^{m+\alpha}}  |f(y)| d\mu(y)
\\
&\lesssim \sum_{j=0}^{\infty} \frac{t^{\alpha} \mu(Q(x, 2^j \ell(Q_i)))}{(2^j \ell(Q_i))^{m+\alpha}} 
\fint_{Q(x, 2^j \ell(Q_i))}  |f(y)| d\mu(y)
\\
&\lesssim t^{\alpha} \sum_{j=0}^{\infty} (2^j \ell(Q_i))^{-\alpha} M_{\mu} f(x) 
\lesssim t^{\alpha} \ell(Q_i)^{-\alpha} M_{\mu} f(x).   
\end{align*}
As a result,
\begin{align}\label{eq:J1}
\mathcal{J}_1 
\lesssim 
\bigg(\int_{0}^{2\kappa \diam(Q_i)} t^{2\alpha} \frac{dt}{t} \bigg)^{\frac12} 
\ell(Q_i)^{-\alpha} M_{\mu} f(x) 
\lesssim 
M_{\mu} f(x). 
\end{align}
Let us estimate $\mathcal{J}_2$. Using that $2Q_i \subset Q(x,  3\ell(Q_i))$ and the size condition \eqref{eq:size} again, we arrive at
\begin{multline*}
|\theta_t^{\mu}(f\, \mathbf{1}_{2Q_i})(x_i')| 
\lesssim \int_{2Q_i} \frac{t^{\alpha}}{(t+|x_i'-y|)^{m+\alpha}} |f(y)| d\mu(y) 
\\
\le \frac{1}{t^m} \int_{Q(x, 3\ell(Q_i))} |f(y)| d\mu(y)
\lesssim t^{-m} \ell(Q_i)^m M_{\mu}f(x). 
\end{multline*}
This immediately yields 
\begin{multline}\label{eq:J2}
\mathcal{J}_2 \leq \bigg(\int_{t_0}^{t_0^{-1}} |\theta_t^{\mu} f(x_i')|^2 \frac{dt}{t} \bigg)^{\frac12} 
+ \bigg(\int_{2\kappa \ell(Q_i)}^{\infty} |\theta_t^{\mu}(f\, \mathbf{1}_{2Q_i})(x_i')|^2 \frac{dt}{t} \bigg)^{\frac12}
\\
\leq g_{\mu, t_0}f(x_i') + \ell(Q_i)^m \bigg(\int_{2\kappa \diam(Q_i)}^{\infty} t^{-2m} \frac{dt}{t}\bigg)^{\frac12} M_{\mu}f(x) 
\leq g_{\mu, t_0}f(x_i') + C M_{\mu}f(x).  
\end{multline}
In order to control $\mathcal{J}_3$, we observe that $|x-x_i'| \leq \kappa \diam(Q_i)$. Hence if $t>2\kappa \diam(Q_i)$, the smoothness condition \eqref{eq:smooth} and \eqref{aFWfveV} give
\begin{multline*}
|\theta_t^{\mu}(f\, \mathbf{1}_{(2Q_i)^c})(x) - \theta_t^{\mu}(f\, \mathbf{1}_{(2Q_i)^c})(x_i')|
\lesssim \int_{(2Q_i)^c} |s_t(x, y) - s_t(x_i', y)|\, |f(y)| d\mu(y) 
\\
\qquad\lesssim \int_{\Rn} \frac{|x-x_i'|^{\alpha}}{(t+|x-y|)^{m+\alpha}} |f(y)| d\mu(y) 
\lesssim t^{-\alpha} \ell(Q_i)^{\alpha} M_{\mu}f(x).  
\end{multline*}
As a consequence, 
\begin{align}\label{eq:J3}
\mathcal{J}_3 \lesssim \ell(Q_i)^{\alpha}
\bigg(\int_{2\kappa \diam(Q_i)}^{\infty} t^{-2\alpha} \frac{dt}{t} \bigg)^{\frac12} M_{\mu}f(x) 
\lesssim M_{\mu}f(x).  
\end{align}
Accordingly, \eqref{eq:ggM} follows from \eqref{eq:J1J2J3}, \eqref{eq:J1}, \eqref{eq:J2} and \eqref{eq:J3}. 

Now let $x \in E_{Q_i}$. Invoking \eqref{eq:ggM} and the fact that $x_i'\not\in\Omega$, we get  
\begin{multline*}
(1+\varepsilon)\,\lambda < g_{\mu, t_0}f(x) 
\leq g_{\mu, t_0}(f\, \mathbf{1}_{2Q_i})(x) +  g_{\mu, t_0}(f\, \mathbf{1}_{(2Q_i)^c})(x)
\\
\leq g_{\mu, t_0}(f\, \mathbf{1}_{2Q_i})(x) + g_{\mu, t_0}f(x_i') + C_0 M_{\mu}f(x) 
\leq g_{\mu, t_0}(f\, \mathbf{1}_{2Q_i})(x) + \lambda + C_0\, \delta \lambda. 
\end{multline*}
Choosing $\delta$ small enough so that $0<\delta \le \frac{\varepsilon}{2C_0}$, 
we obtain $g_{t_0}(f\, \mathbf{1}_{2Q_i})(x) > \varepsilon \lambda/2$,  that is, 
\begin{equation}\label{eq:EQi}
E_{Q_i} \subset \{x \in Q_i: g_{\mu, t_0}(f\, \mathbf{1}_{2Q_i})(x) > \varepsilon \lambda/2\}.
\end{equation}
Using this, that $x_i\in E_{Q_i}$, that  $2Q_i \subset Q(x_i, 3\ell(Q_i)) \subset 4 Q_i$, \eqref{eq:EQi}, and \eqref{eq:g-weak} we arrive at
\begin{multline*}
\mu(E_{Q_i}) \leq \mu(\{x \in Q_i: g_{\mu, t_0}(f\, \mathbf{1}_{2Q_i})(x) > \varepsilon \lambda/2\})
\lesssim  \frac{2\,C_1}{ \varepsilon \lambda} \int_{2Q_i} |f| d\mu  
\\
\leq \frac{2\,C_1}{ \varepsilon \lambda} \int_{Q(x_0, 3\ell(Q_i))} |f| d\mu  
\leq \frac{2C_1}{ \varepsilon \lambda} M_{\mu}f(x_0) \mu(4Q_i) 
\leq 2C_1 \delta \varepsilon^{-1} \mu(4Q_i)  \leq \gamma \mu(4Q_i),  
\end{multline*}
provided $0<\delta \leq \frac{\varepsilon \gamma}{2C_1}$. It is worth mentioning that both $C_0$ and $C_1$ are independent of $t_0$. We have then shown that if $0<\delta \leq \min\big\{\frac{\varepsilon}{2C_0}, \frac{\varepsilon \gamma}{2C_1}\big\}$, then  \eqref{eq:QQ} holds. 

Next, let us establish \eqref{eq:gLp}. Let $1<p \leq 2$ and $w \in A_{p, \B}$. By \cite[Lemma 2.3]{OP}, there are positive constants $C_w$ and $\theta_w$ such that 
${w(E)}/{w(Q)} \leq C_w ({\mu(E)}/{\mu(Q)})^{\theta_w}$ for any cube $Q$ and any subset $E \subset Q$,  
This and \eqref{eq:QQ} give $w(E_{Q_i})\leq C_w \gamma^{\theta_w} w(4Q_i)$. 
We then sum  over $i$ and use \eqref{eq:W-dec} to conclude that 
\begin{multline*}
w \big(\big\{x \in \Rn: g_{\mu,t_0}(f)(x) > (1+\varepsilon)\lambda, M_{\mu}f(x) \leq \delta \lambda  \big\}\big)
\\
=
w \big(\big\{x \in \Omega: g_{\mu,t_0}(f)(x) > (1+\varepsilon)\lambda, M_{\mu}f(x) \leq \delta \lambda  \big\}\big)
\\
\le
\sum_{i} w(E_{Q_i})
\leq 
C_w \gamma^{\theta_w}
\sum_{i} w(4\,Q_i)
\le
C_w \gamma^{\theta_w} \kappa_0 
w(\Omega_{\lambda}). 
\end{multline*}
As a consequence, 
\begin{align}\label{eq:gfMf}
\| g_{\mu,t_0}(f)\|_{L^p(w)}^p
&=(1+\varepsilon)^p p \int_{0}^\infty \lambda^{p-1} 
w(\{x: g_{\mu,t_0}(f)(x) > (1+\varepsilon)\lambda \}) d\lambda 
\nonumber \\
&\leq (1+\varepsilon)^p p C_w \gamma^{\theta_w} \kappa_0 \int_{0}^{\infty} \lambda^{p-1} 
w\big(\{x: g_{\mu,t_0}f(x) > \lambda \}\big) d\lambda
\nonumber \\
&\qquad\qquad+(1+\varepsilon)^{p} p \int_{0}^{\infty} \lambda^{p-1} 
w\big(\{x: M_{\mu}f(x) > \delta \lambda \}\big) d\lambda
\nonumber \\
&
= (1+\varepsilon)^{p} C_w \gamma^{\theta_w} \kappa_0  \| g_{\mu,t_0}(f) \|_{L^p(w)}^p 
+ (1+\varepsilon)^{p} \delta^{-p} \| M_{\mu}f \|_{L^{p}(w)}^{p}
\nonumber \\
&
\le
(1+\varepsilon)^{p} C_w \gamma^{\theta_w} \kappa_0  \| g_{\mu,t_0}(f) \|_{L^p(w)}^p 
+ C\,(1+\varepsilon)^{p} \delta^{-p} \|f \|_{L^{p}(w)}^{p}
\end{align}
where in the last inequality we have used that $M_{\mu}$ is bounded on $L^p(w)$ for any $1<p<\infty$ and $w \in A_{p, \B}$ (see \cite[Theorem 3.1]{OP}). On the other hand, 
\eqref{eq:size} and \eqref{aFWfveV} imply
\[
g_{\mu, t_0}(f)(x)
\lesssim \bigg(\int_{t_0}^{t_0^{-1}} \frac{dt}{t}\bigg)^{\frac12}\,M_\mu f(x)
=
(2\,\log(t_0^{-1}))^{1/2}\,M_\mu f(x)
\]
and
\begin{align*}
	\|g_{\mu, t_0}(f)\|_{L^{p}(w)}\le C_{t_0}\,\|M_\mu f\|_{L^{p}(w)}
	\le
	 C_{t_0}\,\|f\|_{L^{p}(w)}<\infty, 
\end{align*}
since $f$ is a bounded compactly supported function. Thus, taking $\gamma>0$ small enough so that 
$(1+\varepsilon)^p c_0 \gamma^{\theta} \kappa_0<1/2$ we obtain from \eqref{eq:gfMf}
\begin{align*}
\|g_{\mu,t_0}(f)\|_{L^p(w)} \lesssim \|f\|_{L^p(w)}, 
\end{align*}
with an implicit constant that does not depend on $t_0$. This and the monotone convergence theorem easily completes the proof of \eqref{eq:gLp}. 
\end{proof}

\subsection{Singular integral operators} 

Throughout this subsection, we let $\Sigma=\re^{n}$ and let $\mu=\mathcal{L}^{n}$ be the Lebesgue measure in $\re^{n}$. In what follows we will implicitly assume that $\mathcal{L}^{n}$ is the underlying measure and write $\mathbb{M}$ (in place of $\mathbb{M}_{\mathcal{L}^{n}}$) to denote the set of Lebesgue measurable functions in $\re^{n}$. Analogously, when we write that some condition occurs a.e.~we mean that it does $\mathcal{L}^{n}$-a.e. We let $\B$ be the collection of all euclidean balls $\re^{n}$, in which case $M_\B$ is the classical Hardy-Littlewood maximal function (with respect to uncentered balls) and we will simply write $M$. Of course, one can equivalently work with cubes in place of balls as the corresponding maximal function is pointwise equivalent to $M$. In this context $w$ is a $\B$-weight (we will simply say that $w$ is a weight) if $w\in\mathbb{M}$ with $0<w<\infty$ a.e. In this fashion, $A_{p,\B}$ or $RH_{s,\B}$  are the classical Muckenhoupt and reverse Hölder classes and will be denoted by $A_p$ and $RH_s$, respectively.

Given a symbol $a \in \mathscr{C}^{\infty}(\Rn \times \Rn)$,  we define the {\tt pseudo-differential operator} $T_a$ by
\begin{equation*}
T_{a} f(x)=\int_{\Rn} e^{i x \cdot \xi} a(x, \xi) \widehat{f}(\xi) d\xi, 
\end{equation*}
where $f \in \mathcal{S}$ and $\widehat{f}$ denotes the Fourier transform of $f$. We say that the symbol $a$ belongs to the Hörmander class $S^m_{\rho,\delta}$ introduced in \cite{H} if it satisfies 
$
|\partial_{x}^{\alpha} \partial_{\xi}^{\beta} a(x, \xi)| 
\lesssim(1+|\xi|)^{m-\rho|\beta|+\delta|\alpha|}
$,
for all multi-indices $\alpha,\beta \in \mathbb{N}^n$, where $m \in \R$ and $0 \leq \rho, \delta \leq 1$. 

Recently, Beltran \cite{B} proved that if $a \in S^m_{\rho,\delta}$ with $m \in \R$,  $0 \leq \delta \leq \rho \leq 1$ and $\delta < 1$, then 
\begin{equation}\label{eq:Ta}
\int_{\Rn} |T_a f(x)|^2 w dx 
\lesssim \int_{\Rn}|f(x)|^2 M^2 \mathcal{M}_{\rho, m} M^5 w(x) dx, 
\end{equation}
for any weight $w$, where 
\begin{equation*}
\mathcal{M}_{\rho, m} w(x) := \sup _{(y, r) \in \Lambda_{\rho}(x)} 
\frac{w(B(y,r))}{|B(y, r)|^{1+2m/n}},  
\end{equation*}
and $\Lambda_{\rho}(x) :=\left\{(y, r) \in \Rn \times(0,1) :|y-x| \leq r^{\rho} \right\}$.  

Observe that $B(y,r) \subset B(x,2r^{\rho})$ for any $(y, r) \in \Lambda_{\rho}(x)$. Thus, if we pick $m \in \R$ and $\rho \in [0, 1]$ such that $\rho=1+2m/n$, then 
\begin{equation}\label{eq:MwMw}
\mathcal{M}_{\rho, m} w 
\leq \sup _{(y, r) \in \Lambda_{\rho}(x)} \frac{|B(x,2r^{\rho})|}{|B(y, r)|^{1+2m/n}} 
\frac{w(B(x,2r^{\rho}))}{|B(x,2r^{\rho})|}
\leq C M w(x). 
\end{equation}
Moreover, $w \in A_{1}$ implies that $Mw(x) \leq C w(x)$, a.e.~$x \in \Rn$. Combining \eqref{eq:Ta} and \eqref{eq:MwMw} yields 
\begin{equation*}
\int_{\Rn} |T_a f(x)|^2 w(x) dx 
\leq C \int_{\Rn}|f(x)|^2 M^8 w(x) dx 
\leq C \int_{\Rn}|f(x)|^2 w(x) dx . 
\end{equation*}
This, Theorem \ref{thm:BFSAp}, Theorem~\ref{thm:PhiAp}, and Remarks~\ref{remark:rescale} and \ref{remark:rescale:mod} (applied with $p_0=r=2$) readily imply the following result: 

\begin{theorem}
Let $a \in S^m_{\rho,\delta}$ with $m=-n(1-\rho)/2$, $0 \leq \delta \leq \rho \leq 1$ and $\delta < 1$. Let $u$ and $v$ be weights on $\Rn$ such that $v \in L^1_{\loc}(\Rn)$. If $\X_v$ is a Banach function space on $(\Rn, v)$ such that 
\begin{align}\label{eq:TaM} 
\|(M'_{v} h)\,u^{-2}\|_{\X'_v} \le C_0 \|h u^{-2}\|_{\X'_v},\quad\forall\,h \in \M, 
\end{align} 
for some $C_0<\infty$, then 
\begin{equation}\label{eq:Taf}
\|(T_a f)\,u\|_{\X_v^2}  \leq C \|f\,u\|_{\X_v^2}.  
\end{equation}
On the other hand, if $\Phi$ is a Young function such that $\Phi\in\Delta_2$ \textup{(}equivalently, $I_\Phi<\infty$\textup{)}  and
\begin{align}\label{eq:TaM:mod}
	\int_{\Rn} \overline{\Phi}((M'_{v}h)\, u^{-2})\, v\, dx 
	\le C_0 \int_{\Rn} \overline{\Phi}(|h|\, u^{-2})\,v\, dx,\quad\forall\,h \in \M_{}, 
\end{align} 
for some $C_0<\infty$, then
\begin{equation}\label{eq:Taf:mod}
\int_{\Rn} \Phi_2(|T_a f| \, u)\, v\, dx 
\le C 
\int_{\Rn} \Phi_2(|f|\, u)\,v\, dx, 
\end{equation}
where $\Phi_2(t):=\Phi(t^2)$, $t\ge 0$.
\end{theorem}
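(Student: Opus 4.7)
The plan is to reduce both claims to the endpoint extrapolation results already available, namely Theorems \ref{thm:BFSAp}(b) and \ref{thm:PhiAp}(b), applied to the family of pairs $\F:=\{(|T_a f|^2, |f|^2): f\in\mathcal{S}\}$ at the exponent $p_0=1$, after absorbing the squaring into a rescaled weight. The starting point is Beltran's pointwise-weighted inequality \eqref{eq:Ta}, together with the already noted fact that the choice $m=-n(1-\rho)/2$ gives $\rho=1+2m/n$, so that \eqref{eq:MwMw} yields $\mathcal{M}_{\rho,m} w(x)\le C\, Mw(x)$ for every weight $w$. Inserting this into \eqref{eq:Ta} produces
\begin{equation*}
\int_{\Rn} |T_a f|^2\, w\, dx \;\le\; C \int_{\Rn} |f|^2\, M^{8}w\, dx \qquad \text{for every weight } w.
\end{equation*}

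The second step is to specialise to $w\in A_1$. Iterating the bound $Mw\le [w]_{A_1} w$ eight times gives $M^8 w\le [w]_{A_1}^{8}w$ a.e., whence
\begin{equation*}
\|T_a f\|_{L^2(\Rn,w)}^{2}\;\le\; C\,[w]_{A_1}^{8}\,\|f\|_{L^2(\Rn,w)}^{2}\qquad \forall\,w\in A_1.
\end{equation*}
Writing $F=|T_a f|^2$ and $G=|f|^2$ this is exactly
\begin{equation*}
\|F\|_{L^1(\Rn,w)}\le C\,[w]_{A_1}^{8}\,\|G\|_{L^1(\Rn,w)},\qquad \forall\,w\in A_1,
\end{equation*}
which is the hypothesis \eqref{eq:uXv-3:1}, as well as \eqref{eq:Phifu-3:1}, for the family $\F$ with $\Psi(t)=C t^{8}$ at the endpoint $p_0=1$.

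To derive \eqref{eq:Taf} I would now invoke Theorem \ref{thm:BFSAp}(b) applied to $\F$ with the Banach function space $\X_v$ and \emph{with the weight $u$ in that theorem replaced by $u^{2}$}. The only assumption required for that theorem at $p_0=1$ is the $M'_{\B,v}$-inequality \eqref{eqn:thm:BFSAp:Mprime}, which with the weight $u^{2}$ is literally the present hypothesis \eqref{eq:TaM}. The conclusion \eqref{eq:uXv-4:1} then reads
\begin{equation*}
\bigl\||T_a f|^{2}\,u^{2}\bigr\|_{\X_v}\;\le\; C\,\bigl\||f|^{2}\,u^{2}\bigr\|_{\X_v},\qquad f\in\mathcal{S},
\end{equation*}
and taking square roots and using the definition \eqref{Xr} of $\X_v^{2}$ (which is a Banach function space since $2\ge 1$) gives exactly \eqref{eq:Taf}. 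The modular statement \eqref{eq:Taf:mod} follows by the same rescaling trick: apply Theorem \ref{thm:PhiAp}(b) to the same family $\F$ and the Young function $\Phi\in\Delta_2$, again with $u^{2}$ in place of $u$. The hypothesis \eqref{eq:Phifu-2} of that theorem with $u^2$ is precisely \eqref{eq:TaM:mod}, while the starting $A_1$ estimate is the one already obtained from Beltran's bound, so \eqref{eq:Phifu-4:1} yields
\begin{equation*}
\int_{\Rn}\Phi\bigl(|T_a f|^{2}\,u^{2}\bigr)\,v\,dx\;\le\; C\int_{\Rn}\Phi\bigl(|f|^{2}\,u^{2}\bigr)\,v\,dx,
\end{equation*}
which is \eqref{eq:Taf:mod} in view of $\Phi_{2}(t)=\Phi(t^{2})$.

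No serious obstacle is expected in this argument: the deep analytic input (Beltran's $L^{2}$-pointwise weighted bound for $T_a$) is already available, and the extrapolation machinery of Sections \ref{sec:BFSextr} and \ref{sec:modular} is tailored exactly to this kind of endpoint-$p_0=1$ input. The only subtle point is the bookkeeping required to match the square appearing in $\X_v^{2}$ and $\Phi_2(t)=\Phi(t^2)$: it is handled, once and for all, by squaring the family to $(|T_a f|^2,|f|^2)$ and simultaneously replacing the pointwise multiplier $u$ in the hypotheses of the two extrapolation theorems by $u^{2}$.
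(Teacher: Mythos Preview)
Your proof is correct and follows essentially the same approach as the paper: the paper derives the $L^2(w)$ bound for $w\in A_1$ from Beltran's inequality and then invokes Theorems~\ref{thm:BFSAp} and~\ref{thm:PhiAp} via the rescaling Remarks~\ref{remark:rescale} and~\ref{remark:rescale:mod} with $p_0=r=2$, which is exactly the ``square the pair and replace $u$ by $u^2$'' manoeuvre you spell out explicitly.
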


\begin{remark}
Let $2 \le p<\infty$ and $\X_v=L^{p/2}(v)$. Then \eqref{eq:TaM} becomes 
\begin{align*}
\|(Mh)\,u^{-2} v^{-1}\|_{L^{(p/2)'}(v)} \le C \|h u^{-2} v^{-1}\|_{L^{(p/2)'}(v)},  
\end{align*}
which holds if and only if $(u^{-2}v^{-1})^{(p/2)'}v \in A_{(p/2)'}$. The later is also equivalent to $u^p v \in A_{p/2}$. On the other hand, \eqref{eq:Taf} can be rewritten as 
\begin{equation*}
\|T_a f\|_{L^p(u^p v)}  \leq C \|f\|_{L^p(u^p v)}.  
\end{equation*}
Therefore, taking $u \equiv 1$, we will recover the end-point weighted estimate for the pseudo-differential operator $T_a$ established in \cite[Theorem 1.3]{CT} and \cite[Theorem 3.10]{MRS} respectively.  
\end{remark}

Next, we will use $A_{\infty}$ extrapolation to establish the Coifman-Fefferman inequalities on Banach function spaces.

Given a Young function $\Phi$ and a ball $B$ we define the normalized Luxemburg norm as
\begin{equation*}
	\|f\|_{\Phi, B }
	:=
	\inf\Big\{\lambda>0:
	\fint_{B} \Phi\Big(\frac{|f(x)|}{\lambda}\Big)dx \leq 1\Big\}. 
\end{equation*}
It is not difficult to see if $\Phi_1$ and $\Phi_2$ are Young functions such that $\Phi_1(t)\approx\Phi_2(t)$ for all $t\ge t_0$ for some $t_0>1$ then $\|f\|_{\Phi_1, B }\approx	\|f\|_{\Phi_2, B }$. This means that in this context cases we will not be concerned about the value of the Young
functions for $t$ small. 

Denoting  by $\overline{\Phi} $ the complementary function associated to $\Phi $ one has the generalized Hölder's inequality
\begin{equation}\label{faer}
	\fint_B |f\, g|\,dx
\leq
2\, \|f\|_{\Phi,B} \|g\|_{\overline{\Phi} ,B}.
\end{equation}
Taking in particular $g\equiv 1$ one has 
\[
\fint_B |f|\,dx
\le
2\, \|f\|_{\Phi,B}.
\]

There is a further generalization that turns  out to be  useful for
our purposes, see \cite{O}: If $\Phi_1 $, $\Phi_2$, $\Phi_3$ are Young functions
such that
$
\Phi_1 ^{-1}(t) \, \Phi_2^{-1}(t)\, \Phi_3^{-1}(t)\leq t,
$
for all $t\ge 1$, then
\begin{equation}\label{Holder:1-ABC}
\fint_B |f\, g\,h|\,dx\leq C\,\|f\|_{\Phi_1 ,B}\,\|g\|_{\Phi_2,B}\,\|h\|_{\Phi_3,B}.
\end{equation}
Note that this implies
\begin{equation}\label{HolderC-AB}
	\|f\,g\|_{\overline{\Phi}_3,B}\leq C\,\|f\|_{\Phi_1 ,B}\,\|g\|_{\Phi_2,B}
	\qquad\text{and}\qquad
	\|f\|_{\overline{\Phi}_3,B}\leq C\,\|f\|_{\Phi_1, B},
\end{equation}
The first estimate is obtained by duality and for the second one, we take $g\equiv 1$.

\begin{remark}\label{remark:Linfty}\rm
Let us observe that $\Phi(t)=t$, for $t\in (0,\infty)$, is not a Young function. Nonetheless, one can extend the previous definitions with the understanding that 
$\overline{\Phi} (t)=0$ if $t\le 1$ and	$\overline{\Phi}(t)=\infty$ otherwise, whose (generalized) inverse is $\overline{\Phi }^{\,-1}(t)\equiv 1$. In such scenario, one can easily see that
\[
\|f\|_{\Phi, B }=\fint_B |f|\,dx
	\qquad\text{and}\qquad
\|f\|_{\overline{\Phi}, B }=\|f\|_{L^\infty(B)},
\]
and \eqref{faer} holds. Note also that \eqref{Holder:1-ABC} and \eqref{HolderC-AB} remain true if one of the $\Phi_j$'s is $\overline{\Phi}$.
\end{remark}

To continue, if $\Psi$ is a Young function and $k \ge 0$, we say that the kernel $K$ satisfies the {\tt $L^{\Psi,k}$-Hörmander condition}, denoted by $K \in \mathcal{H}_{\Psi,k}$, if it satisfies 
\begin{align*}
\sup _{B \subset \Rn} \sup _{x,z \in \frac12 B} \sum_{j=1}^{\infty} 
(2^j r_B)^n j^k \norm{(K(x-\cdot)-K(z-\cdot)) \mathbf{1}_{2^j B \setminus 2^{j-1} B}}_{\Psi, 2^jB}<\infty, 
\end{align*}
where the first supremum is taken over all balls in $\Rn$. When $k=0$, we simply write $\mathcal{H}_{\Psi}$. An operator $T$ is said to be a {\tt singular integral operator} if $\|T\|_{L^2 \to L^2} < \infty$ and it admits the following representation 
\begin{equation*}
T f(x)=\int_{\Rn} K(x-y) f(y) dy, \quad x \not\in \supp(f) 
\end{equation*}
for all $f \in \mathcal{S}(\Rn)$. As in Section \ref{subsec:LP-UR}, given  a measurable function $b$ and $k\ge 0$, the $k$-th order commutator of $T$ is then
\begin{equation*}
\mathbf{C}_{b}^{k}(T) f(x) 
=
\int_{\R^{n}}(b(x)-b(y))^{k} K(x,y) f(y) dy,\quad k \geq 0, 
\end{equation*} 
with the understanding that $\mathbf{C}_{b}^{0}(T) f(x) =T f(x)$.

\begin{theorem}\label{thm:ABC}
Let $T$ be a singular integral operator with the kernel $K$, and let $k\ge 0$. Let $\Phi$ and $\Psi$ be two Young functions such that $\overline{\Phi}^{-1}(t)\, \overline{\Psi}^{-1}(t)\, \overline{\mathcal{E}}_k^{-1}(t) \le t$ for $t\ge 1$, where $\overline{\mathcal{E}}_k(t)=e^{t^{1/k}}-1$ for $t\in (0,\infty)$ if $k\ge 1$ and $\overline{\mathcal{E}}_0\equiv 1$ if $k=0$.  Let $u$ and $v$ be weights on $\Rn$ such that $v \in L^1_{\loc}(\Rn)$. Assume that $\X_v$ is a Banach function space on $(\Rn, v)$ such that 
\begin{align}\label{eq:TMBv}
\|(M'_{v} h)\,u^{-1}\|_{\X'_v} \le C_0 \|h\, u^{-1}\|_{\X'_v},\quad\forall\,h \in \M. 
\end{align} 
for some $C_0<\infty$. If $b \in \BMO(\R^n,\L^n)$ and $K \in \mathcal{H}_{\Psi, k}$, then 
\begin{align}\label{eq:MAf-2} 
	\|(\mathbf{C}_{b}^{k}(T) f)\,u\|_{\X_v}  &\leq C \|b\|_{\BMO(\R^n,\L^n)}^k \|(M_{\overline{\Phi}} f)\,u\|_{\X_v}. 
\end{align}
On the other hand, assume that $\Theta$ is a Young function such that $\Theta\in\Delta_2$ \textup{(}equivalently, $I_\Phi<\infty$\textup{)}  and
\begin{align}\label{eq:TMBv:mod}
	\int_{\Rn} \overline{\Theta}((M'_{v}h)\, u^{-1})\, v\, dx 
	\le C_0 \int_{\Rn} \overline{\Theta}(|h|\, u^{-1})\,v\, dx,\quad\forall\,h \in \M, 
\end{align} 
for some $C_0<\infty$. If $b \in \BMO(\R^n,\L^n)$ and $K \in \mathcal{H}_{\Psi, k}$, then 
\begin{equation}\label{eq:MAf-2:mod} 
	\int_{\Rn} \Theta(|\mathbf{C}_{b}^{k}(T) f| \, u)\, v\, dx 
	\le C 
	\int_{\Rn} \Theta((M_{\overline{\Phi}} f)\,u)\,v\, dx. 
\end{equation}

\end{theorem}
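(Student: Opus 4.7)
The plan is to reduce both \eqref{eq:MAf-2} and \eqref{eq:MAf-2:mod} to a scalar Coifman--Fefferman type weighted inequality on $L^p(w)$ for $w\in A_{\infty}$ and then invoke the $A_\infty$ extrapolation results, Theorems \ref{thm:BFSAi} and \ref{thm:PhiAi}. Concretely, the key step is to prove
\begin{equation}\label{eq:CF-plan}
\int_{\Rn} |\mathbf{C}_{b}^{k}(T)f|^{p}\,w\,dx
\le C\, \|b\|_{\BMO}^{kp} \int_{\Rn} (M_{\overline{\Phi}} f)^{p}\, w\,dx,
\end{equation}
for some (equivalently, every) $p\in(0,\infty)$ and every $w\in A_{\infty}$. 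Once \eqref{eq:CF-plan} is available, we simply apply Theorem \ref{thm:BFSAi} with basis $\B$ equal to the collection of all Euclidean balls and with the extrapolation family
\[
\F:=\bigl\{\bigl(|\mathbf{C}_{b}^{k}(T)f|,\; \|b\|_{\BMO}^{k} M_{\overline{\Phi}} f\bigr)\bigr\};
\]
the hypothesis \eqref{eq:TMBv} is precisely \eqref{eq:BFSAi-1}, so the conclusion is \eqref{eq:MAf-2}. The modular inequality \eqref{eq:MAf-2:mod} follows identically by applying Theorem \ref{thm:PhiAi} in place of Theorem \ref{thm:BFSAi}, using the hypothesis \eqref{eq:TMBv:mod} and $\Theta\in\Delta_2$.

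To establish \eqref{eq:CF-plan} I would argue by induction on $k$ through the sharp maximal function. The heart of the matter is the pointwise bound
\[
M^{\#}\bigl(\mathbf{C}_{b}^{k}(T)f\bigr)(x)
\le C\, \|b\|_{\BMO}^{k}\, M_{\overline{\Phi}} f(x)
+ C\sum_{j=0}^{k-1} \|b\|_{\BMO}^{k-j}\, M_{s}\bigl(\mathbf{C}_{b}^{j}(T)f\bigr)(x),
\]
valid for some $s>1$ to be chosen. This is derived in the usual way: fix a ball $B$, split $f=f\mathbf{1}_{2B}+f\mathbf{1}_{(2B)^{c}}$, and expand $(b(x)-b(y))^{k}=\sum_{j}\binom{k}{j}(b(x)-b_B)^{k-j}(b_B-b(y))^{j}$. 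The "local" piece is controlled via Kolmogorov's inequality and the weak-type $(1,1)$ boundedness of $T$, combined with the generalized Hölder inequality \eqref{Holder:1-ABC} for the triple of Young functions $(\Phi,\Psi,\mathcal{E}_{k})$; here the choice of $\overline{\mathcal{E}}_{k}$ is made precisely so that John--Nirenberg yields $\|(b-b_{B})^{k}\|_{\mathcal{E}_{k},B}\lesssim \|b\|_{\BMO}^{k}$, and the relation $\overline{\Phi}^{-1}\overline{\Psi}^{-1}\overline{\mathcal{E}}_{k}^{-1}\le t$ guarantees the triple Hölder is applicable. The "non-local" piece uses the $\mathcal{H}_{\Psi,k}$-Hörmander condition on $K$ and generalized Hölder on dyadic annuli $2^{j}B\setminus 2^{j-1}B$, where the factor $j^{k}$ absorbs the growth of $(b_{2^{j}B}-b_{B})^{k}\lesssim j^{k}\|b\|_{\BMO}^{k}$. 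Combining this pointwise estimate with the Fefferman--Stein inequality $\|h\|_{L^{p}(w)}\le C\|M^{\#}h\|_{L^{p}(w)}$ for $w\in A_{\infty}$, together with the $L^{p}(w)$ boundedness of $M_{s}$ when $s$ is small enough that $w\in A_{p/s}$, closes the induction. The base case $k=0$ is the classical Coifman--Fefferman estimate $\|Tf\|_{L^{p}(w)}\lesssim\|M_{\overline{\Phi}}f\|_{L^{p}(w)}$ for singular integrals with $L^{\Psi}$-Hörmander kernels, in the spirit of Rubio de Francia--Ruiz--Torrea and Lorente--Martell--Pérez--Riveros.

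The main obstacle is the careful bookkeeping of Young functions in the sharp maximal function estimate: one must orchestrate the triple Hölder on the local piece so that $M_{\overline{\Phi}}f$ emerges in tandem with the $\BMO$ factor (absorbed through $\mathcal{E}_{k}$ via John--Nirenberg) and the size/Hörmander structure of $K$ (absorbed through $\Psi$), all under the constraint $\overline{\Phi}^{-1}(t)\overline{\Psi}^{-1}(t)\overline{\mathcal{E}}_{k}^{-1}(t)\le t$ for $t\ge 1$. Once this pointwise inequality is properly set up, the induction on $k$ and the appeal to our extrapolation theorems proceed mechanically, and both \eqref{eq:MAf-2} and \eqref{eq:MAf-2:mod} follow in a unified manner.
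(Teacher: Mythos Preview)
Your proposal is correct and follows essentially the same route as the paper: the paper simply cites \cite{LRT} and \cite{LMRT} for the $A_\infty$ Coifman--Fefferman inequality \eqref{eq:CF-plan} (whose proof is precisely the sharp-maximal-function argument you sketch) and then applies Theorems~\ref{thm:BFSAi} and~\ref{thm:PhiAi} to extrapolate. The only difference is that you spell out the ingredients of the cited weighted estimate rather than invoking it as a black box.
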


Note that for $k=0$, then one can take $\overline{\Psi}=\Phi$ and the previous result gives that if $K \in \mathcal{H}_{\Psi}$ then 
\begin{align*}
\|(T f)\,u\|_{\X_v}  &\leq C \|(M_{\overline{\Phi}} f)\,u\|_{\X_v} 
\quad\text{ and }\quad 
\int_{\Rn} \Theta(|T f| \, u)\, v\, dx 
\le C \int_{\Rn} \Theta((M_{\overline{\Phi}} f)\,u)\,v\, dx. 
\end{align*}

Under the hypotheses of the previous result it was proved in \cite[Theorem~A]{LRT} and \cite[Theorem~3.3]{LMRT} that for every $p \in (0, \infty)$ and for every $w \in A_{\infty}$,  
\begin{align*}
\|T_{b}^k f\|_{L^p(w)} &\leq C \|b\|_{\BMO(\R^n,\L^n)}^k \|M_{\overline{\Phi}} f\|_{L^p(w)}. 
\end{align*}
As a consequence of this, Theorem~\ref{thm:BFSAi} and Theorem~\ref{thm:PhiAi} readily imply \eqref{eq:MAf-2} and \eqref{eq:MAf-2:mod} as desired.  

If $\Psi(t)=t^r$ with $1 \leq r \leq \infty$, then $K \in \mathcal{H}_{\Psi}$ coincides with the so-called $L^r$-Hörmander condition in \cite{MPT}. Under this condition, the third author et al. showed in \cite{MPT} that for every $p \in (0, \infty)$ and for every $w \in A_{\infty}$,  
\begin{equation}\label{eq:Lr}
\|Tf\|_{L^p(w)} \leq C \|M_{r'} f\|_{L^p(w)}.   
\end{equation}
It is remarkable that such result is sharp in the sense that the inequality \eqref{eq:Lr} does not hold if $M_{r'}$ is replaced by $M_s$, $1 \leq s < r'$. Evidently, Theorem \ref{thm:ABC} recovers \eqref{eq:Lr} and extends it to Banach function spaces and modular inequalities.

Let us end this subsection by studying singular integrals of Calderón-type. 

\begin{theorem}\label{thm:TABphi}
Fix $n, m, d \in \N$, let $N=N(n, m) \in \N$ be a sufficiently large integer, and let $\phi: \Rn \to \R^d$ be such that
\begin{align*}
	C^{-1} |x-y| \leq |\phi(x) - \phi(y)| \leq C|x-y|,\quad \forall\,x, y \in \Rn. 
\end{align*}
for some $C\ge 1$. Let $F \in W^{1,1}_{\loc}(\Rn)$ be a complex-valued function, $G_j \in W^{1,1}_{\loc}(\Rn)$ be a real-valued function with $\nabla F, \nabla G_j \in [{\rm BMO}(\Rn, \L^n)]^n$ for each $j=1,\ldots,m$, and write $G=(G_1,\ldots, G_m)$.  Suppose that $\eta \in \mathscr{C}^{N+2}(\R^m)$ is a complex-valued even function with the property that $\partial^{\alpha} \eta \in L^1(\R^m, \L^m)$ for every multi-index $|\alpha| \leq N+2$, and 
$\sup_{\xi \in \R^m} (1+|\xi|) |\eta(\xi)| < \infty$.  
Define the maximal singular integral 
\begin{align}\label{eq:TAB}
	T_{\phi, *}^{F, G, \eta}f(x) := \sup_{\varepsilon>0} \bigg|
	\int_{\substack{y \in \Rn \\ |\phi(x)-\phi(y)|>\varepsilon}} K_{F, G, \eta}(x, y) f(y) dy \bigg|, 
\end{align}
where 
\begin{align}\label{eq:KAB(x,y)}
	K_{F, G, \eta}(x, y) = 
	\frac{F(x)-F(y)-\langle \nabla F(y), x-y\rangle}{|x-y|^{n+1}} \eta\bigg(\frac{G(x)-G(y)}{|x-y|}\bigg).  
\end{align}
Let $u$ and $v$ be weights on $\Rn$ such that $v \in L^1_{\loc}(\Rn)$. 
\begin{list}{\textup{(\theenumi)}}{\usecounter{enumi}\leftmargin=1cm \labelwidth=1cm \itemsep=0.2cm 
		\topsep=.2cm \renewcommand{\theenumi}{\alph{enumi}}}

\item If $\X_v$ is a Banach function space over $(\Rn, v)$ such that 
\begin{align*}
\norm{(M h)\,u}_{\X_v} &\leq C_0 \|h\,u\|_{\X_v}, \quad \forall\,h \in\M, 
\\
\|(M'_v h)\,u^{-1} \|_{\X'_v} &\leq C_0 \|h\,u^{-1}\|_{\X'_v}, \quad \forall\,h \in\M, 
\end{align*}
for some $C_0<\infty$,
then 
\begin{align}\label{eq:TAB-X}
\|(T_{\phi, *}^{F,G,\eta}f)  u\|_{\X_v} \leq C  \|f\, u\|_{\X_v}. 
\end{align}
\item If $\Phi$ is a Young function such that $\Phi \in \Delta_2$ (equivalently, $I_{\Phi}<\infty$) and 
\begin{align*}
\int_{\Rn} \Phi((M h)\, u)\, v\, dx 
&\le C_0 \int_{\Rn} \Phi(|h|\, u)\, v\, dx,\quad\forall\,h \in \M, 
\\
\int_{\Rn} \overline{\Phi}((M'_v h)\,u^{-1}) v\, dx
&\le C_0 \int_{\Rn} \overline{\Phi}(|h|\, u^{-1})\,v\, dx,\quad\forall\,h \in \M,  
\end{align*} 
for some $C_0<\infty$, then
\begin{equation}\label{eq:TAB-Phi}
\int_{\Rn} \Phi((T_{\phi, *}^{F,G,\eta}f)\,u)\, v\, dx \leq C \int_{\Rn} \Phi(|f|\, u)\, v\, dx. 
\end{equation}
\end{list}
\end{theorem}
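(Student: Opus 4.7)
The plan is to deduce both \eqref{eq:TAB-X} and \eqref{eq:TAB-Phi} from a single weighted Lebesgue estimate by means of the $A_p$ extrapolation machinery, applied to the family of extrapolation pairs
\[
\F := \{(T_{\phi,*}^{F,G,\eta} f,\, |f|) : f \in L^{\infty}(\Rn, \Ln) \text{ with compact support}\}.
\]
Concretely, item (a) will follow from Theorem~\ref{thm:BFSAp}(a) applied to $\F$ with the given $\X_v$ and $u$, while item (b) will follow from Theorem~\ref{thm:PhiAp}(a) applied to the same family, with the prescribed $\Phi$, $u$, and $v$. Thus the only remaining task is to establish the initial weighted inequality
\begin{equation}\label{eq:TAB-weighted-plan}
	\|T_{\phi,*}^{F,G,\eta} f\|_{L^{p_0}(\Rn, w)} \le C\big([w]_{A_{p_0}}\big)\, \|f\|_{L^{p_0}(\Rn, w)}
\end{equation}
for some $p_0 \in (1,\infty)$ and every $w \in A_{p_0}$.

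To prove \eqref{eq:TAB-weighted-plan}, the plan is to view $T_{\phi,*}^{F,G,\eta}$ as the maximal truncation of a Calderón--Zygmund operator. First, one verifies that $K_{F,G,\eta}$ in \eqref{eq:KAB(x,y)} is a standard Calderón--Zygmund kernel. The bi-Lipschitz condition on $\phi$ gives $|\phi(x)-\phi(y)|\approx|x-y|$, so the factor $|x-y|^{n+1}$ in the denominator has the correct homogeneity. The hypothesis $\nabla F \in [\BMO(\Rn,\Ln)]^n$ allows one to estimate the first-order Taylor remainder $F(x)-F(y)-\langle\nabla F(y), x-y\rangle$ by $|x-y|$ up to a $\BMO$-controlled factor, which is handled through a dyadic-annular decomposition and the John--Nirenberg inequality. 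The assumption $\sup_\xi (1+|\xi|)|\eta(\xi)|<\infty$, combined with $\nabla G_j \in \BMO$, then controls the factor $\eta(|G(x)-G(y)|/|x-y|)$. These ingredients produce the size bound $|K_{F,G,\eta}(x,y)| \lesssim |x-y|^{-n}$, and analogous computations exploiting $\eta \in \mathscr{C}^{N+2}$ with $\partial^{\alpha}\eta \in L^{1}$ yield the standard smoothness estimates in each argument.

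Second, one establishes $L^2(\Rn, \Ln)$-boundedness of the non-maximal operator $T_\phi^{F,G,\eta}$; this is classical in the theory of Calderón-type commutators (Coifman--McIntosh--Meyer, David--Journé, and their descendants), where the smoothness of $\eta$ and the $\BMO$ regularity of $\nabla F$ and $\nabla G_j$ suffice. The maximal truncation $T_{\phi,*}^{F,G,\eta}$ then inherits $L^2$-boundedness via a standard Cotlar-type inequality. Combining $L^2$-boundedness with the Calderón--Zygmund kernel estimates established in the first step, the weighted bound \eqref{eq:TAB-weighted-plan} for every $p_0 \in (1,\infty)$ and every $w \in A_{p_0}$ follows from the classical weighted Calderón--Zygmund theory. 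Once \eqref{eq:TAB-weighted-plan} is secured, applying Theorem~\ref{thm:BFSAp}(a) to $\F$ yields \eqref{eq:TAB-X} and applying Theorem~\ref{thm:PhiAp}(a) yields \eqref{eq:TAB-Phi}.

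The main obstacle will be carrying out the kernel analysis under the relatively weak assumption $\nabla F, \nabla G_j \in \BMO$, rather than under $L^{\infty}$ or Lipschitz regularity. In particular, the structural hypothesis $\sup_\xi(1+|\xi|)|\eta(\xi)|<\infty$ is precisely what is needed to absorb the unbounded growth of the quotient $|G(x)-G(y)|/|x-y|$ inside $\eta$ that arises from $\BMO$ regularity; without it, the size and smoothness estimates for $K_{F,G,\eta}$ would not yield a Calderón--Zygmund kernel. Once this delicate step is completed, the extrapolation framework from Sections~\ref{sec:BFSextr} and \ref{sec:modular} promotes the weighted Lebesgue inequality to the Banach function space and modular inequalities asserted in the statement.
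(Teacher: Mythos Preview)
Your high-level strategy is exactly the paper's: one establishes
\[
\|T_{\phi,*}^{F,G,\eta} f\|_{L^{p}(\Rn, w)} \le C\,\|f\|_{L^{p}(\Rn, w)},\qquad 1<p<\infty,\ w\in A_p,
\]
and then applies Theorem~\ref{thm:BFSAp}(a) for part (a) and Theorem~\ref{thm:PhiAp}(a) for part (b). The paper does not reprove the weighted Lebesgue bound; it simply cites \cite{Mar, MMMMM20} for it and moves directly to the extrapolation step.

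The gap in your proposal lies in the kernel analysis. You assert that the size estimate $|K_{F,G,\eta}(x,y)|\lesssim |x-y|^{-n}$ and the standard smoothness bounds follow from the hypotheses, and then invoke ``classical weighted Calder\'on--Zygmund theory.'' But under only $\nabla F,\nabla G_j\in\BMO$, the remainder $F(x)-F(y)-\langle\nabla F(y),x-y\rangle$ is \emph{not} controlled pointwise by $C|x-y|$: a $\BMO$ gradient is in general unbounded, and John--Nirenberg gives only exponential integrability over balls, not pointwise bounds. Likewise the argument $\tfrac{G(x)-G(y)}{|x-y|}$ of $\eta$ need not be bounded, and the decay of $\eta$ only partially compensates. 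Hence $K_{F,G,\eta}$ is \emph{not} a standard Calder\'on--Zygmund kernel in the usual sense, and the weighted theory for standard kernels does not apply as stated. The actual argument in \cite{Mar, MMMMM20} (building on \cite{Hof} and related work) is considerably more delicate and tailored to the $\BMO$ setting; your sketch conflates an averaged/John--Nirenberg-type control with a pointwise kernel bound. Once the weighted $L^p$ estimate is taken from \cite{Mar, MMMMM20}, the extrapolation step you describe is correct and is precisely what the paper does.
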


Under the hypotheses of Theorem \ref{thm:TABphi}, \cite{Mar, MMMMM20} showed that the operator $T_{\phi,*}^{F,G,H}$ in \eqref{eq:TAB} is well-defined and bounded on $L^p(\Rn, w)$ for all $1<p<\infty$ and $w \in A_p(\Rn, \L^n)$. Thus, Theorem 
\ref{thm:TABphi} is a consequence of this, Theorem \ref{thm:BFSAp}, and Theorem \ref{thm:PhiAp}. 

Let us illustrate that Theorem \ref{thm:TABphi} contains many applications. 
\begin{itemize}
\item First order commutator $[H, F D]$. Let $H$ be the Hilbert transform on the real line, and $D$ be the one-dimensional derivative operator. Assume that $F \in W^{1,1}_{\loc}(\R)$ with $F' \in \BMO(\R, \L)$ and $f \in \mathscr{C}_0^{\infty}(\R)$. For any differentiability point $x\in\re$ of $F$ (hence $\L$-a.e.)  one can see \cite{Mar, MMMMM20}  that 
\begin{align}\label{eq:HAD}
[H, M_F D] f(x) 
&= H(F f')(x)-F(x) (Hf)'(x) \nonumber 
\\
&=
\lim_{\varepsilon\to 0^+} \frac1\pi \int_{\substack{y \in \R \\ |x-y|>\varepsilon}}  \frac{F(x)-F(y)-F'(y)(x-y)}{(x-y)^2} f(y) dy.   
\end{align}
Therefore, Theorem \ref{thm:TABphi} can be applied with $n=1$, $m=1$, $G \equiv 0$, $\phi(x)=x$, and any even function 
$\eta \in \mathscr{C}_0^{\infty}(\R)$ such that $\eta(0)=1$. 

\item Calderón commutator. On the other hand, recall that the first Calderón commutator is defined by 
\begin{align*}
C_F^1 f(x)=\lim_{\varepsilon\to 0^+} \frac1\pi \int_{\substack{y \in \R \\ |x-y|>\varepsilon}} \frac{F(x)-F(y)}{(x-y)^2} f(y) dy.  
\end{align*}
Then it follows from \eqref{eq:HAD} that 
\begin{align}\label{eq:HAD-CA-HA}
C_F^1 f =[H, M_F D] f + H(F' f). 
\end{align}
Hunt, Muckenhoupt and Wheeden \cite[Theorem~9]{HMW} proved that $H$ is bounded on $L^p(\R, w)$ for all weights $w \in A_p$ and $1<p<\infty$. Hence, if $F' \in L^{\infty}(\R)$, by Theorem \ref{thm:BFSAp}, one concludes that \eqref{eq:TAB-X} and \eqref{eq:TAB-Phi} hold for the Hilbert transform $H$. Hence, from \eqref{eq:HAD-CA-HA}, $C_F^1$ enjoys the same property. This covers the unweighted inequality given in \cite[Theorem~1.1]{Mus}, which was reproved by time-frequency analysis and originated in \cite{Cal}.  

\item Cauchy integrals. Let $\kappa \in (0, \infty)$ and $\Sigma$ be a $\kappa$-chord-arc curve passing through infinity in 
$\mathbb{C}$ (cf. \cite{Mar, MMMMM20}).  The Cauchy integral operator on $\Sigma$ is defined by 
\begin{align}\label{eq:C-Sigma}
\mathcal{C}_{\Sigma} f(z) := \text{p.v. } \frac{1}{2\pi i} \int_{\zeta \in \Sigma} \frac{f(\zeta)}{\zeta-z} d\zeta,\quad z \in \Sigma. 
\end{align}
As in \cite{Mar, MMMMM20} $C_{\Sigma}$ can be rewritten as 
\begin{align}\label{eq:C-R}
\mathcal{C}_{\R} f(t) := \text{p.v. } \frac{i}{2\pi} \int_{s \in \R} \frac{z'(s)}{z(t)-z(s)} f(s)ds, \quad t \in \R.  
\end{align}
where $\re\ni s\mapsto z(s)\in\C$ is the arc-length parametrization of $\Sigma$. 
Then we have 
\begin{align}
(\mathcal{C}_{\R} -(i/2)H) f(t) = \text{p.v. } \frac{i}{2\pi} \int_{s \in \R} 
\frac{z(t)-z(s)-z'(s)(t-s)}{(z(t)-z(s))(t-s)} f(s)ds. 
\end{align}
If one chooses the appropriate functions as in \cite{Mar, MMMMM20}, then the hypotheses of Theorem \ref{thm:TABphi} are verified. 
Thus, the conclusions \eqref{eq:TAB-X} and \eqref{eq:TAB-Phi} hold for $(\mathcal{C}_{\R} -(i/2)H)$ and hence for $\mathcal{C}_{\R}$ replacing $T_{\phi,*}^{F,G, \eta}$.

\item Double layer potential for Laplace's equation:  
\begin{align}\label{eq:KKf(x)}
\mathcal{K}f(x) =\lim_{\varepsilon\to 0^+} \int_{\substack{y \in \R^n \\ |x-y|>\varepsilon}}  \frac{F(x)-F(y)-\langle \nabla F(y), x-y \rangle}
{(|x-y|^2+(F(x)-F(y))^2)^{(n+1)/2}} f(y)dy.  
\end{align}
If we take $d=n$, $m=1$, $F=G$, $\phi(x)=x$ and $\eta(t)=(1+t^2)^{-(n+1)/2}$, then $K_{F,G,\eta}$ in \eqref{eq:KAB(x,y)} agrees with the kernel in \eqref{eq:KKf(x)}. Also, the principle-value singular integrals is pointwise controlled by the corresponding maximal singular integrals. Then Theorem \ref{thm:TABphi} can be applied to get weighted norm inequalities for $\mathcal{K}$ in \eqref{eq:KKf(x)}. 

We would like to observe that by means of the operator $\mathcal{K}$, Fabes et al. \cite{FJR} studied the Dirichlet and Neumann problems for Laplace's equation on a bounded $\mathscr{C}^1$ domain $\Omega \subset \Rn$ ($n \geq 3$).  More precisely, when the datum $f$ (resp. its derivative) belongs to  $L^p(\partial \Omega)$, the solution of the Dirichlet problem (resp. Neumann problem) was formulated in the form of the classical double (resp. single) layer potential. In addition, $T_{\phi,*}^{F,G,\eta}$ and $\mathcal{K}$ were used to prove the compactness of boundary layer potentials on $L^p(\partial \Omega)$ and on the Sobolev space $L_1^p(\partial \Omega)$ \cite{FJR}; and on $L^p(\Gamma)$, where $\Gamma$ is the boundary of a bounded ${\rm VMO}_1$ domain (that is, a domain whose boundary is given in local coordinates by the graph of a function whose gradient belongs to {\rm VMO}), see in \cite[Theorem~1.17]{Hof}.  
\end{itemize}

In Theorem~\ref{thm:TABphi} we can take $v\equiv 1$ and $\X_v=L^{p(\cdot)}(\R^{n-1},\L^{n-1})$ as in Example~\ref{ex:Lpvar}, and assume that $1<p_-\le p_+<\infty$, $p(\cdot)\in{\rm LH}$, $u\in A_{p(\cdot)}$  (cf.~\eqref{eq:Lpvar-pp}--\eqref{eq:Lpvar-Apvar}). In the case of rearrangement Banach function spaces we can consider $\X=L^p$, $L^{p,q}$, or $L^{p}(\log L)^{\alpha}$ with $1<p<\infty$, $1\le q\le \infty$, and $\alpha\in\re$, in which case we have $p_\X=q_\X=p$ and weights $u,v$ satisfying $u^{p}\, v \in A_{p}$, $v \in A_{\infty}$. Analogously, we can consider the spaces $\X=(L^p+L^q)$ or $\X=(L^p\cap L^q)$, with $1<p,q<\infty$, in which case $p_\X=\min\{p,q\}$ and $q_\X=\max\{p,q\}$ and the weights satisfy $u^{p}\, v \in A_{p}$, $u^{q}\, v \in A_{q}$, and $v \in A_{\infty}$.

Regarding the modular inequalities, we can consider a Young function $\Phi$ such that $1<i_\Phi\le I_\Phi<\infty$, and assume that $u^{i_\Phi}\, v \in A_{i_\Phi}$, $u^{I_\Phi}\, v \in A_{I_\Phi}$, and $v \in A_{\infty}$.  This can be applied to the cases $\Phi(t)=t^p\,(\log(e+t))^\alpha$, or $\Phi(t)=t^p\,(\log(e+t))^\alpha\,(\log\log(e^e+t))^\beta$, with $1<p<\infty$, $\alpha, \beta\in\re$, in which case $i_{\Phi}=I_{\Phi}=p$, and the weights satisfy $u^{p}\, v \in A_{p}$  and $v \in A_{\infty}$. Also, if $\Phi(t)\approx \min\{t^p,t^q\}$ or $\Phi(t)\approx\max\{t^p,t^q\}$ with $1<p, q<\infty$ we have  
$i_{\Phi}=\min\{p,q\}$ and $I_{\Phi}=\max\{p,q\}$ and our weights satisfy  $u^{p}\, v \in A_{p}$, $u^{q}\, v \in A_{q}$, and $v \in A_{\infty}$.

\subsection{Schrödinger operators with potentials}  
Let us consider the following Schrö\-din\-ger operator with inverse-square potentials on $\Rn \, (n \geq 3)$,
\begin{equation*}
\mathscr{L}_a=-\Delta+\frac{a}{|x|^2} \quad \text { with } \quad 
a \geq -\left(\frac{n-2}{2}\right)^{2}.  
\end{equation*}
The Schrödinger operator $\mathscr{L}_a$ is understood as the Friedrichs extension of $-\Delta+\frac{a}{|x|^2}$ defined initially on $\mathscr{C}^{\infty}(\Rn \setminus \{0\})$. The condition $a \geq -\left(\frac{n-2}{2}\right)^2$ guarantees that $\mathscr{L}_a$ is nonnegative. This operator has a wide range of applications in physics and mathematics including the Dirac equation with Coulomb potential, and the study of perturbations of classic space-time metrics, see \cite{BPST, VZ, ZZ}. Recently,  harmonic analysis tools have been developed to investigate some problems related to the Schrödinger operator $\mathscr{L}_a$. The paper \cite{KMVZZ1} studied the global well-posedness and scattering for both the defocusing and focusing energy-critical NLS with inverse-square potential. Additionally, \cite{KMVZ} obtained the sharp thresholds of well-posedness and scattering for the focusing cubic NLS with inverse-square potential. 

In order to state our result we need to introduce some notation. Much as in the previous section $\Sigma=\re^{n}$, $\mu=\mathcal{L}^{n}$ is the Lebesgue measure in $\re^{n}$, $\B$ is collection of all euclidean balls $\re^{n}$, in which case $M_\B$ is the classical Hardy-Littlewood maximal function (with respect to uncentered balls) and we will simply write $M$. Of course, one can equivalently work with cubes in place of balls as the corresponding maximal function is pointwise equivalent to $M$. In this context $w$ is a $\B$-weight (we will simply say that $w$ is a weight) if $w\in\mathbb{M}$ with $0<w<\infty$ a.e. In this fashion, $A_{p,\B}$ or $RH_{s,\B}$  are the classical Muckenhoupt and reverse Hölder classes and will be denoted by $A_p$ and $RH_s$, respectively. Write
\begin{equation*}
\begin{split}
\p_{-} &:= \max\left\{1, \frac{n}{n-\sigma} \right\}; \qquad 
\p_{+} := \frac{n}{\max\{r+\sigma, 0\}},\ r>0; \qquad \widetilde{\p}_{+} := \frac{n}{\max\{r,\sigma\}};
\end{split}
\end{equation*}
and $\sigma :=(n-2-\sqrt{(n-2)^{2}+4 a})/2$. Recently,  in \cite{BDDLL, KMVZZ2} it was proved that if $n \geq 3$, $a \geq -\left(\frac{n-2}{2}\right)^2$ and $0<r<2$, then 
\begin{equation*}
\|(-\Delta)^{r/2}f\|_{L^p(\Rn, w)} \lesssim \|\mathscr{L}_a^{r/2} f\|_{L^p(\Rn, w)},
\quad \forall\,w \in A_{p/\p_{-}} \cap RH_{(\p_{+}/p)'},\  \p_{-}<p<\p_{+},
\end{equation*}
and
\begin{equation*}
\|\mathscr{L}_a^{r/2} f\|_{L^p(\Rn, w)}\lesssim
\|(-\Delta)^{r/2}f\|_{L^p(\Rn, w)},
\quad \forall\,w \in A_{p/\p_{-}} \cap RH_{(\widetilde{\p}_{+}/p)'},\ \p_{-}<p<\widetilde{\p}_{+}.
\end{equation*}
Also, in \cite{SY}, it was shown that for every $p \in (1, 2)$ and for every $w \in A_p \cap RH_{(2/p)'}$,  
\begin{align}\label{eq:LfLp}
	\|\nabla \mathscr{L}_V^{-1/2}f \|_{L^p(\Rn, w)} \leq C \|f \|_{L^p(\Rn, w)},  
\end{align}
where $\mathscr{L}_V=-\Delta+V$ and $V \in L^1_{\loc}(\Rn)$ is a non-negative function. 
Here we mention that the inequality \eqref{eq:LfLp} fails for general potentials $V \in L^1_{\loc}(\Rn)$ when $p>2$, see \cite{Shen}. By Theorem \ref{thm:lim-RIBFS},  we can extend these results to Banach function spaces. 

\begin{theorem}\label{thm:Sch} 
Let $n \geq 3$, $a \geq -\left(\frac{n-2}{2}\right)^2$ and $0<r<2$. 	Let $\X$ and $\Y$ be rearrangement invariant Banach function spaces over $(\Rn, \mathcal{L}^n)$ such that $\X^{\frac1r}$ is a Banach function space for some $r>\p_-$ and $\p_{-}<p_{\X} \le q_{\X}< \p_{+}$, and $1<p_{\Y} \le q_{\Y}< 2$
\begin{equation*}
	\big\|\big((-\Delta)^{r/2}f\big)\, u\big\|_{\X} \leq C \big\|\big(\mathscr{L}_a^{r/2} f \big)\,u\big\|_{\X},
\end{equation*}
for every weight $u$ verifying $u^{p_{\X}} \in A_{p_{\X}/\p_{-}} \cap RH_{(\p_{+}/p_{\X})'}$ and $u^{q_{\X}} \in A_{q_{\X}/\p_{-}} \cap RH_{(\p_{+}/q_{\X})'}$;
\begin{equation*}
	\big\|\big(\mathscr{L}_a^{r/2} f \big)\,u\big\|_{\X}\le C \big\|\big((-\Delta)^{r/2}f\big)\, u\big\|_{\X},   
\end{equation*}
for every weight $u$ verifying $u^{p_{\X}} \in A_{p_{\X}/\p_{-}} \cap RH_{(\widetilde{\p}_{+}/p_{\X})'}$ and $u^{q_{\X}} \in A_{q_{\X}/\p_{-}} \cap RH_{(\widetilde{\p}_{+}/q_{\X})'}$; and 
\begin{align*}
	\big\|\big(\nabla \mathscr{L}_V^{-1/2}f\big)\,u\big\|_{\Y} \leq C \|f\,u\|_{\Y},  
\end{align*}
for every weight $u$ verifying  $u^{p_{\Y}} \in A_{p_{\Y}} \cap RH_{(2/p_{\Y})'}$ and $u^{q_{\Y}} \in A_{q_{\Y}} \cap RH_{(2/q_{\Y})'}$.
\end{theorem}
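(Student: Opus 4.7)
The plan is to derive each of the three inequalities in Theorem \ref{thm:Sch} by a single invocation of the limited range extrapolation result Theorem \ref{thm:lim-RIBFS}, using as starting point the weighted $L^p$ estimates of \cite{BDDLL, KMVZZ2, SY} recalled immediately above the statement. The ambient setting $(\Rn, \Ln)$ with the basis $\B$ of Euclidean balls is a Muckenhoupt basis satisfying $A_{\infty, \B} \subset \bigcup_{s>1} RH_{s, \B}$, so the standing hypotheses of Theorem \ref{thm:lim-RIBFS} are in place, and in each case the task reduces to identifying the appropriate family of extrapolation pairs together with the pair $(\p_-, \p_+)$ driving the extrapolation.

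For the first inequality I would form the family $\F_1$ of pairs $(|(-\Delta)^{r/2}f|, |\mathscr{L}_a^{r/2}f|)$, with $f$ ranging over a convenient dense class (say Schwartz functions vanishing near the origin). With $\p_-$ and $\p_+$ as defined before the statement of the theorem, the estimate of \cite{BDDLL, KMVZZ2} quoted above provides \eqref{eq:lim-3} for every $\p_0 \in (\p_-, \p_+)$ and every $w_0 \in A_{\p_0/\p_-} \cap RH_{(\p_+/\p_0)'}$, which is precisely the initial hypothesis of Theorem \ref{thm:lim-RIBFS}. The assumption $\p_- < p_\X \le q_\X < \p_+$ on the Boyd indices of $\X$, together with $\X^{1/r}$ being a Banach function space for some $r > \p_-$, are exactly the structural requirements of that theorem, and its conclusion \eqref{eq:lim-4} under the weight assumption \eqref{eq:upuq} is exactly the first inequality. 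The second inequality is handled in the same manner, now using $\widetilde{\p}_+$ in place of $\p_+$, with the pairs $(|\mathscr{L}_a^{r/2}f|, |(-\Delta)^{r/2}f|)$ and the converse weighted bound from \cite{BDDLL, KMVZZ2}; implicit here is that the Boyd indices also satisfy $q_\X < \widetilde{\p}_+$ so that the class $RH_{(\widetilde{\p}_+/q_\X)'}$ is non-trivial.

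For the third inequality I would invoke Theorem \ref{thm:lim-RIBFS} with $(\p_-, \p_+) = (1, 2)$ and the family $\F_3 = \{(|\nabla \mathscr{L}_V^{-1/2}f|, |f|)\}$. The estimate \eqref{eq:LfLp} of \cite{SY}, valid for all $p_0 \in (1, 2)$ and all $w_0 \in A_{p_0} \cap RH_{(2/p_0)'}$, supplies hypothesis \eqref{eq:lim-3} in this range. The standing assumption $1 < p_\Y \le q_\Y < 2$, together with the existence of some $r > 1$ for which $\Y^{1/r}$ is Banach (which can be arranged whenever $\Y$ is itself Banach, since $p_\Y > 1$), matches the remaining requirements of Theorem \ref{thm:lim-RIBFS}, whose conclusion yields the claim. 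No step in this plan constitutes a genuine obstacle; the only care needed is the book-keeping verifying that the parameters $(\p_-, \p_+)$, the extrapolation pairs, and the weight classes in each application match both the hypotheses of Theorem \ref{thm:lim-RIBFS} and the shapes of the quoted $L^p$ bounds.
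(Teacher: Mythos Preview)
Your proposal is correct and follows exactly the paper's approach: the paper's entire proof is the sentence ``By Theorem~\ref{thm:lim-RIBFS}, we can extend these results to Banach function spaces,'' and you have simply spelled out the three applications of that theorem with the appropriate choices of extrapolation pairs and parameters $(\p_-,\p_+)$.

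One small caveat: your parenthetical claim that $\Y^{1/r}$ is automatically Banach for some $r>1$ whenever $\Y$ is Banach with $p_\Y>1$ is not true for arbitrary Banach function spaces; this is an additional structural hypothesis (satisfied by the standard examples $L^p$, $L^{p,q}$, $L^p(\log L)^\alpha$, etc.) that the statement of Theorem~\ref{thm:Sch} appears to have omitted for $\Y$, and which should properly be assumed.
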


In the previous result  for the first estimate we can consider $\X=L^p$, $L^{p,q}$, or $L^{p}(\log L)^{\alpha}$ with $\p_-<p<\p_+$, $\p_-\le q\le \infty$, and $\alpha\in\re$, in which case we have $p_\X=q_\X=p$ and weights $u$ satisfying $u^{p} \in A_{p/\p_{-}} \cap RH_{(\p_{+}/p)'}$. Analogously, we can consider the spaces $\X=(L^p+L^q)$ or $\X=(L^p\cap L^q)$, with $\p_-<p,q<\p_+$, in which case $p_\X=\min\{p,q\}$ and $q_\X=\max\{p,q\}$ and the weights satisfy the corresponding conditions.  For the 
second and third estimates we need to write $\widetilde{\p}_{+}$ in place of ${\p}_{+}$.

\subsection{Operators associated with the Kato conjecture}

Let $A$ be an $n \times n$ matrix of complex and $L^{\infty}$-valued coefficients defined on $\Rn$. We assume that $A$ satisfies the following ellipticity condition: there exist $0<\lambda \le \Lambda<\infty$ such that
\[
\lambda |\xi|^2 \le \Re A(x) \xi \cdot \bar{\xi} \quad\text{and}\quad 
|A(x)\xi \cdot \bar{\eta}| \le \Lambda |\xi| |\eta|, 
\]
for all $\xi, \eta \in \C^n$ and almost every $x \in \Rn$. We have used the notation $\xi \cdot \eta=\sum_{j=1}^n \xi_j \eta_j$, and therefore $\xi \cdot \bar{\eta}$ is the usual inner product in $\C^n$. Note that then $A(x)\xi \cdot \bar{\eta}=\sum_{j, k}a_{j,k}(x) \xi_k \bar{\eta}_j$. Associated with this matrix we define the second order divergence form operator $Lu =-\div(A \nabla u)$, which is understood in the standard weak sense as a maximal-accretive operator on the space $L^2(\Rn, dx)$ with domain $D(L)$  by means of a sesquilinear form. 

Associated to this operator we can consider the functional calculus $\varphi(L)$ where $\varphi$ is holomorphic and bounded in an appropriate sector, the Riesz transform $\nabla L^{-1/2}$, and some square functions. The $L^p$ theory for these operators was developed in the monograph \cite{Aus}. The weighted norm inequalities were obtained in \cite{AM3} using a generalized Calderón-Zygmund theory from \cite{AM1}.  As in \cite{Aus} and \cite{AM2}, we denote by $(p_{-}(L), p_{+}(L))$, respectively  $(q_{-}(L), q_{+}(L))$, the maximal open interval on which the Heat semigroup $\{e^{-tL}\}_{t>0}$, respectively its gradient $\{\sqrt{t}\nabla e^{-tL}\}_{t>0}$,  is uniformly bounded on $L^p(\Rn)$. It is obtained in \cite{Aus} that $p_-(L)=q_-(L)$ and $2<q_+(L)\le p_+(L)$. 

Recall the weighted norm inequalities from \cite{AM3}: 
\begin{align*}
\|\varphi(L) f\|_{L^p(w)} \lesssim \|\varphi\|_{\infty} \|f\|_{L^p(w)},\quad \forall\,p \in (p_{-}(L), p_{+}(L)), \, \, 
w \in A_{p/p_{-}(L)} \cap RH_{(p_{+}(L)/p)'};
\end{align*}
\begin{align*}
\|\nabla L^{-1/2} f\|_{L^p(w)} \lesssim \|f\|_{L^p(w)}, \quad \forall\,p \in (q_{-}(L), q_{+}(L)), \, \, 
w \in A_{p/q_{-}(L)} \cap RH_{(q_{+}(L)/p)'}; 
\end{align*}
and
\begin{align*}
	\|L^{1/2} f \|_{L^p(w)} \lesssim \|\nabla f\|_{L^p(w)},\quad \forall\,p \in (p_{-}(L), p_{+}(L)), \, \, 
	w \in A_{p/p_{-}(L)} \cap RH_{(p_{+}(L)/p)'}. 
\end{align*}

By Theorem \ref{thm:lim-RIBFS}, these estimates give the following:  

\begin{theorem}
Let $L$ be an elliptic operator as above and let $\X$ be a rearrangement invariant Banach function space over $(\Rn, \Ln)$ such that $p_-(L)<p_{\X} \le q_{\X}<\infty$ and $\X^{\frac1r}$ is a  Banach function space for some $r>p_-(L)$. 
\begin{list}{\textup{(\theenumi)}}{\usecounter{enumi}\leftmargin=1cm \labelwidth=1cm \itemsep=0.2cm 
		\topsep=.2cm \renewcommand{\theenumi}{\alph{enumi}}}

\item If $q_{\X}<p_{+}(L)$, then for every weight $u$ such that $u^{p_{\X}} \in A_{p_{\X}/p_{-}(L)} \cap RH_{(p_{+}(L)/p_{\X})'}$ and $u^{q_{\X}} \in A_{q_{\X}/p_{-}(L)} \cap RH_{(p_{+}(L)/q_{\X})'}$,   
\begin{align*}
\|(\varphi(L) f)\,u\|_{\X} \lesssim \|\varphi\|_{\infty} \|f\,u\|_{\X} 
\quad\text{and}\quad 	
\|(L^{1/2} f)\,u\|_{\X} \lesssim \|(\nabla f)\,u\|_{\X},
\end{align*}
where $\varphi$ is holomorphic and bounded in an appropriate sector.

\item If $q_{\X}<q_{+}(L)$, then for every weight $u$ such that $u^{p_{\X}} \in A_{p_{\X}/p_{-}(L)} \cap RH_{(q_{+}(L)/p_{\X})'}$ and $u^{q_{\X}} \in A_{q_{\X}/p_{-}(L)} \cap RH_{(q_{+}(L)/q_{\X})'}$,   
\begin{align*}
\|(\nabla L^{-1/2} f)\,u\|_{\X} \lesssim \|f\,u\|_{\X},  
\end{align*}
and, consequently the following Kato type estimate holds
\begin{align*}
	 \|(L^{1/2} f)\,u\|_{\X}\approx \|(\nabla f)\,u\|_{\X}.  
\end{align*}
\end{list} 
\end{theorem}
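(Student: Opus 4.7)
The entire result is set up to be a direct application of the limited range extrapolation Theorem~\ref{thm:lim-RIBFS} to the three families of weighted $L^p$ inequalities recalled in the discussion preceding the statement. The plan is therefore to identify, in each case, the right family of extrapolation pairs together with the correct values of $\p_-$ and $\p_+$, verify that the hypotheses on $\X$ match those of Theorem~\ref{thm:lim-RIBFS}, and then read off the conclusion.

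For part (a), I would introduce two families of extrapolation pairs. The first is $\mathcal{F}_1 := \{(|\varphi(L)f|, \|\varphi\|_\infty |f|)\}$ with $f$ ranging in a dense class (e.g.\ $\mathscr{C}_c^\infty(\R^n)$, extending the conclusion by density once the right-hand side is controlled), and the second is $\mathcal{F}_2 := \{(|L^{1/2}f|, |\nabla f|)\}$. The stated weighted $L^p$ estimates tell us that for every $p\in(p_-(L),p_+(L))$ and every $w\in A_{p/p_-(L)}\cap RH_{(p_+(L)/p)'}$,
\[
\|F\|_{L^p(\R^n,w)}\lesssim \|G\|_{L^p(\R^n,w)},\qquad (F,G)\in \mathcal{F}_i,\ i=1,2.
\]
Now I apply Theorem~\ref{thm:lim-RIBFS} with $\p_-=p_-(L)$, $\p_+=p_+(L)$, and $\p_0$ any point in between. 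The basis $\B$ of Euclidean cubes is a Muckenhoupt basis satisfying $A_{\infty,\B}\subset\bigcup_{s>1}RH_{s,\B}$, and the assumptions $p_-(L)<p_\X\le q_\X<p_+(L)$ and $\X^{1/r}$ being a Banach function space for some $r>p_-(L)$ are precisely those of Theorem~\ref{thm:lim-RIBFS}. The conditions \eqref{eq:upuq} on $u$ coincide with those imposed on the weight in~(a). The conclusion \eqref{eq:lim-4} applied to $\mathcal{F}_1$ and $\mathcal{F}_2$ gives the two estimates in~(a).

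For part (b), I repeat the argument with the single family $\mathcal{F}_3 := \{(|\nabla L^{-1/2}f|,|f|)\}$, this time with the extrapolation range $(\p_-,\p_+)=(q_-(L),q_+(L))=(p_-(L),q_+(L))$, using the identity $p_-(L)=q_-(L)$ recalled in the paper. The weight conditions in the statement of~(b) are exactly those required by Theorem~\ref{thm:lim-RIBFS} for this range, and the assumption $q_\X<q_+(L)$ places the Boyd indices of $\X$ strictly inside the extrapolation window. This yields the bound for $\nabla L^{-1/2}$.

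Finally, for the Kato equivalence in~(b), I would note that since $q_+(L)\le p_+(L)$, one has $RH_{(q_+(L)/p)'}\subset RH_{(p_+(L)/p)'}$ for every relevant $p$, so the weight conditions in~(b) are strictly stronger than those in~(a). Therefore the $L^{1/2}$ estimate from~(a) is available under the hypotheses of~(b) and gives $\|(L^{1/2}f)\,u\|_\X\lesssim \|(\nabla f)\,u\|_\X$. The reverse inequality is obtained by writing $\nabla f=\nabla L^{-1/2}(L^{1/2}f)$ and invoking the just-established bound for $\nabla L^{-1/2}$. The only technical care needed is the realization of the extrapolation pairs on a class of $f$ for which both sides make sense and for which the desired density argument closes; this is standard in this context (cf.\ \cite{AM1,AM3}) and is not the main obstacle, since the bulk of the work has already been carried out in the proof of Theorem~\ref{thm:lim-RIBFS}.
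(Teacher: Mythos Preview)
Your proposal is correct and follows exactly the approach the paper intends: the paper simply states ``By Theorem~\ref{thm:lim-RIBFS}, these estimates give the following'' before the theorem, so the result is meant to be a direct application of the limited range extrapolation theorem to the three weighted $L^p$ inequalities from \cite{AM3} recalled immediately above. Your identification of the extrapolation pairs, the choices $(\p_-,\p_+)=(p_-(L),p_+(L))$ for part~(a) and $(\p_-,\p_+)=(p_-(L),q_+(L))$ for part~(b), the observation that $q_+(L)\le p_+(L)$ forces $RH_{(q_+(L)/p)'}\subset RH_{(p_+(L)/p)'}$ so that the hypotheses of~(b) imply those of~(a), and the factorization $\nabla f=\nabla L^{-1/2}(L^{1/2}f)$ for the reverse Kato inequality, all match the intended argument.
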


For (a) we can consider $\X=L^p$, $L^{p,q}$, or $L^{p}(\log L)^{\alpha}$ with $p_-(L)<p<p_+(L)$, $p_-(L)\le q\le \infty$, and $\alpha\in\re$, in which case we have $p_\X=q_\X=p$ and weights $u$ satisfying $u^{p} \in A_{p/p_{-}(L)} \cap RH_{(p_{+}(L)/p)'}$.  Analogously, we can consider the spaces $\X=(L^p+L^q)$ or $\X=(L^p\cap L^q)$, with $p_-(L)<p,q<p_+(L)$, in which case $p_\X=\min\{p,q\}$ and $q_\X=\max\{p,q\}$ and the weights satisfy the corresponding conditions. The same can be done for (b)   with $q_{+}(L)$ in place of $p_{+}(L)$.

Let us next study several conical square functions. Introduce the conical square functions written in terms of the Heat semigroup $\{e^{-tL}\}_{t>0}$ (hence the subscript $\mathrm{H}$): for every $m \in \N_0:=\N\cup \{0\}$,
\begin{align*}
S_{m, \mathrm{H}}f(x) :=\bigg(\iint_{\Gamma(x)}| (t^2 L)^{m+1} e^{-t^2 L}f(y)|^2 \frac{dydt}{t^{n+1}}\bigg)^{\frac12}, 
\end{align*}
\begin{align*}
G_{m, \mathrm{H}}f(x) &:=\bigg(\iint_{\Gamma(x)}| t\nabla_y (t^2 L)^m e^{-t^2 L}f(y)|^2 \frac{dydt}{t^{n+1}}\bigg)^{\frac12}, 
\\
\mathcal{G}_{m, \mathrm{H}}f(x) &:=\bigg(\iint_{\Gamma(x)}| t\nabla_{y,t} (t^2 L)^m e^{-t^2 L}f(y)|^2 \frac{dydt}{t^{n+1}}\bigg)^{\frac12}, 
\end{align*} 
where $\Gamma(x)=\{(x,t)\in \Rn \times (0,\infty):|x-y|<t\}$.

In the same manner, let us consider conical square functions associated with the Poisson semigroup $\{e^{-tL}\}_{t>0}$ (hence the subscript $\mathrm{P}$): for every $k \in \N_0:=\N\cup \{0\}$, 
\begin{align*}
S_{k, \mathrm{P}}f(x) :=\bigg(\iint_{\Gamma(x)}| (t \sqrt{L})^{2(k+1)} e^{-t \sqrt{L}}f(y)|^2 \frac{dydt}{t^{n+1}}\bigg)^{\frac12}, 
\end{align*}
and
\begin{align*}
G_{k, \mathrm{P}}f(x) &:=\bigg(\iint_{\Gamma(x)}| t\nabla_y (t \sqrt{L})^{2k} e^{-t\sqrt{L}}f(y)|^2 \frac{dydt}{t^{n+1}}\bigg)^{\frac12}, 
\\
\mathcal{G}_{k, \mathrm{P}}f(x) &:=\bigg(\iint_{\Gamma(x)}| t\nabla_{y,t} (t \sqrt{L})^{2k} e^{-t\sqrt{L}}f(y)|^2 \frac{dydt}{t^{n+1}}\bigg)^{\frac12}. 
\end{align*} 

For every $k \in \N_0$ let us set $p_{+}(L)^{k,*} :=\frac{p_{+}(L)n}{n-(2k+1) p_{+}(L)}$, if $p_{+}(L)<\frac{n}{2k+1}$, $p_{+}(L)^{k,*}:=\infty$ otherwise. With these notation in hand, we present the weighted estimates for conical square functions defined above. Indeed, the third author and Prisuelos-Arribas \cite{MP} obtained that for every $m, k \in \N_0$
\begin{align*}
S_{m, \mathrm{H}}, G_{m, \mathrm{H}}, \mathcal{G}_{m, \mathrm{H}} \text{ are bounded on } L^p(\Rn, w), w \in A_{p/p_{-}(L)}, p \in (p_{-}(L), \infty),
\end{align*}
and
\begin{align*}
S_{k, \mathrm{P}}, G_{k, \mathrm{P}}, \mathcal{G}_{k, \mathrm{P}} \text{ are bounded on } L^p(\Rn, w), w \in A_{p/p_{-}(L)} \cap RH_{(p_{+}(L)^{k,*}/p)'},
\end{align*}
for every $p \in (p_{-}(L), p_{+}(L)^{k,*})$. 

Based on these facts and Theorem \ref{thm:lim-RIBFS}, we conclude the weighted inequalities for conical square functions on rearrangement invariant Banach function spaces as follows. 
\begin{theorem}
Let $L$ be an elliptic operator as above and let $\X$ be a rearrangement invariant Banach function space over $(\Rn, \Ln)$ such that $p_-(L)<p_{\X} \le q_{\X}<\infty$ and $\X^{\frac1r}$ is a  Banach function space for some $r>p_-(L)$. 
\begin{list}{\textup{(\theenumi)}}{\usecounter{enumi}\leftmargin=1cm \labelwidth=1cm \itemsep=0.2cm 
		\topsep=.2cm \renewcommand{\theenumi}{\alph{enumi}}}
	
\item For every weight $w$ such that $w^{p_{\X}} \in A_{p_{\X}/p_{-}(L)}$ and $w^{q_{\X}} \in A_{q_{\X}/p_{-}(L)}$,  
\begin{equation*}
\|(T_1 f)w\|_{\X} \lesssim \|f\,w\|_{\X},\quad\forall\,T_1 \in\{S_{m, \mathrm{H}}, G_{m, \mathrm{H}}, \mathcal{G}_{m, \mathrm{H}}\}, \, m\in \N_0.  
\end{equation*}

\item If $q_{\X}<p_{+}(L)^{k,*}$, then for every weight $w$ such that $w^{p_{\X}} \in A_{p_{\X}/p_{-}(L)} \cap RH_{(p_{+}(L)^{k,*}/p_{\X})'}$ and $w^{q_{\X}} \in A_{q_{\X}/p_{-}(L)} \cap RH_{(p_{+}(L)^{k,*}/q_{\X})'}$, 
\begin{equation*}
\|(T_2 f)w\|_{\X} \lesssim \|f\,w\|_{\X},\quad\forall\,T_2 \in \{S_{k, \mathrm{P}}, G_{k, \mathrm{P}}, \mathcal{G}_{k, \mathrm{P}}\}, \, k \in \N_0.  
\end{equation*}
\end{list} 
\end{theorem}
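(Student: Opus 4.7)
The plan is to deduce both parts from Theorem~\ref{thm:lim-RIBFS} (limited range extrapolation on rearrangement invariant Banach function spaces), feeding in as the initial hypothesis the weighted $L^{\p_0}$ estimates of \cite{MP} recalled in the paragraph immediately before the theorem. The general framework is already in place: the basis $\B$ of euclidean balls in $\Rn$ is a Muckenhoupt basis with $A_{\infty,\B}\subset\bigcup_{s>1}RH_{s,\B}$, and by hypothesis $p_-(L)<p_\X\le q_\X<\infty$ with $\X^{1/r}$ a Banach function space for some $r>p_-(L)$. What remains in each case is only to identify the correct endpoints $(\p_-,\p_+)$ and to run the extrapolation on the family $\F:=\{(|Tf|,|f|)\}$, where $T$ is any of the conical square functions under consideration.

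For part (a) I would take $\p_-:=p_-(L)$ and $\p_+:=\infty$, so that $(\p_+/p)'=1$ for every finite $p$ and the reverse-Hölder clause in \eqref{eq:lim-3} is vacuous. Picking any $\p_0\in(p_-(L),\infty)$, the result from \cite{MP} reads
\[
\|T_1 f\|_{L^{\p_0}(\Rn,w_0)}\le C\|f\|_{L^{\p_0}(\Rn,w_0)},\qquad w_0\in A_{\p_0/p_-(L),\B}=A_{\p_0/\p_-,\B}\cap RH_{(\p_+/\p_0)',\B},
\]
for $T_1\in\{S_{m,\mathrm H},G_{m,\mathrm H},\mathcal{G}_{m,\mathrm H}\}$, which is exactly the hypothesis \eqref{eq:lim-3} of Theorem~\ref{thm:lim-RIBFS}. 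Since $p_-(L)<p_\X\le q_\X<\infty=\p_+$ and $\X^{1/r}$ is a Banach function space with $r>p_-(L)=\p_-$, the theorem yields $\|(T_1 f)w\|_\X\le C\|fw\|_\X$ for every weight $w$ with $w^{p_\X}\in A_{p_\X/p_-(L),\B}$ and $w^{q_\X}\in A_{q_\X/p_-(L),\B}$, as claimed.

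Part (b) is handled in the same way with
\[
\p_-:=p_-(L),\qquad \p_+:=p_+(L)^{k,*},
\]
now invoking the extra standing assumption $q_\X<p_+(L)^{k,*}$ in order to secure $q_\X<\p_+$. For any $\p_0\in(p_-(L),p_+(L)^{k,*})$, \cite{MP} provides
\[
\|T_2 f\|_{L^{\p_0}(\Rn,w_0)}\le C\|f\|_{L^{\p_0}(\Rn,w_0)},\qquad w_0\in A_{\p_0/p_-(L),\B}\cap RH_{(p_+(L)^{k,*}/\p_0)',\B},
\]
for $T_2\in\{S_{k,\mathrm P},G_{k,\mathrm P},\mathcal{G}_{k,\mathrm P}\}$, which matches the class $A_{\p_0/\p_-,\B}\cap RH_{(\p_+/\p_0)',\B}$ required by \eqref{eq:lim-3}. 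Applying Theorem~\ref{thm:lim-RIBFS} then delivers the desired inequality for every weight $w$ with $w^{p_\X}\in A_{p_\X/p_-(L),\B}\cap RH_{(p_+(L)^{k,*}/p_\X)',\B}$ and $w^{q_\X}\in A_{q_\X/p_-(L),\B}\cap RH_{(p_+(L)^{k,*}/q_\X)',\B}$, which is precisely the weight class appearing in (b).

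There is no serious obstacle in the argument: once the endpoints $(\p_-,\p_+)$ are correctly identified, the proof reduces to a direct application of Theorem~\ref{thm:lim-RIBFS}. The only point requiring some attention is the Banach-function-space hypothesis that $\X^{1/r}$ be a Banach function space for some $r>p_-(L)$, which is built into the statement exactly so that the limited-range rearrangement-invariant extrapolation is applicable; without it one would have to fall back on the more general formulation given by Theorem~\ref{thm:LRE} and verify the corresponding boundedness assumptions \eqref{eq:LRE-1}--\eqref{eq:LRE-2} by hand.
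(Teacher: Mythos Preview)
Your proposal is correct and follows exactly the approach the paper takes: the theorem is stated as a direct consequence of the weighted $L^p$ estimates from \cite{MP} together with Theorem~\ref{thm:lim-RIBFS}, and you have simply spelled out the choice of endpoints $(\p_-,\p_+)=(p_-(L),\infty)$ for part~(a) and $(\p_-,\p_+)=(p_-(L),p_+(L)^{k,*})$ for part~(b). The paper does not give any further argument beyond the sentence ``Based on these facts and Theorem~\ref{thm:lim-RIBFS}, we conclude\ldots'', so your write-up is in fact more detailed than the original.
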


For (a) we can consider $\X=L^p$, $L^{p,q}$, or $L^{p}(\log L)^{\alpha}$ with $p_-(L)<p<\infty$, $p_-(L)\le q\le \infty$, and $\alpha\in\re$, in which case we have $p_\X=q_\X=p$ and weights $u$ satisfying $u^{p} \in A_{p/p_{-}(L)}$.  Analogously, we can consider the spaces $\X=(L^p+L^q)$ or $\X=(L^p\cap L^q)$, with $p_-(L)<p,q<\infty$, in which case $p_\X=\min\{p,q\}$ and $q_\X=\max\{p,q\}$ and the weights satisfy the corresponding conditions. The same can be done for (b)  with the corresponding changes.



\end{document}